\begin{document}

\title{Clones of Borel Boolean Functions}
\author{Ruiyuan Chen and Ilir Ziba}
\date{}
\maketitle

\begin{abstract}
We study the lattice of all Borel clones on $2 = \{0,1\}$: classes of Borel functions $f : 2^n -> 2$, $n \le \omega$, which are closed under composition and include all projections.
This is a natural extension to countable arities of Post's 1941 classification of all clones of finitary Boolean functions.
Every Borel clone restricts to a finitary clone, yielding a ``projection'' from the lattice of all Borel clones to Post's lattice.
It is well-known that each finitary clone of affine mod $2$ functions admits a unique extension to a Borel clone.
We show that over each finitary clone containing either both $\wedge, \vee$, or the 2-out-of-3 median operation, there lie at least 2 but only finitely many Borel clones.
Over the remaining clones in Post's lattice, we give only a partial classification of the Borel extensions, and present some evidence that the full structure may be quite complicated.
\let\thefootnote=\relax
\footnotetext{2020 \emph{Mathematics Subject Classification}:
    03E15, 
    08A40, 
    08A65, 
    03G25. 
}
\footnotetext{\emph{Key words and phrases}: clone, Post's lattice, Borel function, infinitary propositional logic, Rudin--Keisler order.}
\end{abstract}

\tableofcontents

\addtocounter{section}{1}
\begin{figure}[p]
\centering
\hbox{\hspace{-1ex}\includegraphics{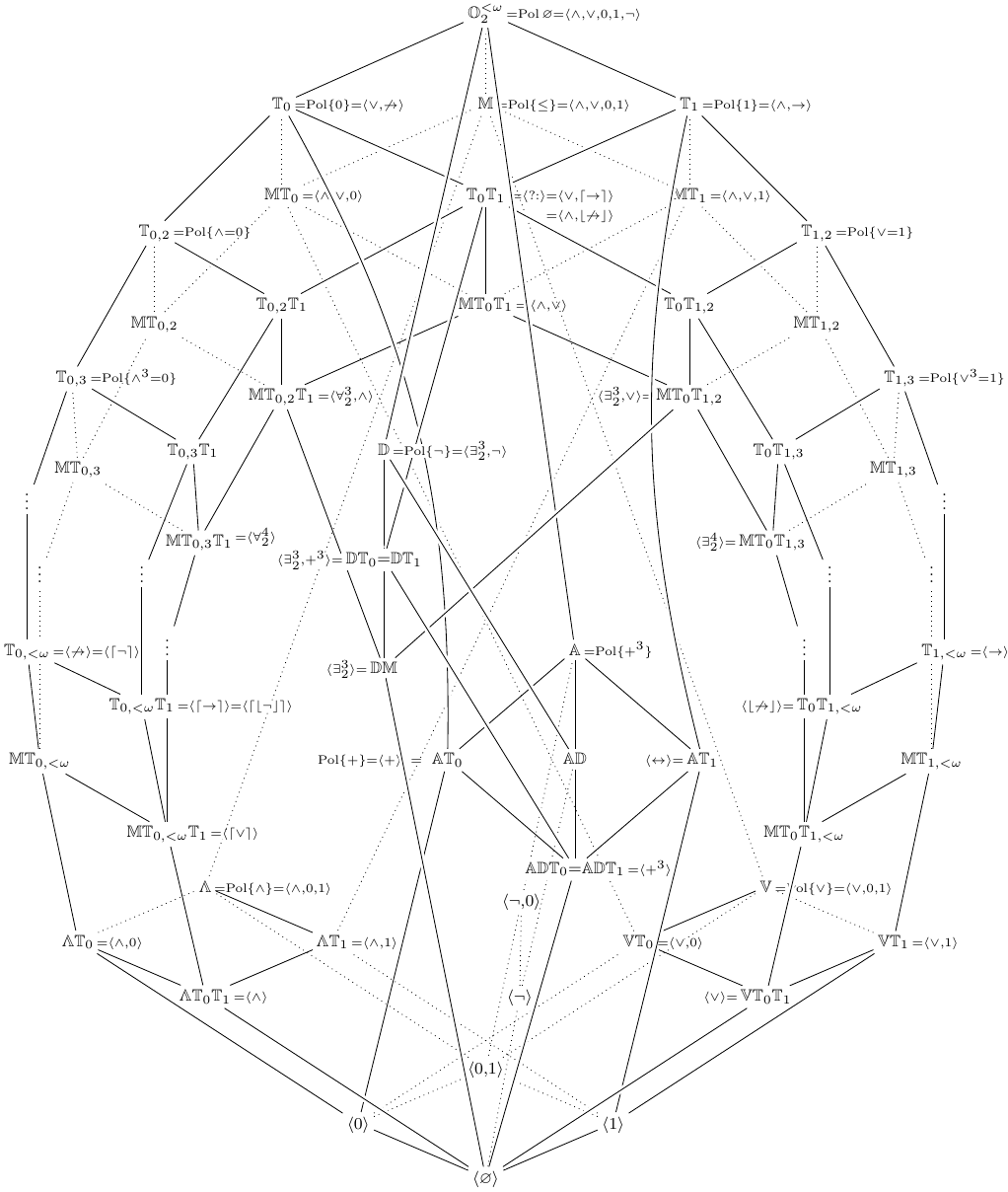}}
\caption{Post's lattice $\Clo^{<\omega}{2}$ of all clones of finitary Boolean functions $2^n -> 2$.
See \cref{sec:post} for the definitions of these clones, functions, and relations.
(Here, all clones are implicitly restricted to finitary.)}
\label{fig:post}
\end{figure}
\addtocounter{section}{-1}

\section{Introduction}
\label{sec:intro}
\addtocounter{equation}{1}

It is a standard exercise in propositional logic that the logical connectives $\wedge, \vee, \neg$ can express all Boolean functions $2^n -> 2$ of finite arity; in fact $\wedge, \neg$ already suffice.
On the other hand, $\wedge, \vee$ do not suffice to generate all Boolean functions, for they are both ``positive'' connectives; and in fact together with the constant truth values $1, 0$, they generate precisely all monotone Boolean functions.
Leaving out the constants $1, 0$, we obtain only the monotone functions fixing $1, 0$, etc.
In 1941, Post \cite{Post} gave a complete classification of all possible classes of finitary Boolean functions $2^n -> 2$ generated by a class of functions under composition, known as \defn{clones} on $2$; these may be thought of as all ``sublogics'' of classical finitary $2$-valued propositional logic.
The resulting countable lattice of all clones on $2$, known as \defn{Post's lattice}, is depicted in \cref{fig:post}.
For background on Post's lattice and clone theory, see \cite{FMMT}, \cite{Lau}, \cite{Szendrei}.

In this paper, we consider the analogous problem of classifying all clones of \emph{countable} arity Boolean functions $2^n -> 2$, $n \le \omega$; these may be thought of as ``sublogics'' of countably infinitary propositional logic.
A Boolean function $2^n -> 2$ may be obtained as a composition of the countable connectives $\bigwedge, \bigvee$ and $\neg$ iff it is a \defn{Borel} function, i.e., the indicator function of a Borel set $A \subseteq 2^n$.
Let $\Op2^\Borel \subseteq \bigsqcup_{n \le \omega} 2^{2^n}$ denote the clone of all Borel functions.
We will mostly restrict attention to subclones of $\Op2^\Borel$, thereby ruling out pathological ``connectives'' such as nonprincipal ultrafilters which cannot be explicitly defined.
Note that definability constraints such as being Borel can interact with the algebraic structure on $2$ in intricate ways.
For example, a Borel function $2^n -> 2$ of countable arity which is affine over $\#Z/2\#Z$ can in fact depend only on finitely many variables; but this is no longer true for any condition weaker (in a precise sense; see below) than affinity.

Even restricted to the Borel clones on $2$, we do not obtain a complete classification in this paper.
However, we obtain a classification of a large ``region'' of the lattice of all Borel clones, as well as a partial classification of the remaining ``regions'' along with some indications that they may be difficult to fully classify.
In order to state our results more precisely, we now give an overview of our approach, which is based on Post's classification of the finitary clones.

\subsection{The bundle of Borel clones}

Let $\Op2^{<\omega} := \bigsqcup_{n < \omega} 2^{2^n}$ denote the clone of all finitary functions.
Thus Post's lattice \ref{fig:post} consists of all finitary subclones of $\Op2^{<\omega}$, while we seek to classify all countable-arity subclones of $\Op2^\Borel$.
Given such a subclone $F \subseteq \Op2^\Borel$, we may restrict to the finitary functions $F \cap \Op2^{<\omega}$ within it.
We get a map
\begin{align*}
\yesnumber
\label{eq:intro-bundle}
\Clo^\Borel{2} := \{\text{Borel clones on $2$}\} &-->> \{\text{finitary clones on $2$}\} =: \Clo^{<\omega}{2} \\
F &|--> F \cap \Op2^{<\omega},
\end{align*}
which turns out to be a complete lattice homomorphism (see \cref{thm:pancake} and \cref{eq:pancake-borel}).
Thus, we may regard the lattice $\Clo^\Borel{2}$ of Borel clones on $2$ as a ``fiber bundle'' over Post's lattice $\Clo^{<\omega}{2}$, depicted in \cref{fig:post-borel}; and the classification problem for Borel clones decomposes into, for each finitary clone $G$ in Post's lattice, classifying the ``fiber'' over $G$, of all Borel clones $F$ with finitary restriction $G$.
When $G$ is defined as all the finitary functions preserving certain finitary relations, then such $F$ are precisely the Borel clones of functions preserving the same relations, and containing all the finitary such functions $G$.

To illustrate this, consider the following clones, which are the maximal nodes in Post's lattice (defined in more detail in \cref{sec:post}):
\begin{itemize}
\item  $\Mono :=$ all monotone (i.e., $\le$-preserving) functions $2^n -> 2$.
\item  $\Cons{c}{1} :=$ all functions $2^n -> 2$ preserving the constant $c \in \{0,1\}$.
\item  $\Dual :=$ all functions $2^n -> 2$ equal to their own de~Morgan dual.
\item  $\Aff :=$ all functions $2^n -> 2$ affine over $\#Z/2\#Z$.
\end{itemize}
We denote the finitary, respectively Borel, versions of these clones by a superscript $^{<\omega}$, resp., $^\Borel$.
Thus, for example, $\Mono^{<\omega}$ is the clone of finitary monotone functions, generated by $\wedge, \vee, 0, 1$.
Then the Borel clones $F$ in the fiber of the bundle \cref{eq:intro-bundle} over $\Mono^{<\omega}$ are those such that $\wedge, \vee, 0, 1 \in F \subseteq \Mono^\Borel$.
There is a greatest such $F$, namely $\Mono^\Borel$, as well as a least, namely the Borel clone generated by $\wedge, \vee, 0, 1$; these are distinct since the former contains the countable disjunction (join/supremum) $\bigvee$.
In contrast, in the fiber over $\Aff^{<\omega}$, there lies only a single Borel clone, by the aforementioned fact that Borel affine maps can depend on only finitely many variables.
It follows that the same holds over each subclone of $\Aff^{<\omega}$ in Post's lattice, e.g., $\Aff\Dual^{<\omega} := \Aff^{<\omega} \cap \Dual^{<\omega}$.

\subsection{Main results}

We are able to completely classify the Borel clones lying over a node near the top of Post's lattice:

\begin{theorem}[see \cref{sec:borel-topcube}]
\label{thm:intro-topcube-dual}
\leavevmode
\begin{enumerate}[label=(\alph*)]
\item \label{thm:intro-topcube-dual:topcube}
Over each of the 8 finitary clones in the ``cube'' at the top of Post's lattice \ref{fig:post}, consisting of all intersections of combinations of $\Mono, \Cons01, \Cons11$, there lie at least 2 but only finitely many Borel clones restricting to that finitary clone, namely:
2 Borel clones over $\Op2^{<\omega}$;
3 over $\Cons01^{<\omega}, \Cons11^{<\omega}$ each;
5 over $\Cons01\Cons11^{<\omega}$;
4 over $\Mono^{<\omega}$;
6 over $\Mono\Cons01^{<\omega}, \Mono\Cons11^{<\omega}$ each; and
9 over $\Mono\Cons01\Cons11^{<\omega}$.
\item \label{thm:intro-topcube-dual:dual}
Over the finitary clones $\Dual^{<\omega}, \Dual\Cons01^{<\omega}, \Dual\Mono^{<\omega}$, there lie 2, 3, 3 Borel clones respectively.
\item \label{thm:intro-topcube-dual:aff}
Over each finitary clone $F$ in Post's lattice \ref{fig:post}, there lies only 1 Borel clone iff $F \subseteq \Aff$.
\end{enumerate}
\end{theorem}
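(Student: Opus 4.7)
I'll start with part (c). The ``if'' direction is an immediate consequence of the fact, noted in the introduction, that every Borel Boolean function affine over the field of order $2$ depends on only finitely many variables; this forces the fiber over any $G \subseteq \Aff$ to be a singleton. For the ``only if'' direction, if $F \not\subseteq \Aff$, then inspection of Post's lattice shows $F$ contains (up to trivial modifications) one of $\wedge$, $\vee$, or the $2$-out-of-$3$ median; the corresponding countable Borel operation---$\bigwedge$, $\bigvee$, or an infinitary majority---then lies in the Borel closure of $F$ but depends on infinitely many inputs, so cannot be in the Borel clone generated purely by $F$'s finitary members, yielding at least two Borel extensions.

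For parts (a) and (b), the common approach is to identify for each finitary clone $G$ a small list of candidate infinitary Borel operations consistent with $G$'s defining relations---chiefly $\bigwedge$ and $\bigvee$, and (for fibers below $\Cons01\Cons11^{<\omega}$) restricted variants such as the operation returning $1$ iff all but finitely many inputs equal $1$---and to show that the fiber over $G$ consists precisely of the Borel clones generated by $G$ together with a subset of these candidates. The counts in (a) correspond to the number of such subsets consistent with $G$: over $\Op2^{<\omega}$ only $\bigwedge$ is needed (as $\bigvee = \neg \bigwedge \neg$), giving $2$; over $\Mono^{<\omega}$, $\bigwedge$ and $\bigvee$ are independent, giving $4$; over $\Mono\Cons01\Cons11^{<\omega}$ the restricted variants introduce additional possibilities, giving $9$. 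For part (b), the self-dual constraint $f(x) = \neg f(\neg x)$ forces $\bigwedge$ and $\bigvee$ to be adjoined together or not at all, collapsing the fibers to $2, 3, 3$. Distinctness among the candidate clones is witnessed by explicit separating Borel functions; for instance, $\bigvee$ is separated from the Borel clone generated by the finitary monotone functions via an inductive argument showing the latter consists only of functions depending on finitely many inputs.

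The main obstacle is the upper bound: every Borel function preserving $G$'s defining relations must lie in one of the enumerated candidate clones. This requires a normal-form theorem describing how an arbitrary such Borel function decomposes over the finitary operations of $G$ together with the infinitary candidates. For the monotone fibers this should take the form of a Scott-style representation of monotone Borel functions $2^\omega \to 2$ as countable joins of countable meets (or vice versa) of finitary monotone functions, proved by induction along the Borel hierarchy restricted to upward-closed sets. For the non-monotone fibers (e.g., over $\Cons01\Cons11^{<\omega}$, accounting for the jump from $4$ to $5$), the argument must further handle interactions between Borel structure and preservation of $\{0\}, \{1\}$ without monotonicity; this is where I expect the bulk of the technical work. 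Once such a normal form is available, the counts in (a) and (b) follow by combinatorial bookkeeping.
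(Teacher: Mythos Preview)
Your part (c) sketch is essentially right, and matches the paper.

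Your approach to (a) and (b), however, has genuine gaps and diverges substantially from the paper's method. The main missing idea is the \emph{Wadge-type dichotomy} (the paper's Lemma~\ref{thm:post-wadge}): rather than attempting a normal-form theorem by induction on the Borel hierarchy, one shows directly that any Borel function outside a given target clone (e.g., discontinuous, or in $\Cons01 \setminus \Limm011$) must already generate $\bigvee$ together with appropriate finitary functions. This dichotomy, plus the \emph{modularity isomorphisms} $[G, \down G] \cong [\ang{G \cup \{1\}}, \Op2]$ of \cref{thm:post-down-mod}, lets one bootstrap the classification over $\Op2^{<\omega}$ (two clones) successively down to each fiber in the cube, without ever needing a global structural decomposition of arbitrary Borel functions. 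Your proposed induction along the Borel hierarchy is not carried out and would be hard to make precise; even for the monotone case, the statement $\Mono^\Borel = \ang{\bigwedge,\bigvee,0,1}^\Borel$ is Dyck's monotone Lusin separation theorem, which the paper cites rather than reproves.

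Your treatment of (b) is also too coarse. The self-dual clones are handled via the operator $\beta(f)(x_0,\vec{x}) = (x_0\,?\,\delta(f)(\vec{x}) : f(\vec{x}))$, which gives an order-isomorphism between subclones of $\Dual$ and clones containing $0$ that are closed under $\beta$ (\cref{thm:post-D-beta-mod}). Over $\Dual^{<\omega}$ and $\Dual\Cons01^{<\omega}$ this reduces to the already-classified fibers over $\Op2^{<\omega}$ and $\Cons01^{<\omega}$. But over $\Dual\Mono^{<\omega}$, the corresponding clone is $\Mono\Cons02^{<\omega}$, and the classification there requires nontrivial input from the ``side tube'' analysis (Kahane's lemma~\ref{thm:kahane} and \cref{thm:post-borel-MT02L0}) before one can conclude there are exactly three $\beta$-closed clones. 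Your heuristic that self-duality ``forces $\bigwedge$ and $\bigvee$ to be adjoined together'' does not account for the intermediate clone $\Dual\Mono\Limm011^\Borel = \ang{\beta(\bigwedge)}^\Borel$, nor does it explain why no further intermediate clones exist.
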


\Cref{fig:post-borel} (shaded regions) depicts those fibers of the bundle $\Clo^\Borel{2} ->> \Clo^{<\omega}{2}$ from \cref{eq:intro-bundle} in which we get a complete classification of the Borel clones.
The proofs are spread out over several results in \cref{sec:borel-topcube}; see there for more detailed pictures of each of the fibers individually.

Over the remaining finitary clones in Post's lattice \ref{fig:post}, we do not get a complete classification of the Borel clones.
Instead, we exhibit a wide variety of complex behaviors among the Borel clones.
In order to state these, we recall some more clones from Post's lattice:
\begin{itemize}
\item  $\Meet :=$ all indicator functions $f : 2^n -> 2$ of filters $f^{-1}(1) \subseteq 2^n$, or $f = 0$.
\item  $\Cons0k :=$ all indicator functions $f : 2^n -> 2$ of sets $f^{-1}(1) \subseteq 2^n$ with the $k$-ary intersection property (any $k$ bit strings in $f^{-1}(1)$ have bitwise conjunction $\ne \vec{0}$); thus $\Cons01 = \Cons0{{1}}$.
\item  $\Cons0{<\omega} := \bigcap_{k < \omega} \Cons0k =$ all indicator functions of sets with the finite intersection property.
\item  $\Join, \Cons1k, \Cons1{<\omega}$ are the de~Morgan duals of these, concerning ideals/disjunctions.
\end{itemize}
As before, e.g., $\Cons0k^{<\omega}$ denotes the clone of all finitary such functions, while $\Cons0k^\Borel$ denotes the corresponding Borel clone.
Note that $\Cons0{<\omega}^{<\omega}$ thus consists of all finitary functions which are bounded above by some particular variable (by considering a conjunction of strings with a single $0$); whereas $\Cons0{<\omega}^\Borel \supseteq \Meet^\Borel$, which includes all Borel filters $\subseteq 2^\omega$, is much more complicated.

\begin{theorem}[see \crefrange{sec:borel-T0inf}{sec:borel-T0k}, especially \cref{thm:post-borel-T0k,thm:post-borel-Meet,ex:summable}]
\label{thm:intro-T0k}
\leavevmode
\begin{enumerate}[label=(\alph*)]
\item \label{thm:intro-T0k:T0k}
Over each of finitary clones $\Cons0k, \Cons0k\Cons11, \Mono\Cons0k, \Mono\Cons0k\Cons11$ in the left ``side tube'' of Post's lattice \ref{fig:post}, for $k < \omega$, there lie at least $2^k+1, 2^k+k+2, 2^k+4, 2^k+k+6$ Borel clones, respectively.
\item \label{thm:intro-T0k:T0inf}
Over each of the finitary clones $\Cons0{<\omega}$ and its intersections with $\Mono, \Cons11$ (at the ``base of the side tube''), there lie at least countably infinitely many Borel clones.
\item \label{thm:intro-T0k:meet}
Over each of the finitary clones $\Meet, \Meet\Cons11, \Meet\Cons01, \Meet\Cons01\Cons11$ and its intersections with $\Cons01, \Cons11$ (below the ``side tube''), there lie precisely 3, 3, 4, 4 Borel clones $F \subseteq 2^{2^{\le\omega}}$ respectively which are ``countably closed'' in $2^{2^\omega}$, i.e., contain all functions which agree at any countably many input strings $\vec{x}_0, \vec{x}_1, \vec{x}_2, \dotsc \in 2^\omega$ with some function in $F$.
However, there also exist other ``non-countably closed'' clones of Borel filters.
\end{enumerate}
Similarly for the de~Morgan duals of these clones.
\end{theorem}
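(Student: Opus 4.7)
The statement splits naturally into \emph{lower bounds} on the number of Borel clones (parts (a), (b)) versus an \emph{exact classification} of countably closed clones (part (c)), and I would treat these separately while exploiting the bundle structure $\Clo^\Borel{2} \twoheadrightarrow \Clo^{<\omega}{2}$ from \cref{eq:intro-bundle} throughout.

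For parts (a) and (b), the approach is to build explicit families of pairwise distinct Borel clones by adjoining, to the target finitary clone $G$, carefully chosen countable-arity Borel functions $f: 2^\omega \to 2$ whose preimages $f^{-1}(1) \subseteq 2^\omega$ have prescribed $k$-intersection profiles. The count $2^k+1$ in (a) matches $|2^{[k]}|+1$, suggesting an indexing by subsets $S \subseteq [k]$ (or $\{0,1\}^k \setminus \{\vec 0\}$), plus one degenerate case. For each such $S$ I would take $f_S$ to be the indicator of a suitable Boolean combination of $k$ cylinder sets, chosen so that the $k$-fold bitwise conjunctions actually witnessed in $f_S^{-1}(1)$ are exactly those encoded by $S$. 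The additive corrections $+k+2$, $+4$, $+k+6$ arise from extra generators contributed by $\Cons11$ (via $1$-preservation, which gives a $k$-parameter family of ``shifted'' variants) and by $\Mono$ (which duplicates each profile by the choice of monotone closure involved). Part (b) follows by doing this uniformly in $k$: since $\Cons0{<\omega} = \bigcap_k \Cons0k$, the constructions for varying $k$ yield a strictly descending chain distinguished by their maximal failing intersection arity.

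To prove pairwise inequality of the constructed clones, I would attach to each clone $G[f_S] := \langle G \cup \{f_S\}\rangle^\Borel$ a combinatorial invariant---roughly, the set of $k$-intersection patterns realizable among preimages of Borel functions in the clone---and verify that it is preserved under countable composition with $G$. The key technical lemma is a \emph{substitution-stability} statement: plugging $G$-members into any $f_S$ cannot generate $k$-intersection patterns outside those prescribed by $S$, because members of $G$ themselves preserve the $k$-intersection property. This separates the $G[f_S]$'s and rules out collapse.

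For (c), over $\Meet^{<\omega}$ every nonzero function is the indicator of a filter, so a Borel clone over $\Meet$ corresponds to a family of Borel filters on $2^\omega$ closed under countable Boolean operations projecting to $\Meet^{<\omega}$. Countable closure then forces this family to be extremely rigid: any Borel filter it contains must be determined by its behavior on a countable collection of inputs, which via a Rudin--Keisler-style argument pins it down to one of a short list parametrized by membership of the coordinate-principal filters, the Fr\'echet filter, and the $\Cons01$/$\Cons11$-data, giving exactly the counts $3, 3, 4, 4$. The non-countably-closed examples would be constructed by the summable-ideal variant from \cref{ex:summable}, exhibiting a Borel filter whose associated clone cannot fit into the classified list; the de~Morgan dual case is then automatic by symmetry. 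The main obstacle is the \emph{completeness} of the classification in (c): showing no countably closed Borel clone beyond those listed exists requires a delicate Rudin--Keisler rigidity argument under countable-arity composition, considerably subtler than the invariant-based nonequality arguments powering (a) and (b).
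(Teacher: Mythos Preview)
Your proposal has substantive gaps in both halves.

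\textbf{Parts (a) and (b).} Your construction of separating functions $f_S$ as ``indicators of Boolean combinations of $k$ cylinder sets'' cannot work: such functions depend on only finitely many coordinates, hence are essentially finitary and all lie in the minimum clone $\ang{\Cons0k^{<\omega}}^\Borel$ of the fiber. They cannot distinguish any Borel clones over $\Cons0k^{<\omega}$. The paper's $2^k$ clones are not indexed by subsets $S \subseteq [k]$ at all; they are intersections of clones $\Limm0kt := \Pol\{{\wedge^{k-t}\wedge\lim\wedge^t}{=}0\}$ for $0 \le t \le k$, indexed by downward-closed sets $D \subseteq \{(k',t') \mid t' \le k'\}$ with $(k,0)\in D\not\ni(k{+}1,0)$, and separated by the genuinely infinitary functions $\forall^{k-t}_2 \sqcup \liminf^{\sqcup t}$ (orthogonal disjunctions of finitary thresholds with copies of $\liminf$). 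The recursive $2^k$ count is a path-counting argument in this grid, not a powerset of $[k]$. Your account of the additive corrections is also off: the $+k$ over $\Cons11$-variants comes from a ``secondary row'' of clones $\Limm0kt\Limm111$ obtained via a modularity isomorphism (\cref{thm:post-T0k-lim1-mod}), not from ``shifted variants''; the extra terms over $\Mono$-variants come from a ``tertiary'' Scott-continuous clone $\Incr\Cons0k$ and from the classified interval below $\Limm0kk$ (which has $2,3,4,6$ elements via \cref{thm:post-borel-L0k}), not from ``duplication by monotone closure''.

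\textbf{Part (c).} You correctly identify that the non-countably-closed example comes from the summable filter (\cref{ex:summable}), but your plan for the exhaustive classification of countably closed clones is backwards. Countable closure does not force individual filters to be ``determined on a countable collection of inputs''; rather, a countably closed clone is one defined by $\le\omega$-ary invariant relations. The paper's argument rests on a Kahane-type dichotomy (\cref{thm:kahane}): any $f \in \Mono\Cons02 \setminus \Limm011$ already generates $\liminf$ over $\ang{\wedge}$, and similar dichotomies force any discontinuous $\Meet$ function to generate $\bigwedge$ or $\ceil{\liminf}$. This, together with the easy fact that the countable closure of $\ang{\liminf}$ is all of $\Meet\Cons01\Cons11$ (\cref{thm:post-ctcl-liminf}), pins down the $3,3,4,4$ clones. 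No Rudin--Keisler rigidity is used for the positive classification; Rudin--Keisler (via Kanovei--Reeken) appears only in the opposite direction, to exhibit a non-countably-closed clone.
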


\Cref{fig:post-borel-T0k} (shaded regions) depicts the fibers of the bundle \cref{eq:intro-bundle} mentioned above, in which we get only a partial classification of the Borel clones (these clones are defined in \cref{sec:borel-T0k}); see also \cref{fig:post-borel-T0inf,fig:post-borel-Meet,fig:post-borel-L0kt}.
As these pictures indicate, the lower bounds on the numbers of Borel clones in \cref{thm:intro-T0k}\cref{thm:intro-T0k:T0k,thm:intro-T0k:T0inf} are merely the numerical counts of detailed order-theoretic structures on these fibers.
For instance, the constant terms in these lower bounds count the fully classified lower (solid-shaded in \ref{fig:post-borel-T0k}) portions of each fiber, with $2, 3, 4, 6$ elements respectively, which are isomorphic to four of the fibers in \cref{thm:intro-topcube-dual}\cref{thm:intro-topcube-dual:topcube}.
On the other hand, the upper, hatch-shaded ``cobweb'' portions of \ref{fig:post-borel-T0k} contribute $2^k$ distinct Borel clones in each fiber, with a recursively generated order-structure; but we cannot rule out the existence of other Borel clones between these.

\Cref{thm:intro-T0k}\cref{thm:intro-T0k:meet} shows that in a precise sense, it is ``difficult'' to fully classify all Borel clones over $\Meet^{<\omega}$ and its variants.
Recall that by standard clone theory, every clone $F \subseteq \bigsqcup_n 2^{2^n}$ can be defined as all Boolean functions which preserve some given set of $k$-ary relations $R \subseteq 2^k$ for various $k$, called \defn{polymorphisms} of those relations;
this includes $R = {\le} \subseteq 2^2$ (yielding the monotone functions $\Mono$), ${\ne} \subseteq 2^2$ (yielding the self-dual functions $\Dual$), etc.
The \emph{countably closed} clones $F$ as in \ref{thm:intro-T0k}\cref{thm:intro-T0k:meet} are precisely those which can be defined as the polymorphisms of \emph{countable-arity} relations; for example, polymorphisms of the $\omega$-ary convergence relation ``$\lim_{i -> \infty} x_i = 0$'' yield the functions $f : 2^n -> 2$ which fix and are continuous at $\vec{0}$, denoted $\Limm011$ near the tops of \cref{fig:post-borel,fig:post-borel-T0k}.
Thus, \cref{thm:intro-T0k}\cref{thm:intro-T0k:meet} says that on the one hand, there are only a few Borel clones of filters which are definable by countable-arity relations; on the other hand, such relations cannot distinguish all distinct Borel clones.

\begin{figure}[p]
\centering
\includegraphics{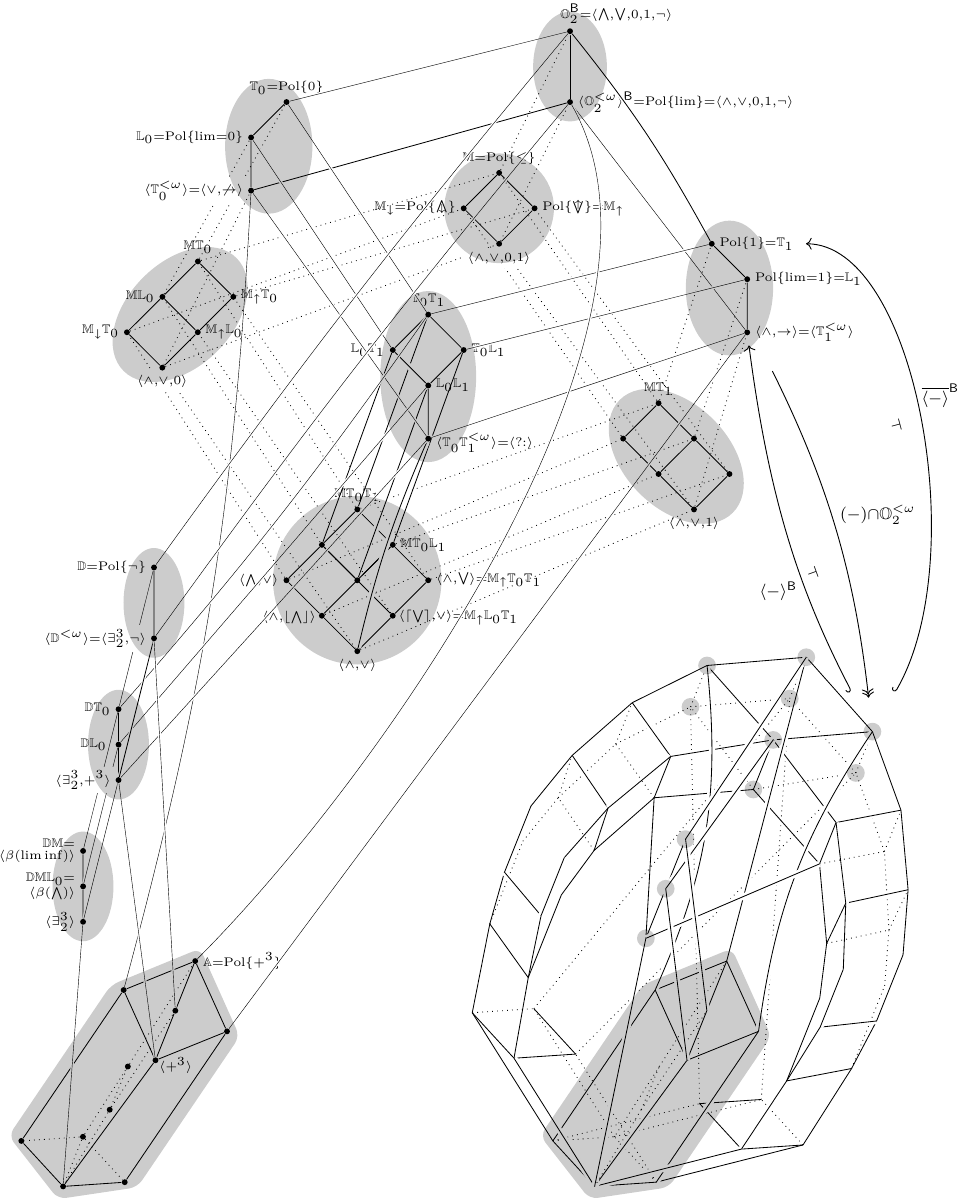}
\caption{Bundle of Borel clones $\Clo^\Borel{2} ->> \Clo^{<\omega}{2}$ projecting to Post's lattice \ref{fig:post} via finitary restriction, with fibers (shaded blobs) in which a complete classification of the Borel clones is known.}
\label{fig:post-borel}
\end{figure}

\begin{figure}[p]
\centering
\includegraphics{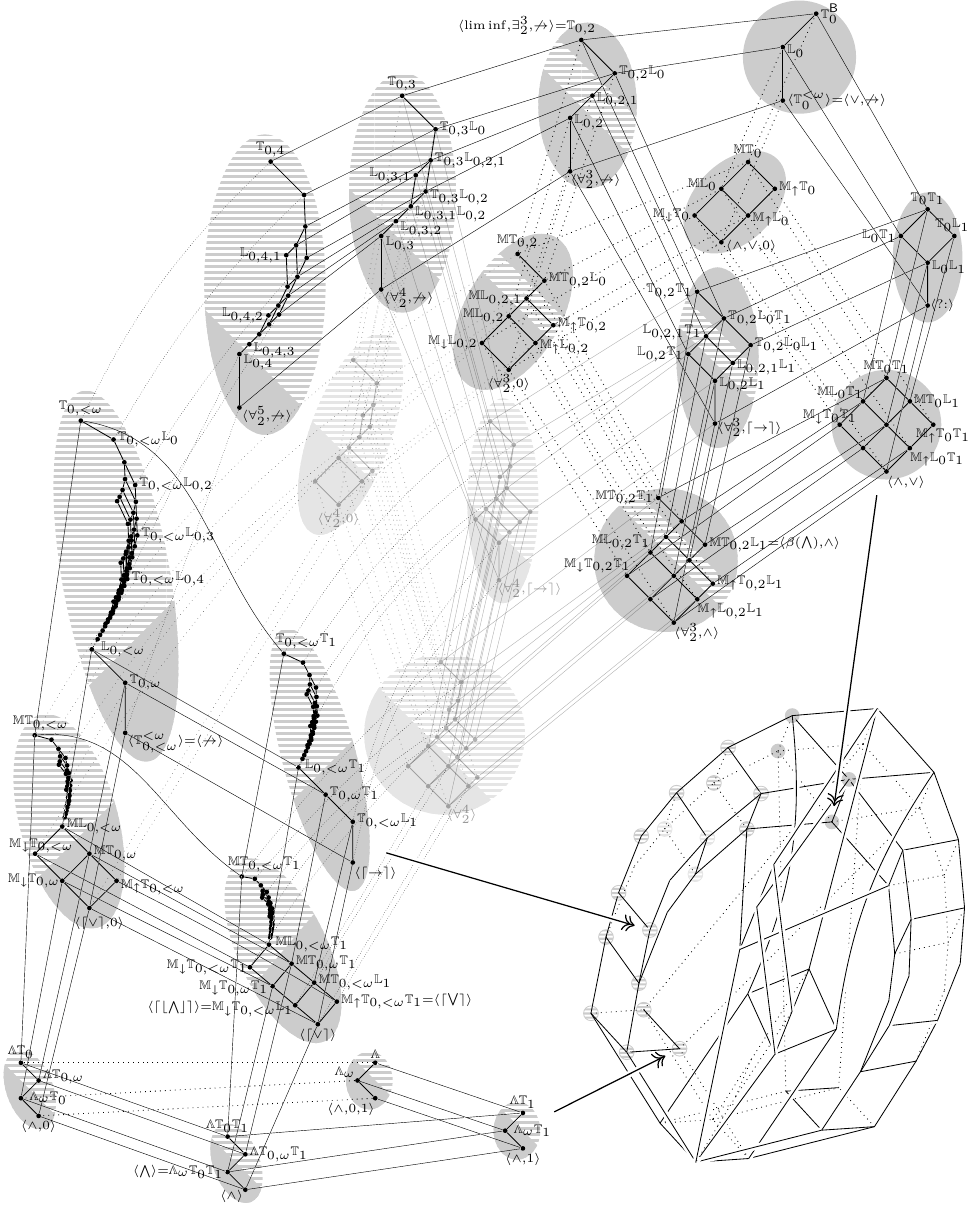}
\caption{Fibers of the bundle \ref{fig:post-borel} in which only a partial classification of the Borel clones is known (solid-shaded = fully classified; hatch-shaded = at least the shown nodes are known to be distinct).}
\label{fig:post-borel-T0k}
\end{figure}

We now briefly outline our proof strategy for the aforementioned positive classification results.
The combinatorial core consists of a handful of simple ``Wadge's lemma''-type dichotomies, showing that every Borel clone $F$ must either be contained in a specific clone, or else contain a ``minimally complex'' function outside of that clone; see \cref{thm:post-wadge,thm:post-forall2,thm:kahane,thm:fkt-gen}.
These dichotomies are used to bootstrap several abstract structural mappings, showing that large regions of the lattice of Borel clones $\Clo^\Borel{2}$ are isomorphic or embed into each other; see \cref{fig:post-mod} and \cref{thm:post-T0inf-mod,thm:post-T0inf-down,thm:post-D-beta-mod,thm:post-T1-mod,thm:post-T0k-lim1-mod}.
By applying these isomorphisms repeatedly, we then transport the core dichotomies across various regions of the lattice $\Clo^\Borel{2}$, in order to resolve it into the pieces shown in \cref{fig:post-borel,fig:post-borel-T0k}.

\subsection{Related work and future directions}

An unusual feature of our classification shown in \cref{fig:post-borel-T0k} is that among the Borel clones over $\Cons02$ and its variants, we are able to fully characterize the bottommost as well as topmost clones, but not the intermediate clones $\Limm021$ and its variants.
This has to do with the special connection between the clone $\Mono\Cons02$ and the self-dual monotone functions $\Dual\Mono$ (the function $\forall^3_2 = \exists^3_2 : 2^3 -> 2$ is the 2-out-of-3 median operation).
It remains to be seen whether similar ideas may be used to characterize upper regions (the ``cobwebs'' in \ref{fig:post-borel-T0k}) of the $\Cons0k$ Borel clones for $k \ge 3$.

The \cref{ex:summable} of a Borel clone not definable by countable-arity relations mentioned in \cref{thm:intro-T0k}\cref{thm:intro-T0k:meet} essentially follows from a result of Kanovei--Reeken~\cite{KanoveiReeken} on Borel equivalence relations and ideals.
This result uses deep set-theoretic tools such as forcing and Hjorth's turbulence theory \cite{Hjorth}, while showing something much stronger (namely, Borel non-reducibility of equivalence relations) than is needed for the application to Borel clones (namely, Rudin--Keisler non-reducibility between Borel filters).
We expect there to be other examples of distinct Borel clones inseparable by countable-arity relations; a better understanding of the connections with the theory of Borel ideals and filters (see \cite{Solecki}, \cite{Hrusak}, \cite{Kanovei}) may help with such a pursuit.

More generally, we are pleasantly surprised that a large part of the lattice of Borel clones seems to be classifiable using only abstract lattice-theoretic reasoning plus a few simple combinatorial dichotomies, as described in the preceding subsection.
In particular, we make no use in this paper of more powerful techniques such as determinacy, forcing, or effective descriptive set theory (see \cite{Mdst}, \cite{Kanovei}).
This is especially surprising given the use of such techniques in a structure theory which is conceptually related to that of Borel clones: the Wadge hierarchy \cite{Wadge}.
Recall that a \defn{Wadge class} (on $2^\omega$) is essentially a family $F$ of functions $2^\omega -> 2$ which is closed under \emph{right-}composition with arbitrary continuous functions.
The lattice (essentially a well-order) of all Wadge classes of Borel functions has been completely described \cite{Wadge}, \cite{Louveau}.

We note that the classification of Borel clones in this paper is formally ``orthogonal'' to the Wadge hierarchy: every Wadge class contains all finitary (i.e., continuous) functions, whereas a Borel clone containing all finitary functions and a single discontinuous function must be the entirety of $\Op2^\Borel$.
Rather, we expect that the machinery we have developed may be useful in future studies of infinitary clones of ``definable'' Boolean functions \emph{beyond} Borel (under suitable determinacy hypotheses), with the Wadge hierarchy as a ``backbone''.
Partly for this reason, we have taken care to state our main lemmas, such as the structural lattice isomorphisms mentioned above, as much as possible for arbitrary infinitary clones on $2$, without assuming Borelness.

Clones of polymorphisms have received much attention in recent years in connection with \defn{constraint satisfation problems (CSPs)}, a large class of computational/combinatorial problems, especially since the proof of the \emph{CSP dichotomy theorem} \cite{Bulatov}, \cite{Zhuk} using universal-algebraic methods.
See \cite{Bodirsky} for a detailed survey of this area.
Recently, clones and related concepts have also seen applications to CSPs in Borel combinatorics \cite{Thornton}, \cite{KatayTothVidnyanszky}.
We hope to investigate potential combinatorial applications of Borel clones on $2$ in future work.

\subsection*{Acknowledgments}

We would like to thank Ilijas Farah for providing the reference \cite{KanoveiReeken} cited in \cref{ex:summable}.
The authors were supported by NSF grant DMS-2224709.

\section{Clone theory}
\label{sec:clone}

We begin by reviewing the concepts from general clone theory which we will need.
All of the ideas here are standard in universal algebra; see \cite{FMMT}, \cite{Lau}, \cite{Szendrei}, \cite{Bodirsky}.
However, we will be needing the infinite-arity (even occasionally uncountable-arity) versions of the usual machinery, which may be less familiar.
We will sketch the less routine proofs for the reader's convenience.

\subsection{Lattices and adjunctions}

Recall that a \defn{lattice} is a poset in which any two elements $a, b$ have a greatest lower bound or \defn{meet} $a \wedge b$ as well as least upper bound or \defn{join} $a \vee b$, while a \defn{complete lattice} is a poset in which arbitrarily many elements $a_i$ (possibly none) have a meet $\bigwedge_i a_i$ and join $\bigvee_i a_i$.
A \defn{meet-irreducible} element in a complete lattice is one which cannot be obtained as a meet of strictly smaller elements, i.e., has a unique immediate predecessor; \defn{join-irreducible} is defined similarly.

Recall that an \defn{adjunction} or \defn{Galois connection} between two posets $A, B$ is a pair of maps $f : A -> B$ (the \defn{left adjoint}) and $g : B -> A$ (the \defn{right adjoint}) satisfying
\begin{equation}
\label{eq:adj}
f(a) \le b  \iff  a \le g(b),
\end{equation}
denoted $f \dashv g$.
A left adjoint preserves all existing joins, while a right adjoint preserves all existing meets; the converses hold assuming $A, B$ are complete lattices.
If $B$ is replaced with its order-reversal $B^\op$, then \cref{eq:adj} becomes
\begin{equation*}
b \le f(a)  \iff  a \le g(b)
\end{equation*}
(we call $f, g$ \emph{dually adjoint on the right}), and each of $f, g$ maps joins to meets.
In either case, we have $g \circ f = \id_A$ iff $f$ is injective iff $g$ is surjective, and dually for $f \circ g = \id_B$.

The \defn{interval} between elements $a, b \in A$ of a poset is
\begin{align*}
[a,b] := \{x \in A \mid a \le x \le b\}.
\end{align*}
Given elements $a, b, c, d \in A$ of a lattice, such that $b, c \in [a,d]$, we have the \defn{modularity adjunction}
\begin{equation}
\label{eq:adj-mod}
\begin{tikzcd}
{}
[a,b] \rar["c \vee (-)", shift left=2] &[2em]
{}
[c,d]. \lar["b \wedge (-)", shift left=2, right adjoint']
\end{tikzcd}
\end{equation}
If these form an isomorphism, then necessarily $a = b \wedge c$ and $d = b \vee c$.
(Recall that the lattice $A$ is called \emph{modular} if we have such a modularity isomorphism for all $b, c \in A$.)
See \cref{fig:adj-mod}.

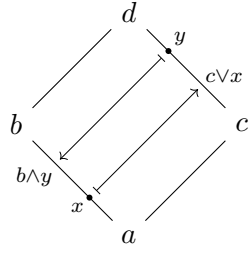
\begin{figure}
\centering
\begin{tikzpicture}[x=1.5cm, y=1.5cm, every label/.append style={inner sep=1pt}]
\node(a) {$a$};
\node(b) at ($(a)+(-1,1)$) {$b$} edge (a);
\node(c) at ($(a)+(1,1)$) {$c$} edge (a);
\node(d) at ($(b)+(c)-(a)$) {$d$} edge (b) edge (c);
\node(x)[dot, label={below left:$\scriptstyle x$}] at ($(a)!.35!(b)$) {};
\node(x')[coordinate, label={above right:$\scriptstyle c \vee x$}] at ($(x)+(c)-(a)$) {};
\node(y)[dot, label={above right:$\scriptstyle y$}] at ($(c)!.65!(d)$) {};
\node(y')[coordinate, label={below left:$\scriptstyle b \wedge y$}] at ($(y)-(c)+(a)$) {};
\draw[|->, very thin, shorten=.5ex] (x) -- (x');
\draw[|->, very thin, shorten=.5ex] (y) -- (y');
\end{tikzpicture}
\caption{The modularity adjunction $[a,b] \rightleftarrows [c,d]$ in a lattice.}
\label{fig:adj-mod}
\end{figure}

For more information on general lattice theory, see \cite{Gratzer}, \cite{GHKLMS}, \cite{Jstone}.

\subsection{Clones}

\begin{convention}
\label{def:arity}
The \defn{arity} of an operation or relation will always be a cardinal, by which we mean a (von Neumann) initial ordinal.
We will use the letters $k, l, m, n$ for cardinals, including infinite ones.
For a class $N$ of cardinals, by \defn{$N$-ary} we mean $n$-ary for some $n \in N$.
We will use the letters $K, L, M, N$ for classes of cardinals.

We abuse notation by writing $n$ instead of $\{n\}$ when convenient.
For a regular cardinal $\kappa$, we write $N = {<}\kappa$ for the set of all cardinals $n < \kappa$.
This includes the case $\kappa = \omega$, where ${<}\omega$ is formally the same as the set $\omega$; however, ``${<}\omega$-ary'' means finitary, while ``$\omega$-ary'' means countably infinitary.
We will use the letters $\kappa, \lambda, \mu, \nu$ for regular cardinals.
We also use ${\le}\kappa$ as an abbreviation for ${<}\kappa^+$.

We will occasionally use $N = {<}\infty$ to mean the class of all cardinals.
\end{convention}

\begin{notation}
\label{def:op}
Let $N$ be a class of cardinals, $X$ be a set.
We let
\begin{equation*}
\Op{X}^N := \bigsqcup_{n \in N} X^{X^n}
\end{equation*}
denote the class of $N$-ary functions on $X$.
We also let $\Op{X} := \Op{X}^{<\infty}$ denote \emph{all} functions on $X$.

For a subclass $F \subseteq \Op{X}^N$ of $N$-ary functions and $n \in N$, we let
\begin{align*}
F^n := F \cap \Op{X}^n
\end{align*}
denote all $n$-ary functions in $F$.
For a subclass of cardinals $M \subseteq N$, we let
\begin{align*}
F^M := F \cap \Op{X}^M = \bigsqcup_{m \in M} F^m
\end{align*}
denote the \defn{$M$-ary restriction} of $F$.
Note that the notation $\Op{X}^N$ can thus be consistently interpreted as the $N$-ary restriction of $\Op{X} = \Op{X}^{<\infty}$.
\end{notation}

\begin{definition}
We say that $F \subseteq \Op{X}^N$ is an \defn{$N$-ary clone} on $X$ if it contains all \defn{projections}
\begin{align*}
\pi_i = \pi^n_i : X^n &--> X \\
\vec{x} &|--> x_i \\
\intertext{for $i < n \in N$, as well as all \defn{compositions}}
g(\vec{f}) = g \circ \vec{f} : X^m &--> X \\
\vec{x} &|--> g(f_0(\vec{x}), f_1(\vec{x}), \dotsc)
\end{align*}
for $m, n \in N$, $g \in F^n$, $\vec{f} = (f_i)_{i < n} \in (F^m)^n$.
Denote the complete lattice of $N$-ary clones on $X$ by
\begin{align*}
\Clo^N{X} \subseteq \@P(\Op{X}^N).
\end{align*}
\end{definition}

\begin{notation}
For an arbitrary class of $N$-ary functions $F \subseteq \Op{X}^N$, we let
\begin{align*}
\ang{F}^N \subseteq \Op{X}^N
\end{align*}
denote the \defn{$N$-ary clone generated by $F$}, i.e., the smallest $N$-ary clone containing $F$.

We also write $\ang{F} := \ang{F}^{<\infty}$.
\end{notation}

\begin{lemma}
\label{thm:clone-gen-left}
$\ang{F}^N \subseteq \Op{X}^N$ is also the smallest class of $N$-ary functions containing all $N$-ary projections and closed under left composition with functions in $F$.
\end{lemma}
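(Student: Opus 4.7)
The plan is to prove equality of the two classes by establishing both inclusions, with the nontrivial direction being that the class $C$ defined by closure under left-composition only is already a clone.

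First I would observe that the inclusion $C \subseteq \ang{F}^N$ is essentially by definition: the clone $\ang{F}^N$ contains all $N$-ary projections and is closed under arbitrary composition with its own elements, in particular under left composition with functions in $F \subseteq \ang{F}^N$. So $\ang{F}^N$ satisfies the two defining closure properties of $C$, and minimality of $C$ yields the inclusion.

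For the reverse inclusion $\ang{F}^N \subseteq C$, I would verify that $C$ is an $N$-ary clone containing $F$. It contains the $N$-ary projections by definition. It contains every $f \in F$ of arity $n \in N$, because $f = f(\pi^n_0, \pi^n_1, \dotsc)$ is the left composition of $f$ with projections already in $C$. The substantive step is showing $C$ is closed under arbitrary composition $g(\vec{f})$ with $g \in C^n$ and $\vec{f} \in (C^m)^n$, and this is where I would use a bootstrapping/induction argument on the construction of $g$ inside $C$: let $C' \subseteq C$ consist of those $g$ such that $g(\vec{f}) \in C$ for every $\vec{f} \in (C^m)^n$ of compatible arities, and show $C' = C$ by verifying that $C'$ itself enjoys the two closure properties defining $C$. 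Projections $\pi^n_i$ lie in $C'$ since $\pi^n_i(\vec{f}) = f_i \in C$. If $h \in F$ has arity $k$ and $g_0, \dotsc, g_{k-1} \in C'$ are $n$-ary, then for any $n$-tuple $\vec{f}$ of $m$-ary elements of $C$,
\begin{equation*}
h(\vec{g})(\vec{f}) = h(g_0(\vec{f}), \dotsc, g_{k-1}(\vec{f})),
\end{equation*}
and each $g_j(\vec{f})$ lies in $C$ by hypothesis, so the whole expression is a left composition of $h \in F$ with elements of $C$ and hence lies in $C$. By minimality of $C$ we conclude $C' = C$.

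The only real obstacle is the last bootstrapping step: one must resist the temptation to ``induct on the complexity of $g$'' in an informal way, since in infinite arities the usual finite-generation induction is awkward; instead, presenting the argument as an application of the minimality of $C$ to the auxiliary class $C'$ keeps the proof clean and simultaneously handles all arities $n \in N$, including infinite ones, without any recursion on term length.
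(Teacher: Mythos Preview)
Your proof is correct and follows essentially the same approach as the paper's: both arguments reduce to showing that the left-composition-closed class $C$ is already a clone, and both do this by applying the minimality of $C$ to the auxiliary class $C' = \{g \mid g(\vec{f}) \in C \text{ for all } \vec{f} \text{ in } C\}$. The paper's sketch is more terse, but the bootstrapping step is identical to yours.
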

\begin{proof}[Proof sketch]
It suffices to verify that said smallest class, call it $G$, is closed under composition.
Indeed, it is easily checked that $\{f \in \Op{X}^N \mid \forall \vec{g} \in G\, (f \circ \vec{g} \in G)\}$ contains all projections and is closed under left composition with functions in $F$, hence contains $G$.
\end{proof}

Note that this description of $\ang{F}^N$ separately characterizes the set of $n$-ary functions in $\ang{F}^N$, for each $n \in N$ (independently of the other cardinals in $N$).
Thus

\begin{corollary}
\label{thm:clone-arity}
For two classes of cardinals $M \subseteq N$ and $F \subseteq \Op{X}^M$, we have $\ang{F}^N \cap \Op{X}^M = \ang{F}^M$.

Thus, if $F$ is an $M$-ary clone, then $\ang{F}^N \cap \Op{X}^M = F$.
In other words, the adjunction
\begin{equation*}
\begin{tikzcd}
\Clo^M{X} \rar[hook, shift left=2, "{\ang{-}^N}"] &[2em]
\Clo^N{X} \lar[two heads, shift left=2, "{(-) \cap \Op{X}^M}", right adjoint']
\end{tikzcd}
\end{equation*}
exhibits the lattice $\Clo^M{X}$ as a retract of $\Clo^N{X}$.
\qed
\end{corollary}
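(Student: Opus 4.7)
My plan is to derive both assertions as essentially bookkeeping consequences of \cref{thm:clone-gen-left}. The key leverage comes from the parenthetical remark immediately following that lemma: the description of $\ang{F}^N$ as the closure of the $N$-ary projections under left composition with functions in $F$ decouples arity-wise. Thus for each fixed $n \in N$, the $n$-ary slice $\ang{F}^N \cap \Op{X}^n$ is the smallest subset of $\Op{X}^n$ containing the $n$-ary projections and closed under the operation $\vec{g} \mapsto f \circ \vec{g}$ as $f$ ranges over $F$.

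I would then fix $n \in M$ and observe that this closure only references $n$ itself and the arities of elements of $F$, all of which lie in $M$. Consequently the same closure process yields the same set whether we work inside $\Op{X}^N$ or inside $\Op{X}^M$; that is, $\ang{F}^N \cap \Op{X}^n = \ang{F}^M \cap \Op{X}^n$ for every $n \in M$. Taking the union over $n \in M$ then gives the first equation $\ang{F}^N \cap \Op{X}^M = \ang{F}^M$, and specializing to $M$-ary clones $F$, for which $\ang{F}^M = F$, yields $\ang{F}^N \cap \Op{X}^M = F$.

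For the adjunction, I would simply verify the characterizing equivalence \cref{eq:adj}: for $F \in \Clo^M{X}$ and $G \in \Clo^N{X}$, I want $\ang{F}^N \subseteq G \iff F \subseteq G \cap \Op{X}^M$. The forward direction uses $F \subseteq \ang{F}^N$ together with $F \subseteq \Op{X}^M$, and the reverse uses monotonicity of $\ang{-}^N$ plus the identity $\ang{G}^N = G$ for the clone $G$. The retract assertion is then immediate from the general fact recalled just before \cref{eq:adj-mod}: the displayed equation $\ang{F}^N \cap \Op{X}^M = F$ for $F \in \Clo^M{X}$ says exactly that $((-) \cap \Op{X}^M) \circ \ang{-}^N = \id_{\Clo^M{X}}$, so the left adjoint is injective and the right adjoint is surjective.

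I anticipate no serious obstacle. The only point requiring any care is justifying the arity-separation, which reduces to noting that in every composition $f \circ \vec{g}$ produced by the inductive description of $\ang{F}^M$, the output arity is inherited from $\vec{g}$ and therefore stays inside $M$, while the scalar arity of $f \in F$ automatically lies in $M$ by hypothesis; hence no intermediate arity outside $M$ is ever needed, even when the ambient lattice is $\Clo^N{X}$.
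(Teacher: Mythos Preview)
Your proposal is correct and follows exactly the approach the paper intends: the paper gives no explicit proof beyond the sentence preceding the corollary, noting that \cref{thm:clone-gen-left} characterizes each $n$-ary slice of $\ang{F}^N$ independently of the other cardinals in $N$, and marks the corollary with \qed. Your write-up simply spells out the bookkeeping (the arity-by-arity equality, the specialization to clones, and the verification of the adjunction/retract), all of which is routine.
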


When $M = {<}\mu$ and $N = {<}\nu$ for infinite regular cardinals $\mu \le \nu$, we have a simpler description of $\ang{F}^N$ for an $M$-ary clone $F$:

\begin{lemma}
For a regular cardinal $\mu$ and ${<}\mu$-ary clone $F \subseteq \Op{X}^{<\mu}$, for any cardinal $n$,
\begin{align*}
\ang{F}^n = \{f \circ (\pi^n_{s(i)})_{i < m} \mid m < \mu,\, f \in F^m,\, s : m `-> n\}
\end{align*}
consists of precisely the $n$-ary functions in $F$ which depend on only ${<}\mu$-many variables.
\end{lemma}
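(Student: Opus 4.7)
The plan is to prove both inclusions in the displayed equality, with the forward inclusion immediate and the reverse following from observing that the right-hand side, taken across all arities, already forms a clone containing $F$. Write $G_n$ for the claimed right-hand side and set $G := \bigsqcup_n G_n$. The inclusion $G_n \subseteq \ang{F}^n$ is clear: each function $f \circ (\pi^n_{s(i)})_{i<m}$ is a single composition of $f \in F \subseteq \ang{F}$ with the $n$-ary projections $\pi^n_{s(i)} \in \ang{F}$, and $\ang{F}$ is closed under composition. For the reverse inclusion it suffices to show $G$ is a clone, since then $F \subseteq G$ forces $\ang{F} \subseteq G$, hence $\ang{F}^n \subseteq G_n$.

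Containment $F \subseteq G$ is witnessed by $s = \id_m$ for $f \in F^m$, and each projection $\pi^n_i$ lies in $G_n$ via $f = \pi^1_0 \in F^1$ together with the injection $s \colon 1 \hookrightarrow n$ sending $0 \mapsto i$. The crux is closure of $G$ under composition. Given $g \in G^k$ and $h_0, \dotsc, h_{k-1} \in G_n$, first write $g = g' \circ (\pi^k_{t(i)})_{i<l}$ with $g' \in F^l$, $l < \mu$, and $t \colon l \hookrightarrow k$, so that $g \circ (h_j)_{j<k} = g' \circ (h_{t(i)})_{i<l}$; this reduces the problem to the case $g \in F^k$ with $k < \mu$. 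Expand each $h_j = f_j \circ (\pi^n_{s_j(i)})_{i<m_j}$ with $f_j \in F^{m_j}$, $m_j < \mu$, and let $S := \bigcup_{j<k} s_j(m_j) \subseteq n$. By regularity of $\mu$, we have $|S| < \mu$. Fixing an enumeration $\sigma \colon |S| \hookrightarrow n$ of $S$, I would rewrite each $h_j = f'_j \circ (\pi^n_{\sigma(l)})_{l<|S|}$, where $f'_j := f_j \circ (\pi^{|S|}_{\sigma^{-1}(s_j(i))})_{i<m_j} \in F^{|S|}$ (using that $F$ is a ${<}\mu$-ary clone). Then $g \circ (h_j)_j = (g \circ (f'_j)_{j<k}) \circ (\pi^n_{\sigma(l)})_{l<|S|}$, and the inner composition lies in $F^{|S|}$ again by the clone property of $F$, exhibiting $g \circ (h_j)_j$ as an element of $G_n$.

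The final clause---that $G_n$ equals the $n$-ary functions ``in $F$'' depending on only ${<}\mu$-many variables---is then automatic: each $f \circ (\pi^n_{s(i)})_{i<m} \in G_n$ visibly depends on at most the $m < \mu$ coordinates indexed by the image of $s$, while the converse (every such function lies in $G_n$) is a restatement of the main equality just proved. The only nontrivial step in the whole argument is the invocation of regularity of $\mu$ to bound $|S|$; I expect this to be the main, though still mild, obstacle, since it is precisely what prevents a naive generalization to singular $\mu$.
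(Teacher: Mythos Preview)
Your proof is correct and follows essentially the same approach as the paper. The only minor difference is that the paper invokes the earlier characterization of $\ang{F}^N$ as the smallest class containing projections and closed under \emph{left} composition with functions in $F$, so it only needs to check that $G$ is closed under left composition with $F$ rather than full clone closure; but your direct verification that $G$ is a clone is equally valid and uses the same key ingredient (regularity of $\mu$ to bound the combined variable set).
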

\begin{proof}[Proof sketch]
It is easily checked that said class contains all projections and is closed under left composition with functions in $F$ (using regularity of $\mu$ to disjointify variables).
\end{proof}

\begin{corollary}
For regular cardinals $\mu \le \nu$, the image of the map
\begin{align*}
\ang{-}^{<\nu} : \Clo^{<\mu}{X} &--> \Clo^{<\nu}{X}
\end{align*}
consists of precisely the ${<}\nu$-ary subclones of $\ang{\Op{X}^{<\mu}}^{<\nu}$.
\qed
\end{corollary}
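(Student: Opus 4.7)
The plan is to show two inclusions. The image of $\ang{-}^{<\nu}$ is clearly contained in the set of ${<}\nu$-ary subclones of $\ang{\Op{X}^{<\mu}}^{<\nu}$: for any ${<}\mu$-ary clone $F$ we have $F \subseteq \Op{X}^{<\mu}$, so monotonicity of $\ang{-}^{<\nu}$ gives $\ang{F}^{<\nu} \subseteq \ang{\Op{X}^{<\mu}}^{<\nu}$. The content is the reverse inclusion: every ${<}\nu$-ary subclone $G \subseteq \ang{\Op{X}^{<\mu}}^{<\nu}$ arises as $\ang{F}^{<\nu}$ for some ${<}\mu$-ary clone $F$.

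The natural candidate is $F := G \cap \Op{X}^{<\mu} = G^{<\mu}$, which is a ${<}\mu$-ary clone by \cref{thm:clone-arity} (applied to the retract adjunction). Trivially $\ang{F}^{<\nu} \subseteq G$ since $F \subseteq G$ and $G$ is a ${<}\nu$-ary clone, so the work is to prove $G \subseteq \ang{F}^{<\nu}$. Given $g \in G^n$ with $n < \nu$, the hypothesis $G \subseteq \ang{\Op{X}^{<\mu}}^{<\nu}$ together with the preceding lemma (applied to the ${<}\mu$-ary clone $\Op{X}^{<\mu}$) says that $g$ depends on only $m < \mu$ many variables; fix an injection $s : m \hookrightarrow n$ enumerating the essential coordinates of $g$. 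The idea is to ``pull $g$ down'' to a genuinely $m$-ary function by substituting the essential variables back in: choose a retraction $t : n \to m$ of $s$ (i.e., $t \circ s = \id_m$) and form $g' := g \circ (\pi^m_{t(j)})_{j<n}$. Since $G$ is closed under composition with projections, $g' \in G^m \subseteq G^{<\mu} = F$; and one checks directly $g = g' \circ (\pi^n_{s(i)})_{i<m}$, exhibiting $g$ as an element of $\ang{F}^{<\nu}$ in the explicit form provided by the preceding lemma.

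The one nuisance is the boundary case $m = 0$, where $g$ is a constant and the retraction $t$ does not exist if $n \ge 1$. If $n = 0$ as well, then $g \in G^0 \subseteq F$ directly. Otherwise, one replaces $g'$ by the $1$-ary constant $h := g \circ (\pi^1_0, \dotsc, \pi^1_0) \in G^1 \subseteq F$ and recovers $g = h \circ \pi^n_0 \in \ang{F}^n$. I expect this to be the only obstacle worth pausing over; everything else is notational bookkeeping around injections and retractions, with all the actual mathematical content already encoded in the ``bounded dependence'' lemma that precedes the corollary.
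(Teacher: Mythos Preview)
Your proof is correct and is precisely the natural unfolding of why the paper marks this corollary with a bare \qed: it follows immediately from the preceding ``essentially ${<}\mu$-ary'' lemma together with the retraction in \cref{thm:clone-arity}, via the candidate $F = G^{<\mu}$ and a variable-substitution to pull each $g \in G$ down to its ${<}\mu$-ary essential part. Your handling of the $m=0$ edge case is also the right fix (and is moot once the paper later adopts \cref{def:nullary}).
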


\begin{definition}
We call a function $f : X^n -> X$ \defn{essentially ${<}\mu$-ary} if it depends on only ${<}\mu$-many variables, i.e., it is in $\ang{\Op{X}^{<\mu}}$.
We call a clone $F$ \defn{essentially ${<}\mu$-ary} if it consists entirely of essentially ${<}\mu$-ary functions, i.e., $F \subseteq \ang{\Op{X}^{<\mu}}$.
\end{definition}

\begin{remark}
\label{rmk:ktop}
If $X$ is finite, then an essentially finitary function $f : X^n -> X$ is one which is continuous with respect to the discrete topology on $X$ and the product topology on $X^n$.

More generally, for a class of cardinals $M$, let us define a \defn{$M$-topology} to mean a family of sets closed under arbitrary unions and $M$-ary intersections, and the \defn{product $M$-topology} on $X^n$ to mean the smallest $M$-topology containing preimages of arbitrary sets under the projections $\pi_i : X^n -> X$ (so interpolating between the usual product topology when $M = {<}\omega$ and the box topology when $M = {<}\infty$).
Then, for $\abs{X} < \mu$, an essentially ${<}\mu$-ary function $f : X^n -> X$ is one which is \defn{${<}\mu$-continuous}, i.e., the preimage of an arbitrary set belongs to the product ${<}\mu$-topology.
\end{remark}

%

\subsection{Polymorphisms}

\begin{notation}
Let $K$ be a class of cardinals, $X$ be a set.
We let
\begin{equation*}
\Rel{X}^K := \bigsqcup_{k \in K} \@P(X^k)
\end{equation*}
denote the class of all $K$-ary relations on $X$.
We will loosely refer to a subclass $\@M \subseteq \Rel{X}^K$ as a (relational) \defn{$K$-ary structure} on $X$.
We use similar notations as for functions (\cref{def:op}) to denote arities: $\@M^k := \@M \cap \Rel{X}^k$ means the $k$-ary relations in $\@M$; $\Rel{X} := \Rel{X}^{<\infty}$; etc.
\end{notation}

\begin{definition}
\label{def:pol-inv}
Let $X$ be a set, $f : X^n -> X$ be an $n$-ary function, $R \subseteq X^k$ be a $k$-ary relation, for some cardinals $n, k$.
The following phrases are synonymous:
\begin{itemize}
\item
$f$ \defn{preserves} $R$, or is a \defn{polymorphism} of $R$.
\item
$f$ is a homomorphism $(X,R)^n -> (X,R)$, where $(X,R)^n := (X^n,R^n)$ is the \defn{product structure} (in the category of structures with a single $n$-ary relation).
\item
$R$ is \defn{$f$-invariant}, or \defn{closed} under $f$, or a \defn{substructure} of the product structure $(X,f)^k$ where $f$ acts coordinatewise.
\item
For any $k \times n$ matrix $(\vec{x}_i)_{i < k} = (x_{i,j})_{i < k, j < n} \in X^{k \times n}$, if each $(x_{i,j})_{i < k} \in R$ for fixed $j < n$, then also $(f(\vec{x}_i))_{i < k} = (f((x_{i,j})_{j < n}))_{i < k} \in R$.
\begin{equation*}
\begin{tikzcd}[
    row sep=0em, column sep=1em,
    every node/.append style={allow upside down},
    execute at end picture={
        \draw[line width=.8pt]
            (\tikzcdmatrixname-1-1.north west) to[square left brace] (\tikzcdmatrixname-1-1.north west |- \tikzcdmatrixname-3-1.south)
            (\tikzcdmatrixname-1-4.north east) to[square right brace] (\tikzcdmatrixname-1-4.north east |- \tikzcdmatrixname-3-4.south)
            (\tikzcdmatrixname-1-1.north -| \tikzcdmatrixname-1-5.west) to[square left brace] (\tikzcdmatrixname-1-5.north west |- \tikzcdmatrixname-3-5.south)
            (\tikzcdmatrixname-1-1.north -| \tikzcdmatrixname-1-5.east) to[square right brace] (\tikzcdmatrixname-1-5.north east |- \tikzcdmatrixname-3-5.south);
        \draw[decoration={brace,mirror}, decorate]
            ([xshift=-1em]\tikzcdmatrixname-1-1.north west) to["$\scriptstyle k$"{left}] ([xshift=-1em]\tikzcdmatrixname-1-1.north west |- \tikzcdmatrixname-3-1.south);
        \draw[decoration={brace}, decorate]
            ([yshift=1.5ex]\tikzcdmatrixname-1-1.north west) to["$\scriptstyle n$"{above}] ([yshift=1.5ex]\tikzcdmatrixname-1-1.north west -| \tikzcdmatrixname-1-4.east);
    },
]
x_{0,0} & x_{0,1} & x_{0,2} & \dotsb &[2em] f(\vec{x}_0) \\
x_{1,0} & x_{1,1} & x_{1,2} & \dotsb \rar[phantom, "\to"] & f(\vec{x}_1) \\
\vdots \dar[phantom, "\in"{sloped}] & \vdots \dar[phantom, "\in"{sloped}] & \vdots \dar[phantom, "\in"{sloped}] & \ddots & \vdots \dar[phantom, "\in"{sloped}] \\[1.5em]
R & R & R & & R
\end{tikzcd}
\end{equation*}
\end{itemize}
If these hold, we write interchangeably
\begin{align*}
f \in \Pol(R)  \iff  R \in \Inv(f)
\end{align*}
where $\Pol(R) \subseteq \Op{X}$ is the class of all polymorphisms of $R$, and $\Inv(f) \subseteq \Rel{X}$ is the class of all $f$-invariant relations.
More generally, for any $\@M \subseteq \Rel{X}$ and $F \subseteq \Op{X}$, we write
\begin{align*}
\Pol(\@M) := \bigcap_{R \in \@M} \Pol(R), &&
\Inv(F) := \bigcap_{f \in F} \Inv(f).
\end{align*}
We let also $\Pol^N(\@M) := \Pol(\@M) \cap \Op{X}^N$ and $\Inv^K(F) := \Inv(F) \cap \Rel{X}^K$ for classes of cardinals $N, K$.
\end{definition}

Thus for any classes of cardinals $N, K$, we have an (order-reversing) Galois connection
\begin{equation}
\label{eq:pol-inv}
\begin{tikzcd}
\@P(\Op{X}^N) \rar[shift left, "\Inv^K"] &[2em]
\@P(\Rel{X}^K) \lar[shift left, "\Pol^N"],
\end{tikzcd}
\end{equation}
i.e., $F \subseteq \Pol(\@M) \iff \@M \subseteq \Inv(F) \iff$ every $f \in F$ preserves every $R \in \@M$.

Every class of functions of the form $\Pol(\@M)$ is a clone.
Thus the fixed classes on the left side of the above Galois connection, i.e., those $F \subseteq \Op{X}^N$ for which $F = \Pol^N(\Inv^K(F))$, must in particular be clones.
We may characterize them more precisely as follows:

\begin{lemma}
\label{thm:pol-inv-kcl}
For any class of functions $F \subseteq \Op{X}$, class of cardinals $K$, and $n$-ary function $g : X^n -> X$, we have $g \in \Pol^n(\Inv^K(F))$ iff for any $K$-ary family of $n$-tuples $(\vec{x}_i)_{i < k} \in (X^n)^k$, where $k \in K$, there is an $f \in \ang{F}^n$ such that $f(\vec{x}_i) = g(\vec{x}_i)$ for each $i < k$.
\end{lemma}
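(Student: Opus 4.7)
The plan is to prove this via the standard ``free structure on $\vec{x}_0,\dotsc,\vec{x}_{k-1}$'' trick, which realizes the interpolation condition as the defining feature of a particular $f$-invariant relation.

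For the easy direction ($\Leftarrow$), I would fix $R \in \Inv^K(F)$, say of arity $k \in K$, and a $k \times n$ matrix $(x_{i,j})_{i<k,\,j<n}$ whose columns $(x_{i,j})_{i<k}$ all lie in $R$. Setting $\vec{x}_i := (x_{i,j})_{j<n} \in X^n$, the hypothesis yields $f \in \ang{F}^n$ with $f(\vec{x}_i) = g(\vec{x}_i)$ for $i < k$. Since $\Pol(R)$ is a clone containing $F$, it contains $\ang{F}^n$, so $(f(\vec{x}_i))_{i<k} \in R$; that is, $(g(\vec{x}_i))_{i<k} \in R$. As $R$ was arbitrary, $g \in \Pol^n(\Inv^K(F))$.

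For the harder direction ($\Rightarrow$), the key step is to cook up a $k$-ary $F$-invariant relation that witnesses failure of interpolation. Given $(\vec{x}_i)_{i<k} \in (X^n)^k$ with $k \in K$, define
\begin{equation*}
R := \{(h(\vec{x}_i))_{i<k} \mid h \in \ang{F}^n\} \subseteq X^k.
\end{equation*}
This $R$ is $F$-invariant because applying any $m$-ary $f \in F$ componentwise to tuples $(h_l(\vec{x}_i))_{i<k}$ (for $l < m$, $h_l \in \ang{F}^n$) yields the tuple generated by $f \circ (h_0,\dotsc,h_{m-1}) \in \ang{F}^n$; and its columns $(x_{i,j})_{i<k}$ lie in $R$ because they are witnessed by the projections $\pi^n_j \in \ang{F}^n$. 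Since $R \in \Inv^K(F)$ and $g \in \Pol^n(\Inv^K(F))$, the tuple $(g(\vec{x}_i))_{i<k}$ belongs to $R$, which by definition produces the desired $h \in \ang{F}^n$ with $h(\vec{x}_i) = g(\vec{x}_i)$ for all $i < k$.

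The only mild subtlety I anticipate is book-keeping around the clone $\ang{F}^n$ versus $F$ itself: the closure under composition in $\ang{F}$ is exactly what makes $R$ above $F$-invariant (and equivalently $\ang{F}$-invariant), and is also what's needed on the other side to get $\ang{F}^n \subseteq \Pol(R)$. Both uses are handled by \cref{thm:clone-gen-left} together with the fact that $\Pol(R)$ is a clone, so no real obstacle arises.
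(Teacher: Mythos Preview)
Your proposal is correct and follows essentially the same approach as the paper: the paper's ``forward $\ang{F}$-orbit'' $R := \{(f(\vec{x}_i))_{i<k} \mid f \in \ang{F}^n\}$ is exactly the relation you construct, and both arguments use the projections $\pi^n_j$ to witness that the columns lie in $R$. Your write-up is in fact more detailed than the paper's sketch, which treats the $(\Leftarrow)$ direction in a single clause about $K$-closedness.
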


In other words, this says that $\Pol^n(\Inv^K(F))$ is the \defn{$K$-closure} of $\ang{F}^n \subseteq X^{X^n}$, with respect to the product $K$-topology from \cref{rmk:ktop}.
For instance:
\begin{itemize}
\item
When $K = {<}\omega$, we get that $\Pol^n(\Inv^{<\omega}(F)) = \-{\ang{F}^n}$ is the usual closure of $\ang{F}^n \subseteq X^{X^n}$.
\item
When $K = {<}\omega_1$, we get that $\Pol^n(\Inv^{<\omega_1}(F))$ consists of all functions agreeing on any countably many $n$-tuples with a function in $\ang{F}^n$; we call this the \defn{countable closure} of $\ang{F}^n$.
\item
When $K \supseteq {\le}\abs{X}^n$, e.g., $X$ is finite, $n$ countable, and $K = {\le}2^{\aleph_0}$, we get $\Pol^n(\Inv^K(F)) = \ang{F}^n$.
\end{itemize}

\begin{proof}
Any $\Pol^n(R)$ for a $K$-ary relation $R$ is $K$-closed, since for a function to preserve $R$ requires checking its values only on each $K$-ary family of $n$-tuples at a time; thus $\Pol^n(\Inv^K(F))$ contains the $K$-closure of $F$.
Conversely, for $g \in \Pol^n(\Inv^K(F))$ and $(\vec{x}_i)_{i < k} \in (X^n)^k$, the forward $\ang{F}$-orbit $R := \{(f(\vec{x}_i))_{i < k} \mid f \in \ang{F}^n\}$ is $F$-invariant, hence preserved by $g$; we have $(x_{i,j})_{i < k} = (\pi^n_j(\vec{x}_i))_{i < k} \in R$ for each $j$, whence $(g(\vec{x}_i))_{i < k} \in R$, i.e., $(g(\vec{x}_i))_{i < k} = (f(\vec{x}_i))_{i < k}$ for some $f \in \ang{F}^n$.
\end{proof}

On the right side of the adjunction \cref{eq:pol-inv}, the fixed classes are characterized as follows:

\begin{lemma}
\label{thm:inv-pol-npp}
For any class of relations $\@M \subseteq \Rel{X}$, regular cardinal $\nu$, and $k$-ary relation $S \subseteq X^k$, we have $S \in \Inv^k(\Pol^{<\nu}(\@M))$ iff $S$ is a ${<}\nu$-directed union of relations \defn{positive-primitively definable} from $\@M$, i.e., (infinitary) first-order definable from the relations in $\@M$ using $=$, $\exists$ over arbitrarily many variables, and $\bigwedge$ of arbitrary arity.
\end{lemma}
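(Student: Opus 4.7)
The plan is to prove both directions. The $(\Leftarrow)$ direction is largely routine, while $(\Rightarrow)$ rests on a ``diagram formula'' construction.

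For $(\Leftarrow)$, relations in $\@M$ are invariant under $\Pol^{<\nu}(\@M)$ by definition, and the operations $=$, $\bigwedge$, and $\exists$ (over arbitrarily many variables) each preserve invariance under any single function: for $\exists$, one assembles witnesses for the image tuple by applying the function coordinatewise to witnesses for the input tuples. Finally, a ${<}\nu$-directed union of $f$-invariant relations is $f$-invariant whenever $f$ has arity $n < \nu$, since by regularity of $\nu$ any $n$-tuple of elements of the union fits into a single piece of the system.

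For $(\Rightarrow)$, I would fix $S \in \Inv^k(\Pol^{<\nu}(\@M))$ and exhibit it as a ${<}\nu$-directed union of pp-definable subrelations. For each $n < \nu$ and each $T = (\vec{s}_i)_{i < n} \in S^n$, organize $\vec{s}_i = (s_{i,j})_{j<k}$ into the $n \times k$ matrix $(s_{i,j})$ with ``columns'' $\vec{c}_j := (s_{i,j})_{i < n} \in X^n$, and define
\begin{equation*}
R_T := \{(f(\vec{c}_j))_{j < k} \mid f \in \Pol^n(\@M)\} \subseteq X^k.
\end{equation*}
One checks (a) $R_T \subseteq S$ by invariance of $S$ under each $n$-ary polymorphism $f$ applied to the matrix whose rows $\vec{s}_i$ lie in $S$; (b) $\vec{s}_i \in R_T$ by taking $f = \pi^n_i$; and (c) the family $\{R_T\}$ is ${<}\nu$-directed under concatenation of tuples, since any polymorphism of arity $n_1$ extends via dummy variables to arity $n_1 + n_2 < \nu$ (using regularity of $\nu$). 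Hence $S = \bigcup_T R_T$.

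What remains, and forms the main obstacle, is to show each $R_T$ is pp-definable from $\@M$. I would do this via the ``diagram formula'' of $T$: introduce an existential variable $z_{\vec{v}}$ for each $\vec{v} \in X^n$, identify the column witnesses with the free variables by $z_{\vec{c}_j} = y_j$, and conjoin $R(z_{\vec{v}_0}, \dotsc, z_{\vec{v}_{r-1}})$ for every $r$-ary $R \in \@M$ and every $(\vec{v}_0, \dotsc, \vec{v}_{r-1}) \in R^n$, i.e., every $r$-tuple of elements of $X^n$ belonging componentwise to $R$ in the product structure $(X,R)^n$. A tuple $\vec{y} \in X^k$ satisfies this formula iff there is a map $z : X^n \to X$ preserving every relation in $\@M$ and sending $\vec{c}_j \mapsto y_j$, i.e., some $f \in \Pol^n(\@M)$ with $f(\vec{c}_j) = y_j$ -- which is precisely the defining condition for $\vec{y} \in R_T$. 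This encoding crucially requires $\exists$ over the possibly large variable set $X^n$ and $\bigwedge$ of arbitrary arity, which are exactly the tools permitted by pp-definability in the statement.
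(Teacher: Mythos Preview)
Your proof is correct and follows essentially the same approach as the paper's: the paper also proves $(\Leftarrow)$ by noting that $\Inv^k(f)$ is closed under pp-definability and ${<}\nu$-directed unions, and proves $(\Rightarrow)$ by defining, for each $n$-tuple in $S^n$, the orbit under $\Pol^n(\@M)$ and observing that ``$\exists f \in X^{X^n}\,(f \text{ is a polymorphism and } f((\vec{x}_j)_j) = \vec{y})$'' is a pp-formula with $|X^n|$-many existential variables. Your diagram-formula construction is exactly this, spelled out in more detail than the paper's sketch.
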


Here by a \emph{${<}\nu$-directed union} we mean a union of a 
family of sets in which any ${<}\nu$-sized subfamily has an upper bound.
Let
\begin{equation*}
\PPStr^K{X} \subseteq \@P(\Rel{X}^K)
\end{equation*}
denote the class of \defn{positive-primitive $K$-ary structures}, by which we mean structures closed under positive-primitive definability (sometimes called \emph{coclones}; see \cite{Lau}).
The above then says that $\Inv^K(\Pol^{<\nu}(\@M)) = \@M$ is the closure under ${<}\nu$-directed union of the positive-primitive $K$-ary structure generated by $\@M$.
Note that for sufficiently large $\nu$, namely $\nu > 2^{\abs{X}^k}$, ${<}\nu$-directed unions in $X^k$ are trivial; thus we simply get the positive-primitively definable relations in that case.

\begin{proof}[Proof]
Any $\Inv^k(f)$ for a ${<}\nu$-ary function $f$ is easily seen to be closed under ${<}\nu$-directed union and positive-primitive definability; thus $\Inv^k(\Pol^{<\nu}(\@M))$ contains all such $R$.
Conversely, let $S \in \Inv^k(\Pol^{<\nu}(\@M))$.
Then any family of tuples $(\vec{x}_j)_{j < n} \in S^n$ where $n < \nu$ generates a smallest $\Pol^{<\nu}(\@M)$-invariant subset $R((\vec{x}_j)_{j < n}) \subseteq X^k$ contained in $S$; and $S$ is the ${<}\nu$-directed union of all of these $R((\vec{x}_j)_{j < n})$.
But each $R((\vec{x}_j)_{j < n})$ is the saturation of $(\vec{x}_j)_{j < n}$ under all $f \in \Pol^n(\@M)$ (as in the proof of \cref{thm:pol-inv-kcl}, with the ``transpose matrix'' of $(\vec{x}_j)_j$); and ``$\exists f \in X^{X^n}\, (f \text{ is a polymorphism and } f((\vec{x}_j)_j) = \vec{y})$'' is a positive-primitive formula (where the existential is over $\abs{X^n}$-many variables).
\end{proof}

The preceding two lemmas are the natural generalizations of classical results of Geiger \cite{Geiger} and Bodnarčuk--Kalužnin--Kotov--Romov \cite{BKKR} to arbitrary arities.

\subsection{Comparing arities}

The $\Pol$--$\Inv$ adjunction \cref{eq:pol-inv} interacts with the change of arity adjunction from \cref{thm:clone-arity} as follows.
For classes of cardinals $M \subseteq N$ and $K$, we have the diagram
\begin{equation}
\label{eq:pol-inv-arity}
\begin{tikzcd}
\@P(\Op{X}^N)
    \dar[two heads, "{(-) \cap \Op{X}^M}"]
    \rar[shift left=4, "\ang{-}^N"]
    &[2em]
\Clo^N{X}
    \lar[hook', right adjoint']
    \dar[two heads, shift left=0, right adjoint', "{(-) \cap \Op{X}^M}"]
    \rar[shift left=4, "\Inv^K"]
    &[2em]
\PPStr^K{X}^\op
    \lar[shift left=0, right adjoint', "\Pol^N"]
    \dlar[shift left=4, right adjoint', "\Pol^M"]
    \rar[hook]
    &
\@P(\Rel{X}^K)^\op
\\[2em]
\@P(\Op{X}^M)
    \uar[hook, shift left=4, left adjoint']
    \rar[shift left=2, "\ang{-}^M"]
    &
\Clo^M{X}
    \lar[hook', right adjoint', shift left=2]
    \uar[hook, shift left=4, "{\ang{-}^N}"]
    \urar["\Inv^K"{pos=.7,inner sep=0pt}]
\end{tikzcd}
\end{equation}
in which the two diagrams of left/right adjoints each commute, and the unmarked arrows are inclusions.
For sufficiently large $K$, $\Inv^K$ is an embedding of both $\Clo^N{X}$ and $\Clo^M{X}$ into $\PPStr^K{X}^\op$ by \cref{thm:pol-inv-kcl}.
Thus for instance, commutativity of the triangle yields

\begin{corollary}
For a set of finitary functions $F \subseteq \Op{X}^{<\omega}$, the countable clone it generates is $\ang{F}^{<\omega_1} = \Pol^{<\omega_1}(\Inv(F))$.
If $X$ is finite, it suffices to take $\Inv^{\le 2^{\aleph_0}}$.
\qed
\end{corollary}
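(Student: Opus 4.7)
The plan is to apply the standard Galois-closure technique already used in the proofs of \cref{thm:pol-inv-kcl} and \cref{thm:inv-pol-npp}, pushed to a sufficiently high arity to make the closure operator trivial. The forward inclusion $\ang{F}^{<\omega_1} \subseteq \Pol^{<\omega_1}(\Inv(F))$ is immediate from the Galois connection \cref{eq:pol-inv}: every relation preserved by $F$ is preserved by every function built from $F$ by composition with projections, hence by the entire clone $\ang{F}^{<\omega_1}$.

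For the reverse inclusion, I would fix $g \in \Pol^n(\Inv(F))$ with $n < \omega_1$ and exhibit a single $F$-invariant relation forcing $g \in \ang{F}^n$. Enumerate $X^n = \{\vec{y}_\alpha : \alpha < \kappa\}$ with $\kappa = |X^n|$, and set
\[
R := \{(f(\vec{y}_\alpha))_{\alpha < \kappa} \in X^\kappa \mid f \in \ang{F}^n\} \subseteq X^\kappa,
\]
the orbit of the ``identity family'' $(\vec{y}_\alpha)_{\alpha<\kappa}$ under $\ang{F}^n$ acting coordinatewise. The $F$-invariance of $R$ follows from \cref{thm:clone-gen-left}: for any $h \in F$ of arity $m$ and tuples $(f_j(\vec{y}_\alpha))_\alpha \in R$ with $f_j \in \ang{F}^n$, coordinatewise application yields $(h(f_0,\dotsc,f_{m-1})(\vec{y}_\alpha))_\alpha$, and $h \circ (f_0,\dotsc,f_{m-1})$ again lies in $\ang{F}^n$ since $\ang{F}^n$ is closed under left composition by $F$. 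Now $g$ preserves $R$ by hypothesis; since the $n$ ``coordinate tuples'' $(\pi^n_j(\vec{y}_\alpha))_{\alpha<\kappa}$ lie in $R$ (as $\pi^n_j \in \ang{F}^n$), applying $g$ yields $(g(\vec{y}_\alpha))_{\alpha<\kappa} \in R$, so $g$ agrees on all of $X^n$ with some $f \in \ang{F}^n$, giving $g = f \in \ang{F}^n$.

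For the refinement when $X$ is finite, the arity $\kappa = |X^n|$ of $R$ is at most $|X|^\omega = 2^{\aleph_0}$ for countable $n$, so the witnessing $R$ already lies in $\Inv^{\le 2^{\aleph_0}}(F)$, and the same argument gives $\ang{F}^{<\omega_1} = \Pol^{<\omega_1}(\Inv^{\le 2^{\aleph_0}}(F))$. I do not expect a real obstacle; the only point requiring care is arity bookkeeping---choosing a relation of arity at least $|X^n|$ so that the Galois closure is \emph{exact}, rather than merely the countable closure obtained in \cref{thm:pol-inv-kcl} when one restricts to $K = {<}\omega_1$.
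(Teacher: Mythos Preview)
Your proof is correct and is essentially the same approach as the paper's. The paper derives the corollary from the commutativity of the triangle in diagram~\cref{eq:pol-inv-arity} together with \cref{thm:pol-inv-kcl} (which makes $\Pol^{<\omega_1}\circ\Inv^K$ the identity on $\Clo^{<\omega_1}{X}$ once $K$ contains $|X^n|$ for each $n<\omega_1$); your argument simply unpacks that diagram chase into the concrete orbit-relation construction, which is exactly the content of the proof of \cref{thm:pol-inv-kcl} specialized to $k=|X^n|$.
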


On the other hand, if $M, K$ are such that $\Pol^M, \Inv^K$ form inverse order-isomorphisms $\Clo^M{X} \cong \PPStr^K{X}^\op$ by \cref{thm:pol-inv-kcl,thm:inv-pol-npp}, e.g., $X$ is finite and $M = K = {<}\omega$, then the triangle exhibits this common lattice as a retract of $\Clo^N{X}$ in two different ways.

\begin{lemma}
For a finite $X$, regular cardinal $\nu$, and ${<}\nu$-ary clone $F$, $\Pol^{<\omega}(\Inv^{<\omega}(F)) = F \cap \Op{X}^{<\omega}$.
\end{lemma}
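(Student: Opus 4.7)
The plan is to deduce this immediately from Lemma \ref{thm:pol-inv-kcl} (the general characterization of $\Pol^n \circ \Inv^K$ as a $K$-closure), using the fact that finiteness of $X$ together with finiteness of the arity collapses the topological closure into the class itself.

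For the easy containment $F \cap \Op{X}^{<\omega} \subseteq \Pol^{<\omega}(\Inv^{<\omega}(F))$, note that any $g \in F$ preserves, by definition of $\Inv$, every relation in $\Inv(F)$ and in particular every finitary one; if in addition $g$ is finitary then $g \in \Pol^{<\omega}(\Inv^{<\omega}(F))$.

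For the reverse containment, fix $n < \omega$ and $g \in \Pol^n(\Inv^{<\omega}(F))$. I would apply Lemma \ref{thm:pol-inv-kcl} with $K = {<}\omega$: it guarantees that for every finite family of $n$-tuples $(\vec{x}_i)_{i<k} \in (X^n)^k$, there exists $f \in \ang{F}^n$ agreeing with $g$ on all of them. The key observation is that, because $X$ is finite and $n$ is finite, the set $X^n$ is itself finite; so one may take the ``finite family'' to be an enumeration of all of $X^n$. This yields an $f \in \ang{F}^n$ with $f = g$ on every input, i.e.\ $g = f \in \ang{F}^n$.

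It remains to convert $\ang{F}^n \subseteq F$. Since $F$ is a ${<}\nu$-ary clone with $\nu$ regular (hence infinite) and $n < \omega \le \nu$, Corollary \ref{thm:clone-arity} (applied with $M = n$ and $N = {<}\nu$, or equivalently the explicit description in the lemma immediately following it) gives $\ang{F}^n = F^n$; more concretely, every element of $\ang{F}^n$ has the form $f \circ (\pi^n_{s(i)})_{i<m}$ with $m \le n$, $f \in F^m$, and $s : m \hookrightarrow n$, which lies in $F$ because $F$ contains its projections and is closed under composition. Therefore $g \in F \cap \Op{X}^{<\omega}$, completing the argument.

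There is essentially no obstacle here once Lemma \ref{thm:pol-inv-kcl} is in hand; the only thing to watch is that ``${<}\omega$-closure'' trivializes in the finite setting, so no genuine topological argument is needed. The proof is really a combined sanity-check that (i) the $\Pol$--$\Inv$ adjunction restricted to finite domains and finite arities recovers the classical Geiger/BKKR statement, and (ii) that passing through an ambient larger-arity clone $F$ does not introduce any extra finitary polymorphisms beyond $F^{<\omega}$ itself.
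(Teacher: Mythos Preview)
Your proof is correct and is essentially the same as the paper's: both invoke Lemma~\ref{thm:pol-inv-kcl} with $K = {<}\omega$ and then use finiteness of $X^n$ (hence of $X^{X^n}$) to collapse the ${<}\omega$-closure of $\ang{F}^n = F^n$ to $F^n$ itself. The paper compresses this into one line (``$\Pol^n(\Inv^{<\omega}(F))$ is the closure of $F^n$; but $F^n$ is finite''), while you unpack the closure by explicitly enumerating all of $X^n$ as the finite test family.
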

\begin{proof}
For $n < \omega$, $\Pol^n(\Inv^{<\omega}(F))$ is by \cref{thm:pol-inv-kcl} the closure of $F^n$; but $F^n$ is finite.
\end{proof}

\begin{corollary}
\label{thm:pancake}
For a finite $X$ and regular cardinal $\nu$, the adjunction of \cref{thm:clone-arity} admits a further right adjoint:
\begin{equation*}
\begin{tikzcd}
\Clo^{<\nu}{X}
    \dar[two heads, shift right=2, "{(-) \cap \Op{X}^{<\omega}}"]
\\[2em]
\Clo^{<\omega}{X}
    \uar[hook, shift left=6, "{\ang{-}^{<\nu}}", left adjoint']
    \uar[hook, bend right=60, looseness=2, shift right=4, "{\Pol^{<\nu} \circ \Inv^{<\omega} = \-{\ang{-}^{<\nu}}}"{right}, right adjoint]
\end{tikzcd}
\end{equation*}
In other words, the complete lattice $\Clo^{<\nu}{X}$ of ${<}\nu$-ary clones admits the complete lattice $\Clo^{<\omega}{X}$ of finitary clones as a quotient (via restriction to finitary functions).
Thus we may regard the former lattice as a ``bundle'' over the latter.
Each finitary clone $F \in \Clo^{<\omega}{X}$ admits a least extension to a ${<}\nu$-ary clone, namely $\ang{F}^{<\nu} = \{\text{essentially finite $\nu$-ary versions of functions in $F$}\}$, as well as a greatest extension to a ${<}\nu$-ary clone, namely $\-{\ang{F}^{<\nu}} = \{\text{pointwise limits of functions in } \ang{F}^{<\nu}\}$.
\qed
\end{corollary}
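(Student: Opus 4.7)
The plan is to verify the adjunction $(-) \cap \Op{X}^{<\omega} \dashv \Pol^{<\nu} \circ \Inv^{<\omega}$ directly, i.e., for every $G \in \Clo^{<\nu}{X}$ and $F \in \Clo^{<\omega}{X}$,
\[
G \cap \Op{X}^{<\omega} \subseteq F \iff G \subseteq \Pol^{<\nu}(\Inv^{<\omega}(F)).
\]
The identification $\Pol^{<\nu}(\Inv^{<\omega}(F)) = \-{\ang{F}^{<\nu}}$ follows arity by arity from \cref{thm:pol-inv-kcl} with $K = {<}\omega$, so once the adjunction is proved, the description of the right adjoint as ``pointwise closure'' is automatic, and the characterization of $\-{\ang{F}^{<\nu}}$ as the greatest $<\nu$-ary extension of $F$ is the standard consequence of being the value of a right adjoint at $F$ (applied to any other candidate extension $G$); meanwhile $\ang{F}^{<\nu}$ being the least extension is a restatement of \cref{thm:clone-arity}.

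The reverse direction $(\Leftarrow)$ is immediate: intersect the inclusion $G \subseteq \Pol^{<\nu}(\Inv^{<\omega}(F))$ with $\Op{X}^{<\omega}$, yielding $G^{<\omega} \subseteq \Pol^{<\omega}(\Inv^{<\omega}(F))$, and invoke the preceding lemma applied to $F$ itself (viewing it as a ${<}\omega$-ary clone) to conclude $\Pol^{<\omega}(\Inv^{<\omega}(F)) = F$.

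The forward direction $(\Rightarrow)$ is the heart of the argument. Suppose $G^{<\omega} := G \cap \Op{X}^{<\omega} \subseteq F$. Since both $\Inv^{<\omega}$ and $\Pol^{<\nu}$ reverse inclusions, it suffices to prove the stronger statement $G \subseteq \Pol^{<\nu}(\Inv^{<\omega}(G^{<\omega}))$, which in turn reduces to the key identity
\[
\Inv^{<\omega}(G) = \Inv^{<\omega}(G^{<\omega}),
\]
as $G \subseteq \Pol^{<\nu}(\Inv^{<\omega}(G))$ is a trivial instance of the Galois connection \cref{eq:pol-inv}. The inclusion $\supseteq$ is immediate from $G^{<\omega} \subseteq G$. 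For $\subseteq$, I would apply the preceding lemma to $G$ itself to get $\Pol^{<\omega}(\Inv^{<\omega}(G)) = G^{<\omega}$, and then appeal to the abstract Galois-connection identity $\Inv \circ \Pol \circ \Inv = \Inv$ to compute
\[
\Inv^{<\omega}(G^{<\omega}) = \Inv^{<\omega}(\Pol^{<\omega}(\Inv^{<\omega}(G))) = \Inv^{<\omega}(G).
\]

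The only substantive input is the preceding lemma --- the sole place where the finiteness of $X$ is used --- which packages the nontrivial combinatorial content of ``relations preserved by $G^{<\omega}$ are automatically preserved by all of $G$.'' Everything else is formal Galois-theoretic bookkeeping, so I do not expect any serious obstacle beyond setting up the bookkeeping correctly.
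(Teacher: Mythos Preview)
Your proposal is correct and matches the paper's approach: the paper marks the corollary with a bare \qed, treating it as immediate from the preceding lemma together with the commutative diagram \cref{eq:pol-inv-arity} (factor $(-)\cap\Op{X}^{<\omega}$ as the composite $\Pol^{<\omega}\circ\Inv^{<\omega}$ of an isomorphism and a left adjoint, then compose right adjoints), and you have simply unwound that composition into an explicit verification of the adjunction inequality. The key input in both cases is the preceding lemma, invoked exactly where you invoke it.
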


\begin{notation}
\label{def:clone-fiber}
In the situation of \cref{thm:pancake}, for a finitary clone $F \in \Clo^{<\omega}{X}$, we write
\begin{align*}
\Clo^{<\nu}_F{X} := [\ang{F}^{<\nu}, \-{\ang{F}^{<\nu}}] \subseteq \Clo^{<\nu}{X}
\end{align*}
for the interval of ${<}\nu$-ary clones with finitary restriction $F$, i.e., the fiber of the bundle \ref{thm:pancake} over $F$.
\end{notation}

As is typical in discussions of Post's lattice, we will also adopt the following

\begin{convention}
\label{def:nullary}
We will henceforth assume that clones do not contain nullary functions (constants).
Thus, when we say e.g., a ``${<}\omega$-ary clone'', we really mean $N$-ary for $N = \{n \mid 0 < n < \omega\}$.
When we refer to a constant function, e.g., $0 \in 2$, we will always mean $0 : 2^n -> 2$ for some $n > 0$.
\end{convention}

This convention is used to eliminate uninteresting duplication when classifying clones on a given set such as $X = 2$.
Without it, for each clone $F$ containing a positive-arity constant function $c = 0$ or $1$, we may arbitrarily choose whether or not to include the nullary versions of all constants in $F$.
Thus, the classification of clones on $2$ possibly containing nullary functions is essentially the same as for clones without, except that clones with constants are duplicated.

\section{Clones on $2$}
\label{sec:post}

In this section, we discuss general aspects of clones of arbitrary arity on $2 = \{0,1\}$.
In \cref{sec:post-funs-rels}, we define the standard operations (logical connectives) and invariant relations used to specify the clones in Post's lattice $\Clo^{<\omega}{2}$, as well as their infinitary generalizations.
We then define the standard clones in Post's lattice and state Post's theorem in \cref{sec:post-clones}.
In \crefrange{sec:post-ceil-floor-xsec}{sec:post-beta}, we introduce some operators on functions that map between different regions of Post's lattice.

\subsection{Basic functions and relations on $2$}
\label{sec:post-funs-rels}

\begin{definition}
\label{def:post-funs}
We define the following functions on $2$ of various arities:
\begin{itemize}
\item
$\neg : 2 -> 2$ is the bit flip (logical negation).
\end{itemize}
Given now any other function $f : 2^n -> 2$, its \defn{de~Morgan dual} is its $\neg$-conjugate
\begin{align*}
\delta(f) := \neg \circ f \circ \neg.
\end{align*}
This is an automorphism of the clone $\Op2$; thus below, anything said about a function or a clone automatically transfers to its de~Morgan dual.
\begin{itemize}
\item
$0^n, 1^n : 2^n -> 2$ are the constant functions.
\item
$\wedge = \wedge^2 : 2^2 -> 2$ is binary conjunction (minimum); ${\vee} = \delta(\wedge)$ is binary disjunction.
\item
$\wedge^n$ or $\bigwedge^n : 2^n -> 2$ is $n$-ary conjunction; $\bigwedge$ by default means $\bigwedge^\omega$.
Similarly for ${\bigvee^n} := \delta(\bigwedge^n)$.
\item
$\forall^n_k : 2^n -> 2$ and its dual $\exists^n_k : 2^n -> 2$, where $0 \le k \le n^+$, are given by:
\begin{align*}
\forall^n_k(\vec{x})
&:= \bigwedge_{\substack{I \subseteq n \\ \abs{I} \ge k}} \bigvee_{i \in I} x_i
= \bigvee_{\substack{J \subseteq n \\ \abs{n \setminus J} < k}} \bigwedge_{j \in J} x_j
= \text{``all but $<k$ inputs are true''}, \\
\exists^n_k(\vec{x})
&:= \bigvee_{\substack{I \subseteq n \\ \abs{I} \ge k}} \bigwedge_{i \in I} x_i
= \bigwedge_{\substack{J \subseteq n \\ \abs{n \setminus J} < k}} \bigvee_{j \in J} x_j
= \text{``at least $\ge k$ inputs are true''}.
\end{align*}
Note that $\forall^n_1 = \wedge^n$, and $\exists^n_1 = \vee^n$.
Note also that for $k, n < \omega$, we have $\forall^n_k = \exists^n_{n-k+1}$.
In particular, $\forall^3_2 = \exists^3_2$ is the 2-out-of-3 median function
\begin{align*}
\forall^3_2(x,y,z)
= (x \vee y) \wedge (x \vee z) \wedge (y \vee z)
= (x \wedge y) \vee (x \wedge z) \vee (y \wedge z)
= \exists^3_2(x,y,z).
\end{align*}
By default, $\forall_k$ means $\forall^\omega_k : 2^\omega -> 2$, and similarly $\exists_k := \exists^\omega_k$.
\item
$\liminf := \forall_\omega : 2^\omega -> 2$ and its dual $\limsup := \exists_\omega$, which may be written more simply as:
\begin{align*}
\liminf \vec{x} &= \bigvee_{i \in \omega} \bigwedge_{j \ge i} x_j, &
\limsup \vec{x} &= \bigwedge_{i \in \omega} \bigvee_{j \ge i} x_j.
\end{align*}
We thus have the following ordering among all aforementioned functions of arity $\omega$:
\begin{align*}
\forall_0 = 0 \le \forall_1 = \inline\bigwedge \le \forall_2 \le \forall_3 \le \dotsb \le \forall_\omega = \liminf
\le \limsup = \exists_\omega \le \dotsb \le \exists_2 \le \exists_1 \le \exists_0.
\end{align*}
\item
${+}^n : 2^n -> 2$ is the $n$-ary addition mod $2$ (i.e., ``XOR'').
By default, ${+}$ means $+^2$.
\item
$(?:) : 2^3 -> 2$ is the ternary conditional (``if-then-else'')
\begin{align*}
(x ? y : z) := (x \wedge y) \vee (\neg x \wedge z).
\end{align*}
\item
$-> : 2^2 -> 2$ is the Boolean implication $x -> y = \neg x \vee y$.
Its de Morgan dual is $\delta(->) = </-$, the variable transposition of $x -/> y = x \wedge \neg y$.
\end{itemize}
\end{definition}

\begin{definition}
\label{def:ceil-floor}
For $f : 2^n -> 2$, define its \defn{upper/lower-bounded versions}
\begin{align*}
\begin{aligned}
\ceil{f} : 2^{1+n} &-> 2 \\
(x_0, x_1, \dotsc) &|-> x_0 \wedge f(x_1, \dotsc),
\end{aligned}
&&
\begin{aligned}
\floor{f} : 2^{1+n} &-> 2 \\
(x_0, x_1, \dotsc) &|-> x_0 \vee f(x_1, \dotsc).
\end{aligned}
\end{align*}
Examples include:
\begin{itemize}
\item
$<- = \floor{\neg}$ (where $x <- y := y -> x$), and similarly $-/> = \ceil{\neg}$.
\item
$\ceil{->} : 2^3 -> 2$ is thus given by $\ceil{->}(x, y, z) = \ceil{\floor{\neg}}(x, z, y) = x \wedge (y -> z)$.
Its de~Morgan dual is $\floor{-/>}(x, y, z) = \floor{\ceil{\neg}}(x, y, z) = x \vee (y \wedge \neg z)$.
\item
$\ceil{\vee} : 2^3 -> 2$ is similarly given by $\ceil{\vee}(x, y, z) = x \wedge (y \vee z)$; dually, $\floor{\wedge}(x, y, z) = x \vee (y \wedge z)$.
\item
$\ceil{\bigvee} : 2^\omega -> 2$ is similarly given by
$
\ceil{\bigvee}(x_0, x_1, x_2, \dotsc) = x_0 \wedge (x_1 \vee x_2 \vee \dotsb).
$
\item
$\ceil{\wedge} = \wedge^3$ and $\ceil{\bigwedge} = \bigwedge$.
\end{itemize}
\end{definition}

\begin{notation}
\label{def:fun-rel-<}
For a family of functions $(f^n : 2^n -> 2)_n$ of various arities $n \ge 1$, when we write $f^{<\nu}$ as part of a set of functions, we mean that $f^n$ for each $1 \le n < \nu$ is included.
For instance,
\begin{align*}
\ang{\vee, \bigwedge^{<\omega_1}} = \ang{\vee, \id, \wedge, \wedge^3, \wedge^4, \dotsc, \bigwedge}
\end{align*}
consists of all functions built from binary joins and countable meets (which is also just $\ang{\vee, \bigwedge}$).


We use similar notation for sets of relations indexed over a cardinal 
(see below).
\end{notation}

\begin{definition}
\label{def:post-rels}
We define the following relations on $2$ of various arities:
\begin{itemize}
\item
${\le} \subseteq 2^2$ is the usual linear order where $0 < 1$.
A function preserves $\le$ iff it is monotone.
\end{itemize}
If $f : 2^k -> 2$ is a function of arity $k$, or more generally a partial function, then we may also treat $f$ as the $(1+k)$-ary relation given by its graph.
Examples include:
\begin{itemize}
\item
The constant $0$ may be treated as the unary relation $\{0\}$.
A function $g$ preserves $0$ iff $g(\vec{0}) = 0$.
\item
$\neg : 2^1 -> 2$ is identified with the binary relation $\ne$.
A function $g$ preserves $\neg$ iff it is \defn{self-dual}, i.e., $\delta(g) = g$, i.e., whenever $\vec{x}, \vec{y}$ differ in \emph{every} coordinate (so $\vec{y} = \neg \vec{x}$), then $g(\vec{x}) \ne g(\vec{y})$.
\item
$+ : 2^2 -> 2$ is identified with the ternary relation $\{(x,y,z) \in 2^3 \mid x+y+z = 0\}$.
A function preserves $+$ iff it is a linear transformation (over $\#Z/2\#Z$).
\item
$+^3 : 2^3 -> 2$ is identified with the quaternary relation $\{(x,y,z,w) \in 2^4 \mid x+y+z+w = 0\}$.
A function preserves $+^3$ iff it is an affine transformation.
\item
$\wedge : 2^2 -> 2$ as a ternary relation is preserved by $g : 2^n -> 2$ iff $g$ is a meet-semilattice homomorphism, which means that $g^{-1}(1) \subseteq 2^n$ is empty or a filter.
\item
The partial increasing join operation $\bigveeup : 2^\omega -> 2$, defined by
\begin{align*}
\bigveeup \vec{x} = y  \coloniff  (x_0 \le x_1 \le \dotsb) \wedge \paren[\Big]{\bigvee_i x_i = y},
\end{align*}
as a $1+\omega = \omega$-ary relation is preserved by $g : 2^n -> 2$, $n \le \omega$, iff $g$ is \emph{Scott-continuous}, i.e., monotone and $g^{-1}(0) \subseteq 2^n$ is closed under increasing joins, or equivalently closed in the Cantor topology (see e.g., \cite[III-1.6]{GHKLMS}).
Similarly for decreasing meet $\bigwedgedown: 2^\omega -> 2$.
\item
Similarly, the partial limit operation $\lim : 2^\omega -> 2$ is preserved by $g : 2^\omega -> 2$ iff $g$ is continuous.
\end{itemize}
\end{definition}

\begin{definition}
\label{def:event}
For $f : 2^k -> 2$, we also have two $k$-ary relations, distinct from the graph of $f$, which we denote using the ``probabilist's event notation''
\begin{align*}
(f{=}0) := f^{-1}(0) = \{\vec{x} \in 2^k \mid f(\vec{x}) = 0\}, &&
(f{=}1) := f^{-1}(1) = \{\vec{x} \in 2^k \mid f(\vec{x}) = 1\}.
\end{align*}
Examples include:
\begin{itemize}
\item
$({\bigwedge^k}{=}0) \subseteq 2^k$ is the $k$-ary disjointness relation ($k \ge 1$).
A function $g : 2^n -> 2$ preserves ${\bigwedge^k}{=}0$ iff $g^{-1}(1) \subseteq 2^n$ does not contain $k$ strings with bitwise meet $\vec{0}$.
If $n \le k$, then (by considering the strings with exactly one $1$) this is equivalent to $g \le \pi_i$ for some $i < n$.
\item
In particular, ${\bigwedge^1}{=}0$ yields the same unary relation as the constant $0$.
\item
$({\lim}{=}0) \subseteq 2^\omega$ is the set of eventually 0 sequences.
A function $g : 2^n -> 2$ for $n \le \omega$ preserves ${\lim}{=}0$ iff $g$ vanishes on a neighborhood of $\vec{0} \in 2^n$, i.e., $g(\vec{0}) = 0$ and $g$ is continuous at $\vec{0} \in 2^n$.
\end{itemize}
\end{definition}

\subsection{Post's lattice}
\label{sec:post-clones}

\begin{definition}
\label{def:post-clones}
We use the following names for certain clones on $2$:
\begin{itemize}
\item  $\Aff := \Pol \{+^3\}$ consists of the affine functions.
\item  $\Dual := \Pol \{\neg\}$ consists of the self-dual functions.
\item  $\Mono := \Pol \{\le\}$ consists of the monotone functions.
\item  $\Decr := \Pol \{\bigwedgedown\}$ and $\Incr := \Pol \{\bigveeup\}$ (note that these are contained in $\Mono$, by definition of the domains of the partial operations $\bigwedgedown, \bigveeup$: for example, $x \le y \iff \bigwedgedown (y, x, x, \dotsc) = x$).
\item  $\Cons{0}{k} := \Pol \{{\bigwedge^k}{=}0\}$ and $\Cons{1}{k} := \Pol \{{\bigvee^k}{=}1\}$, where $k$ is a positive cardinal.
\item  $\Cons{c}{1} := \Cons{c}{{1}} = \Pol \{c\}$ consists of the functions preserving the constant $c \in 2$.
\item  $\Cons{c}{<\kappa} := \bigcap_{1 \le k < \kappa} \Cons{c}{k}$ for a regular cardinal $\kappa$.
Thus
\begin{align*}
\Cons{c}{1} = \Cons{c}{{1}} \supseteq \Cons{c}{2} \supseteq \Cons{c}{3} \supseteq \dotsb \supseteq \Cons{c}{<\omega} \supseteq \Cons{c}{\omega} = \Cons{c}{<\omega_1} \supseteq \Cons{c}{\omega_1} \supseteq \dotsb.
\end{align*}
\item  $\Limm{c}{1}{1} := \Pol \{{\lim}{=}c\}$.
(Generalizations called $\Limm{c}{k}{k}$ and $\Limm{c}{k}{t}$ will be defined later in \cref{sec:borel-T0k}.)
\item  $\Meet := \Pol \{\wedge\}$ and $\Join := \Pol \{\vee\}$; more generally, $\Meet_k := \Pol \{\bigwedge^k\}$ and $\Join_k := \Pol \{\bigvee^k\}$.
\end{itemize}
For two (or more) clones $\clonename{X}, \clonename{Y}$ in this list, we use the abbreviation
\begin{align*}
\clonename{X}\clonename{Y} := \clonename{X} \cap \clonename{Y}.
\end{align*}
Thus for instance,
$\Aff\Cons01 = \Pol \{+^3, 0\} = \Pol \{+\}$ consists of the linear functions.

Note that we use these symbols for the ${<}\infty$-ary clones (see \cref{def:op}), consisting of \emph{all} functions preserving said relations.
We may restrict to functions of a certain arity by writing e.g., $\Mono^{<\omega}$ for the finitary monotone functions.
\end{definition}

\begin{theorem}[Post \cite{Post}]
\label{thm:post}
The lattice $\Clo^{<\omega}{2}$ of finitary clones on $2$ is countable, and consists of precisely the finitary restrictions of the following clones:
\begin{itemize}
\item
The ``cube'' of 8 clones formed from intersecting all combinations of the 3 clones $\Mono, \Cons01, \Cons11$.
\item
The 8 infinite series, consisting of the $\omega+1$ clones $\Cons0k$, where $k$ is one of $2, 3, \dotsc, {<}\omega$; the intersections of these with $\Mono, \Cons11$, or both; and the de~Morgan duals of these (consisting of $\Cons1k$ and their intersections with $\Mono$ and/or $\Cons01$).
\item
The 8 clones consisting of $\Meet$; its intersections with $\Cons01$ and/or $\Cons11$; and their de~Morgan duals.
\item
The 11 subclones of $\Aff$: $\Aff, \Aff\Dual, \Aff\Cons01, \Aff\Cons11, \Aff\Dual\Cons01 = \Aff\Dual\Cons11$; and the clone $\ang{\neg, 0} = \ang{\neg, 1}$ of essentially unary functions and its subclones $\ang{\neg}, \ang{0, 1}, \ang{0}, \ang{1}$, and the trivial clone $\ang{\emptyset}$.
\item
The 3 subclones of $\Dual$ which are not subclones of $\Aff$: $\Dual$, $\Dual\Cons01 = \Dual\Cons11$, and $\Dual\Mono$.
\end{itemize}
The ordering between these clones, as well as generators for them, are depicted in \cref{fig:post}.
\end{theorem}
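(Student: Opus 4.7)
The plan is to exploit the Galois correspondence developed above. By \cref{thm:pol-inv-kcl,thm:inv-pol-npp} specialized to the finite set $X = 2$ with $N = K = {<}\omega$, the maps $\Pol^{<\omega}$ and $\Inv^{<\omega}$ are inverse order-reversing bijections between $\Clo^{<\omega}{2}$ and the lattice $\PPStr^{<\omega}{2}$ of positive-primitive finitary relational structures on $2$; thus, classifying finitary clones reduces to classifying the associated ``coclones''. As a first step I would verify the forward direction: for each named clone, exhibit a standard set of generators (as shown in \cref{fig:post}) together with a defining family of invariant relations from \cref{def:post-rels} (for instance $\le$ for $\Mono$, $\neg$ for $\Dual$, $+^3$ for $\Aff$, and $({\bigwedge^k}{=}0)$ for $\Cons0k$), and check the Hasse-diagram containments by expressing the generators of the smaller clone in terms of those of the larger. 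Pairwise distinctness would then follow, at each covering edge in \cref{fig:post}, by exhibiting a relation preserved on one side but not the other.

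The substantive content is the reverse direction: no other clones exist. Following the classical strategy, I would first identify the five \emph{maximal proper} clones $\Mono$, $\Cons01$, $\Cons11$, $\Dual$, and $\Aff$, each being the polymorphism clone of a single critical relation, and show directly that any function escaping one of them, adjoined to the projections, already generates $\Op{2}^{<\omega}$. Every proper clone $F$ is therefore contained in some intersection of these maximal clones, and I would case-split accordingly. The case $F \subseteq \Aff$ is a straightforward linear-algebra classification mod $2$ yielding the $11$ affine subclones listed; the case $F \subseteq \Dual$ but $F \not\subseteq \Aff$ forces $F$ to contain the self-dual median $\forall^3_2$ and yields the $3$ additional self-dual subclones; and the case $F \subseteq \Meet$ (dually $\Join$), together with the ``top cube'' of intersections of $\Mono$, $\Cons01$, $\Cons11$, is read off directly from the defining invariants, noting that in each such block a single critical generator controls membership.

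The main obstacle --- and the true combinatorial heart of the theorem --- is the ``side tube'' classification: showing that between $\Mono\Cons0{<\omega}\Cons11$ and $\Mono\Cons01\Cons11$ there lie exactly the clones $\Mono\Cons0k\Cons11$ indexed by $k = 2, 3, \dotsc$, and similarly for the $7$ parallel side-tube families. Pairwise distinctness of the $\Cons0k$ is immediate since $({\bigwedge^k}{=}0)$ is preserved by $\Cons0k$ but not by $\Cons0{k+1}$. Completeness is the hard part: given a clone $F$ containing $\wedge$ (and possibly further constants or monotonicity) but failing to preserve $({\bigwedge^k}{=}0)$ at some minimal $k$, one must extract from the witness of failure a function which, combined with the available generators via projection substitution, exhausts the next standard clone up the tube. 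This is Post's original inductive argument using DNF/CNF canonical forms; I would at this point defer to the references \cite{Post,Lau,FMMT} for the bookkeeping-heavy combinatorics, which occupies the bulk of Post's original manuscript.
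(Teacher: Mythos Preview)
The paper does not prove this theorem at all: it is stated as Post's 1941 classical result and cited to \cite{Post} (with further references \cite{FMMT}, \cite{Lau}, \cite{Szendrei} given in the introduction), with no proof or proof sketch provided. Your outline is a reasonable summary of the classical strategy---and indeed you yourself defer to the same references for the hard ``side tube'' combinatorics---but there is no proof in the paper to compare it against; the theorem functions here purely as background input to the classification of Borel clones.
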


It follows from \cref{thm:pancake} that the task of analyzing $\Clo^{<\nu}{2}$ for higher cardinals $\nu > \omega$ largely reduces to analyzing the lattice $\Clo^{<\nu}_F{2}$ of ${<}\nu$-ary clones restricting to $F$ (\cref{def:clone-fiber}), separately for each finitary clone $F \in \Clo^{<\omega}{2}$ given in \cref{thm:post}, i.e., in \cref{fig:post}.

\subsection{The bounding and cross-sectioning operators}
\label{sec:post-ceil-floor-xsec}

In this and the following subsections, we consider several operators $\Op2 -> \Op2$, turning functions on $2$ into new functions.
We will use these to show some global structural relationships between distinct ``regions'' of (infinitary) Post's lattice; see \cref{fig:post-mod}.

First, recall the bounding operators $f |-> \ceil{f}, \floor{f}$ from \cref{def:ceil-floor}.

\begin{remark}
\label{rmk:ceil-floor}
We have the following simple identities:
\begin{align*}
\ceil{f \circ \vec{g}} &= \ceil{f} \circ (\pi_0, \ceil{\vec{g}}), &
\floor{f \circ \vec{g}} &= \floor{f} \circ (\pi_0, \floor{\vec{g}}) \\
\intertext{(where $\ceil{(g_0, g_1, \dotsc)} := (\ceil{g_0}, \ceil{g_1}, \dotsc)$ and similarly for $\floor{\vec{g}}$).
Also, clearly}
\ceil{f} &\in \ang{f, \wedge}, &
\floor{f} &\in \ang{f, \vee}.
\end{align*}
\end{remark}

\begin{lemma}
\label{thm:ceil-floor-gen}
For any regular cardinal $\nu$ and set of functions $F \subseteq \Op2^{<\nu}$, we have
\begin{align*}
\ang{\ceil{\ang{F}^{<\nu}}}^{<\nu} &= \ang{\ceil{F} \cup \{\wedge\}}^{<\nu}, &
\ang{\floor{\ang{F}^{<\nu}}}^{<\nu} &= \ang{\floor{F} \cup \{\vee\}}^{<\nu}
\end{align*}
(where $\ceil{F} := \{\ceil{f} \mid f \in F\}$).
\end{lemma}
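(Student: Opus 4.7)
The plan is to prove both inclusions directly, the easy direction first and the other by structural induction via \cref{thm:clone-gen-left}.

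For the inclusion $\supseteq$, observe that $\ceil{F} \subseteq \ceil{\ang{F}^{<\nu}}$ trivially, and that $\wedge = \ceil{\id} = \ceil{\pi^1_0}$, which lies in $\ceil{\ang{F}^{<\nu}}$ since the unary identity is a projection, hence in $\ang{F}^{<\nu}$. So both generators on the right are in the clone on the left.

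For the inclusion $\subseteq$, I need to show that $\ceil{g} \in \ang{\ceil{F} \cup \{\wedge\}}^{<\nu}$ for every $g \in \ang{F}^{<\nu}$. By \cref{thm:clone-gen-left}, it suffices to induct on the description of $\ang{F}^{<\nu}$ as the smallest class containing the projections and closed under left composition with functions in $F$. For the base case, $\ceil{\pi^n_i}(x_0, x_1, \dotsc, x_n) = x_0 \wedge x_{1+i}$, which is built from $\wedge$ and projections. For the inductive step, if $g = f \circ \vec{h}$ with $f \in F$ and each $\ceil{h_j}$ already known to lie in $\ang{\ceil{F} \cup \{\wedge\}}^{<\nu}$, then the first identity of \cref{rmk:ceil-floor} gives $\ceil{g} = \ceil{f} \circ (\pi_0, \ceil{\vec{h}})$, which is a composition of $\ceil{f} \in \ceil{F}$ with functions already in the target clone. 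The dual identity for $\floor{-}$ handles the second equation symmetrically.

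The only potentially delicate point is that \cref{thm:clone-gen-left} provides an iterative description of $\ang{F}^{<\nu}$ built only via left composition, which matches exactly the form of the composition identity in \cref{rmk:ceil-floor}: closing under general (right-plus-left) composition would force us to carry around $\wedge$ through arbitrary substitutions, whereas the one-sided description lets the inductive step proceed with a single application of the $\ceil{-}$ identity. There is no serious obstacle, and the argument is essentially a routine verification once this compatibility is noted.
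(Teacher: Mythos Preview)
Your proof is correct and follows essentially the same approach as the paper: both directions use the observation $\wedge = \ceil{\id}$ for $\supseteq$, and the identity $\ceil{f \circ \vec{g}} = \ceil{f} \circ (\pi_0, \ceil{\vec{g}})$ from \cref{rmk:ceil-floor} for $\subseteq$. The only cosmetic difference is that the paper phrases the inductive argument as ``the preimage $\ceil{-}^{-1}(\ang{\ceil{F} \cup \{\wedge\}}^{<\nu})$ is closed under $\circ$ and contains all projections'' (hence is a clone containing $F$), whereas you unwind this into an explicit induction via \cref{thm:clone-gen-left}; these are the same argument.
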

\begin{proof}
We only prove the first identity; the second is dual.

$\supseteq$ follows easily from $\wedge = \ceil{\id} \in \ceil{\ang{F}^{<\nu}}$.

$\subseteq$ follows from $\ceil{\ang{F}^{<\nu}} \subseteq \ang{\ceil{F} \cup \{\wedge\}}^{<\nu}$, which follows from the fact that the preimage $\ceil{-}^{-1}(\ang{\ceil{F} \cup \{\wedge\}}^{<\nu})$ is closed under $\circ$ and contains all projections, by the above remark.
\end{proof}

\begin{lemma}
\label{thm:ceil-clone}
For any regular cardinal $\nu$ and ${<}\nu$-ary clone $\wedge \in F \subseteq \Op2^{<\nu}$, we have
\begin{align*}
F \cap \Cons0{<\nu} &= \ang{\ceil{F}}^{<\nu}.
\end{align*}
\end{lemma}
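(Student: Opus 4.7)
The plan is to prove the two inclusions $\ang{\ceil{F}}^{<\nu} \subseteq F \cap \Cons0{<\nu}$ and $F \cap \Cons0{<\nu} \subseteq \ang{\ceil{F}}^{<\nu}$ separately.

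For the first (easy) inclusion, I would observe that every $\ceil{f}$ for $f \in F$ lies in $F$ (using $\wedge \in F$ and the defining identity $\ceil{f}(x_0, \vec{y}) = x_0 \wedge f(\vec{y})$), and lies in every $\Cons0k$ because any tuple in $\ceil{f}^{-1}(1)$ is forced to have first coordinate $1$, so the bitwise meet of any collection of such tuples has first coordinate $1$ and is in particular nonzero. Thus $\ceil{F} \subseteq F \cap \Cons0{<\nu}$, and since the right side is a clone, the generated clone $\ang{\ceil{F}}^{<\nu}$ is contained in it as well.

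For the second (main) inclusion, the key combinatorial step will be the claim that every $g \in F \cap \Cons0{<\nu}$ of arity $n < \nu$ satisfies $g \le \pi^n_i$ for some $i < n$. I would prove this by contradiction: if $g \not\le \pi^n_i$ for every $i < n$, then for each coordinate $i$ I can pick some $\vec{x}^i \in g^{-1}(1)$ with $x^i_i = 0$. The family $\{\vec{x}^i : i < n\}$ then has at most $n < \nu$ distinct members, lies entirely in $g^{-1}(1)$, and has bitwise meet $\vec{0}$ (since the $j$-th coordinate of the meet is dominated by $x^j_j = 0$), contradicting membership in $\Cons0k$ for $k$ equal to the size of this family. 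Given the claim, I can then write $g(\vec{y}) = y_i \wedge g(\vec{y}) = \ceil{g}(y_i, \vec{y})$ for all $\vec{y}$, so that $g = \ceil{g} \circ (\pi^n_i, \pi^n_0, \pi^n_1, \dotsc)$ lies in $\ang{\ceil{g}}^{<\nu} \subseteq \ang{\ceil{F}}^{<\nu}$.

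The step I expect to be the main (and essentially only) subtle point is the projection dominance claim in the infinitary setting: the finitary analogue is already noted earlier in the paper for $\Cons0{<\omega}^{<\omega}$, but here it is essential that the witness family has size bounded by the arity $n$, which is in turn bounded by $\nu$ — this is precisely what lets the $<\nu$-intersection property deliver the contradiction. Everything else reduces to routine composition within the generated clone.
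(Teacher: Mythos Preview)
Your proof is correct and follows essentially the same approach as the paper: both directions match, with the paper citing the earlier remark $\ceil{f} \in \ang{f,\wedge}$ for $\supseteq$ and invoking the projection-dominance fact $f \le \pi_i$ (which you spell out in detail) for $\subseteq$, then writing $f(\vec{x}) = \ceil{f}(x_i,\vec{x})$. Your explicit contradiction argument for the dominance claim is exactly the content behind the paper's terse ``recall \cref{def:post-clones}'' reference.
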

\begin{proof}
$\supseteq$ follows from \cref{rmk:ceil-floor}.
Conversely, for $f \in F \cap \Cons0{<\nu}$, we have $f \le \pi_i$ for some $i$ (recall \cref{def:post-clones}), whence $f(\vec{x}) = \ceil{f}(x_i, \vec{x})$ is in $\ang{\ceil{F}}$.
\end{proof}

We now consider an operator that forms a partial inverse to $\ceil{-}, \floor{-}$.

\begin{definition}
\label{def:xsec}
For $f : 2^{1+n} -> 2$ and a constant $c \in 2$, define the \defn{cross-section} $f_c : 2^n -> 2$ by
\begin{align*}
f_c(\vec{x}) := f(c, \vec{x}).
\end{align*}
These satisfy the identities
\begin{align*}
c^n &= (\pi^{1+n}_0)_c, &
\pi^n_i &= (\pi^{1+n}_{1+i})_c,
&
f_c \circ \vec{g}_c &= (f \circ (\pi_0, \vec{g}))_c, &
f &= (f \circ (\pi_{1+i})_i)_c.
\end{align*}
\end{definition}

\begin{lemma}
\label{thm:post-const-xsec}
For a regular cardinal $\nu$, ${<}\nu$-ary clone $F \subseteq \Op2^{<\nu}$, and $c \in 2$,
\begin{align*}
\ang{F \cup \{c\}}^{<\nu} = \{f_c \mid f \in F^{\ge2}\}.
\end{align*}
\end{lemma}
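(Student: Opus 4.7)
The plan is to prove the two inclusions separately; the direction $\supseteq$ is immediate, and the content of the lemma lies in $\subseteq$. For $\supseteq$, given any $f \in F^{\ge 2}$ of arity $1+n$, the definition $f_c(\vec{x}) = f(c, \vec{x})$ exhibits $f_c$ as a composition of $f \in F$ with the constant $c$ and projections, all of which belong to $F \cup \{c\}$; hence $f_c \in \ang{F \cup \{c\}}^{<\nu}$.

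For $\subseteq$, I would show that $G := \{f_c \mid f \in F^{\ge 2}\}$ is itself a ${<}\nu$-ary clone containing $F \cup \{c\}$, which forces $\ang{F \cup \{c\}}^{<\nu} \subseteq G$. Using the first three identities of \cref{def:xsec}: $G$ contains every projection because $\pi^n_i = (\pi^{1+n}_{1+i})_c$ with $\pi^{1+n}_{1+i} \in F$ of arity $1+n \ge 2$; it contains the constant $c^n$ of every positive arity $n \ge 1$ because $c^n = (\pi^{1+n}_0)_c$; and it contains every $f \in F$ (of arity $n \ge 1$) because $f = (f \circ (\pi^{1+n}_{1+i})_{i<n})_c$, where the inner composition lies in $F$ by cloneness and has arity $1+n \ge 2$.

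The remaining point is closure under composition, which is where the fourth identity of \cref{def:xsec} bites: given $f_c \in G^m$ (with $f \in F^{\ge 2}$ of arity $1+m$) and a matching family $\vec{g}_c$ of elements of $G^l$ (with $\vec{g} = (g_i)_{i<m}$, each $g_i \in F^{\ge 2}$ of common arity $1+l$), the identity $f_c \circ \vec{g}_c = (f \circ (\pi^{1+l}_0, \vec{g}))_c$ presents the composition as a cross-section of $f \circ (\pi^{1+l}_0, \vec{g}) \in F$, which again has arity $1+l \ge 2$. The entire argument is thus a bookkeeping exercise with the four identities in \cref{def:xsec}; the only mild obstacle is keeping track of arities and verifying in each case that the ``inner'' function really has arity $\ge 2$, which is automatic given the standing convention (\cref{def:nullary}) that all clone members have positive arity.
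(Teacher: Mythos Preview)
Your proposal is correct and follows exactly the approach the paper takes: the paper's proof is the one-line remark that ``$\supseteq$ is obvious; $\subseteq$ follows from the above identities which ensure that the right-hand side is a clone containing $F \cup \{c\}$,'' and you have simply unpacked each of those four identities from \cref{def:xsec} in turn, with the arity bookkeeping made explicit.
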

(Here, following \cref{def:nullary}, $c$ on the left denotes a positive-arity constant function.)
\begin{proof}
$\supseteq$ is obvious; $\subseteq$ follows from the above identities
which ensure that the right-hand side is a clone containing $F \cup \{c\}$.
\end{proof}

\begin{remark}
\label{rmk:xsec-ceil-floor}
The above operators are related via
\begin{align*}
\floor{f}_0 &= f, &
\floor{f_0}(x_0, x_1, \dotsc) &= x_0 \vee f(0, x_1, \dotsc) = x_0 \vee f(x_0, x_1, \dotsc), \\
\ceil{f}_1 &= f, &
\ceil{f_1}(x_0, x_1, \dotsc) &= x_0 \wedge f(1, x_1, \dotsc) = x_0 \wedge f(x_0, x_1, \dotsc).
\end{align*}
\end{remark}

\begin{proposition}
\label{thm:post-T0inf-mod}
Let $\nu$ be a regular cardinal.
The composite of the modularity adjunctions \cref{eq:adj-mod} between the \rlap{three intervals}
\begin{equation*}
\begin{tikzcd}
{} [\ang{\wedge}^{<\nu}, \Op2^{<\nu}]
    \rar[shift left=2, two heads, "\qquad\qquad\qquad F \mapsto \ang{F \cup \{1\}}^{<\nu} = \{f_1 \mid f \in F^{\ge2}\}"{yshift=1ex}]
    \dar[shift left=0, two heads, right adjoint', "\substack{G \\ \mapsdown \\ G \cap \Cons0{<\nu} \mathrlap{= \ang{\ceil{G}}^{<\nu}}}"] &[1em]
{} [\ang{\wedge, 1}^{<\nu}, \Op2^{<\nu}]
    \lar[shift left=2, hook, right adjoint']
\\[1em]
{} [\ang{\wedge}^{<\nu}, \Cons0{<\nu}^{<\nu}]
    \uar[shift left=4, hook]
    \urar[bend right=30, <->, "\cong"']
\end{tikzcd}
\end{equation*}
is an isomorphism.
(Here the $`->$ arrows are inclusions.)

Moreover, the composite $F |-> \ang{F \cup \{1\}}^{<\nu} |-> \ang{F \cup \{1\}}^{<\nu} \cap \Cons0{<\nu} : [\ang{\wedge}^{<\nu}, \Op2^{<\nu}] -> [\ang{\wedge}^{<\nu}, \Cons0{<\nu}^{<\nu}]$ is equal to $G |-> G \cap \Cons0{<\nu}$, which thus has a further right adjoint $F |-> \ang{F \cup \{1\}}^{<\nu}$.
\end{proposition}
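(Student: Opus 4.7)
The plan is to prove two inverse-isomorphism identities
\begin{align*}
\text{(i)}\ & \ang{F \cup \{1\}}^{<\nu} \cap \Cons0{<\nu} = F && \text{for } F \in [\ang{\wedge}^{<\nu}, \Cons0{<\nu}^{<\nu}], \\
\text{(ii)}\ & \ang{(G \cap \Cons0{<\nu}) \cup \{1\}}^{<\nu} = G && \text{for } G \in [\ang{\wedge, 1}^{<\nu}, \Op2^{<\nu}],
\end{align*}
which together say that the composite of the two adjunctions, expressed as the pair of maps ``adjoin $1$'' and ``intersect with $\Cons0{<\nu}$'' between the outer intervals, is an isomorphism. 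Everything else in the proposition will then fall out with little extra work.

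For (i), the idea is to chain \cref{thm:ceil-clone}, \cref{thm:ceil-floor-gen}, and the elementary observation $\ceil{1} = \pi_0$:
\begin{align*}
\ang{F \cup \{1\}}^{<\nu} \cap \Cons0{<\nu}
&= \ang{\ceil{\ang{F \cup \{1\}}^{<\nu}}}^{<\nu}
= \ang{\ceil{F} \cup \{\ceil{1}\} \cup \{\wedge\}}^{<\nu} \\
&= \ang{\ceil{F}}^{<\nu}
= F \cap \Cons0{<\nu}
= F.
\end{align*}
The steps are, in order: \cref{thm:ceil-clone} applied to the clone $\ang{F \cup \{1\}}^{<\nu}$; \cref{thm:ceil-floor-gen}; the observations that $\ceil{1} = \pi_0$ is a projection and $\wedge = \ceil{\id} \in \ceil{F}$; \cref{thm:ceil-clone} applied to $F$; and finally the hypothesis $F \subseteq \Cons0{<\nu}$. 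Only the last step uses that hypothesis, so the same chain with any $G \supseteq \ang{\wedge}^{<\nu}$ in place of $F$ also yields the ``moreover'' identity $\ang{G \cup \{1\}}^{<\nu} \cap \Cons0{<\nu} = G \cap \Cons0{<\nu}$.

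For (ii), the inclusion $\subseteq$ is immediate. For the reverse, given $g \in G$, I take the upper-bounded form $\ceil{g}(x_0, \vec{x}) := x_0 \wedge g(\vec{x}) \in G$ (since $\wedge \in G$). Because $\ceil{g} \le \pi_0$, any $k$-tuple of bitwise-disjoint inputs must contain some input with first coordinate $0$, on which $\ceil{g}$ vanishes; hence $\ceil{g} \in \Cons0k$ for every $k < \nu$, so $\ceil{g} \in G \cap \Cons0{<\nu}$. Then $\ceil{g}_1 = g$ places $g$ in $\ang{(G \cap \Cons0{<\nu}) \cup \{1\}}^{<\nu}$ via \cref{thm:post-const-xsec}.

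The ``further right adjoint'' claim is the equivalence $G \cap \Cons0{<\nu} \le F \iff G \le \ang{F \cup \{1\}}^{<\nu}$. For the backward direction, intersect both sides with $\Cons0{<\nu}$ and apply (i). For the forward direction, combine the ``moreover'' identity with (ii) applied to $\ang{G \cup \{1\}}^{<\nu}$ to rewrite $G \le \ang{G \cup \{1\}}^{<\nu} = \ang{(G \cap \Cons0{<\nu}) \cup \{1\}}^{<\nu} \le \ang{F \cup \{1\}}^{<\nu}$. The main obstacle I anticipate is purely bookkeeping—keeping the directions of the several adjunctions and composites straight—since the mathematical content is controlled entirely by \cref{thm:ceil-clone,thm:ceil-floor-gen,thm:post-const-xsec} together with the cross-section identities $\ceil{1} = \pi_0$ and $\ceil{g}_1 = g$.
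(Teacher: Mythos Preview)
Your proof is correct and follows essentially the same approach as the paper's. The only cosmetic difference is in the computation for (i): the paper writes $\ang{F \cup \{1\}}^{<\nu} \cap \Cons0{<\nu} = \ang{\ceil{\{f_1 \mid f \in F^{\ge2}\}}}^{<\nu}$ via \cref{thm:ceil-clone,thm:post-const-xsec} and then uses the explicit formula $\ceil{f_1}(x_0,\vec{x}) = x_0 \wedge f(x_0,\vec{x})$ from \cref{rmk:xsec-ceil-floor} to place each $\ceil{f_1}$ in $\ang{f,\wedge} \subseteq F$, whereas you reach $\ang{\ceil{F}}^{<\nu}$ directly via \cref{thm:ceil-floor-gen} and the observation $\ceil{1} = \pi_0$; both routes amount to the same calculation.
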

\begin{proof}
We prove the last statement first.
For a clone $\wedge \in F \subseteq \Op2^{<\nu}$, we have
$\ang{F \cup \{1\}}^{<\nu} \cap \Cons0{<\nu}
= \ang{\ceil{\{f_1 \mid f \in F^{\ge2}\}}}^{<\nu}$
(by \cref{thm:ceil-clone,thm:post-const-xsec}),
which by the preceding remark is contained in $F \cap \Cons0{<\nu}$ (since $\ceil{f_1} \in \ang{f, \wedge} \subseteq F$); the other inclusion is obvious.

Now to finish proving that $[\ang{\wedge}^{<\nu}, \Cons0{<\nu}^{<\nu}] \cong [\ang{\wedge, 1}^{<\nu}, \Op2^{<\nu}]$, it remains to show that for $G$ in the latter interval, we have $G \subseteq \ang{\ceil{G} \cup \{1\}}^{<\nu}$, which again follows from the preceding remark.
\end{proof}

\Cref{thm:post-T0inf-mod} shows that, analogously to \cref{thm:pancake}, the entire interval $[\ang{\wedge}^{<\nu}, \Op2^{<\nu}]$ may be regarded as a ``bundle'' over the intervals $[\ang{\wedge}^{<\nu}, \Cons0{<\nu}^{<\nu}] \cong [\ang{\wedge, 1}^{<\nu}, \Op2^{<\nu}]$, which contain respectively the least and greatest elements in each fiber.
This is depicted in \cref{fig:post-mod-T0inf} for $\nu = \omega$: the bottommost and topmost shaded intervals are isomorphic, and everything between them is between two clones corresponding to each other in the bottom interval and the top interval.

\subsection{The downward-closure of a clone}
\label{sec:post-down}

We now extend \cref{thm:post-T0inf-mod} to show that in fact,  ``most'' of $[\ang{\wedge}^{<\nu}, \Op2^{<\nu}]$ decomposes as a product of two intervals, i.e., ``most'' clones containing $\wedge$ are determined by two ``orthogonal projections'', one onto $[\ang{\wedge}^{<\nu}, \Cons0{<\nu}^{<\nu}] \cong [\ang{\wedge, 1}^{<\nu}, \Op2^{<\nu}]$ as in \ref{thm:post-T0inf-mod}, and the other given as follows.

\begin{definition}
\label{def:down}
For a class of functions $G \subseteq \Op2$, let $\down G$ denote its \defn{pointwise downward-closure}:
\begin{align*}
\down G := \set{f : 2^n -> 2}{\exists g \in G^n\, (f \le g)}.
\end{align*}
For example:
\begin{itemize}
\item  $\down \ang{\emptyset}^{<\nu} = \set{f : 2^n -> 2}{n < \nu,\, \exists i < n\, (f \le \pi_i)} = \Cons0{<\nu}^{<\nu}$.
\item  $\down \ang{\Mono\Cons01\Cons11^{<\omega}}^{<\omega_1} = \Limm011^{<\omega_1}$.
(Recall \cref{def:post-clones}.
Given $f \in \Limm011^\omega$, i.e., $f : 2^\omega -> 2$ which is $0$ on a neighborhood of $\vec{0} \in 2^\omega$, which we may assume to be clopen, downward-closed and $\subsetneq 2^\omega$, the indicator function $g$ of the complement of that neighborhood is in $\ang{\Mono\Cons01\Cons11^{<\omega}}$.)
\end{itemize}
\end{definition}

\begin{lemma}
\label{thm:down-clone}
If $G \subseteq \Mono$ is a clone, then so is $\down G$.
\end{lemma}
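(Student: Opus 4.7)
The plan is direct: verify the two defining properties of a clone (contains projections, closed under composition) for $\down G$, using only the hypotheses that $G$ is already a clone and that its functions are monotone.

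For projections, since $G$ is a clone it contains every projection $\pi^n_i$, and $\pi^n_i \le \pi^n_i$, so $\pi^n_i \in \down G$ trivially.

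The real content is closure under composition. Given an $n$-ary $g \in \down G$ and $m$-ary $\vec{f} = (f_i)_{i<n} \in (\down G)^n$, pick by definition witnesses $g' \in G^n$ with $g \le g'$ and $f'_i \in G^m$ with $f_i \le f'_i$. The natural candidate upper bound for $g \circ \vec{f}$ is $g' \circ \vec{f'}$, which already lies in $G$ since $G$ is a clone. To verify $g \circ \vec{f} \le g' \circ \vec{f'}$ pointwise, I would insert the intermediate function $g' \circ \vec{f}$: the first inequality $g \circ \vec{f} \le g' \circ \vec{f}$ is immediate from $g \le g'$, while the second inequality $g' \circ \vec{f} \le g' \circ \vec{f'}$ uses exactly the hypothesis that $g' \in G \subseteq \Mono$ is monotone, applied to the pointwise inequality $\vec{f} \le \vec{f'}$ of input tuples. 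Hence $g \circ \vec{f} \in \down G$.

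There is essentially no obstacle: the proof is a single-line chain of inequalities, and the monotonicity hypothesis is used precisely once, to move the primes past $g'$. This also explains why the assumption $G \subseteq \Mono$ cannot be dropped — without monotonicity, an upper bound on the inputs gives no information about the output.
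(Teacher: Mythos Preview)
Your proof is correct and essentially identical to the paper's own argument: the paper also picks witnesses and uses the same two-step chain $g \circ \vec{f} \le g' \circ \vec{f} \le g' \circ \vec{f'}$, with monotonicity of $g'$ giving the second inequality.
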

\begin{proof}
Let $f \le g \in G^n$, and $f_i \le g_i \in G^m$ for each $i < n$.
Then $f \circ \vec{f} \le g \circ \vec{f} \le g \circ \vec{g} \in G^m$.
\end{proof}

\begin{remark}
Clearly $G \subseteq F \subseteq \down G \implies \down F = \down G$.
Thus if $G \subseteq \Op2$ is such that $\down G$ is a clone, then so is $\down F$ for every $F \in [G, \down G]$.
In particular, if $F \subseteq \down (F \cap \Mono)$, then $\down F$ is a clone.
\end{remark}

The following generalizes \cref{thm:ceil-clone}, which is the case $G = \ang{\emptyset}^{<\nu}$:

\begin{lemma}
\label{thm:ceil-down-clone}
For any regular cardinal $\nu$, $G \subseteq \Op2^{<\nu}$, and ${<}\nu$-ary clone $G \cup \{\wedge\} \subseteq F \subseteq \Op2^{<\nu}$,
\begin{align*}
\ang{F \cap \down G}^{<\nu} = \ang{\ceil{F} \cup G}^{<\nu}.
\end{align*}
\end{lemma}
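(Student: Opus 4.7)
My plan is to verify both inclusions of the stated equality.

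\emph{Forward inclusion} $\ang{F \cap \down G}^{<\nu} \subseteq \ang{\ceil{F} \cup G}^{<\nu}$. It suffices to show the generating set $F \cap \down G$ is already contained in $\ang{\ceil{F} \cup G}^{<\nu}$. Given any $h \in F \cap \down G$ of arity $n$, the definition of $\down G$ supplies some $g \in G^n$ with $h \le g$ pointwise. The key algebraic identity is
\[
h(\vec{x}) \;=\; g(\vec{x}) \wedge h(\vec{x}) \;=\; \ceil{h}\bigl(g(\vec{x}),\vec{x}\bigr),
\]
which exhibits $h$ as the composition $\ceil{h} \circ (g, \pi_0^{(n)}, \dotsc, \pi_{n-1}^{(n)})$. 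Since $h \in F$ implies $\ceil{h} \in \ceil{F}$ (by \cref{rmk:ceil-floor}), and $g \in G$ by choice, this composition lies in $\ang{\ceil{F} \cup G}^{<\nu}$.

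\emph{Reverse inclusion} $\ang{\ceil{F} \cup G}^{<\nu} \subseteq \ang{F \cap \down G}^{<\nu}$. Here I show $\ceil{F} \cup G \subseteq F \cap \down G$. For $g \in G$, the hypothesis $G \subseteq F$ gives $g \in F$, while $g \le g$ witnesses $g \in \down G$. For $\ceil{f}$ with $f \in F^n$, \cref{rmk:ceil-floor} combined with $\wedge \in F$ yields $\ceil{f} \in F^{1+n}$; moreover $\ceil{f}(x_0,\vec{x}) = x_0 \wedge f(\vec{x}) \le x_0 = \pi_0^{(1+n)}(x_0,\vec{x})$ pointwise, and provided $G$ contains the projection $\pi_0^{(1+n)}$ (automatic when $G$ is a clone, as will be the case in the intended applications), we conclude $\ceil{f} \in \down G$.

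The crux of the proof is the forward direction, specifically the identity $h = \ceil{h}(g, \vec{\pi})$: this captures the intuition that every $h \in F \cap \down G$ is recovered from its ``upper-bounded version'' $\ceil{h} \in \ceil{F}$ by plugging a bound witness $g \in G$ into the distinguished first input. The reverse direction is essentially formal unwinding of definitions, with the only mildly subtle point being that $G$ must supply enough projections to place $\ceil{f}$ into $\down G$; since every clone contains all projections, this imposes no restriction in practice.
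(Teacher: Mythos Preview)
Your proof is correct and follows essentially the same approach as the paper. For the forward inclusion you reproduce exactly the paper's argument: given $h \in F \cap \down G$ with $h \le g \in G$, write $h(\vec{x}) = \ceil{h}(g(\vec{x}),\vec{x})$. For the reverse inclusion the paper just says ``follows from \cref{rmk:ceil-floor}'', and your unpacking of this (namely $\ceil{F} \subseteq F$ via $\ceil{f} \in \ang{f,\wedge}$, $G \subseteq F$ by hypothesis, and $\ceil{f} \le \pi_0$, $g \le g$ for membership in $\down G$) is precisely the intended reading.

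You are also right to flag the point about projections: for $\ceil{f} \in \down G$ one needs $\pi_0^{(1+n)}$ to lie in $G$, so the reverse inclusion as stated requires $G$ to contain all projections (in particular, it holds when $G$ is a clone). The paper's statement allows arbitrary $G \subseteq \Op2^{<\nu}$, but its proof implicitly uses this, and all applications in the paper take $G$ to be a clone. So your caveat is well taken and in fact slightly sharper than what the paper writes.
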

\begin{proof}
$\supseteq$ follows from \cref{rmk:ceil-floor}.
Conversely, for $f \in F \cap \down G$, we have $f \le g$ for some $g \in G$, whence $f(\vec{x}) = g(\vec{x}) \wedge f(\vec{x}) = \ceil{f}(g(\vec{x}), \vec{x})$ is in $\ang{\ceil{F}}$.
\end{proof}

\begin{corollary}
\label{thm:down-T0inf}
For any regular cardinal $\nu$ and $G \subseteq \Op2^{<\nu}$,
\begin{align*}
\ang{\down G}^{<\nu} = \ang{\Cons0{<\nu}^{<\nu} \cup G}^{<\nu}.
\end{align*}
\end{corollary}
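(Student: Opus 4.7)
The plan is to apply \cref{thm:ceil-down-clone} with $F := \ang{\Cons{0}{<\nu}^{<\nu} \cup G}^{<\nu}$, the clone on the right-hand side of the claimed identity, and then chain with \cref{thm:ceil-clone}. The hypothesis $G \cup \{\wedge\} \subseteq F$ is automatic because $\wedge \le \pi_0$ forces $\wedge \in \Cons{0}{<\nu}^{<\nu} \subseteq F$, so \cref{thm:ceil-down-clone} gives
\[
\ang{F \cap \down G}^{<\nu} = \ang{\ceil{F} \cup G}^{<\nu}.
\]
I then want to identify each side of this display with one side of the desired identity.

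On the left, I will show $\down G \subseteq F$, which forces $F \cap \down G = \down G$ and collapses the left side to $\ang{\down G}^{<\nu}$. The key identity is $\ceil{f}(g(\vec{x}), \vec{x}) = g(\vec{x}) \wedge f(\vec{x})$: when $f \le g \in G$, the right side simplifies to $f(\vec{x})$; since $\ceil{f} \le \pi_0$ automatically puts $\ceil{f} \in \Cons{0}{<\nu}^{<\nu} \subseteq F$ and since $g \in G \subseteq F$, this exhibits any $f \in \down G$ as a composition inside $F$.

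On the right, I will show $\ang{\ceil{F} \cup G}^{<\nu} = F$. The inclusion $\subseteq$ is immediate from $\ceil{F} \subseteq F$ (via \cref{rmk:ceil-floor} and $\wedge \in F$) together with $G \subseteq F$. For the reverse, \cref{thm:ceil-clone} applied to the clone $F$ gives $F \cap \Cons{0}{<\nu} = \ang{\ceil{F}}^{<\nu}$, which equals $\Cons{0}{<\nu}^{<\nu}$ since $\Cons{0}{<\nu}^{<\nu} \subseteq F$; thus $\Cons{0}{<\nu}^{<\nu} \cup G \subseteq \ang{\ceil{F} \cup G}^{<\nu}$, and taking clone closure yields $F \subseteq \ang{\ceil{F} \cup G}^{<\nu}$.

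Chaining the two equalities delivers $\ang{\down G}^{<\nu} = F = \ang{\Cons{0}{<\nu}^{<\nu} \cup G}^{<\nu}$. The only substantive point in the whole argument — and the one easiest to miss — is that $\ceil{f}$ is always bounded by its first argument $\pi_0$ and therefore always lies in $\Cons{0}{<\nu}^{<\nu}$; this is what turns $f = \ceil{f} \circ (g, \pi_0, \pi_1, \dots)$ into a composition inside $\Cons{0}{<\nu}^{<\nu} \cup G$ whenever $f \le g \in G$. Beyond this observation, no new combinatorics is required, and the only bookkeeping is keeping the two inclusions $\Cons{0}{<\nu}^{<\nu} \subseteq F$ and $\ceil{F} \subseteq \Cons{0}{<\nu}^{<\nu}$ straight when invoking the two lemmas.
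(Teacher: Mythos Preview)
Your proof is correct, but it takes a more circuitous route than the paper's. The paper applies \cref{thm:ceil-down-clone} with the simplest possible choice $F = \Op2^{<\nu}$: then $F \cap \down G = \down G$ is immediate, and the right-hand side $\ang{\ceil{\Op2^{<\nu}} \cup G}^{<\nu}$ becomes $\ang{\Cons0{<\nu}^{<\nu} \cup G}^{<\nu}$ directly via \cref{thm:ceil-clone} (which gives $\ang{\ceil{\Op2^{<\nu}}}^{<\nu} = \Op2^{<\nu} \cap \Cons0{<\nu} = \Cons0{<\nu}^{<\nu}$). That is the entire argument.

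By instead choosing $F = \ang{\Cons0{<\nu}^{<\nu} \cup G}^{<\nu}$, you create two nontrivial obligations — showing $\down G \subseteq F$ and showing $\ang{\ceil{F} \cup G}^{<\nu} = F$ — both of which you discharge correctly, but neither of which is needed with the paper's choice. In particular, your step~3 (the identity $f = \ceil{f}(g(\vec{x}),\vec{x})$ for $f \le g$) is literally the $\subseteq$ half of the proof of \cref{thm:ceil-down-clone} itself, so you are re-deriving part of that lemma rather than just invoking it. Your approach does have the mild advantage of making the inclusion $\down G \subseteq \ang{\Cons0{<\nu}^{<\nu} \cup G}^{<\nu}$ fully explicit, but the paper's one-line specialization is cleaner.
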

\begin{proof}
Take $F = \Op2^{<\nu}$ above.
\end{proof}

\begin{corollary}
\label{thm:post-lower}
For any regular cardinal $\nu$ and ${<}\nu$-ary clone $\wedge \in G \subseteq \Op2^{<\nu}$,
\begin{align*}
\ang{G \cup \{1\}}^{<\nu} \cap \down G = G.
\end{align*}
\end{corollary}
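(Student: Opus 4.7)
The inclusion $\supseteq$ is immediate: every $g \in G$ trivially lies in $\ang{G \cup \{1\}}^{<\nu}$, and $g \le g$ witnesses $g \in \down G$.

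For the converse, I plan to argue by a direct reconstruction. Fix $f \in \ang{G \cup \{1\}}^{<\nu} \cap \down G$. Since $G$ is a ${<}\nu$-ary clone, \cref{thm:post-const-xsec} lets me write $f = h_1$, i.e.\ $f(\vec{x}) = h(1, \vec{x})$, for some $h \in G^{\ge 2}$; and since $f \in \down G$, there is $g \in G$ of the same arity as $f$ with $f \le g$. The aim is then to express $f$ using only $h$, $g$, and $\wedge$, all of which already lie in $G$.

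My candidate is
\begin{align*}
k(\vec{x}) := g(\vec{x}) \wedge h(g(\vec{x}), \vec{x}),
\end{align*}
which belongs to $G$ by closure under composition. A split on the value of $g(\vec{x}) \in \{0, 1\}$ should verify $k = f$: if $g(\vec{x}) = 1$ then $k(\vec{x}) = 1 \wedge h(1, \vec{x}) = f(\vec{x})$; if $g(\vec{x}) = 0$ then $k(\vec{x}) = 0$, and $f \le g$ forces $f(\vec{x}) = 0$ as well. Morally, this formula is the cross-sectioning identity $\ceil{h_1}(x_0, \vec{x}) = x_0 \wedge h(x_0, \vec{x})$ from \cref{rmk:xsec-ceil-floor}, with $g(\vec{x})$ plugged in for the dummy first coordinate $x_0$.

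The same conclusion can alternatively be assembled from the preceding lemmas: \cref{thm:ceil-down-clone} applied to $F := \ang{G \cup \{1\}}^{<\nu}$ yields $\ang{F \cap \down G}^{<\nu} = \ang{\ceil{F} \cup G}^{<\nu}$, while \cref{thm:ceil-floor-gen} identifies $\ang{\ceil{F}}^{<\nu}$ with $\ang{\ceil{G \cup \{1\}} \cup \{\wedge\}}^{<\nu}$, which is contained in $G$ since $\ceil{1}$ is a projection and $\ceil{g} \in G$ whenever $\wedge, g \in G$. The only real subtlety, in either version, is recognizing that the naive $h(g(\vec{x}), \vec{x})$ fails because $h(0, \vec{x})$ is uncontrolled, and that meeting with $g(\vec{x})$ and invoking $f \le g$ is exactly what repairs the bad case.
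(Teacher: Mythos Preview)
Your proof is correct. Your direct construction $k(\vec{x}) = g(\vec{x}) \wedge h(g(\vec{x}), \vec{x})$ is precisely the identity $\ceil{h_1}(g(\vec{x}),\vec{x})$ from \cref{rmk:xsec-ceil-floor} that underlies the paper's proof, which packages the same computation by applying \cref{thm:ceil-down-clone} with $F = \ang{G \cup \{1\}}^{<\nu}$ and invoking \cref{thm:post-T0inf-mod} (rather than \cref{thm:ceil-floor-gen} as you do) for the fact that $\ang{\ceil{F}}^{<\nu} \subseteq G$; so your two routes and the paper's are all the same argument at different levels of abstraction.
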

This says that $G$ is ``downward-closed within its fiber from \cref{thm:post-T0inf-mod}''.
\begin{proof}
Take $F = \ang{G \cup \{1\}}^{<\nu}$ above, noting that $\ang{\ceil{\ang{G \cup \{1\}}^{<\nu}}}^{<\nu} = G \cap \Cons0{<\nu}$ by \ref{thm:post-T0inf-mod}.
\end{proof}

Summarizing the preceding results, we have:

\begin{proposition}
\label{thm:post-T0inf-down}
Let $\nu$ be a regular cardinal.
We have an adjunction
\begin{equation*}
\begin{tikzcd}[column sep=1em]
&& \scriptstyle (\ang{F \cup \{1\}}^{<\nu}, \ang{\down F}^{<\nu})
    \dar[phantom, "\scriptstyle\in"{sloped}]
\\[-1em]
\scriptstyle (F \cap \Cons0{<\nu}, \ang{\down F}^{<\nu})
    \rar[phantom, "\scriptstyle\in"{sloped}]
    \ar[urr, mapsto, bend left=5]
&
{} [\ang{\wedge}^{<\nu}, \Cons0{<\nu}^{<\nu}] \times [\Cons0{<\nu}^{<\nu}, \Op2^{<\nu}] \rar[phantom, "\cong"]
    \dar[shift left=2, right adjoint'] &
|[label={[rotate=-90,anchor=west,inner sep=0pt,label={[name=GH]right:\scriptstyle(G,H)}]below:\scriptstyle\ni}]|
{} [\ang{\wedge, 1}^{<\nu}, \Op2^{<\nu}] \times [\Cons0{<\nu}^{<\nu}, \Op2^{<\nu}]
\\[-1em]
\scriptstyle F \rar[phantom, "\scriptstyle\in"] \ar[u, mapsto] &
{} [\ang{\wedge}^{<\nu}, \Op2^{<\nu}]
    \uar[shift left=2] &
|[xshift=-5em]|
\scriptstyle G \cap H
    \lar[phantom, "\scriptstyle\ni"]
    \ar[to=GH, mapsfrom]
\end{tikzcd}
\end{equation*}
which restricts to an order-embedding on those $F \in [\ang{\wedge}^{<\nu}, \Op2^{<\nu}]$ such that $\down F$ is a clone.
This includes all monotone $F$, and more generally, all $F \in [G, \down G]$ for some subclone $G \subseteq \Mono^{<\nu}$.
\qed
\end{proposition}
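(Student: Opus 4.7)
The plan is to check the adjoint correspondence \cref{eq:adj} directly, then apply \cref{thm:post-lower} to obtain the order-embedding claim on the indicated subclass. Both maps are clearly order-preserving; the content is in the adjunction condition and the identification of the unit.

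The key preliminary observation I would record first is that every clone $H \in [\Cons0{<\nu}^{<\nu}, \Op2^{<\nu}]$ is automatically downward-closed: by \cref{thm:down-T0inf} with $G := H$, we have $\ang{\down H}^{<\nu} = \ang{\Cons0{<\nu}^{<\nu} \cup H}^{<\nu} = H$, hence $\down H \subseteq H$. The same corollary also shows that the second coordinate of the purported left adjoint is well-defined, i.e., $\ang{\down F}^{<\nu} \supseteq \Cons0{<\nu}^{<\nu}$.

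For the adjoint correspondence, I would unpack: given $F \in [\ang{\wedge}^{<\nu}, \Op2^{<\nu}]$ and $(G,H)$ in the product,
\[
(\ang{F \cup \{1\}}^{<\nu}, \ang{\down F}^{<\nu}) \le (G, H) \iff \ang{F \cup \{1\}}^{<\nu} \subseteq G \text{ and } \ang{\down F}^{<\nu} \subseteq H.
\]
The first conjunct is equivalent to $F \subseteq G$ (since $1 \in \ang{\wedge,1}^{<\nu} \subseteq G$), while the second is equivalent to $F \subseteq H$ (the nontrivial direction uses the downward-closure of $H$ noted above, so that $F \subseteq H \implies \down F \subseteq H \implies \ang{\down F}^{<\nu} \subseteq H$). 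Their conjunction is $F \subseteq G \cap H$, giving the adjunction. The identification of the first coordinate with $F \cap \Cons0{<\nu}$, as displayed in the top row of the diagram, is precisely the isomorphism of \cref{thm:post-T0inf-mod}.

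For the embedding claim, I would use that a left adjoint $L \dashv R$ is an order-embedding exactly on those $F$ for which $R(L(F)) = F$. Here
\[
R(L(F)) = \ang{F \cup \{1\}}^{<\nu} \cap \ang{\down F}^{<\nu}.
\]
When $\down F$ is a clone, so that $\ang{\down F}^{<\nu} = \down F$, applying \cref{thm:post-lower} with $G := F$ yields $\ang{F \cup \{1\}}^{<\nu} \cap \down F = F$, as required. The two sufficient conditions for $\down F$ to be a clone then follow from \cref{thm:down-clone} and the subsequent remark: $F \subseteq \Mono^{<\nu}$ is covered directly by the lemma, and $F \in [G, \down G]$ with $G \subseteq \Mono^{<\nu}$ gives $\down F = \down G$, which is a clone. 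No step is substantively difficult; the only mildly surprising point is the automatic downward-closure of arbitrary clones containing $\Cons0{<\nu}^{<\nu}$, which falls out of \cref{thm:down-T0inf} essentially for free.
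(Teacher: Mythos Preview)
Your proof is correct and is essentially the argument the paper has in mind: the proposition is stated with a bare \qed after ``Summarizing the preceding results, we have'', and you have correctly identified and applied those results (\cref{thm:down-T0inf} for the well-definedness and downward-closure of $H$, the universal property of $\ang{F \cup \{1\}}^{<\nu}$ for the first coordinate, \cref{thm:post-lower} for the unit computation, and \cref{thm:down-clone} with its subsequent remark for the final clause). One minor quibble: ``exactly on those $F$'' slightly overstates the general fact about adjunctions, but you only use the sufficient direction, which is all that is needed.
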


In other words, every such clone $F$ is determined by its two ``projections'' or ``coordinates'' $F \cap \Cons0{<\nu}$ and $\down F$, in the intervals $[\ang{\wedge}^{<\nu}, \Cons0{<\nu}^{<\nu}]$ and $[\Cons0{<\nu}^{<\nu}, \Op2^{<\nu}]$.
This gives a detailed analysis of the ``tube'' on the left side of Post's lattice \ref{fig:post} and its ${<}\nu$-ary version; see \cref{fig:post-mod-down}.

Some particular consequences of \cref{thm:post-T0inf-down} are:

\begin{corollary}
\label{thm:post-down-emb}
For any regular cardinal $\nu$ and ${<}\nu$-ary clones $\wedge \in G \subseteq \Mono\Cons0{<\nu}^{<\nu}$ and $G \subseteq H \subseteq \Cons0{<\nu}^{<\nu}$, the modularity adjunction
\begin{align*}
[G, \ang{G \cup \{1\}}^{<\nu}] &\rightleftarrows [H, \ang{H \cup \{1\}}^{<\nu}] \\
F &|-> \ang{F \cup H}^{<\nu}
\end{align*}
exhibits the left interval as a retract of the right.
\qed
\end{corollary}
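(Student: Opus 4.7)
The plan is to show that the left adjoint $F \mapsto \ang{F \cup H}^{<\nu}$ of the modularity adjunction is injective, which is equivalent to showing that the round-trip $F \mapsto \ang{F \cup H}^{<\nu} \cap \ang{G \cup \{1\}}^{<\nu}$ is the identity on $[G, \ang{G \cup \{1\}}^{<\nu}]$. Setting $F' := \ang{F \cup H}^{<\nu} \cap \ang{G \cup \{1\}}^{<\nu}$, we clearly have $F \subseteq F' \subseteq \ang{G \cup \{1\}}^{<\nu}$ and $G \subseteq F \subseteq F'$, so both $F, F' \in [G, \ang{G \cup \{1\}}^{<\nu}]$; the goal is $F = F'$.

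The main tool is \cref{thm:post-T0inf-down}, which asserts that the ``coordinate'' map $F \mapsto (F \cap \Cons0{<\nu}, \ang{\down F}^{<\nu})$ is an order-embedding on those $F \in [\ang{\wedge}^{<\nu}, \Op2^{<\nu}]$ for which $\down F$ is a clone. To secure its hypothesis, note that since $G \subseteq \Mono^{<\nu}$ and the constant $1$ is monotone, $\ang{G \cup \{1\}}^{<\nu} \subseteq \Mono^{<\nu}$; hence $F, F' \subseteq \Mono^{<\nu}$, and by \cref{thm:down-clone} both $\down F$ and $\down F'$ are clones. So it suffices to show $F$ and $F'$ have the same two coordinates.

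For the $\Cons0{<\nu}$ coordinate, \cref{thm:post-T0inf-mod} gives $\ang{G \cup \{1\}}^{<\nu} \cap \Cons0{<\nu} = G$ (since $G \subseteq \Cons0{<\nu}$); as both $F, F' \subseteq \ang{G \cup \{1\}}^{<\nu}$ and both contain $G$, both intersect $\Cons0{<\nu}$ in precisely $G$. For the downward-closure coordinate, $\ang{\down F}^{<\nu} \subseteq \ang{\down F'}^{<\nu}$ is immediate. The reverse containment uses \cref{thm:down-T0inf} twice, together with $H \subseteq \Cons0{<\nu}^{<\nu}$:
\[
\ang{\down F'}^{<\nu} = \ang{\Cons0{<\nu}^{<\nu} \cup F'}^{<\nu} \subseteq \ang{\Cons0{<\nu}^{<\nu} \cup F \cup H}^{<\nu} = \ang{\Cons0{<\nu}^{<\nu} \cup F}^{<\nu} = \ang{\down F}^{<\nu}.
\]
Hence $F = F'$ by the embedding in \cref{thm:post-T0inf-down}, establishing the retract.

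The only subtle step is verifying that $\down F$ is a clone so that \cref{thm:post-T0inf-down} applies; this works out cleanly because $1 \in \Mono$ forces the entire interval $[G, \ang{G \cup \{1\}}^{<\nu}]$ to sit inside $\Mono^{<\nu}$. The rest of the argument is a direct bookkeeping exercise with the already-established adjunctions.
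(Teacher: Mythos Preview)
Your proof is correct and is precisely the intended argument: the paper records this corollary with a bare \qed as a direct consequence of \cref{thm:post-T0inf-down}, and you have simply spelled out those details (verifying that $F,F'\subseteq\Mono^{<\nu}$ so that the embedding applies, then matching the two coordinates via \cref{thm:post-T0inf-mod} and \cref{thm:down-T0inf}).
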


This says that the ``vertical fibers'' of the image of the embedding in \ref{thm:post-T0inf-down} (i.e., the fibers of the bundle \ref{thm:post-T0inf-mod}) are ``increasing as the $x$-coordinate (in $[\ang{\wedge}^{<\nu}, \Cons0{<\nu}^{<\nu}]$) increases''.

\begin{corollary}
\label{thm:post-down-mod}
Let $\nu$ be a regular cardinal, $\wedge \in H \subseteq \Mono^{<\nu}$ be a ${<}\nu$-ary clone.
Then we have modularity isomorphisms between the intervals
\begin{equation*}
\begin{alignedat}[b]{2}
[H \cap \Cons0{<\nu}, \Cons0{<\nu}^{<\nu}]
&\cong [H, \down H]
&&\cong [\ang{H \cup \{1\}}^{<\nu}, \Op2^{<\nu}] \\
F &|-> \ang{F \cup H}^{<\nu} &&|-> \ang{F \cup H \cup \{1\}}^{<\nu} = \ang{F \cup \{1\}}^{<\nu} \\
G \cap \Cons0{<\nu} = \ang{\ceil{G}}^{<\nu} &<-| && \mathllap{G \cap \down H} <-| G.
\end{alignedat}
\qed
\end{equation*}
\end{corollary}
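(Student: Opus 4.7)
The plan is to derive this corollary from the general product embedding of \cref{thm:post-T0inf-down} combined with the top-level isomorphism of \cref{thm:post-T0inf-mod}, restricted to an appropriate sub-interval.

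First, we observe that $\down H$ is a clone by \cref{thm:down-clone} (using $H \subseteq \Mono^{<\nu}$), and that for every $F \in [H, \down H]$ we have $\down F = \down H$: the inclusion $H \subseteq F$ forces $\down H \subseteq \down F$, while $F \subseteq \down H$ combined with idempotence of $\down$ forces $\down F \subseteq \down H$. Consequently every $F \in [H, \down H]$ lies in the domain of the order-embedding of \cref{thm:post-T0inf-down}, with ``second coordinate'' $\ang{\down F}^{<\nu}$ pinned to $\down H$, leaving the ``first coordinate'' $F \mapsto F \cap \Cons0{<\nu}$ as the only source of variation.

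For the first isomorphism $[H, \down H] \cong [H \cap \Cons0{<\nu}, \Cons0{<\nu}^{<\nu}]$, we check that the first-coordinate map surjects onto the claimed interval (the upper endpoint is correct because $\Cons0{<\nu}^{<\nu} = \down\ang{\emptyset}^{<\nu} \subseteq \down H$): given $F' \in [H \cap \Cons0{<\nu}, \Cons0{<\nu}^{<\nu}]$, set $F := \ang{F' \cup H}^{<\nu}$, which lies in $[H, \down H]$ because $F' \cup H \subseteq \down H$ and $\down H$ is a clone. To verify $F \cap \Cons0{<\nu} = F'$, apply \cref{thm:ceil-clone,thm:ceil-floor-gen} to obtain $F \cap \Cons0{<\nu} = \ang{\ceil{F'} \cup \ceil{H} \cup \{\wedge\}}^{<\nu}$; since $\wedge \in F'$ gives $\ceil{F'} \subseteq F'$ and $\ceil{H} \subseteq H \cap \Cons0{<\nu} \subseteq F'$, this collapses to $F'$. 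This matches the modularity adjunction of \cref{eq:adj-mod} since $H \vee \Cons0{<\nu}^{<\nu} = \down H$ by \cref{thm:down-T0inf} and $H \wedge \Cons0{<\nu}^{<\nu} = H \cap \Cons0{<\nu}$ is automatic.

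For the second isomorphism, we restrict the top-level isomorphism $[\ang{\wedge}^{<\nu}, \Cons0{<\nu}^{<\nu}] \cong [\ang{\wedge, 1}^{<\nu}, \Op2^{<\nu}]$ of \cref{thm:post-T0inf-mod} to the sub-interval $[H \cap \Cons0{<\nu}, \Cons0{<\nu}^{<\nu}]$. The image matches $[\ang{H \cup \{1\}}^{<\nu}, \Op2^{<\nu}]$ once we verify that $H \cap \Cons0{<\nu}$ corresponds to $\ang{H \cup \{1\}}^{<\nu}$, i.e., $\ang{H \cup \{1\}}^{<\nu} \cap \Cons0{<\nu} = H \cap \Cons0{<\nu}$: applying \cref{thm:ceil-clone,thm:ceil-floor-gen} and using $\ceil{1} = \wedge$ from \cref{rmk:ceil-floor} reduces the left side to $\ang{\ceil{H}}^{<\nu}$, which sits inside $H \cap \Cons0{<\nu}$ because $\ceil{H} \subseteq H$ and every $\ceil{h} \le \pi_0$. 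The composite middle-to-right map thus sends $F$ to $\ang{(F \cap \Cons0{<\nu}) \cup \{1\}}^{<\nu} = \ang{F \cup \{1\}}^{<\nu}$, the last equality by the identical reduction applied to $F$ in place of $H$; and this equals $\ang{F \cup H \cup \{1\}}^{<\nu}$ since $H \subseteq F$. No individual step is conceptually difficult; the main bookkeeping obstacle is verifying that the various ``$\ceil{-}$'' reductions collapse to the expected clones, so that the endpoints of the restricted isomorphism really do match $[\ang{H \cup \{1\}}^{<\nu}, \Op2^{<\nu}]$.
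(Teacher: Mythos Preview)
Your argument is correct and follows essentially the same route as the paper, which treats the corollary as an immediate consequence of \cref{thm:post-T0inf-down} (fixing the ``$y$-coordinate'' $\down F = \down H$) together with \cref{thm:post-T0inf-mod}; you have simply unpacked the verification of surjectivity and of the endpoint correspondence in more detail. One tiny slip: $\ceil{1}$ is $\pi_0$, not $\wedge$ (you may be thinking of $\ceil{\id} = \wedge$), but since both are already in any clone containing $\wedge$ this does not affect the reduction.
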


This says that the ``horizontal slices'' of \ref{thm:post-T0inf-down}, with fixed ``$y$-coordinate $\down H \in [\Cons0{<\nu}^{<\nu}, \Op2^{<\nu}]$'', are isomorphic to the top and bottom slices between which they are sandwiched.
For example, taking $H = \ang{\Mono\Cons01\Cons11^{<\omega}}^{<\omega_1} = \ang{\wedge,\vee}^{\omega_1}$, so that $\down H = \Limm011^{<\omega_1}$ (see \cref{def:down}), we get
\begin{alignat*}{2}
\yesnumber
\label{eq:post-down-MT01T11-mod}
[\ang{\ceil{\vee}}^{<\omega_1}, \Cons0\omega^{<\omega_1}]
&\cong [\ang{\wedge,\vee}^{<\omega_1}, \Limm011^{<\omega_1}]
&&\cong [\ang{\wedge,\vee,1}^{<\omega_1}, \Op2^{<\omega_1}] \\
F &|-> \ang{F \cup \{\vee\}}^{<\omega_1} &&|-> \ang{F \cup \{\vee,1\}}^{<\omega_1} = \ang{F \cup \{1\}}^{<\omega_1} \\
G \cap \Cons0\omega = \ang{\ceil{G}}^{<\omega_1} &<-| && \mathllap{G \cap \Limm011} <-| G.
\end{alignat*}
When instead $H = \Mono\Cons02\Cons11^{<\omega}$, this yields the slice depicted in \cref{fig:post-mod}\subref{fig:post-mod-T0inf} and \subref{fig:post-mod-down}.

\begin{remark}
When $\nu = \omega$, by an inspection of finitary Post's lattice \ref{fig:post}, we see that \cref{thm:post-T0inf-down} in fact applies to all finitary clones $F \ni \wedge$, i.e., $\down F = \ang{F \cup \Cons0{<\omega}^{<\omega}}^{<\omega}$ is always a clone.
It does not seem possible to prove this in full generality for $\nu > \omega$.
However, in the context of ${<}\omega_1$-ary Borel clones, it turns out that \cref{thm:post-T0inf-down} does always apply; see \cref{thm:post-borel-T0inf-down}.
\end{remark}

In determining for which clones $F \ni \wedge$ is $\down F$ also a clone, the following notion can be useful:

\begin{definition}
\label{def:up}
For a function $f : 2^n -> 2$, let $\up f : 2^n -> 2$ denote the indicator function of the \defn{upward-closure} $\up f^{-1}(1)$ of $f^{-1}(1) \subseteq 2^n$, namely
\begin{align*}
(\up f)(\vec{x}) := \bigvee_{\vec{y} \le \vec{x}} f(\vec{y}).
\end{align*}
For a class of functions $F \subseteq \Op2$, let $\up[F] := \{\up f \mid f \in F\}$.
(Note that this is \emph{not} dual to the pointwise downward-closure $\down G$ of a set of functions from \cref{def:down}, which refers to the pointwise ordering on functions $2^n -> 2$, whereas this refers to the bitwise ordering on $2^n$.)
\end{definition}

The upward-closure $\up f$ is the least monotone function $\ge f$.
It follows that for any $F \subseteq \Op2$,
\begin{align}
\label{eq:up}
F \cap \Mono \subseteq \up[F] \subseteq \Mono, &&
F \cap \Mono = F \cap \up[F], &&
F \subseteq \down \up[F].
\end{align}
Thus if $\up[F] \subseteq F$, then $G := \up[F] \subseteq \Mono$ obeys $G \subseteq F \subseteq \down G$, and so $\down F = \down G$ is a clone.
(It is not always true that $\up[F] \subseteq F$ for a clone $F$; see \cref{rmk:post-up-analytic}.)

\begin{figure}
\centering
\subcaptionbox{%
    \cref{thm:post-T0inf-mod}%
    \label{fig:post-mod-T0inf}%
}{\includegraphics[valign=t]{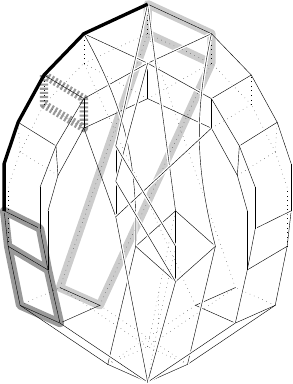}}
\hfill
\subcaptionbox{%
    \cref{thm:post-T0inf-down}%
    \label{fig:post-mod-down}%
}{\includegraphics[valign=t]{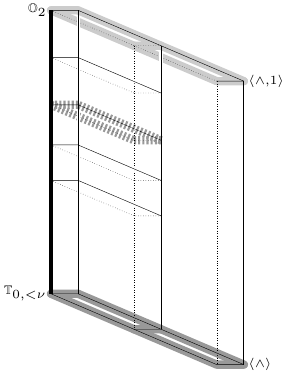}}
\hfill
\subcaptionbox{%
    \cref{thm:post-D-beta-mod}%
    \label{fig:post-mod-D}%
}{\includegraphics[valign=t]{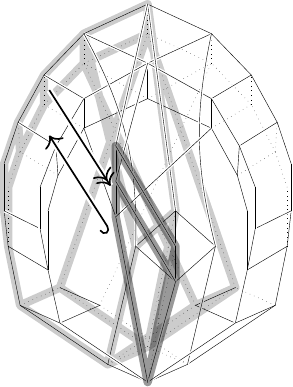}}
\caption{Modularity isomorphisms/retractions between various subintervals of Post's lattice.}
\label{fig:post-mod}
\end{figure}

\subsection{The self-dualizing operator}
\label{sec:post-beta}

Next, we consider a different region of Post's lattice on which the cross-sectioning operators yield an isomorphism: the self-dual functions.
(This is the same approach used to classify the self-dual clones in the original finitary Post's lattice in e.g., \cite[\S3.2.3]{Lau}.)

\begin{definition}
\label{def:beta}
For $f : 2^n -> 2$, define $\beta(f) : 2^{1+n} -> 2$ by
\begin{align*}
\beta(f)(x_0, \vec{x}) := (x_0 ? \delta(f)(\vec{x}) : f(\vec{x})).
\end{align*}
(Recall from \cref{def:post-funs} that $?:$ denotes the ternary conditional.)
\end{definition}

\begin{remark}
\label{rmk:beta}
It is easily seen that for any cardinal $n$, including $n=0$, $\beta$ is a bijection
\begin{equation*}
\begin{tikzcd}[column sep=1em]
2^{2^n} \rar[phantom, "\smash{\overset{\beta}\cong}\mathstrut"] &
\#D^{1+n} \rar[phantom, "\subseteq"] &
2^{2^{1+n}}
    \ar[ll, two heads, bend left=30, "f_0 \mapsfrom f"]
\end{tikzcd}
\end{equation*}
with retraction $(-)_0 : 2^{2^{1+n}} ->> 2^{2^n}$ (the cross-section from \cref{def:xsec}), i.e., for any $f : 2^{1+n} -> 2$,
\begin{align*}
f \in \#D^{1+n}
\iff  f \in \im(\beta)
\iff  f = \beta(f_0).
\end{align*}
\end{remark}

\begin{proposition}
\label{thm:post-D-beta-mod}
For any regular cardinal $\nu$, we have an order-isomorphism
\begin{align*}
[\ang{\emptyset}^{<\nu}, \Dual^{<\nu}] &\cong \{G \in \Clo^{<\nu}{2} \mid 0 \in G^1,\, \beta(G) \subseteq G\} \subseteq [\ang{0}^{<\nu}, \Op2^{<\nu}] \\
F &|-> \ang{F \cup \{0\}}^{<\nu} = \{f_0 \mid f \in F^{\ge2}\} \\
G \cap \Dual = \ang{\beta(G)}^{<\nu} &<-| G.
\end{align*}
\end{proposition}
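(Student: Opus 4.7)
The plan is to leverage the bijection $\beta : 2^{2^n} \xrightarrow{\cong} \Dual^{1+n}$ from \cref{rmk:beta} (with inverse $(-)_0$), together with the explicit description $\ang{F \cup \{0\}}^{<\nu} = \{f_0 \mid f \in F^{\ge 2}\}$ from \cref{thm:post-const-xsec}. Order-preservation of both maps is immediate, so the work lies entirely in verifying that the image of $F \mapsto \ang{F \cup \{0\}}^{<\nu}$ hits precisely those $G$ with $0 \in G^1$ and $\beta(G) \subseteq G$, and that the two maps are mutually inverse.

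The key identity I would isolate at the outset: for any $g \in \Dual^n$ with $n \geq 1$, the value $\beta(g)(x_0, \vec{x}) = (x_0 ? \delta(g)(\vec{x}) : g(\vec{x}))$ collapses to $g(\vec{x})$ independently of $x_0$ (since $\delta(g) = g$). Hence $g(\vec{x}) = \beta(g)(x_0, x_0, x_1, \dotsc, x_{n-1})$ can be recovered from $\beta(g)$ by a pure clone operation, without invoking the constant $0$. This identity is what handles the arity-$1$ edge case and makes the equality $F = \ang{\beta(G)}^{<\nu}$ possible at all.

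For the forward direction, given a clone $F \subseteq \Dual^{<\nu}$ and setting $G := \ang{F \cup \{0\}}^{<\nu}$, the conditions $0 \in G^1$ and $\beta(G) \subseteq G$ are immediate: each $g \in G$ has the form $f_0$ with $f \in F \subseteq \Dual$, so $\beta(g) = \beta(f_0) = f \in F \subseteq G$ by \cref{rmk:beta}. To recover $F$, I would check $G \cap \Dual = F$: the $\supseteq$ is clear, while for $g \in G \cap \Dual$, writing $g = f_0$ with $f \in F^{\ge 2}$ gives $f = \beta(f_0) = \beta(g)$, and the collapse identity then exhibits $g$ as a clone-theoretic composite of $\beta(g) = f$ with projections, hence $g \in F$. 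Conversely, given $G$ with $0 \in G^1$ and $\beta(G) \subseteq G$, and setting $F := G \cap \Dual$, the inclusion $\ang{F \cup \{0\}}^{<\nu} \subseteq G$ is trivial, and for each $g \in G^n$ ($n \geq 1$) we have $\beta(g) \in \beta(G) \subseteq G$ and $\beta(g) \in \Dual$, hence $\beta(g) \in F$ and $g = \beta(g)_0 \in \{f_0 \mid f \in F^{\ge 2}\} = \ang{F \cup \{0\}}^{<\nu}$. The auxiliary identity $F = \ang{\beta(G)}^{<\nu}$ follows because $\beta(G) \subseteq F$ (just shown) and, conversely, each $f \in F$ equals $\beta(f)$ composed with projections via the collapse identity, with $\beta(f) \in \beta(G)$.

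The main delicate point is the bookkeeping around \cref{def:nullary}: clones exclude nullary functions, so the cross-section $(-)_0$ only makes sense on arities $\geq 2$, and recovering $g$ of arity $1$ from $\beta(g)$ of arity $2$ must go through the projection trick of the collapse identity rather than plugging the constant $0$ into the first slot. Once this arity discipline is in place, everything else is a direct unpacking of \cref{rmk:beta} and \cref{thm:post-const-xsec}.
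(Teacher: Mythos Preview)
Your proof is correct and follows essentially the same strategy as the paper: both arguments rest on \cref{rmk:beta} (that $\beta$ and $(-)_0$ are inverse between $2^{2^n}$ and $\Dual^{1+n}$) and \cref{thm:post-const-xsec}, and both must handle the arity-$1$ edge case arising from \cref{def:nullary}. The one genuine difference is your ``collapse identity'': you observe up front that for self-dual $g$, the function $\beta(g)(x_0,\vec{x}) = g(\vec{x})$ is simply $g$ with an added dummy variable, so $g$ and $\beta(g)$ generate each other via pure variable substitutions. This lets you verify directly that the two maps are inverse and that $F = \ang{\beta(G)}^{<\nu}$, bypassing the paper's more indirect route through the bijection on \emph{all} subsets of $\Dual$ (including nullary cross-sections) followed by the observation that unary constants are recoverable from binary ones in a clone. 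Your approach is slightly cleaner for exactly the reason you identify: the collapse identity absorbs the nullary/unary bookkeeping into a single clone-theoretic step.
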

\begin{proof}
By \cref{rmk:beta}, $F |-> F_0 := \{f_0 \mid f \in F\}$ is a bijection between all subsets of $\Dual$ and arbitrary sets of functions on $2$, including nullary functions $f_0$ which occur when $f \in F$ is unary.
But for a clone $F$ and unary $f \in F$, the unary constant function $x |-> f(0) = f_0$ also appears as $(f \circ \pi^2_0)_0$; thus $F_0$ is completely determined by the positive-arity functions $F_0^{\ge1} \subseteq F_0$, namely a nullary function is in $F_0$ iff the corresponding unary constant function is in $F_0^{\ge1}$.
Hence, $F |-> \ang{F \cup \{0\}}^{<\nu} = F_0^{\ge1}$ (by \cref{thm:post-const-xsec}) remains injective on clones $F$, with inverse (by \cref{rmk:beta}) taking $G = F_0^{\ge1}$ to $\beta(F_0)$, which can be obtained from $\beta(G) = \beta(F_0^{\ge1})$ by adding a unary function $f$ whenever the corresponding binary function $f'(x,y) := f(x)$ is in $\beta(G)$; since $f(x) = f'(x,x)$, this means $\beta(F_0) = \ang{\beta(G)}^{<\nu}$.
This proves that $F |-> \ang{F \cup \{0\}}^{<\nu}$ is injective, with inverse on its image given by $G |-> \ang{\beta(G)}^{<\nu}$, which must hence equal the right adjoint $G |-> G \cap \Dual$ (recall again \ref{eq:adj-mod}).

For a clone of the form $G = \ang{F \cup \{0\}}^{<\nu}$, we clearly have $0 \in G^1$ and $\beta(G) \subseteq \ang{\beta(G)}^{<\nu} = F \subseteq G$.
Conversely, if $G \in \Clo^{<\nu}{2}$ satisfies these two conditions, then clearly $\ang{\ang{\beta(G)}^{<\nu} \cup \{0\}}^{<\nu} \subseteq G$, and again by \cref{rmk:beta,thm:post-const-xsec}, $G = G^{\ge1} = \beta(G)_0^{\ge1} \subseteq (\ang{\beta(G)}^{<\nu})_0^{\ge1} = \ang{\ang{\beta(G)}^{<\nu} \cup \{0\}}^{<\nu}$.
This shows that the image of the isomorphism is as claimed.
\end{proof}

Thus, the classification of self-dual clones reduces to the classification of clones containing $0$ and closed under the $\beta$ operator.
See \cref{fig:post-mod-D}, which should probably be viewed only impressionistically as it is rather difficult to tell at a glance which clones are closed under $\beta$.
In the finitary Post's lattice \ref{fig:post}, the three non-affine clones containing $0$ and closed under $\beta$ are
\begin{align}
\label{eq:post-D-beta-clones}
\ang{\Dual^{<\omega} \cup \{0\}}^{<\omega} &= \Op2^{<\omega}, &
\ang{\Dual\Cons01^{<\omega} \cup \{0\}}^{<\omega} &= \Cons01^{<\omega}, &
\ang{\Dual\Mono^{<\omega} \cup \{0\}}^{<\omega} &= \Mono\Cons02^{<\omega}.
\end{align}
The following lemma can simplify checking closure of a clone under $\beta$ in general:

\begin{lemma}
\label{thm:post-D-beta-gen}
For any set of functions $G \subseteq \Op2^{<\nu}$, we have
\begin{equation*}
\beta(\ang{G}^{<\nu}) \subseteq \ang{\beta(G)}^{<\nu}.
\end{equation*}
Thus, $\beta(G) \subseteq \ang{G}^{<\nu}$ iff $\beta(\ang{G}^{<\nu}) \subseteq \ang{G}^{<\nu}$.
\end{lemma}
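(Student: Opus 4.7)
The plan is to mimic the strategy used for $\ceil{-}$ and $\floor{-}$ in \cref{rmk:ceil-floor,thm:ceil-floor-gen}: establish that $\beta$ satisfies a compositional identity on the nose, then use the left-composition characterization of $\ang{G}^{<\nu}$ from \cref{thm:clone-gen-left} to conclude by an easy closure argument.

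\smallskip

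\emph{Step 1: the key identity.} First I would verify that for any $g : 2^n -> 2$ and $\vec{f} = (f_i)_{i < n}$ with each $f_i : 2^m -> 2$,
\begin{align*}
\beta(g \circ \vec{f}) = \beta(g) \circ (\pi_0, \beta(\vec{f})),
\end{align*}
where $\beta(\vec{f}) := (\beta(f_0), \beta(f_1), \dotsc)$ and $\pi_0 : 2^{1+m} -> 2$ is the projection onto the first coordinate. The proof is a direct verification on the two cross-sections $x_0 = 0$ and $x_0 = 1$: since $\beta(h)(0, \vec{x}) = h(\vec{x})$ and $\beta(h)(1, \vec{x}) = \delta(h)(\vec{x})$, plugging in $x_0 = 0$ to the right-hand side gives $g(\vec{f}(\vec{x})) = (g\circ\vec{f})(\vec{x})$, and plugging in $x_0 = 1$ gives $\delta(g)(\delta(\vec{f})(\vec{x})) = \delta(g \circ \vec{f})(\vec{x})$, agreeing with $\beta(g\circ\vec{f})$ on both. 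I would also note the trivial fact that $\delta$ fixes projections, so $\beta(\pi^n_i) = \pi^{1+n}_{1+i}$, i.e., $\beta$ sends projections to projections.

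\smallskip

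\emph{Step 2: induction via left-composition.} By \cref{thm:clone-gen-left}, $\ang{G}^{<\nu} \subseteq \Op2^{<\nu}$ is the smallest class containing all projections and closed under left composition $g \circ \vec{f}$ with $g \in G$ (and $\vec{f}$ of appropriate arity in $\Op2^{<\nu}$). I claim the class
\begin{align*}
C := \{f \in \Op2^{<\nu} \mid \beta(f) \in \ang{\beta(G)}^{<\nu}\}
\end{align*}
contains the projections and is closed under such left composition: the former is immediate from $\beta(\pi^n_i) = \pi^{1+n}_{1+i}$, and the latter follows from the identity of Step 1, since $\beta(g) \in \beta(G)$ and each $\beta(f_i) \in \ang{\beta(G)}^{<\nu}$ by assumption, whence $\beta(g) \circ (\pi_0, \beta(\vec{f})) \in \ang{\beta(G)}^{<\nu}$. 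Therefore $\ang{G}^{<\nu} \subseteq C$, which is precisely $\beta(\ang{G}^{<\nu}) \subseteq \ang{\beta(G)}^{<\nu}$.

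\smallskip

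\emph{Step 3: the ``thus''.} The biconditional $\beta(G) \subseteq \ang{G}^{<\nu} \iff \beta(\ang{G}^{<\nu}) \subseteq \ang{G}^{<\nu}$ is now formal: the $\Leftarrow$ direction is trivial since $G \subseteq \ang{G}^{<\nu}$; for $\Rightarrow$, if $\beta(G) \subseteq \ang{G}^{<\nu}$ then $\ang{\beta(G)}^{<\nu} \subseteq \ang{G}^{<\nu}$, and combining with what we just proved gives $\beta(\ang{G}^{<\nu}) \subseteq \ang{\beta(G)}^{<\nu} \subseteq \ang{G}^{<\nu}$.

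\smallskip

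There is really no ``hard part'' here; the only thing to be careful about is the case-check in Step 1, which requires keeping straight which occurrences of $\delta$ arise from the outer $\beta$ versus the action of $\delta$ on compositions (namely $\delta(g \circ \vec{f}) = \delta(g) \circ \delta(\vec{f})$, since $\delta$ is a clone automorphism). Once the identity is in hand, Step 2 is a one-line application of \cref{thm:clone-gen-left}, exactly parallel to the proof of \cref{thm:ceil-floor-gen}.
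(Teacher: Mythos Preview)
Your proof is correct and takes a genuinely different route from the paper's. You establish the explicit compositional identity $\beta(g \circ \vec{f}) = \beta(g) \circ (\pi_0, \beta(\vec{f}))$ and then run the left-composition induction of \cref{thm:clone-gen-left}, exactly as in \cref{thm:ceil-floor-gen}. The paper instead leverages the cross-section machinery: since $\beta(G) \subseteq \Dual$ and $\Dual$ is itself a clone, $\ang{\beta(G)}^{<\nu} \subseteq \Dual$, so by \cref{rmk:beta} the preimage $\beta^{-1}(\ang{\beta(G)}^{<\nu})$ equals $(\ang{\beta(G)}^{<\nu})_0$, which by \cref{thm:post-const-xsec} (restricted to positive arities) is the clone $\ang{\ang{\beta(G)}^{<\nu} \cup \{0\}}^{<\nu}$ containing $G$. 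Your argument is more self-contained and makes the analogy with $\ceil{-}$ transparent; the paper's argument avoids verifying any new identity by reusing the $(-)_0$/``adjoin a constant'' correspondence already set up for \cref{thm:post-D-beta-mod}, at the cost of implicitly relying on $\ang{\beta(G)}^{<\nu} \subseteq \Dual$.
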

\begin{proof}
By \cref{rmk:beta}, $\beta^{-1}(\ang{\beta(G)}^{<\nu}) = (\ang{\beta(G)}^{<\nu})_0 \supseteq \beta(G)_0 = G$, whence by \cref{thm:post-const-xsec}, $\beta^{-1}(\ang{\beta(G)}^{<\nu})^{\ge1} = (\ang{\beta(G)}^{<\nu})_0^{\ge1} = \ang{\ang{\beta(G)}^{<\nu} \cup \{0\}}^{<\nu}$ is a clone containing $G$.
\end{proof}

\section{Borel clones on $2$}
\label{sec:borel}

\begin{definition}
Recall that a \defn{Borel set} $A \subseteq 2^n$, where $n \le \omega$, is a set obtained as a countable Boolean combination of the subbasic clopen sets $\pi_i^{-1}(1)$ for each $i < n$.
A \defn{Borel function} $f : 2^n -> 2$ is one such that $f^{-1}(0), f^{-1}(1)$ are Borel sets, i.e., $f$ is the indicator function of a Borel set, i.e., $f$ is obtained by applying $\bigwedge^\omega, \bigvee^\omega, \neg$ pointwise to the $\pi_i$.
Thus
\begin{align*}
\Op2^\Borel := \ang{\bigwedge, \bigvee, \neg}^{<\omega_1} \subseteq \Op2^{<\omega_1}
\end{align*}
is the ${<}\omega_1$-ary clone of all Borel functions of countable arity on $2$.
\end{definition}

We are interested in classifying subclones of $\Op2^\Borel$, which we call \defn{Borel clones} on $2$.
As such, we adopt the following

\begin{notation}
\label{def:clone-borel}
We will treat the superscript ${}^\Borel$ for ``Borel'' as if it were a class of arities, intermediate between finite (${<}\omega$) and countable (${<}\omega_1$).
Thus, in analogy with \cref{def:op}, for a class of functions $F \subseteq \Op2$, we write
\begin{equation*}
F^\Borel := F \cap \Op2^\Borel
\end{equation*}
for the Borel functions in $F$.
The following are some example uses of this notation:
\begin{itemize}
\item
$\Mono^\Borel \subseteq \Op2^\Borel$ denotes the clone of monotone Borel functions $2^n -> 2$, $n \le \omega$ (recall \cref{def:post-clones}).
\item
If $\@M \subseteq \Rel2$ is a class of relations on $2$, then $\Pol^\Borel(\@M) = \Pol(\@M) \cap \Op2^\Borel = \Pol^{<\omega_1}(\@M) \cap \Op2^\Borel$ consists of all Borel polymorphisms of $\@M$.
\item
If $F \subseteq \Op2^\Borel$ is a class of Borel functions, then $\ang{F}^\Borel = \ang{F} \cap \Op2^\Borel = \ang{F}^{<\omega_1}$, while $\-{\ang{F}}^\Borel = \-{\ang{F}} \cap \Op2^\Borel$ consists of the Borel functions which are pointwise limits of functions in $\ang{F}^\Borel$.
Note that if $F = \Pol^{<\omega}(\@M)$ consists of all finitary polymorphisms of a class of relations, then $\-{\ang{F}}^\Borel = \Pol^\Borel(\Inv^{<\omega}(\Pol^{<\omega}(\@M))) = \Pol^\Borel(\@M)$ (by \cref{thm:pancake}); e.g., $\-{\ang{\Mono^{<\omega}}}^\Borel = \Mono^\Borel$.
\end{itemize}
We also write
\begin{equation*}
\Clo^\Borel{2} := [\ang{\emptyset}^\Borel, \Op2^\Borel] \subseteq \Clo^{<\omega_1}{2}
\end{equation*}
for the sublattice of all clones of Borel functions.
Thus, the ``bundle'' of \cref{thm:pancake} specializes to
\begin{equation}
\label{eq:pancake-borel}
\begin{tikzcd}
\Clo^\Borel{2}
    \dar[two heads, shift right=2, "{(-) \cap \Op2^{<\omega}}"]
\\[2em]
\Clo^{<\omega}{2}
    \uar[hook, shift left=6, "{\ang{-}^\Borel}", left adjoint']
    \uar[hook, bend right=60, looseness=2, shift right=4, "{\Pol^\Borel \circ \Inv^{<\omega} = \-{\ang{-}}^\Borel}"{right}, right adjoint]
\end{tikzcd}
\end{equation}
As in \cref{def:clone-fiber}, put
\begin{equation*}
\Clo^\Borel_F{2} := [\ang{F}^\Borel, \-{\ang{F}}^\Borel] \subseteq \Clo^\Borel{2}
\end{equation*}
for the fiber of this bundle over each finitary clone $F \in \Clo^{<\omega}{2}$ in Post's lattice.
\end{notation}

In the rest of this section, we will describe the structure of $\Clo^\Borel_F{2}$ for $F$ in various ``regions'' of Post's lattice $\Clo^{<\omega}{2}$ (see \cref{fig:post-borel}).

We begin by dispensing with the simplest case: the affine (over $\#Z/2\#Z$) functions.
The following classical result is a special case of general ``automatic continuity'' results for well-behaved topological groups, and can be proved using either Haar measure (as the Steinhaus--Weil theorem) or Baire category (as Pettis's theorem).
See e.g., \cite[9.9]{Kcdst}, \cite{Racts}.

\begin{theorem}
\label{thm:pettis}
Let $A \subseteq 2^\omega$ be a Borel subgroup under addition mod $2$.
If $A$ has countable index, then $A$ is clopen.
\end{theorem}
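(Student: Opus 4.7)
The plan is to argue by Baire category, which handles both finite and countably infinite index uniformly (the Haar measure route would run into trouble if each coset has measure zero, which happens precisely when the index is countably infinite).

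First I would observe that $2^\omega$, viewed as a compact Polish group under pointwise $+$, is a Baire space, and that since $A \subseteq 2^\omega$ has countable index we can write $2^\omega = \bigsqcup_{i \in I}(a_i + A)$ for some countable family of coset representatives. Each coset $a_i + A$ is a translate of the Borel set $A$, hence is Borel, and in particular has the Baire property. Since a countable union of meager sets is meager and $2^\omega$ is nonmeager in itself, at least one coset $a_i + A$ must be nonmeager; translating by $a_i$ then shows $A$ itself is nonmeager and has the Baire property.

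The key tool is then Pettis's theorem: if $B \subseteq G$ is a nonmeager Baire-property subset of a Polish group $G$, then $B - B$ contains a neighborhood of the identity (see e.g.\ \cite[9.9]{Kcdst}). Applying this to our $A$ in the group $(2^\omega, +)$, we conclude that $A + A$ (which is $A - A$ in characteristic $2$) contains an open neighborhood $U$ of $\vec{0}$. But since $A$ is a \emph{subgroup}, $A + A = A$, so $U \subseteq A$.

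Once we have $U \subseteq A$ for an open neighborhood $U$ of $\vec{0}$, openness of $A$ is immediate: $A = \bigcup_{a \in A}(a + U)$ is a union of open sets. Finally, any open subgroup of a topological group is automatically closed, since its complement $2^\omega \setminus A = \bigsqcup_{i \ne i_0}(a_i + A)$ is a union of the other cosets, each of which is open as a translate of $A$. Hence $A$ is clopen. The only nontrivial ingredient is Pettis's theorem itself; the rest is bookkeeping with cosets and the fact that in a topological group open subgroups are closed.
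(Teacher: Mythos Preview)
Your argument is correct: the Baire category route via Pettis's theorem is exactly one of the two standard approaches the paper alludes to (the paper does not actually write out a proof, merely citing \cite[9.9]{Kcdst} and \cite{Racts}). One small quibble with your parenthetical: the Haar measure route does not really ``run into trouble'' in the countably infinite index case---rather, that case is immediately ruled out, since countably many disjoint translates of a Borel set of equal measure cannot cover a probability space unless there are only finitely many of them; one then applies Steinhaus--Weil to the resulting positive-measure subgroup. But this does not affect your proof.
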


\begin{corollary}
$\Aff^\Borel = \ang{\Aff^{<\omega}}^\Borel$, i.e., every affine Borel map $f : 2^n -> 2$ for $n \le \omega$ is continuous.
\end{corollary}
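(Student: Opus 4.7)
The plan is to reduce the claim to Pettis's theorem (\cref{thm:pettis}), which supplies the only substantive input.

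First, I would observe that $f : 2^n \to 2$ is affine over $\#Z/2\#Z$ iff $h(\vec{x}) := f(\vec{x}) + f(\vec{0})$ is a group homomorphism $(2^n, +) \to (2, +)$. Indeed, preservation of $+^3$ by $f$ translates to $f(\vec{x} + \vec{y} + \vec{z}) = f(\vec{x}) + f(\vec{y}) + f(\vec{z})$ for all $\vec{x}, \vec{y}, \vec{z}$; specializing $\vec{z} = \vec{0}$ yields $h(\vec{x} + \vec{y}) = h(\vec{x}) + h(\vec{y})$. Since $f$ is Borel (respectively, essentially finitary) iff $h$ is, it suffices to treat the case in which $f = h$ is an additive homomorphism.

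The case $n < \omega$ is immediate since $2^n$ is then finite and discrete. For $n = \omega$, the kernel $\ker h = h^{-1}(0) \subseteq 2^\omega$ is a Borel subgroup of index at most $2$, hence in particular of countable index, so by \cref{thm:pettis} it is clopen. Being an open subgroup containing $\vec{0}$, it must contain some basic open neighborhood of the form $U_N := \{\vec{x} \in 2^\omega : x_0 = \dotsb = x_{N-1} = 0\}$ for a finite $N$. Then any $\vec{x}, \vec{y} \in 2^\omega$ which agree on coordinates $< N$ satisfy $\vec{x} + \vec{y} \in U_N \subseteq \ker h$, giving $h(\vec{x}) = h(\vec{y})$; thus $h$ depends only on $x_0, \dotsc, x_{N-1}$, and hence both $h$ and $f$ lie in $\ang{\Aff^{<\omega}}^\Borel$.

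The entire combinatorial content of the proof is absorbed into \cref{thm:pettis}; the remainder is routine bookkeeping to pass between affine Boolean functions and group homomorphisms $2^n \to 2$, together with a direct appeal to the definition of the product topology on $2^\omega$.
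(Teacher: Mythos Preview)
Your proof is correct and follows essentially the same approach as the paper: both apply \cref{thm:pettis} to the relevant index-$\le 2$ Borel subgroup. The paper's version is slightly more compressed---it simply observes that either $f^{-1}(0)$ or $f^{-1}(1)$ is already such a subgroup (according as $f(\vec{0}) = 0$ or $1$), rather than passing explicitly through the associated homomorphism $h$---but this is exactly the reduction you spell out, and your additional step extracting essential finiteness from clopenness of the kernel is the standard unpacking of what continuity means for $2^\omega \to 2$.
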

\begin{proof}
Either $f^{-1}(0)$ or $f^{-1}(1)$ is an index $\le2$ Borel subgroup of $2^n$.
\end{proof}

\begin{corollary}
For every finitary clone $F \subseteq \Aff^{<\omega}$, there is a unique Borel clone restricting to $F$, namely $\ang{F}^\Borel = \-{\ang{F}}^\Borel \in \Clo^\Borel_F{2}$.
\qed
\end{corollary}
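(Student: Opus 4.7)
The plan is to reduce the corollary to the fiber-interval description from \cref{thm:pancake}, specialized to Borel clones in \cref{def:clone-borel}: the fiber $\Clo^\Borel_F{2} = [\ang{F}^\Borel, \-{\ang{F}}^\Borel]$ over a finitary clone $F$ collapses to a single element iff its endpoints coincide. So the entire task is to prove $\-{\ang{F}}^\Borel \subseteq \ang{F}^\Borel$ whenever $F \subseteq \Aff^{<\omega}$ (the reverse inclusion being automatic).

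First I would use monotonicity of the operator $\-{\ang{-}}^\Borel = \Pol^\Borel \circ \Inv^{<\omega}$ to get
\begin{equation*}
\-{\ang{F}}^\Borel \subseteq \-{\ang{\Aff^{<\omega}}}^\Borel = \Pol^\Borel(\Inv^{<\omega}(\Aff^{<\omega})) = \Aff^\Borel,
\end{equation*}
where the last equality uses that $\Aff = \Pol\{+^3\}$ is already of the form $\Pol(\@M)$ with finitary $\@M$. The immediately preceding corollary (the Pettis-based ``automatic continuity'' for affine Borel maps) then gives $\Aff^\Borel = \ang{\Aff^{<\omega}}^\Borel$, so every $f \in \-{\ang{F}}^\Borel$ is essentially finitary.

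Second, I would recover the underlying finitary function. Fix $f \in \-{\ang{F}}^\Borel$ with $f : 2^n \to 2$ depending only on coordinates $i_1, \dots, i_k$, and let $g : 2^k \to 2$ be its underlying $k$-ary function, so that $f = g \circ (\pi^n_{i_1}, \dots, \pi^n_{i_k})$. The function $g$ can be written as a composition $g = f \circ (h_0, \dots, h_{n-1})$ where $h_l = \pi^k_j$ if $l = i_j$ and $h_l$ is an arbitrary $k$-ary projection otherwise (this is well-defined precisely because $f$ ignores the coordinates with $l \notin \{i_1,\dots,i_k\}$). Hence $g \in \ang{\{f\}}^{<\omega_1} \subseteq \-{\ang{F}}^\Borel$, and since $g$ is finitary, \cref{thm:pancake} gives
\begin{equation*}
g \in \-{\ang{F}}^\Borel \cap \Op2^{<\omega} = F.
\end{equation*}
Then $f = g \circ (\pi^n_{i_1}, \dots, \pi^n_{i_k}) \in \ang{F}^\Borel$, finishing the proof.

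There is essentially no obstacle past this point: all real content lives in the preceding Pettis-style corollary. The only thing to be mildly careful about is that the ``underlying finitary function'' $g$ is obtained from $f$ by clone operations (left composition with projections) rather than by plugging in constants, so the argument stays inside $\ang{\{f\}}^{<\omega_1}$ and does not require $F$ to contain $0$ or $1$. This matters because subclones of $\Aff^{<\omega}$ in Post's lattice include ones without constants (such as $\ang{\neg}$ or the trivial clone), which the proof has to cover uniformly.
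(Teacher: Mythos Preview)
Your proof is correct and follows essentially the same route the paper (implicitly) takes: the statement is marked \qed\ with no further argument, since once $\Aff^\Borel = \ang{\Aff^{<\omega}}^\Borel$ is established, the fiber over any $F \subseteq \Aff^{<\omega}$ collapses because $\-{\ang{F}}^\Borel$ is an essentially finitary clone with finitary restriction $F$, hence equals $\ang{F}^\Borel$. Your explicit extraction of the underlying finitary $g$ via composition with projections is exactly the content of the corollary in \cref{sec:clone} characterizing the image of $\ang{-}^{<\nu}$ as the subclones of $\ang{\Op{X}^{<\mu}}^{<\nu}$; you have simply unpacked that step, and your remark about avoiding constants is a nice point of care.
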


This is illustrated in the large shaded region near the bottom of \cref{fig:post-borel}.

\subsection{The top cube}
\label{sec:borel-topcube}

We now turn to the Borel clones lying over one of the 8 finitary cones in the ``top cube'' of Post's lattice \ref{fig:post}, between $\Mono\Cons01\Cons11^{<\omega}$ and $\Op2^{<\omega}$.
We will give a complete classification of the corresponding Borel clones, of which there are only finitely many (see \cref{fig:post-borel}).
Our main tool will be the modularity isomorphisms from \cref{thm:post-down-mod}.

The following lemma consists of variations of the well-known fact that countable supremum is the ``simplest'' discontinuous function (the base case of Wadge's lemma; see e.g., \cite[21.16]{Kcdst}).

\begin{lemma}
\label{thm:post-wadge}
Let $f : 2^\omega -> 2$.
\begin{enumerate}[label=(\alph*)]
\item \label{thm:post-wadge:O2}
If $f$ is discontinuous, then $\bigvee \in \ang{\{f\} \cup \Cons1{<\omega}^{<\omega}}$.
\item \label{thm:post-wadge:T0}
If $f \in \Cons01 \setminus \Limm011$, then $\bigvee \in \ang{\{f\} \cup \Cons01\Cons1{<\omega}^{<\omega}}$.
\item \label{thm:post-wadge:M}
If $f \in \Mono \setminus \Decr$, then $\bigvee \in \ang{\{f\} \cup \Mono\Cons1{<\omega}^{<\omega}}$.
\item \label{thm:post-wadge:MT0}
If $f \in \Mono\Cons01 \setminus \Limm011$, then $\bigvee \in \ang{\{f\} \cup \Mono\Cons01\Cons1{<\omega}^{<\omega}}$.
\end{enumerate}
\end{lemma}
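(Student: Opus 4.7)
The plan is to construct, in each part, a tuple $\vec z = (z_0, z_1, \dotsc)$ of $\omega$-ary functions lying in the specified generated clone, such that $f \circ \vec z = \bigvee$.  First extract a witness of the discontinuity: a point $\vec x \in 2^\omega$ and a sequence $\vec y_n \to \vec x$ with $f(\vec x) \ne f(\vec y_n)$; passing to a subsequence, arrange that $\vec y_n$ agrees with $\vec x$ on the first $n$ coordinates.  In parts (b) and (d), failure of continuity at $\vec 0$ forces the witness to lie at $\vec x = \vec 0$.  In (c) and (d), monotonicity of $f$ lets us replace each $\vec y_n$ by $\vec y_n \vee \vec x$, so we may assume $\vec y_n \ge \vec x$.

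For the monotone cases (c), (d), thin the $\vec y_n$'s so that the sets $T_n := \operatorname{supp}(\vec y_n) \setminus \operatorname{supp}(\vec x) \subseteq \{k \ge n\}$ are pairwise disjoint, and extend them to a partition $\{k : x(k) = 0\} = \bigsqcup_i S_i$ with $S_i \supseteq T_i$.  Define $z_k(\vec v) := x(k) \vee v_{j(k)}$, where $j(k) = i$ iff $k \in S_i$ (with $j(k)$ arbitrary on positions where $x(k) = 1$).  Each $z_k$ is either a projection or the constant $1$, and so lies in $\Mono\Cons1{<\omega}^{<\omega}$ (and in $\Mono\Cons01\Cons1{<\omega}^{<\omega}$ for (d), where $\vec x = \vec 0$).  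Verification: $\vec z(\vec 0) = \vec x$ gives $f \circ \vec z(\vec 0) = 0$; for any $\vec v$ with some $v_i = 1$, $\vec z(\vec v) \ge \vec x \vee \chi_{S_i} \ge \vec y_i$, whence $f \circ \vec z(\vec v) = 1$ by monotonicity.  Thus $f \circ \vec z = \bigvee$.

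For the non-monotone cases (a), (b), the above may fail on multi-$v_i$ inputs, since $\vec z(\vec v)$ is then a join of several $\vec y_i$'s on which $f$ need not be $1$.  Use instead a \emph{select-first-$1$} encoding
\[
z_k(\vec v) := \begin{cases} y_{i^*}(k) & \text{if } i^* := \min\{i \le k : v_i = 1\} \text{ exists,} \\ x(k) & \text{otherwise,} \end{cases}
\]
so that $\vec z(\vec v) = \vec y_{i^*(\vec v)}$ for $\vec v \ne \vec 0$ (using that $\vec y_{i^*}$ agrees with $\vec x$ on positions $< i^*$), and $\vec z(\vec 0) = \vec x$.  Each $z_k$ is a finitary Boolean function lying in $\Cons01^{<\omega}$ (when $\vec x = \vec 0$) or in $\Op2^{<\omega}$.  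The key step is to show $z_k$ is in the generated clone even though it need not lie in $\Cons1{<\omega}^{<\omega}$; for this, one uses cross-sections of $f$ such as $f(v_0, v_1, v_1, \dotsc)$ to derive new finitary connectives of the form $v_0 \wedge \neg v_1$, and thereby $\wedge$, $+$, and eventually enough of $\Cons01^{<\omega}$ (or $\Op2^{<\omega}$ for (a)) to express all the $z_k$ as compositions involving $f$ and the specified finitary clone.  Finally, in part (a) we must also handle the dual case $f(\vec x) = 1$, $f(\vec y_n) = 0$; the same construction then yields $f \circ \vec z = \neg\bigvee$ (the indicator of $\vec 0$), and we recover $\bigvee$ by applying $\to \in \Cons1{<\omega}^{<\omega}$: $\bigvee(\vec v) = (f \circ \vec z)(\vec v) \to v_0 = \neg(f \circ \vec z)(\vec v) \vee v_0 = \bigvee(\vec v) \vee v_0 = \bigvee(\vec v)$.

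The main obstacle will be the finitary-connective-generation step in the non-monotone cases: one must argue that the discontinuity hypothesis, together with the available cross-sectioning of $f$ by projections and finitary elements of $\Cons01\Cons1{<\omega}^{<\omega}$ (resp.\ $\Cons1{<\omega}^{<\omega}$), always produces enough non-monotone finitary Boolean functions to express the select-first-$1$ components $z_k$.
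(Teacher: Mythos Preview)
Your proposal has genuine gaps in both the monotone and non-monotone cases, and misses the single trick that makes the paper's proof work uniformly.

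For (c) and (d): you cannot in general thin the $\vec{y}_n$'s so that the sets $T_n = \operatorname{supp}(\vec{y}_n) \setminus \operatorname{supp}(\vec{x})$ become pairwise disjoint.  Take $\vec{x} = \vec{0}$ and $\vec{y}_n$ the indicator of $\{n, n+1, n+2, \dotsc\}$; then $T_n = \{k \ge n\}$, and any subsequence still has nested, non-disjoint supports.  So your partition $\bigsqcup_i S_i$ with $S_i \supseteq T_i$ does not exist, and the construction collapses.

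For (a) and (b): you correctly note that your select-first-$1$ components $z_k$ need not lie in $\Cons1{<\omega}$ (indeed $z_k(\vec{1}) = y_0(k)$ may be $0$), but your proposed remedy---to extract enough finitary connectives from cross-sections of $f$ to express the $z_k$---is not justified.  A general discontinuous $f$ gives no control over which finitary functions its cross-sections generate; there is no reason $\ang{\{f\} \cup \Cons01\Cons1{<\omega}^{<\omega}}$ should contain $-/>$ or $+$.

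The paper's approach avoids both problems with one move: use the select-first-$1$ encoding, but shift it by one bit and send every string with $v_0 = 1$ to $\vec{1}$.  Then each coordinate $g_i$ satisfies $g_i \ge \pi_0$, so $g_i \in \Cons1{<\omega}^{<\omega}$ automatically (and is monotone or $0$-preserving once the witnesses are chosen decreasing, or with $\vec{x}_\infty = \vec{0}$, as the hypotheses of (b)--(d) provide).  The price is that $f \circ g$ is uncontrolled on strings beginning with $1$; but now $\bigvee = \pi_0 \vee (f \circ g)$ (when $f(\vec{x}_\infty) = 0$) or $\bigvee = (f \circ g) \to \pi_0$ (in the dual case of (a)), and both $\vee$ and $\to$ lie in the relevant finitary clone.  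This post-composition fix-up is exactly the idea missing from your sketch.
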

\begin{proof}
\cref{thm:post-wadge:O2}
Suppose $f$ is discontinuous at $\vec{x}_\infty \in 2^\omega$.
Then there is a sequence of strings $\vec{x}_0, \vec{x}_1, \dotsc \in 2^\omega$ converging to $\vec{x}_\infty$ such that $f(\vec{x}_0) = f(\vec{x}_1) = \dotsb \ne f(\vec{x}_\infty)$.
Define
\begin{align*}
g : 2^\omega &--> 2^\omega \\
1\dotsm &|--> \vec{1} \\
01\dotsm &|--> \vec{x}_0 \\
001\dotsm &|--> \vec{x}_1 \\
&\vdotswithin{|-->} \\
\vec{0} &|--> \vec{x}_\infty
\end{align*}
where each tail ``$\dotsm$'' may be an arbitrary string.
This is evidently a continuous function, such that each coordinate $g_i := \pi_i \circ g \ge \pi_0 : 2^\omega -> 2$, hence is in $\ang{\Cons1{<\omega}^{<\omega}}$; thus $f \circ g = f \circ (g_i)_i \in \ang{\{f\} \cup \Cons1{<\omega}^{<\omega}}$.
If $f(\vec{x}_\infty) = 0$, then $f \circ g$ agrees with $\bigvee : 2^\omega -> 2$ on all strings of the form $0\dotsm$, whence ${\bigvee} = \pi_0 \vee (f \circ g) \in \ang{\{f\} \cup \Cons1{<\omega}^{<\omega}}$.
Otherwise, ${\bigvee} = \pi_0 \vee \neg (f \circ g) = (f \circ g) -> \pi_0 \in \ang{\{f\} \cup \Cons1{<\omega}^{<\omega}}$.

\cref{thm:post-wadge:T0}
Let $\vec{x}_0, \vec{x}_1, \dotsc \in f^{-1}(1)$ converge to $\vec{x}_\infty := \vec{0} \in f^{-1}(0)$.
Then the function $g$ defined above preserves $0$ coordinatewise, and we know $f(\vec{x}_\infty) = 0$, whence ${\bigvee} = \pi_0 \vee (f \circ g) \in \ang{\{f\} \cup \Cons01\Cons1{<\omega}^{<\omega}}$.

\cref{thm:post-wadge:M}
Let $\vec{x}_0 \ge \vec{x}_1 \ge \dotsb$ converge to $\vec{x}_\infty \in 2^\omega$ with $f(\vec{x}_0) = f(\vec{x}_1) = \dotsb > f(\vec{x}_\infty)$.
Then $g$ defined above is monotone, and $f(\vec{x}_\infty) = 0$, whence ${\bigvee} = \pi_0 \vee (f \circ g) \in \ang{\{f\} \cup \Mono\Cons1{<\omega}^{<\omega}}$.

\cref{thm:post-wadge:MT0}
Let $\vec{x}_0, \vec{x}_1, \dotsc \in f^{-1}(1)$ converge to $\vec{x}_\infty := \vec{0} \in f^{-1}(0)$.
Then $\vec{x}'_i := \bigvee_{j \ge i} \vec{x}_j$ converge monotonically to $\vec{0}$, and $f(\vec{x}'_i) \ge f(\vec{x}_i) = 1$.
Combine the arguments of \cref{thm:post-wadge:T0,thm:post-wadge:M}.
\end{proof}

\begin{corollary}
\label{thm:post-borel-O2}
There are precisely 2 Borel clones on $2$ restricting to $\Op2^{<\omega}$ (see \cref{fig:post-borel-O2-T0}):
\begin{itemize}
\item  $\Op2^\Borel = \ang{\bigvee, \neg}^\Borel = \ang{\bigwedge, \bigvee, 0, 1, \neg}^\Borel$.
\item  $\ang{\Op2^{<\omega}}^\Borel = \Pol^\Borel\brace{\lim} = \ang{\vee, \neg}^\Borel = \ang{\wedge, \vee, 0, 1, \neg}^\Borel$.
\end{itemize}
\end{corollary}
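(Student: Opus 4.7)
The plan is to identify the fiber $\Clo^\Borel_{\Op2^{<\omega}}{2}$ as a two-element interval by pinning down its top and bottom and then ruling out intermediate clones via \cref{thm:post-wadge}\cref{thm:post-wadge:O2}.

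To identify the bottom of the fiber: by the corollary following \cref{thm:clone-arity} together with \cref{rmk:ktop}, $\ang{\Op2^{<\omega}}^\Borel$ consists of the essentially finitary countable-arity Borel functions, which are precisely the \emph{continuous} functions $2^n -> 2$ for $n \le \omega$ (using compactness of $2^\omega$ to pass from continuity back to essentially finitary). Since $2^\omega$ is metrizable, continuity coincides with sequential continuity, which unwinds to preservation of the partial limit relation $\lim$; hence $\ang{\Op2^{<\omega}}^\Borel = \Pol^\Borel\brace{\lim}$. The alternative generating sets $\{\vee,\neg\}$ and $\{\wedge,\vee,0,1,\neg\}$ each generate $\Op2^{<\omega}$ by \cref{thm:post}, so their Borel-clone closures coincide with this bottom. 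For the top: $\Op2^\Borel$ itself is a Borel clone whose finitary restriction is $\Op2^{<\omega}$, so it lies in the fiber as its maximum element; the identities $\ang{\bigvee,\neg}^\Borel = \ang{\bigwedge,\bigvee,0,1,\neg}^\Borel = \Op2^\Borel$ follow from de~Morgan together with the definition of $\Op2^\Borel$. The two clones are distinct because $\bigvee$ is discontinuous.

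The main step is then to rule out any intermediate clone. Suppose $F \in \Clo^\Borel_{\Op2^{<\omega}}{2}$ strictly contains $\ang{\Op2^{<\omega}}^\Borel$; then $F$ contains some discontinuous Borel function $f$, necessarily of arity $\omega$ since every Boolean function of finite arity is continuous. Applying \cref{thm:post-wadge}\cref{thm:post-wadge:O2} and using that $\Cons1{<\omega}^{<\omega} \subseteq \Op2^{<\omega} \subseteq F$, we conclude $\bigvee \in F$; then $\neg \in F$ together with de~Morgan yields $\bigwedge \in F$, whence $F \supseteq \ang{\bigwedge,\bigvee,\neg}^\Borel = \Op2^\Borel$ and thus $F = \Op2^\Borel$. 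The only substantive ingredient is \cref{thm:post-wadge}\cref{thm:post-wadge:O2}, already proved above, so no serious obstacle remains; everything else is bookkeeping about generators and the elementary observation that finitary Boolean functions are automatically continuous.
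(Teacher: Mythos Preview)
Your proof is correct and follows essentially the same approach as the paper's own proof: both identify the bottom of the fiber as the continuous (equivalently, essentially finitary) functions, then invoke \cref{thm:post-wadge}\cref{thm:post-wadge:O2} to show that any clone properly above the bottom contains $\bigvee$, hence (with $\neg$) all of $\Op2^\Borel$. You simply spell out more of the bookkeeping (why essentially finitary coincides with continuous, why the generating sets agree) that the paper leaves implicit.
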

\begin{proof}
If $\ang{\Op2^{<\omega}}^\Borel \ne F \in \Clo^\Borel_{\Op2^{<\omega}}{2}$, then $F$ contains a discontinuous function, hence contains $\bigvee$ by \cref{thm:post-wadge}\cref{thm:post-wadge:O2}, hence contains all Borel functions since it contains $\neg \in \Op2^{<\omega}$.
\end{proof}

\begin{figure}
\centering
\includegraphics{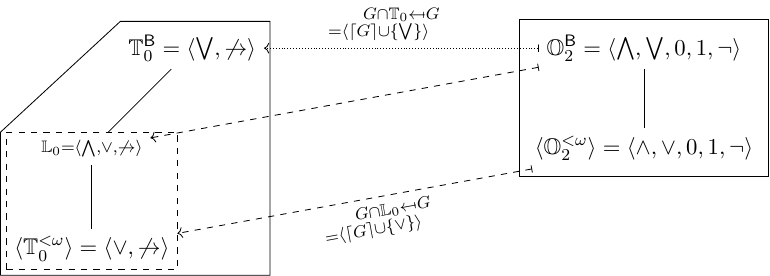}
\caption{Borel clones with finitary restrictions $\Cons01^{<\omega}$ and $\Op2^{<\omega}$, respectively, along with modularity isomorphisms from \cref{thm:post-down-mod} used to deduce the former from the latter.
See also \cref{fig:post-borel}.
(Here, and in the below figures, all clones are ${<}\omega_1$-ary and restricted to Borel functions, unless otherwise indicated.)}
\label{fig:post-borel-O2-T0}
\end{figure}

\begin{theorem}
\label{thm:post-borel-T0}
There are precisely 3 Borel clones on $2$ restricting to $\Cons01^{<\omega}$ (see \cref{fig:post-borel-O2-T0}):
\begin{itemize}
\item  $\Cons01^\Borel = \Pol^\Borel\brace{0} = \ang{\bigvee, -/>}^\Borel = \ang{\bigwedge, \bigvee, 0, -/>}^\Borel$.
\item  $\Limm011^\Borel = \Pol^\Borel\brace{{\lim}{=}0} = \ang{\bigwedge, \vee, -/>}^\Borel = \ang{\ceil{\bigvee}, \vee, -/>}^\Borel = \ang{\bigwedge, \ceil{\bigvee}, \vee, 0, -/>}^\Borel$.
\item  $\ang{\Cons01^{<\omega}}^\Borel = \Pol^\Borel\brace{{\lim}, 0} = \ang{\vee, -/>}^\Borel = \ang{\wedge, \vee, 0, -/>}^\Borel$.
\end{itemize}
\end{theorem}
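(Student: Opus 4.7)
The plan is to identify every clone in the fiber $\Clo^\Borel_{\Cons01^{<\omega}}{2}$ with one of the three displayed clones. First, I verify that $\ang{\Cons01^{<\omega}}^\Borel \subsetneq \Limm011^\Borel \subsetneq \Cons01^\Borel$ indeed lie in this fiber. The fiber endpoints $\ang{\Cons01^{<\omega}}^\Borel$ and $\-{\ang{\Cons01^{<\omega}}}^\Borel = \Cons01^\Borel = \Pol^\Borel\brace{0}$ are given by \cref{thm:pancake}, while the middle clone $\Limm011^\Borel$ restricts finitarily to $\Cons01^{<\omega}$ since finitary functions on $2$ are automatically continuous. Strictness of the chain is witnessed by $\ceil{\bigvee} \in \Limm011^\Borel \setminus \ang{\Cons01^{<\omega}}^\Borel$ (depends on infinitely many variables yet vanishes on the cylinder $\{x_0 = 0\}$) and by $\bigvee \in \Cons01^\Borel \setminus \Limm011^\Borel$ (not continuous at $\vec 0$). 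The various generator-set equalities follow from explicit identities such as $\ceil{\bigvee}(\vec x) = \mathord{-/>}(x_0, \bigwedge_{i \ge 1} \mathord{-/>}(x_0, x_i))$ and its inverse for $\bigwedge$.

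Next, for an arbitrary $F$ in the fiber, I split into cases. If $F \not\subseteq \Limm011^\Borel$, pick any $f \in F \cap (\Cons01^\Borel \setminus \Limm011)$; then \cref{thm:post-wadge}\cref{thm:post-wadge:T0} gives $\bigvee \in F$, and combined with $\mathord{-/>} \in \Cons01^{<\omega} \subseteq F$ this generates $\ang{\bigvee, \mathord{-/>}}^\Borel = \Cons01^\Borel$. So assume $F \subseteq \Limm011^\Borel$; it remains to classify the interval $I := [\ang{\Cons01^{<\omega}}^\Borel, \Limm011^\Borel]$. For this, I invoke the modularity adjunction \cref{eq:adj-mod} for the diamond with $a := \ang{\Cons01^{<\omega}}^\Borel$, $b := \Limm011^\Borel$, $c := \ang{\Op2^{<\omega}}^\Borel$, $d := \Op2^\Borel$. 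Both identities $a = b \cap c$ (the meet collapses to continuous $0$-fixing functions) and $d = b \vee c$ (the join recovers $\bigvee = \neg \circ \bigwedge \circ \neg$ using $\neg \in \Op2^{<\omega}$ and $\bigwedge \in \Limm011^\Borel$) hold directly, yielding an adjunction
\[
I \;\rightleftarrows\; [\ang{\Op2^{<\omega}}^\Borel,\, \Op2^\Borel], \qquad F \longmapsto \ang{F \cup \{1\}}^\Borel, \qquad G \longmapsto G \cap \Limm011^\Borel.
\]
The right interval has exactly two elements by \cref{thm:post-borel-O2}, so showing this adjunction is an \emph{isomorphism} forces $I = \{a, b\}$ and finishes the proof.

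The crux is therefore to verify that the adjunction is an isomorphism, equivalently that $\Limm011^\Borel \cap \ang{F \cup \{1\}}^\Borel = F$ for every $F \in I$. By \cref{thm:post-const-xsec}, every function in $\ang{F \cup \{1\}}^\Borel$ has the form $f_1$ for some $f \in F^{\ge 2}$, so it suffices to show: if $f \in F$ and $f_1 \in \Limm011^\Borel$, then already $f_1 \in F$. The plan is to write $f_1(\vec x) = f(g(\vec x), \vec x)$ for a carefully chosen $g \in F$. Both cross-sections $f_0$ and $f_1$ lie in $\Limm011^\Borel$ (the former since $f \in F \subseteq \Limm011^\Borel$, the latter by hypothesis), so there are finitely many indices $i_0, \dots, i_{k-1}$ with $\{x_{i_0} = \dotsb = x_{i_{k-1}} = 0\} \subseteq f_0^{-1}(0) \cap f_1^{-1}(0)$. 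Taking $g := x_{i_0} \vee \dotsb \vee x_{i_{k-1}} \in \Cons01^{<\omega} \subseteq F$ then gives $f(g(\vec x), \vec x) = f_1(\vec x)$ for all $\vec x$: when $g(\vec x) = 1$ the equation is immediate, and when $g(\vec x) = 0$ both $f(0, \vec x) = f_0(\vec x)$ and $f_1(\vec x)$ equal $0$ by choice of $g$. Hence $f_1 \in F$, establishing the modularity isomorphism and the theorem.
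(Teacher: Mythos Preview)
Your proof is correct and follows the same overall two-step strategy as the paper: first use \cref{thm:post-wadge}\cref{thm:post-wadge:T0} to reduce to the interval $[\ang{\Cons01^{<\omega}}^\Borel, \Limm011^\Borel]$, then establish a modularity isomorphism $F \mapsto \ang{F \cup \{1\}}^\Borel$ onto $\Clo^\Borel_{\Op2^{<\omega}}{2}$ and invoke \cref{thm:post-borel-O2}.

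The difference lies in how you prove this modularity map is an isomorphism. The paper appeals to the general \cref{thm:post-down-mod} with $H = \ang{\wedge,\vee}^\Borel$ and $\down H = \Limm011^{<\omega_1}$, which in turn rests on the $\ceil{-}$ machinery of \cref{sec:post-ceil-floor-xsec,sec:post-down}. You instead verify directly that $\Limm011^\Borel \cap \ang{F \cup \{1\}}^\Borel \subseteq F$, via the concrete observation that if $f \in F \subseteq \Limm011$ and $f_1 \in \Limm011$, then $f_0, f_1$ both vanish on a common basic cylinder $\{x_{i_0} = \dotsb = x_{i_{k-1}} = 0\}$, so $f_1(\vec x) = f(x_{i_0} \vee \dotsb \vee x_{i_{k-1}}, \vec x) \in F$. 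This is a clean elementary argument tailored to $\Limm011$; it avoids the abstract downward-closure framework at the cost of not immediately generalizing to the other fibers (\cref{thm:post-borel-T0T1,thm:post-borel-MT0,thm:post-borel-MT0T1}), where the paper reuses \cref{thm:post-down-mod} uniformly. Your treatment of the generator-set equalities, deducing them \emph{a posteriori} once the three clones are pinned down, is also slightly more economical than the paper's explicit computation via $G \mapsto \ang{\ceil{G} \cup \{\vee\}}^\Borel$.
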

\begin{proof}
By \cref{thm:post-wadge}\cref{thm:post-wadge:T0}, the greatest clone in $\smash{\Clo^\Borel_{\Cons01^{<\omega}}{2}}$ below the maximum $\Cons01^\Borel$ is $\Limm011^\Borel$ (which is indeed below the maximum since $\bigvee : 2^\omega -> 2$ preserves $0$ but is discontinuous at $\vec{0}$).

By \cref{thm:post-down-mod} with
$H = \ang{\Mono\Cons01\Cons11^{<\omega}}^{<\omega_1} = \ang{\wedge,\vee}^\Borel$
and
$\down H = \Limm011^{<\omega_1}$
as in \cref{eq:post-down-MT01T11-mod}, the clones between $\ang{\Cons01^{<\omega}}^\Borel$ and $\Limm011^\Borel$ are in order-preserving bijection, via $F |-> \ang{F \cup \{1\}}^\Borel$, with the clones in $\Clo^\Borel_{\Op2^{<\omega}}{2}$
(since $\ang{\Cons01^{<\omega} \cup \{1\}}^\Borel = \ang{\vee, -/>, 1}^\Borel = \ang{\Op2^{<\omega}}^\Borel$
and $\ang{\Limm011^\Borel \cup \{1\}}^\Borel \supseteq \ang{\bigwedge, -/>, 1}^\Borel = \Op2^\Borel$).
The inverse of this bijection takes $G \in \Clo^\Borel_{\Op2^{<\omega}}{2}$ to $G \cap \Limm011$, which can also be obtained (see again \ref{eq:post-down-MT01T11-mod}) by first restricting all the way to $G \cap \Cons0\omega = \ang{\ceil{G}}^\Borel \ni \wedge$, then adding $\vee$ to yield $\ang{\ceil{G} \cup \{\vee\}}^\Borel$.
Applying this to the two clones in \cref{thm:post-borel-O2} yields
\begin{align*}
\Op2^\Borel \cap \Limm011
&= \Pol^\Borel \brace{{\lim}{=}0}
= \ang{\ceil{\bigvee}, \ceil{\neg}, \vee}^\Borel
= \ang{\ceil{\bigvee}, -/>, \vee}^\Borel
    &&\text{(by \cref{thm:ceil-floor-gen})} \\
&\hphantom{{}= \Pol^\Borel \brace{{\lim}{=}0}}
= \ang{\ceil{\bigwedge}, \ceil{\neg}, \vee}^\Borel
= \ang{\bigwedge, -/>, \vee}^\Borel
    &&\text{(by \cref{thm:ceil-floor-gen})}, \\
\ang{\Op2^{<\omega}}^\Borel \cap \Limm011
&= \Pol^\Borel \brace{\lim, 0}
= \ang{\Cons01^{<\omega}}^\Borel
    &&\text{(by \cref{rmk:ktop})}.
\end{align*}
Thus, these are the two clones in $\Clo^\Borel_{\Cons01^{<\omega}}{2}$ below the maximum.

Finally, to see that the maximum $\Cons01^\Borel$ is generated by $\bigvee, -/>$, we can again apply \cref{thm:post-down-mod} but with $H = \ang{\wedge,\bigvee}^\Borel$, which clearly bounds all $\Cons01^{<\omega_1}$ functions, to get that the clones between $\ang{\bigvee, -/>}^\Borel$ and $\Cons01^\Borel$ correspond to the clones above $\ang{\bigvee, -/>, 1}^\Borel = \Op2^\Borel$, whence $\ang{\bigvee, -/>}^\Borel = \Cons01^\Borel$.
\end{proof}

This proof technique is illustrated in \cref{fig:post-borel-O2-T0}: we partition the lattice $\Clo^\Borel_{\Cons01^{<\omega}}{2}$ into two pieces, each of which is isomorphic by \cref{thm:post-down-mod} to a part of the previously known lattice $\Clo^\Borel_{\Op2^{<\omega}}{2}$.
We now repeatedly apply this technique to the remaining clones in the top cube.

\begin{theorem}
\label{thm:post-borel-T0T1}
There are precisely 5 Borel clones on $2$ restricting to $\Cons01\Cons11^{<\omega}$ (see \cref{fig:post-borel-T0T1}):
\begin{itemize}
\item  $\Cons01\Cons11^\Borel = \Pol^\Borel \brace{0, 1} = \ang{\bigwedge, \bigvee, ?:}^\Borel$.
\item  $\Cons01\Limm111^\Borel = \Pol^\Borel \brace{0, {\lim}{=}1} = \ang{\bigvee, ?:}^\Borel$.
\item  $\Limm011\Cons11^\Borel = \Pol^\Borel \brace{{\lim}{=}0, 1} = \ang{\bigwedge, ?:}^\Borel$.
\item  $\Limm011\Limm111^\Borel = \Pol^\Borel \brace{{\lim}{=}0, {\lim}{=}1} = \ang{\floor{\bigwedge}, ?:}^\Borel = \ang{\ceil{\bigvee}, ?:}^\Borel$.
\item  $\ang{\Cons01\Cons11^{<\omega}}^\Borel = \Pol^\Borel \brace{\lim, 0, 1} = \ang{?:}^\Borel$.
\end{itemize}
\end{theorem}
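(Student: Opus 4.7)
The plan is to mimic the strategy of \cref{thm:post-borel-T0}, but to replace its single modularity step by a symmetric product decomposition exploiting the two constants $0, 1$ on the self-dual clone $\Cons01\Cons11$.

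The crux will be a decomposition lemma: for every $F \in \Clo^\Borel_{\Cons01\Cons11^{<\omega}}{2}$,
\[
    F = \ang{F \cup \{0\}}^\Borel \cap \ang{F \cup \{1\}}^\Borel.
\]
The inclusion $\subseteq$ is immediate. For $\supseteq$, given $h$ in the right-hand side, \cref{thm:post-const-xsec} produces $f, g \in F^{\ge 2}$ with $f_0 = g_1 = h$; since the ternary conditional $?:$ lies in $F^{<\omega} = \Cons01\Cons11^{<\omega}$, the function
\[
    h'(x_0, \vec{x}) := (x_0 \,?\, g(x_0, \vec{x}) : f(x_0, \vec{x}))
\]
lies in $F$, and a case check on the value of $x_0$ gives $h'(x_0, \vec{x}) = h(\vec{x})$ for every $x_0$; thus $h$ is recovered from $h'$ by identifying its first two variables. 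This ``gluing'' step is the only nontrivial point; everything else will be bookkeeping.

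Using the lemma, I would embed $\Clo^\Borel_{\Cons01\Cons11^{<\omega}}{2}$ order-injectively into $\Clo^\Borel_{\Cons01^{<\omega}}{2} \times \Clo^\Borel_{\Cons11^{<\omega}}{2}$ via $F \mapsto (\ang{F \cup \{0\}}^\Borel, \ang{F \cup \{1\}}^\Borel)$, using the standard Post-lattice identity $\ang{\Cons01\Cons11^{<\omega} \cup \{c\}}^{<\omega} = \Cons{c}{1}^{<\omega}$ for $c \in 2$ to check finitary restrictions. By \cref{thm:post-borel-T0} and its de~Morgan dual, this codomain has $3 \times 3 = 9$ elements. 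Enumerating the nine pairwise intersections, exactly five distinct Borel clones arise: the five pairs in which at least one factor is essentially finitary all collapse to $\ang{\Cons01\Cons11^{<\omega}}^\Borel$, while the remaining four pairs yield the four ``discontinuous'' clones $\Cons01\Cons11^\Borel$, $\Cons01\Limm111^\Borel$, $\Limm011\Cons11^\Borel$, $\Limm011\Limm111^\Borel$ of the statement. To close, the $\Pol^\Borel$ descriptions are immediate from \cref{def:post-clones}, and for each of the five clones the listed generators visibly lie in it and, by the five-element classification, must generate it, since the next-smaller clone in the list always lacks whichever of $?:$, $\bigwedge$, $\bigvee$, or $\ceil{\bigvee}$ (equivalently its dual $\floor{\bigwedge}$) the statement calls for.
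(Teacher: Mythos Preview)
Your proof is correct, and it takes a genuinely different route from the paper's.

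The paper proceeds asymmetrically: it applies the Wadge-type dichotomy \cref{thm:post-wadge}\cref{thm:post-wadge:T0} to split $\Clo^\Borel_{\Cons01\Cons11^{<\omega}}{2}$ into the part contained in $\Limm011\Cons11$ and the part containing $\bigvee$, then invokes the modularity isomorphism \cref{thm:post-down-mod} (with the $\ceil{-}$ operator) to identify each piece with a portion of $\Clo^\Borel_{\Cons11^{<\omega}}{2}$. Your argument is instead symmetric in the two constants: the ``gluing'' lemma $F = \ang{F \cup \{0\}}^\Borel \cap \ang{F \cup \{1\}}^\Borel$, proved directly using $?:\in F$, embeds the fiber into the product $\Clo^\Borel_{\Cons01^{<\omega}}{2} \times \Clo^\Borel_{\Cons11^{<\omega}}{2}$, after which one just enumerates the nine intersections.

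Your approach is arguably cleaner and more self-contained for this particular fiber, and the gluing lemma is a nice standalone observation exploiting the ternary conditional. What the paper's approach buys is uniformity: the same machinery (dichotomy plus $\ceil{-}$-modularity) is reused verbatim for the monotone fibers $\Mono\Cons01^{<\omega}$ and $\Mono\Cons01\Cons11^{<\omega}$ in \cref{thm:post-borel-MT0,thm:post-borel-MT0T1}, where $?:$ is unavailable and your gluing step would need a different (and less obvious) replacement. So your argument is a pleasant shortcut here but does not extend as directly to the rest of the top cube.

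One minor remark: your final paragraph should make explicit that each of the five intersections actually lies in the fiber $\Clo^\Borel_{\Cons01\Cons11^{<\omega}}{2}$ (you need both $\le 5$ and $\ge 5$); this is easy since each is sandwiched between $\ang{?:}^\Borel$ and $\Cons01\Cons11^\Borel$, but it deserves a sentence.
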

\begin{proof}
By \cref{thm:post-wadge}\cref{thm:post-wadge:T0}, each clone in $\smash{\Clo^\Borel_{\Cons01\Cons11^{<\omega}}{2}}$ either is contained in $\Limm011\Cons11$ or else contains $\ang{\{\bigvee\} \cup \Cons01\Cons11^{<\omega}}^\Borel = \ang{\bigvee, ?:}^\Borel$ (and these are mutually exclusive, again since $\bigvee$ is discontinuous at $\vec{0}$).

As in the proof of \cref{thm:post-borel-T0}, by \cref{thm:post-down-mod}, the clones below $\Limm011\Cons11$ may be obtained by applying $G |-> G \cap \Limm011 = \ang{\ceil{G} \cup \{\vee\}}^\Borel$ to each of the clones in $\smash{\Clo^\Borel_{\Cons11^{<\omega}}{2}}$, which are the de~Morgan duals of the clones in \cref{thm:post-borel-T0}; these are easily seen to map to the last 3 of the 5 clones listed (see \cref{fig:post-borel-T0T1}; recall from Post's lattice \ref{fig:post} that $\ang{?:} = \ang{\vee, \ceil{->}} = \ang{\wedge, \floor{-/>}}$).

Again by \cref{thm:post-down-mod} but with $H = \ang{\wedge, \bigvee}^\Borel$, clones above $\ang{\bigvee, ?:}^\Borel$ and below $\Cons01\Cons11^\Borel$ (which are all bounded by $\bigvee$) are obtained by applying $G |-> G \cap \Cons01 = \ang{\ceil{G} \cup \{\bigvee\}}^\Borel$ to each of the clones in $\smash{\Clo^\Borel_{\Cons11^{<\omega}}{2}}$ containing $\bigvee$; this yields the first 2 listed clones.
\end{proof}

\begin{figure}
\centering
\includegraphics{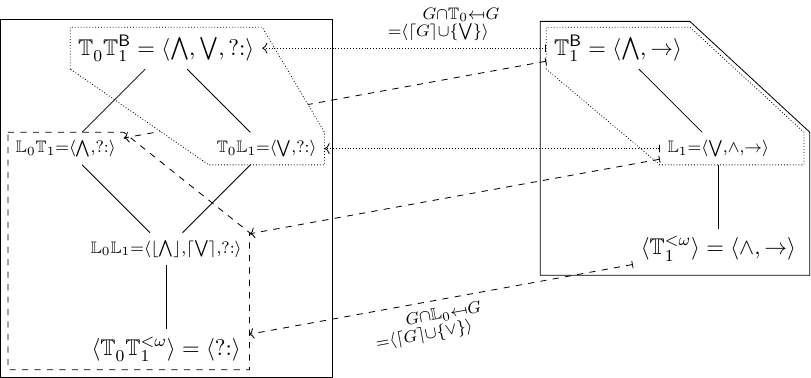}
\caption{Borel clones restricting to $\Cons01\Cons11^{<\omega}$ (left), obtained by applying modularity isomorphisms from \ref{thm:post-down-mod} to the Borel clones restricting to $\Cons11^{<\omega}$ (right), which are isomorphic to $\Clo^\Borel_{\Cons01^{<\omega}}{2}$ from \cref{fig:post-borel-O2-T0}.}
\label{fig:post-borel-T0T1}
\end{figure}

In order to apply the same technique to subclones of $\smash{\Mono^\Borel}$, we need the following standard facts.
The next lemma is well-known in topological lattice theory; see e.g., \cite[VII~1.7]{Jstone}.

\begin{lemma}
\label{thm:mono-lim}
A monotone function $f : 2^\omega -> 2$ is continuous iff it preserves both increasing limits (=joins) and decreasing limits (=meets).
In other words, $\ang{\Mono^{<\omega}}^{<\omega_1} = \Pol^{<\omega_1}\{\le, \lim\} = \Decr\Incr^{<\omega_1}$.
\end{lemma}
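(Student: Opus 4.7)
The plan is to prove the central sequence-theoretic equivalence first, and then derive the three clone identities from it together with the definitions in \cref{def:post-rels,def:post-clones} and \cref{rmk:ktop}.

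For the equivalence, the ``only if'' direction is immediate: a continuous function on $2^\omega$ preserves all convergent sequences, including increasing joins and decreasing meets. For ``if'', fix monotone $f$ preserving both directed limits, an arbitrary $\vec{x}_\infty \in 2^\omega$, and a sequence $\vec{x}_0, \vec{x}_1, \dotsc \to \vec{x}_\infty$ in the Cantor topology. Squeeze $\vec{x}_n$ between its bitwise ``envelopes''
\[
\vec{z}_n := \bigwedge_{m \ge n} \vec{x}_m \le \vec{x}_n \le \bigvee_{m \ge n} \vec{x}_m =: \vec{y}_n,
\]
so that $\vec{y}_n \searrow \vec{x}_\infty$ is decreasing and $\vec{z}_n \nearrow \vec{x}_\infty$ is increasing. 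Monotonicity gives $f(\vec{z}_n) \le f(\vec{x}_n) \le f(\vec{y}_n)$, while the hypothesis yields $f(\vec{y}_n) \searrow f(\vec{x}_\infty)$ and $f(\vec{z}_n) \nearrow f(\vec{x}_\infty)$. Since these outer sequences take values in $2$, they are eventually constant equal to $f(\vec{x}_\infty)$, whence $f(\vec{x}_n) = f(\vec{x}_\infty)$ for all sufficiently large $n$. Thus $f$ is continuous at $\vec{x}_\infty$.

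Next I would chain the clone identities. By \cref{def:post-rels}, preserving $\le$ is monotonicity and preserving $\lim$ is continuity, so $\Pol^{<\omega_1}\{\le, \lim\}$ equals the continuous monotone functions (of countable arity; finitary functions automatically qualify). By \cref{def:post-clones}, $\Decr\Incr^{<\omega_1} = \Pol^{<\omega_1}\{\bigwedgedown, \bigveeup\}$ consists of the monotone functions preserving decreasing meets and increasing joins. The equivalence just proved then gives $\Pol^{<\omega_1}\{\le, \lim\} = \Decr\Incr^{<\omega_1}$ on the $\omega$-ary slice, and the identification is trivial on finite arities (every finitary function is continuous and preserves all $\omega$-indexed limits).

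Finally, for the identification with $\langle \Mono^{<\omega}\rangle^{<\omega_1}$, I would invoke \cref{rmk:ktop}: members of this clone are precisely the essentially finitary monotone functions. A continuous $f : 2^\omega \to 2$ has clopen fibers, which are finite unions of basic clopen cylinders, so $f$ depends on only finitely many coordinates; combined with monotonicity this places $f$ in $\langle \Mono^{<\omega}\rangle^{<\omega_1}$. The reverse inclusion is clear since essentially finitary functions are continuous. I do not anticipate any real obstacle: the only nontrivial step is the squeeze, and the rest is bookkeeping against the definitions already recorded in the paper.
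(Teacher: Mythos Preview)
Your proof is correct and uses essentially the same idea as the paper: both arguments rest on the squeeze $\bigwedge_{m\ge n}\vec{x}_m \le \vec{x}_n \le \bigvee_{m\ge n}\vec{x}_m$ with the outer envelopes converging monotonically to $\vec{x}_\infty$. The only difference is packaging: the paper records this squeeze as a positive-primitive definition of the relation $\lim$ from $\le,\bigveeup,\bigwedgedown$ and then invokes \cref{thm:inv-pol-npp} to conclude $\Pol\{\le,\bigveeup,\bigwedgedown\}\subseteq\Pol\{\lim\}$ in one stroke, whereas you carry out the squeeze directly at the level of a single function and a single convergent sequence. Your version is slightly more elementary (no appeal to the $\Pol$--$\Inv$ machinery), while the paper's is terser and fits its general pattern of proving clone inclusions via pp-definability; neither has a real advantage here.
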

\begin{proof}
For a sequence of bits $x_0, x_1, \dotsc, x_\infty \in 2$, we have
\begin{align*}
\lim_{n -> \infty} x_n = x_\infty  \iff  \exists (y_n)_{n \in \#N}, (z_n)_{n \in \#N} \in 2^\#N\, \paren[\Big]{\bigwedge_n (y_n \le x_n \le z_n) \wedge \paren[\Big]{\bigveeup_n y_n = x_\infty = \bigwedgedown_n z_n}}
\end{align*}
which is a positive-primitive definition (recall \cref{thm:inv-pol-npp}) of $\lim$ from $\le, \bigveeup, \bigwedgedown$.
\end{proof}

Recall from \cref{def:post-rels} that $g : 2^n -> 2$, where $n \le \omega$, preserves $\bigveeup$ iff it is Scott-continuous, i.e., $g$ is the indicator function of a Scott-open set $g^{-1}(1) \subseteq 2^n$, meaning $g^{-1}(1)$ is upward-closed and its complement is closed under directed joins.
The following is again well-known in the theory of directed-complete posets; see e.g., \cite[VII~4.8]{Jstone}.

\begin{lemma}
\label{thm:scott-prod}
$\Incr^{<\omega_1} = \ang{\wedge, \bigvee, 0, 1}^{<\omega_1}$.
\end{lemma}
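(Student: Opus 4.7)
The plan is to verify both inclusions, with the nontrivial direction using the standard structure theorem for Scott-open subsets of the power $2^n$, $n \le \omega$.

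For $\supseteq$: each generator is Scott-continuous. The constants $0, 1$ and the binary meet $\wedge$ are trivially monotone and preserve directed joins (in finite arity this is automatic). The countable join $\bigvee : 2^\omega \to 2$ is monotone, and if $(\vec{x}^{(j)})_j$ is an increasing sequence in $2^\omega$ with join $\vec{x}$, then $\bigvee \vec{x} = 1$ iff some coordinate $x_i = 1$ iff some $x^{(j)}_i = 1$ iff $\bigvee \vec{x}^{(j)} = 1$ for some $j$. Then one invokes the fact (already recorded implicitly in the paper) that $\Incr^{<\omega_1} = \Pol^{<\omega_1}\{\bigveeup\}$ is a clone, so it contains $\ang{\wedge,\bigvee,0,1}^{<\omega_1}$.

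For $\subseteq$: let $f : 2^n \to 2$ with $n \le \omega$ be Scott-continuous, i.e., $f^{-1}(1)$ is upward-closed and closed upward under directed joins of its complement, equivalently $f^{-1}(1)$ is upward-closed and Scott-open. The key observation is that every $\vec{x} \in 2^n$ is the directed join of its \emph{finitely supported} predecessors (those $\vec{y} \le \vec{x}$ with $\supp(\vec{y}) := \{i < n \mid y_i = 1\}$ finite); so by Scott-openness, $\vec{x} \in f^{-1}(1)$ iff some finitely-supported $\vec{y} \le \vec{x}$ satisfies $f(\vec{y}) = 1$. Let
\[
\mathcal{I} := \{\supp(\vec{y}) \mid \vec{y} \in f^{-1}(1),\ \vec{y} \text{ finitely supported}\} \subseteq [n]^{<\omega}.
\]
Since $n \le \omega$, the set $[n]^{<\omega}$ of finite subsets of $n$ is countable, so $\mathcal{I}$ is countable. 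Unwinding, $f(\vec{x}) = 1$ iff $\vec{x} \ge \chi_I$ for some $I \in \mathcal{I}$ iff $\bigwedge_{i \in I} x_i = 1$ for some $I \in \mathcal{I}$. Enumerating $\mathcal{I}$ (with repetition if necessary) as $I_0, I_1, \dotsc$, this gives
\[
f(\vec{x}) = \bigvee_{j < \omega} \bigwedge_{i \in I_j} x_i,
\]
which exhibits $f$ as built from $\wedge^{<\omega}$ and $\bigvee^\omega$ applied to projections (plus possibly the constants $0, 1$ to handle the edge cases $\mathcal{I} = \emptyset$ and $\emptyset \in \mathcal{I}$, where the empty join is $0$ and the empty meet is $1$). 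Since $\ang{\wedge,\bigvee,0,1}^{<\omega_1}$ contains all finite meets $\wedge^k$ as iterated binary meets, this places $f$ in $\ang{\wedge,\bigvee,0,1}^{<\omega_1}$.

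The only subtlety is the compactness/finite-support characterization of the basis of a Scott-open set, which I would state explicitly (as a one-line appeal to the standard fact that the finitely-supported elements are exactly the compact elements of $2^n$, and every element is the directed join of its compact approximants) rather than invoking by reference. I expect no other serious obstacle: all arities involved are at most countable, so no uncountable joins or meets intrude, and the generating set already includes both constants to handle degenerate $\mathcal{I}$.
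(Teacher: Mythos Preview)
Your proof is correct and follows essentially the same approach as the paper's: both establish the nontrivial inclusion $\subseteq$ by observing that every element of $2^n$ is a directed join of finitely-supported elements, so a Scott-open set is a union of upward-closures of finitely-supported points, whence its indicator function is a (countable, since $n \le \omega$) join of finite meets of projections. You add explicit verification of the easy direction $\supseteq$ and handle the degenerate cases with constants $0,1$, which the paper leaves implicit.
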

\begin{proof}
Let $n \le \omega$.
We are to show that if $U \subseteq 2^n$ is Scott-open, then its indicator function is a join of finite meets of projections, i.e., it is open in the product topology on $2^n$ where $2$ has the \emph{Sierpinski topology} $\{\emptyset, \{1\}, 2\}$.
We identify $2^n$ with the powerset $\@P(n)$.
For every $a \in U$, since $a$ is a directed union of finite subsets, there is a finite $b \subseteq a$ such that $b \in U$, whence $a \in [b,n] := \{a \in \@P(n) \mid b \subseteq a\} \subseteq U$.
Thus $U$ is a union of such $[b,n]$, each of which is a finite intersection of subbasic opens $[b,n] = \bigcap_{i \in b} \{a \in \@P(n) \mid i \in a\}$ in the product topology.
\end{proof}

\begin{corollary}
\label{thm:post-borel-M}
There are precisely 4 Borel clones on $2$ restricting to $\Mono^{<\omega}$ (see \cref{fig:post-borel-M-MT0}):
\begin{itemize}
\item  $\Mono^\Borel = \Pol^\Borel\brace{\le} = \ang{\bigwedge, \bigvee, 0, 1}^\Borel$.
\item  $\Incr^\Borel = \Pol^\Borel\brace{\bigveeup} = \ang{\wedge, \bigvee, 0, 1}^\Borel$.
\item  $\Decr^\Borel = \Pol^\Borel\brace{\bigwedgedown} = \ang{\bigwedge, \vee, 0, 1}^\Borel$.
\item  $\ang{\Mono^{<\omega}}^\Borel = \Pol^\Borel\brace{\lim, \le} = \ang{\wedge, \vee, 0, 1}^\Borel$.
\end{itemize}
\end{corollary}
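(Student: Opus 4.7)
The plan is to follow the pattern of the proofs of \cref{thm:post-borel-T0,thm:post-borel-T0T1}: apply the monotone Wadge-type dichotomy \cref{thm:post-wadge}\cref{thm:post-wadge:M} together with its de~Morgan dual to decompose the fiber $\Clo^\Borel_{\Mono^{<\omega}}{2}$ into four mutually exclusive regions, and then identify the unique clone in each region using \cref{thm:mono-lim} and \cref{thm:scott-prod}.

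For any $F \in \Clo^\Borel_{\Mono^{<\omega}}{2}$, since $\Mono^{<\omega} \supseteq \Mono\Cons1{<\omega}^{<\omega}$, any $f \in F \cap (\Mono \setminus \Decr)$ forces $\bigvee \in F$ by \cref{thm:post-wadge}\cref{thm:post-wadge:M}. Hence either $F \subseteq \Decr^\Borel$ or $\bigvee \in F$; by the de~Morgan dual, either $F \subseteq \Incr^\Borel$ or $\bigwedge \in F$. A direct check shows $\bigvee \notin \Decr$ (the tails $\vec{x}^i = (\underbrace{0,\dots,0}_{i},1,1,\dots)$ form a decreasing sequence with meet $\vec{0}$ while $\bigvee \vec{x}^i \equiv 1$) and dually $\bigwedge \notin \Incr$, so the two dichotomies together partition the fiber into exactly four cases indexed by which of $\bigvee, \bigwedge$ belong to $F$.

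Each case then collapses to one of the listed clones. If $F \subseteq \Decr\Incr^\Borel$, then by \cref{thm:mono-lim} $F \subseteq \ang{\Mono^{<\omega}}^\Borel$, the fiber's minimum, so $F = \ang{\Mono^{<\omega}}^\Borel$. If $F \subseteq \Decr^\Borel$ with $\bigwedge \in F$, then $F \supseteq \ang{\Mono^{<\omega} \cup \{\bigwedge\}}^\Borel = \ang{\bigwedge,\vee,0,1}^\Borel = \Decr^\Borel$, using the de~Morgan dual of \cref{thm:scott-prod}, so equality holds; the third case dually yields $F = \Incr^\Borel = \ang{\wedge,\bigvee,0,1}^\Borel$. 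In the fourth case ($\bigwedge, \bigvee \in F$), we have $F \supseteq \ang{\bigwedge, \bigvee, 0, 1}^\Borel$, and the remaining job is to verify the identity $\Mono^\Borel = \ang{\bigwedge, \bigvee, 0, 1}^\Borel$ to conclude $F = \Mono^\Borel$.

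The main obstacle is precisely this last identity---a positive Borel representation theorem asserting that every upward-closed Borel set $A \subseteq 2^n$ is definable by a formula built from the projections $\pi_i$ and the constants $0, 1$ using only countable $\bigwedge, \bigvee$ (no $\neg$). I would prove it by transfinite induction on the Borel rank of $A$: at the base, open and closed upward-closed sets are Scott-open and dually Scott-closed respectively, hence positively Borel by \cref{thm:scott-prod} and its dual; at the inductive step, writing an upward-closed $A$ as $\bigcup_i B_i$ (resp.\ $\bigcap_i B_i$) with $B_i$ of lower rank, one replaces each $B_i$ by the upward-closed intersection $B_i^\uparrow \cap A$ sitting inside $A$, which can be handled inductively after a careful rank-analysis. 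Finally, the four clones are pairwise distinct by which of $\bigwedge, \bigvee$ each contains: $\Mono^\Borel$ both, $\Decr^\Borel$ only $\bigwedge$, $\Incr^\Borel$ only $\bigvee$, and $\ang{\Mono^{<\omega}}^\Borel$ neither.
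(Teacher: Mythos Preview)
Your dichotomy argument is correct and matches the paper's proof exactly: apply \cref{thm:post-wadge}\cref{thm:post-wadge:M} and its dual, then collapse the four cases using \cref{thm:mono-lim} and \cref{thm:scott-prod}.

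The gap is in your proposed proof of $\Mono^\Borel = \ang{\bigwedge, \bigvee, 0, 1}^\Borel$. The induction on Borel rank does not go through as sketched. If $A = \bigcup_i B_i$ is upward-closed with each $B_i$ of strictly lower rank, the sets $B_i$ need not be upward-closed, and your replacement $B_i^\uparrow \cap A$ does not fix this: the upward-closure $B_i^\uparrow$ is in general only analytic, not Borel (cf.\ \cref{rmk:post-up-analytic}), and even when it is Borel there is no bound on its rank in terms of the rank of $B_i$. So ``careful rank-analysis'' cannot rescue the step; the difficulty is genuine, not bookkeeping.

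The paper does not attempt an elementary argument here. It invokes Dyck's monotone version of the Lusin separation theorem (citing Kechris, \emph{Classical Descriptive Set Theory}, 28.12, and Dougherty): any two disjoint analytic sets, one upward-closed and one downward-closed, can be separated by a positively Borel set. Applying this to $A$ and $2^\omega \setminus A$ immediately gives that every upward-closed Borel $A$ is positively Borel. You should either cite this result or be explicit that your inductive approach would need a substantially new idea at the inductive step.
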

\begin{proof}
$\Mono^\Borel = \ang{\bigwedge, \bigvee, 0, 1}^\Borel$ by Dyck's monotone version of the Lusin separation theorem for analytic sets; see \cite[28.12]{Kcdst}, \cite[\S5]{Dougherty}.
The generators for $\Incr^\Borel, \Decr^\Borel$ are by \cref{thm:scott-prod} and its dual; and $\ang{\Mono^{<\omega}}^\Borel = \ang{\wedge, \vee, 0, 1}^\Borel$ from Post's lattice \ref{fig:post}.
By \cref{thm:post-wadge}\cref{thm:post-wadge:M}, each clone in $\Clo^\Borel_{\Mono^{<\omega}}{2}$ either is contained in $\Decr^\Borel$ or contains $\bigvee$, hence $\ang{\wedge, \bigvee, 0, 1}^\Borel = \Incr^\Borel$; dually, it either is contained in $\Incr^\Borel$ or contains $\Decr^\Borel$.
If it contains both $\Incr^\Borel, \Decr^\Borel$, then it contains $\ang{\bigwedge, \bigvee, 0, 1}^\Borel = \Mono^\Borel$.
If it is contained in both, then it is contained in $\ang{\Mono^{<\omega}}^\Borel$ by \cref{thm:mono-lim}.
\end{proof}

\begin{figure}
\centering
\includegraphics{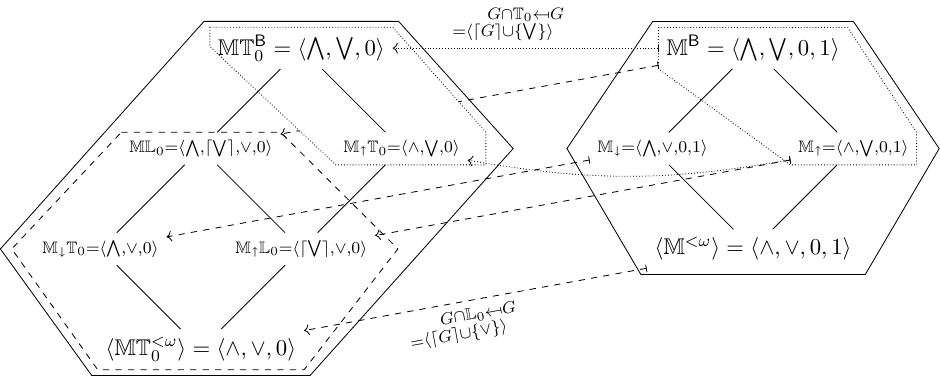}
\caption{Borel clones restricting to $\Mono\Cons01^{<\omega}$ and $\Mono^{<\omega}$, along with modularity isomorphisms from \ref{thm:post-down-mod} used to deduce the former from the latter.}
\label{fig:post-borel-M-MT0}
\end{figure}

\begin{lemma}
\label{thm:mono-meetdown-0}
$\Decr\Cons01^{<\omega_1} \subseteq \Limm011$.
\end{lemma}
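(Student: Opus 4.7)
The plan is to prove the contrapositive: if $g : 2^n \to 2$ with $n \le \omega$ is in $\Cons01$ but not in $\Limm011$, then $g$ fails to preserve $\bigwedgedown$, hence $g \notin \Decr$. The case $n < \omega$ is vacuous since $\Limm011^n = \Cons01^n$ for finite $n$ (the $\omega$-ary relation ${\lim}{=}0$ restricted to constant columns just says $g(\vec{0}) = 0$), so the content is at arity $\omega$.

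So fix $g : 2^\omega \to 2$ with $g(\vec{0}) = 0$ but $g$ not vanishing on any neighborhood of $\vec{0}$. Choose a sequence $\vec{x}_0, \vec{x}_1, \dotsc \in g^{-1}(1)$ with $\vec{x}_i \to \vec{0}$ in the product topology. The crux is that this sequence need not be monotone, but we can force monotonicity by the standard trick (used already in the proof of \cref{thm:post-wadge}\cref{thm:post-wadge:MT0}): set
\begin{equation*}
\vec{y}_i := \bigvee_{j \ge i} \vec{x}_j \in 2^\omega.
\end{equation*}
Then $\vec{y}_0 \ge \vec{y}_1 \ge \dotsb$ coordinatewise, and coordinatewise we still have $\vec{y}_i \to \vec{0}$ (because in each coordinate $k$, $\vec{x}_j(k) = 0$ for all sufficiently large $j$, so $\vec{y}_i(k) = 0$ for all sufficiently large $i$). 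Hence $(\vec{y}_i)_{i < \omega}$ is a decreasing chain in the domain of $\bigwedgedown$ with meet $\vec{0}$.

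Now $g \in \Decr \subseteq \Mono$, so monotonicity gives $g(\vec{y}_i) \ge g(\vec{x}_i) = 1$, and preservation of $\bigwedgedown$ then forces
\begin{equation*}
g(\vec{0}) = g\bigl(\bigwedgedown\nolimits_i \vec{y}_i\bigr) = \bigwedgedown\nolimits_i g(\vec{y}_i) = 1,
\end{equation*}
contradicting $g(\vec{0}) = 0$. The only step that requires any thought is the monotonization, since a priori a sequence witnessing discontinuity at $\vec{0}$ need not already be decreasing; but in the monotone $\Cons01$ setting there is no obstacle, because taking tail-suprema preserves both membership in $g^{-1}(1)$ (by monotonicity of $g$) and convergence to $\vec{0}$ (coordinatewise).
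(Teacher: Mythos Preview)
Your proof is correct and is essentially the direct argument the paper alludes to. The paper also records the positive-primitive definition
\[
\lim_n x_n = 0 \iff \exists (y_n)_n\Bigl(\bigwedge_n (x_n \le y_n) \wedge \bigwedgedown_n y_n = 0\Bigr),
\]
and your monotonization $\vec{y}_i = \bigvee_{j \ge i} \vec{x}_j$ is precisely the witness for this existential, so the two presentations amount to the same idea.
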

\begin{proof}
This is easily seen directly, or via the dual of \cref{thm:scott-prod}, or the positive-primitive definition
\begin{equation*}
\lim_{n -> \infty} x_n = 0  \iff  \exists (y_n)_n\, \paren[\Big]{\bigwedge_n (x_n \le y_n) \wedge \paren[\Big]{\bigwedgedown_n y_n = 0}}.
\qedhere
\end{equation*}
\end{proof}

\begin{theorem}
\label{thm:post-borel-MT0}
There are precisely 6 Borel clones on $2$ restricting to $\Mono\Cons01^{<\omega}$ (see \cref{fig:post-borel-M-MT0}):
\begin{itemize}
\item  $\Mono\Cons01^\Borel = \Pol^\Borel \brace{\le, 0} = \ang{\bigwedge, \bigvee, 0}^\Borel$.
\item  $\Incr\Cons01^\Borel = \Pol^\Borel \brace{\bigveeup, 0} = \ang{\wedge, \bigvee, 0}^\Borel$.
\item  $\Mono\Limm011^\Borel = \Pol^\Borel \brace{\le, {\lim}{=}0} = \ang{\bigwedge, \ceil{\bigvee}, \vee, 0}^\Borel$.
\item  $\Incr\Limm011^\Borel = \Pol^\Borel \brace{\bigveeup, {\lim}{=}0} = \ang{\ceil{\bigvee}, \vee, 0}^\Borel$.
\item  $\Decr\Cons01^\Borel = \Pol^\Borel \brace{\bigwedgedown, 0} = \Pol^\Borel \brace{\bigwedgedown, {\lim}{=}0} = \ang{\bigwedge, \vee, 0}^\Borel$.
\item  $\ang{\Mono\Cons01^{<\omega}}^\Borel = \Pol^\Borel \brace{\le, \lim, 0} = \ang{\wedge, \vee, 0}^\Borel$.
\end{itemize}
\end{theorem}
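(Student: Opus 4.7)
The plan is to mirror the structure of \cref{thm:post-borel-T0T1} very closely: use \cref{thm:post-wadge}\cref{thm:post-wadge:MT0} to split $\Clo^\Borel_{\Mono\Cons01^{<\omega}}{2}$ into two disjoint sub-intervals, and then apply the modularity isomorphism \cref{thm:post-down-mod} with two different choices of $H$ to identify each sub-interval with a piece of the previously classified $\Clo^\Borel_{\Mono^{<\omega}}{2}$ from \cref{thm:post-borel-M}.

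First, the dichotomy. For $F \in \Clo^\Borel_{\Mono\Cons01^{<\omega}}{2}$, either $F \subseteq \Mono\Limm011^\Borel$, or $F$ contains some $f \in \Mono\Cons01 \setminus \Limm011$, in which case \cref{thm:post-wadge}\cref{thm:post-wadge:MT0} produces $\bigvee \in F$ (since $\Mono\Cons01\Cons1{<\omega}^{<\omega} \subseteq \Mono\Cons01^{<\omega} \subseteq F$). These cases are mutually exclusive because $\bigvee \notin \Mono\Limm011$.

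For the bottom sub-interval $[\ang{\Mono\Cons01^{<\omega}}^\Borel, \Mono\Limm011^\Borel]$, apply \cref{thm:post-down-mod} with $\nu = \omega_1$ and $H = \ang{\wedge,\vee}^\Borel = \ang{\Mono\Cons01\Cons11^{<\omega}}^\Borel$, whose pointwise downward closure is $\Limm011^{<\omega_1}$ by \cref{def:down}. The modularity bijection $F \mapsto \ang{F \cup \{1\}}^\Borel$ identifies this sub-interval with $\Clo^\Borel_{\Mono^{<\omega}}{2}$; the endpoint checks $\ang{\Mono\Cons01^{<\omega}, 1}^\Borel = \ang{\Mono^{<\omega}}^\Borel$ and $\ang{\Mono\Limm011^\Borel, 1}^\Borel = \Mono^\Borel$ are straightforward (using $\ceil{\bigvee}(1, \vec{x}) = \bigvee\vec{x}$ for the second). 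Applying the inverse $G \mapsto G \cap \Limm011 = \ang{\ceil{G} \cup \{\vee\}}^\Borel$ (cf.\ \cref{eq:post-down-MT01T11-mod}) to each of the four clones in \cref{thm:post-borel-M}, and computing generators via \cref{thm:ceil-floor-gen} (together with $\wedge \in \ang{\ceil{\bigvee}}$ and \cref{thm:mono-meetdown-0} which forces $\Decr^\Borel \cap \Limm011 = \Decr\Cons01^\Borel$), yields the first four listed clones $\Mono\Limm011^\Borel, \Incr\Limm011^\Borel, \Decr\Cons01^\Borel, \ang{\Mono\Cons01^{<\omega}}^\Borel$.

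For the top sub-interval $[\Incr\Cons01^\Borel, \Mono\Cons01^\Borel]$, apply \cref{thm:post-down-mod} again, this time with $H = \ang{\wedge, \bigvee}^\Borel$, noting that $\down H = \Cons01^{<\omega_1}$ in the Borel sense since every $f : 2^n \to 2$ with $f(\vec{0}) = 0$ satisfies $f \le \bigvee^n \in H$. The bijection $G \mapsto G \cap \Cons01$ identifies $[\Incr^\Borel, \Mono^\Borel] \subseteq \Clo^\Borel_{\Mono^{<\omega}}{2}$ (the two clones containing $\bigvee$) with our sub-interval, yielding $\Incr\Cons01^\Borel = \ang{\wedge, \bigvee, 0}^\Borel$ (which follows from $\Incr^\Borel = \ang{\wedge, \bigvee, 0, 1}^\Borel$ of \cref{thm:post-borel-M} together with \cref{thm:scott-prod}) and $\Mono\Cons01^\Borel$. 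Since $\bigwedge \in \Mono\Cons01^\Borel \setminus \Incr\Cons01^\Borel$ (because $\bigwedge$ is not Scott-continuous), and this sub-interval has only two elements, we get $\Mono\Cons01^\Borel = \ang{\Incr\Cons01^\Borel \cup \{\bigwedge\}}^\Borel = \ang{\bigwedge, \bigvee, 0}^\Borel$. The $\Pol^\Borel$ characterizations then follow from the definitions, with \cref{thm:mono-lim} used for $\ang{\Mono\Cons01^{<\omega}}^\Borel = \Pol^\Borel\{\le, \lim, 0\}$ and \cref{thm:mono-meetdown-0} for the second description of $\Decr\Cons01^\Borel$.

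The main obstacle is the bookkeeping to verify the endpoint matches and extract the stated generators after passage through the modularity isomorphisms; each such computation is a short application of \cref{thm:ceil-floor-gen} or \cref{thm:scott-prod}, entirely analogous to the corresponding steps in the proof of \cref{thm:post-borel-T0T1}.
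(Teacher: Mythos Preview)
Your proposal is correct and follows essentially the same approach as the paper: the dichotomy via \cref{thm:post-wadge}\cref{thm:post-wadge:MT0}, followed by the two modularity isomorphisms from \cref{thm:post-down-mod} with $H = \ang{\wedge,\vee}^\Borel$ and $H = \ang{\wedge,\bigvee}^\Borel$ respectively, applied to the already-classified fiber $\Clo^\Borel_{\Mono^{<\omega}}{2}$. The only slip is cosmetic: you call $\Mono\Limm011^\Borel, \Incr\Limm011^\Borel, \Decr\Cons01^\Borel, \ang{\Mono\Cons01^{<\omega}}^\Borel$ the ``first four listed clones'' when they are the last four in the statement.
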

\begin{proof}
By \cref{thm:post-wadge}\cref{thm:post-wadge:MT0}, each clone in $\Clo^\Borel_{\Mono\Cons01^{<\omega}}{2}$ either is contained in $\Mono\Limm011^\Borel$ or contains $\bigvee$.
As in the proof of \cref{thm:post-borel-T0T1}, the former (the last 4 of the 6 clones listed) are obtained by applying $G |-> G \cap \Limm011 = \ang{\ceil{G} \cup \{\vee\}}^\Borel$ to the clones in $\Clo^\Borel_\Mono{2}$ from \cref{thm:post-borel-M}, while the latter (the first 2 listed clones) are obtained by applying $G |-> G \cap \Cons01 = \ang{\ceil{G} \cup \{\bigvee\}}^\Borel$ to the clones in $\Clo^\Borel_\Mono{2}$ containing $\bigvee$ (see \cref{fig:post-borel-M-MT0}).
\end{proof}

\begin{theorem}
\label{thm:post-borel-MT0T1}
There are precisely 9 Borel clones on $2$ restricting to $\Mono\Cons01\Cons11^{<\omega}$ (see \cref{fig:post-borel-MT0T1}):
\begin{itemize}
\item  $\Mono\Cons01\Cons11^\Borel = \Pol^\Borel \brace{\le, 0, 1} = \ang{\bigwedge, \bigvee}^\Borel$.
\item  $\Mono\Cons01\Limm111^\Borel = \Pol^\Borel \brace{\le, 0, {\lim}{=}1} = \ang{\floor{\bigwedge}, \wedge, \bigvee}^\Borel$.
\item  $\Incr\Cons01\Cons11^\Borel = \Pol^\Borel \brace{\bigveeup, 0, 1} = \ang{\wedge, \bigvee}^\Borel$.
\item  $\Mono\Limm011\Cons11^\Borel = \Pol^\Borel \brace{\le, {\lim}{=}0, 1} = \ang{\bigwedge, \ceil{\bigvee}, \vee}^\Borel$.
\item  $\Decr\Cons01\Cons11^\Borel = \Pol^\Borel \brace{\bigwedgedown, 0, 1} = \ang{\bigwedge, \vee}^\Borel$.
\item  $\Mono\Limm011\Limm111^\Borel = \Pol^\Borel \brace{\le, {\lim}{=}0, {\lim}{=}1} = \ang{\floor{\bigwedge}, \ceil{\bigvee}}^\Borel$.
\item  $\Incr\Limm011\Cons11^\Borel = \Pol^\Borel \brace{\bigveeup, {\lim}{=}0, 1} = \ang{\ceil{\bigvee}, \vee}^\Borel$.
\item  $\Decr\Cons01\Limm111^\Borel = \Pol^\Borel \brace{\bigwedgedown, 0, {\lim}{=}1} = \ang{\floor{\bigwedge}, \wedge}^\Borel$.
\item  $\ang{\Mono\Cons01\Cons11^{<\omega}}^\Borel = \Pol^\Borel \brace{\le, \lim, 0, 1} = \ang{\wedge, \vee}^\Borel$.
\end{itemize}
\end{theorem}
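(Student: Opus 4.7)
The plan is to follow the same template as \cref{thm:post-borel-T0T1,thm:post-borel-MT0}: apply a combinatorial Wadge-type dichotomy to split the fiber into regions, then invoke the modularity adjunction \cref{eq:adj-mod} to identify each region with a previously classified interval. The combinatorial input is \cref{thm:post-wadge}\cref{thm:post-wadge:MT0} together with its de~Morgan dual (obtained from the $\delta$-automorphism of $\Op2$). Applied to any Borel clone $F$ in the fiber, these yield two mutually exclusive dichotomies: $\bigvee \in F$ or $F \subseteq \Mono\Limm011\Cons11^\Borel$, and $\bigwedge \in F$ or $F \subseteq \Mono\Cons01\Limm111^\Borel$. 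The fiber thereby partitions into four cases according to which of $\bigvee, \bigwedge$ lie in $F$.

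If $\bigvee, \bigwedge \in F$ then $F \supseteq \ang{\bigvee, \bigwedge}^\Borel = \Mono\Cons01\Cons11^\Borel$ is the unique maximum, giving $1$ clone. Each remaining case will be handled by the modularity adjunction \cref{eq:adj-mod} identifying the relevant sub-interval of the fiber with an interval in the previously classified $\Clo^\Borel_{\Mono^{<\omega}}{2}$ of \cref{thm:post-borel-M}. Specifically: the case $\bigvee, \bigwedge \notin F$, covering $[\ang{\Mono\Cons01\Cons11^{<\omega}}^\Borel, \Mono\Limm011\Limm111^\Borel]$, will correspond via $c := \ang{\Mono^{<\omega}}^\Borel$, $d := \Mono^\Borel$ to the entire $4$-element fiber $\Clo^\Borel_{\Mono^{<\omega}}{2}$; the case $\bigvee \in F$, $\bigwedge \notin F$, covering $[\Incr\Cons01\Cons11^\Borel, \Mono\Cons01\Limm111^\Borel]$, will correspond via $c := \Incr^\Borel$, $d := \Mono^\Borel$ to the $2$-element subinterval $[\Incr^\Borel, \Mono^\Borel]$; and the dual case contributes another $2$. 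Summing, $1 + 4 + 2 + 2 = 9$.

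The main technical step is verifying that each proposed modularity adjunction is really an order isomorphism, which by \cref{eq:adj-mod} reduces to showing $b \vee c = d$ and $b \wedge c = a$ for the respective endpoints. The join $b \vee c = \Mono^\Borel$ will follow in every case from the identities $\floor{\bigwedge}(0, \vec x) = \bigwedge \vec x$ and $\ceil{\bigvee}(1, \vec x) = \bigvee \vec x$, so that the bounding generators in $b$ combined with the constants $0, 1 \in c$ recover $\bigwedge$ and $\bigvee$. The meet $b \wedge c = a$ in the one-sided cases uses the de~Morgan dual of \cref{thm:mono-meetdown-0}, namely $\Incr\Cons11^\Borel \subseteq \Limm111^\Borel$, so that $\Incr\Cons01\Limm111^\Borel = \Incr\Cons01\Cons11^\Borel$; in the four-clone case it uses the characterization $\ang{\Mono^{<\omega}}^\Borel = \Pol^\Borel\{\le, \lim\}$ from \cref{thm:post-borel-M} (which rests on \cref{thm:mono-lim}) to identify continuous monotone functions preserving $0, 1$ with $\ang{\wedge, \vee}^\Borel = a$. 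Once the isomorphisms are established, the explicit generators listed for each of the $9$ clones are read off by transporting those from \cref{thm:post-borel-M} along the isomorphisms, e.g., via $G \mapsto G \cap \Limm011 \cap \Limm111$ in the four-clone case.
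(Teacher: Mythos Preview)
Your four-way split via both the $\bigvee$ and $\bigwedge$ dichotomies is a clean organizing idea, but there is a genuine gap in the argument. You claim that ``by \cref{eq:adj-mod}'' the modularity adjunction is an isomorphism once you verify $b \vee c = d$ and $b \wedge c = a$. That is backwards: the paper's remark after \cref{eq:adj-mod} states only the \emph{necessary} direction (``If these form an isomorphism, then necessarily $a = b \wedge c$ and $d = b \vee c$''). The converse holds in a modular lattice, but $\Clo^\Borel{2}$ is not known to be modular, and in a general lattice the endpoint conditions do not force $(x \vee c) \wedge b = x$ for all $x \in [a,b]$. So as written, your argument does not rule out additional clones in, say, $[\ang{\wedge,\vee}^\Borel, \Mono\Limm011\Limm111^\Borel]$ that collapse under $F \mapsto \ang{F \cup \{0,1\}}^\Borel$.

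The fix is to invoke \cref{thm:post-down-mod} (and its de~Morgan dual), which \emph{are} proven isomorphisms, built from the $\ceil{-}$/$\floor{-}$ and cross-section operators of \cref{sec:post-ceil-floor-xsec}. Your desired isomorphism $[\ang{\wedge,\vee}^\Borel,\Mono\Limm011\Limm111^\Borel] \cong \Clo^\Borel_{\Mono^{<\omega}}{2}$ is the composite of two applications of \cref{thm:post-down-mod} (once with $H = \ang{\wedge,\vee}^\Borel$ to add $1$, once with the dual to add $0$), and similarly for the two-element intervals. Once you cite that, your approach goes through.

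For comparison, the paper's proof uses only the single $\bigvee$-dichotomy, splitting the fiber into the $6$ clones below $\Mono\Limm011\Cons11^\Borel$ and the $3$ clones containing $\bigvee$, and identifies each piece directly with a sub-interval of $\Clo^\Borel_{\Mono\Cons11^{<\omega}}{2}$ (the de~Morgan dual of \cref{thm:post-borel-MT0}) via a single application of \cref{thm:post-down-mod}. This avoids composing two isomorphisms and keeps the bookkeeping closer to the inductive pattern established in \cref{thm:post-borel-T0,thm:post-borel-T0T1,thm:post-borel-MT0}. Your route is more symmetric and lands directly in the $4$-element $\Mono$ fiber, which is arguably cleaner once the modularity is properly justified.
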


\begin{figure}[tb]
\centering
\includegraphics{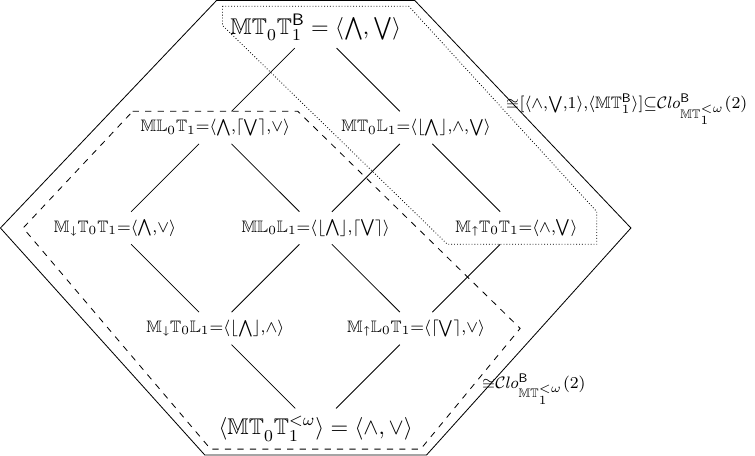}
\caption{Borel clones restricting to $\Mono\Cons01\Cons11^{<\omega}$, with the two indicated sublattices obtained by applying the modularity isomorphisms from \cref{thm:post-down-mod} to the Borel clones restricting to $\Mono\Cons11^{<\omega}$ (not shown).}
\label{fig:post-borel-MT0T1}
\end{figure}

\begin{proof}
By \cref{thm:post-wadge}\cref{thm:post-wadge:MT0}, every clone in $\Clo^\Borel_{\Mono\Cons01\Cons11^{<\omega}}{2}$ either is contained in $\Mono\Limm011\Cons11$ or contains $\bigvee$.
As in the proof of \cref{thm:post-borel-T0T1}, the former (the last 6 of the 9 clones listed) are obtained by applying $G |-> G \cap \Limm011 = \ang{\ceil{G} \cup \{\vee\}}^\Borel$ from \cref{thm:post-down-mod} to the clones in $\Clo^\Borel_{\Mono\Cons11^{<\omega}}{2}$, which are isomorphic via de~Morgan duality to $\Clo^\Borel_{\Mono\Cons01^{<\omega}}{2}$ from \cref{thm:post-borel-MT0}.
For example, the de~Morgan dual of the third clone listed in \cref{thm:post-borel-MT0} is
\begin{align*}
\Mono\Limm111^\Borel &= \ang{\floor{\bigwedge}, \wedge, \bigvee, 1}^\Borel \in \Clo^\Borel_{\Mono\Cons11^{<\omega}}{2}, \\
\shortintertext{which gets mapped to}
\Mono\Limm011\Limm111^\Borel
&= \ang{\ceil{\floor{\bigwedge}}, \ceil{\wedge}, \ceil{\bigvee}, \ceil{1}, \vee}^\Borel \\
&= \ang{\ceil{\floor{\bigwedge}}, \wedge^3, \ceil{\bigvee}, \pi_0, \vee}^\Borel \\
&= \ang{\floor{\bigwedge}, \ceil{\bigvee}}^\Borel
\end{align*}
since $\wedge^3 \in \ang{\wedge} \subseteq \ang{\ceil{\bigvee}}$ (\cref{thm:ceil-floor-gen}), $\vee \in \ang{\floor{\bigwedge}}$ similarly, and $\floor{\bigwedge} \in \Mono\Limm011\Limm111$.
The first 3 listed clones are similarly obtained by applying $G |-> G \cap \Cons01 = \ang{\ceil{G} \cup \{\bigvee\}}^\Borel$ to the 3 clones in $\Clo^\Borel_{\Mono\Cons11^{<\omega}}{2}$ containing $\bigvee$ (or alternatively, by applying de~Morgan duality within $\Clo^\Borel_{\Mono\Cons01\Cons11^{<\omega}}{2}$).
\end{proof}

This completes the classification of the Borel clones restricting to the top cube of Post's lattice, i.e., the interval $[\ang{\Mono\Cons01\Cons11^{<\omega}}^\Borel, \Op2^\Borel] \subseteq \Clo^\Borel{2}$, which is depicted altogether in \cref{fig:post-borel}.

\begin{remark}
\label{rmk:post-topcube-irred}
From this classification, we may read off that every Borel clone in $[\ang{\Mono\Cons01\Cons11^{<\omega}}^\Borel, \Op2^\Borel]$ is some intersection of the following, which form the meet-irreducible elements of $[\ang{\Mono\Cons01\Cons11^{<\omega}}^\Borel, \Op2^\Borel]$.
\begin{align*}
\ang{\Op2^{<\omega}} \gobble{= \Pol \brace{\lim}}, &&
\Cons01 \gobble{= \Pol \{0\}},
\Cons11 \gobble{= \Pol \{1\}}, &&
\Limm011 \gobble{= \Pol \brace{{\lim}{=}0}},
\Limm111 \gobble{= \Pol \brace{{\lim}{=}1}}, &&
\Mono \gobble{= \Pol \{\le\}}, &&
\Decr \gobble{= \Pol \brace{\bigwedgedown}},
\Incr \gobble{= \Pol \brace{\bigveeup}}.
\end{align*}
Dually, every Borel clone in $[\ang{\Mono\Cons01\Cons11^{<\omega}}^\Borel, \Op2^\Borel]$ is generated by some subset of the following functions, together with the generators $\wedge, \vee$ of $\Mono\Cons01\Cons11^{<\omega}$.
In other words, the clones generated by each of the following functions, together with $\wedge, \vee$, yield the join-irreducible elements of $[\ang{\Mono\Cons01\Cons11^{<\omega}}^\Borel, \Op2^\Borel]$.
\begin{align*}
0, 1, &&
\inline\bigwedge, \inline\bigvee, &&
\floor\bigwedge, \ceil\bigvee, &&
?: \; \text{(or $\ceil{->}$, or $\floor{-/>}$)}.
\end{align*}
\end{remark}

Using the classification of the Borel clones over $\Op2^{<\omega}, \Cons01^{<\omega}$, and the correspondence between self-dual clones and clones closed under $0, \beta$ given by \cref{thm:post-D-beta-mod}, we easily also obtain a classification of the Borel clones lying over 2 out of the 3 finitary self-dual, non-affine clones \cref{eq:post-D-beta-clones}:

\begin{corollary}
\label{thm:post-borel-D}
There are precisely 2 Borel clones restricting to $\Dual^{<\omega}$:
\begin{itemize}
\item  $\Dual^\Borel = \Pol^\Borel\{\neg\} = \ang{\beta(\bigvee), \neg}^\Borel = \ang{\beta(\bigwedge), \neg}^\Borel$.
\item  $\ang{\Dual^{<\omega}}^\Borel = \Pol^\Borel\{\lim,\neg\} = \ang{\exists^3_2, \neg}^\Borel$.
\end{itemize}
And there are precisely 3 Borel clones restricting to $\Dual\Cons01^{<\omega} = \Dual\Cons11^{<\omega}$:
\begin{itemize}
\item  $\Dual\Cons01^\Borel = \Dual\Cons11^\Borel = \Pol^\Borel\{\neg,0\} = \ang{\beta(\bigvee), +^3}^\Borel = \ang{\beta(\bigvee)}^\Borel$.
\item  $\Dual\Limm011^\Borel = \Dual\Limm111^\Borel = \Pol^\Borel\{\neg,{\lim}{=}0\} = \ang{\beta(\bigwedge), +^3}^\Borel$.
\item  $\ang{\Dual\Cons01^{<\omega}}^\Borel = \ang{\Dual\Cons11^{<\omega}}^\Borel = \Pol^\Borel\{\neg,\lim,0\} = \ang{\exists^3_2, +^3}^\Borel$.
\end{itemize}
(See \cref{fig:post-borel-D}.)
The functions $\beta(\bigvee), \beta(\bigwedge) : 2^\omega -> 2$ are given by
\begin{align*}
\beta(\inline\bigwedge)(x_0, x_1, \dotsc) &= \paren[\Big]{x_0 \wedge \bigvee_{i \ge 1} x_i} \vee \bigwedge_{i \ge 1} x_i, &
\beta(\inline\bigvee)(x_0, x_1, \dotsc) &= \paren[\Big]{\neg x_0 \wedge \bigvee_{i \ge 1} x_i} \vee \bigwedge_{i \ge 1} x_i.
\end{align*}
\end{corollary}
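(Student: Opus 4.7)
The plan is to invoke \cref{thm:post-D-beta-mod} at $\nu = \omega_1$ and restrict the isomorphism to Borel clones. Since $\beta$ is built from the continuous operations ${?{:}}$ and $\neg$, and cross-sectioning by the constant $0$ preserves Borelness, both $F \mapsto \ang{F \cup \{0\}}^{<\omega_1}$ and $G \mapsto \ang{\beta(G)}^{<\omega_1}$ preserve the property of being Borel. Hence \cref{thm:post-D-beta-mod} restricts to an order-isomorphism between $[\ang{\emptyset}^\Borel, \Dual^\Borel]$ and the poset of Borel clones $G \subseteq \Op2^\Borel$ with $0 \in G^1$ and $\beta(G) \subseteq G$. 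Combined with \cref{eq:post-D-beta-clones}, this pairs finitary restriction $\Dual^{<\omega}$ with $\Op2^{<\omega}$ and $\Dual\Cons01^{<\omega}$ with $\Cons01^{<\omega}$. So counting Borel clones over $\Dual^{<\omega}$ reduces to counting the $\beta$-closed clones among the $2$ clones in $\Clo^\Borel_{\Op2^{<\omega}}{2}$ from \cref{thm:post-borel-O2}, and similarly over $\Dual\Cons01^{<\omega}$ using the $3$ clones of \cref{thm:post-borel-T0}.

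All five candidates clearly contain $0$. For closure under $\beta$ I would apply \cref{thm:post-D-beta-gen} to reduce to checking $\beta$ on generators. For $\Op2^\Borel$ this is automatic since $\beta$ produces Borel outputs; for $\ang{\Op2^{<\omega}}^\Borel$ it is automatic since $\beta$ of a finitary function is finitary. For the three clones over $\Cons01^{<\omega}$, the identities $\beta(f)(0,\vec{x}) = f(\vec{x})$ and $\beta(f)(1,\vec{x}) = \neg f(\neg \vec{x})$ show directly that preservation of $0$, vanishing on a neighborhood of $\vec{0}$, and continuity (in the finitary sense) are each inherited by $\beta(f)$. This confirms $2$ clones over $\Dual^{<\omega}$ and $3$ over $\Dual\Cons01^{<\omega}$; the equalities $\Dual\Cons01^\Borel = \Dual\Cons11^\Borel$ and $\Dual\Limm011^\Borel = \Dual\Limm111^\Borel$ follow from self-duality ($f(\vec{1}) = \neg f(\vec{0})$, and continuity at $\vec{0}$ transports to $\vec{1}$).

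For the displayed generating sets I would transport known generators across the inverse map $G \mapsto \ang{\beta(G)}^\Borel$. Since $\beta(\neg)(x_0, x_1) = \neg x_1$, the generator $\neg$ passes through unchanged, so the two generating sets $\ang{\bigvee, \neg}^\Borel$ and $\ang{\bigwedge, \neg}^\Borel$ for $\Op2^\Borel$ yield $\Dual^\Borel = \ang{\beta(\bigvee), \neg}^\Borel = \ang{\beta(\bigwedge), \neg}^\Borel$; and $\ang{\Dual^{<\omega}}^\Borel, \ang{\Dual\Cons01^{<\omega}}^\Borel$ inherit the finitary generators $\exists^3_2, \neg$ and $\exists^3_2, +^3$ from Post's lattice \cref{fig:post}. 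The substitutions $\beta(f)_c = f$ or $\delta(f)$ yield the explicit formulas for $\beta(\bigvee), \beta(\bigwedge)$ displayed in the statement. The subtlest claim is the collapse $\Dual\Cons01^\Borel = \ang{\beta(\bigvee)}^\Borel$, which I would establish by verifying that the corresponding clone $\ang{\beta(\bigvee), 0}^\Borel$ on the $\Cons01^\Borel$-side contains both generators of $\Cons01^\Borel = \ang{\bigvee, -/>}^\Borel$: the cross-section $\beta(\bigvee)(0, x_1, x_2, \ldots)$ recovers $\bigvee$, and $\beta(\bigvee)(x_0, 0, 0, y, y, \ldots) = \neg x_0 \wedge y$ yields a variable permutation of $-/>$. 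The main obstacle is precisely this tightening; the analogous attempt to collapse $\ang{\beta(\bigwedge), +^3}^\Borel$ to $\ang{\beta(\bigwedge)}^\Borel$ fails because the substitutions available using only $0$ as a constant cannot produce $\vee$ or $-/>$ from $\beta(\bigwedge)$, so $+^3$ must be retained as a generator of $\Dual\Limm011^\Borel$.
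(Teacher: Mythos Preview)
Your proposal is correct and follows essentially the same route as the paper: apply the isomorphism of \cref{thm:post-D-beta-mod}, verify that each of the Borel clones from \cref{thm:post-borel-O2} and \cref{thm:post-borel-T0} is closed under $\beta$, and transport generators via \cref{thm:post-D-beta-gen} (noting $\beta(\neg)=\neg\circ\pi_1$ and $\beta(+)=+^3$). Your extra verification that $\ang{\beta(\bigvee),0}^\Borel$ recovers both $\bigvee$ and $-/>$, together with the monotonicity observation explaining why $\ang{\beta(\bigwedge)}^\Borel \subsetneq \Dual\Limm011^\Borel$, fills in details the paper leaves implicit but does not depart from its argument.
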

\begin{proof}
By applying \cref{thm:post-D-beta-mod} to the Borel clones restricting to $\Op2^{<\omega}$ (\cref{thm:post-borel-O2}) and $\Cons01^{<\omega}$ (\cref{thm:post-borel-T0}), which are all easily seen to be closed under $\beta$.
The generating sets above are obtained using \cref{thm:post-D-beta-gen}, by applying $\beta$ to generating sets for the respective earlier clones (namely $\{\bigvee, \neg\}, \{\bigwedge, \neg\}$, $\{\bigvee, +\}$, and $\{\bigwedge, +\}$ for the non-essentially finite clones).
\end{proof}

\begin{figure}
\centering
\includegraphics{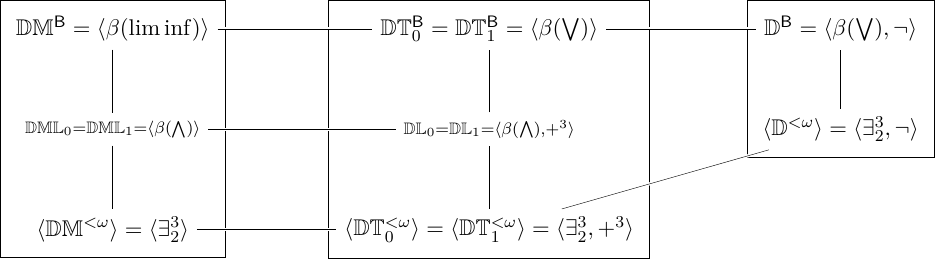}
\caption{Borel clones with finitary restrictions $\Dual\Mono^{<\omega}, \Dual\Cons01^{<\omega}, \Dual^{<\omega}$ respectively.}
\label{fig:post-borel-D}
\end{figure}

The classification of the Borel clones lying over $\Dual\Mono^{<\omega}$, which correspond via \ref{thm:post-D-beta-mod} to the Borel clones over $\Mono\Cons02^{<\omega}$, will be given in \cref{thm:post-borel-DM} after we have treated those latter clones.

\subsection{Bounded functions}
\label{sec:borel-T0inf}

We now turn our attention to the remaining regions in Post's lattice \ref{fig:post}: the ``side tubes'', between $\ang{\wedge}$ and $\Cons02$, or dually between $\ang{\vee}$ and $\Cons12$; we find it more convenient to focus on the former (left side of the diagram \ref{fig:post}).
Unlike above, here we are unable to give a full classification of the corresponding Borel clones.
Nonetheless, we can show some interesting partial structure, as well as give some indications that the remaining structure may be quite complicated (see \cref{fig:post-borel-T0k}).

In this subsection, we focus on the ``base'' of the ``side tube'', below $\Cons0{<\omega} = \Pol \brace{{\wedge^{<\omega}}{=}0}$, which consists of the indicator functions $f : 2^n -> 2$ of subsets $f^{-1}(1) \subseteq 2^n$ with the \emph{finite intersection property} (the meet of finitely many strings in $f^{-1}(1)$ is not $\vec{0}$).
Contained within these is $\Cons0\omega = \Pol \brace{{\bigwedge}{=}0}$, the indicator functions of subsets with the \emph{countable intersection property}, or equivalently functions $\le \pi_i$ for some $i$ (assuming countable arities; recall \cref{def:event}).
Clearly,
\begin{align}
\label{eq:post-T0inf-lim0}
\Cons0\omega^{<\omega_1} \subseteq \Limm011.
\end{align}
We also have the following interactions with the clones from the top cube (from \cref{rmk:post-topcube-irred}):

\begin{lemma}
\label{thm:post-T0inf-lim1}
$\Cons0{<\omega}\Limm111^{<\omega_1} \subseteq \Cons0\omega$.
\end{lemma}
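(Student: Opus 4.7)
The plan is to reduce the statement to showing that every $f \in \Cons0{<\omega}\Limm111$ of arity $n \le \omega$ satisfies $f \le \pi_i$ for some $i < n$ (hence $f \in \Cons0\omega$, by the characterization of $\Cons0\omega$ for $n \le \omega$ noted just before the lemma). The finite-arity case is immediate since $\Cons0{<\omega}^n = \Cons0\omega^n$ for $n < \omega$, so the real content is the case $n = \omega$.

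First I would extract the two consequences of $f \in \Limm111$: namely that $f(\vec 1) = 1$, and that $f$ is continuous at $\vec 1$. The latter produces a finite $F \subseteq n$ with
\begin{equation*}
U_F := \{x \in 2^n : x|_F \equiv 1\} \subseteq f^{-1}(1).
\end{equation*}
A quick sanity check rules out $F = \emptyset$: that would force $f \equiv 1$, but then the pair $(\vec 0,\vec 0)$ lies in $f^{-1}(1)$ with meet $\vec 0$, contradicting $f \in \Cons02$ (for $n \ge 1$).

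Next I would argue by contradiction that $f \le \pi_i$ for some $i \in F$. Suppose not; then for each $i \in F$ there is $x^{(i)} \in f^{-1}(1)$ with $x^{(i)}_i = 0$. Let $\vec 1_F \in 2^n$ denote the characteristic function of $F$ (so $\vec 1_F|_F \equiv 1$, and $\vec 1_F$ is $0$ off $F$); this lies in $U_F \subseteq f^{-1}(1)$. Consider the collection
\begin{equation*}
\{x^{(i)} : i \in F\} \cup \{\vec 1_F\} \subseteq f^{-1}(1),
\end{equation*}
which has at most $|F|+1$ elements. Its bitwise meet is $\vec 0$: on any coordinate $j \in F$ the meet has a $0$ (from $x^{(j)}$), and on any coordinate $j \notin F$ the meet has a $0$ (from $\vec 1_F$). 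This contradicts $f \in \Cons0{|F|+1}$, which is part of $\Cons0{<\omega}$. Hence $f \le \pi_i$ for some $i \in F$, and so $f \in \Cons0\omega$.

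I do not expect a real obstacle here: the argument is a direct marriage of the continuity witness at $\vec 1$ with the finite intersection property. The only mildly subtle point is choosing the auxiliary string $\vec 1_F$ so as to simultaneously ``kill'' all coordinates outside $F$, which is what makes the bound $|F|+1$ (rather than some $n$-dependent quantity) suffice and lets the finite intersection property apply.
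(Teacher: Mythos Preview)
Your argument is correct, but takes a different route from the paper. The paper gives a one-line positive-primitive definition of the $\omega$-ary relation $({\bigwedge}{=}0)$ from $({\lim}{=}1)$ together with the relations $({\bigwedge^k}{=}0)$ for $k<\omega$:
\[
\bigwedge \vec{x} = 0 \iff \exists \vec{y}\,\Bigl((\lim \vec{y} = 1) \wedge \bigwedge_{i<\omega}(x_0 \wedge \dotsb \wedge x_{i-1} \wedge y_i = 0)\Bigr),
\]
and then invokes the $\Pol$--$\Inv$ machinery (Lemma~\ref{thm:inv-pol-npp}) to conclude. This is in keeping with how the paper handles the analogous inclusions (e.g.\ Lemmas~\ref{thm:mono-lim}, \ref{thm:mono-meetdown-0}, \ref{thm:post-T0k-lim1}, \ref{thm:post-T0k-incr}), and has the minor advantage of working verbatim for functions of arbitrary arity. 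Your approach is instead a direct combinatorial argument: you extract a finite continuity witness $F$ at $\vec{1}$ from $\Limm111$, and then combine the strings $x^{(i)}$ with the cleverly chosen $\vec{1}_F$ to violate $\Cons0{|F|+1}$. This is more elementary and self-contained, avoiding the relational machinery entirely; it does rely on the countable-arity characterization of $\Limm111$ as continuity at $\vec{1}$, but that is all the statement requires. (A small cosmetic point: your separate treatment of $F=\emptyset$ is not strictly necessary, since in that case $\vec{1}_\emptyset = \vec{0} \in f^{-1}(1)$ already contradicts $\Cons01$ within your main argument.)
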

\begin{proof}
We have the following positive-primitive definition of $({\bigwedge}{=}0) \subseteq 2^\omega$:
\begin{align*}
\bigwedge \vec{x} = 0  \iff  \exists \vec{y} \in 2^\omega\, \paren[\Big]{(\lim \vec{y} = 1) \wedge \bigwedge_{i < \omega} (x_0 \wedge \dotsb \wedge x_{i-1} \wedge y_i = 0)}.
&\qedhere
\end{align*}
\end{proof}

\begin{corollary}
\label{thm:post-T0inf-incr}
$\Incr\Cons0{<\omega}^{<\omega_1} \subseteq \Cons0\omega$.
\end{corollary}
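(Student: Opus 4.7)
The plan is to reduce the corollary to \cref{thm:post-T0inf-lim1} by showing that every $g \in \Incr\Cons0{<\omega}^{<\omega_1}$ either is a constant zero function (which lies below $\pi_0$ and hence in $\Cons0\omega$) or lies in $\Limm111^{<\omega_1}$, at which point \cref{thm:post-T0inf-lim1} applies directly.

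So I would fix a nonzero $g : 2^n \to 2$ with $n \le \omega$ in $\Incr\Cons0{<\omega}$. Since $\Incr \subseteq \Mono$ (preservation of $\bigveeup$ implies preservation of $\le$), monotonicity combined with $g^{-1}(1) \ne \emptyset$ forces $g(\vec{1}) = 1$. When $n$ is finite this already puts $g$ in $\Limm111$, because continuity at $\vec{1}$ is automatic on a discrete space. For $n = \omega$, continuity at $\vec{1}$ follows from Scott-continuity applied to the canonical prefix decomposition: writing $\vec{x}_i := (\underbrace{1,\dots,1}_i, 0, 0, \dots) \in 2^\omega$, the sequence $\vec{x}_0 \le \vec{x}_1 \le \dotsb$ has coordinatewise join $\vec{1}$, so
\[
1 = g(\vec{1}) = \bigvee_{i < \omega} g(\vec{x}_i),
\]
whence $g(\vec{x}_N) = 1$ for some $N < \omega$, and then monotonicity yields $g \equiv 1$ on the basic clopen neighborhood $\{\vec{x} \in 2^\omega : x_0 = \dotsb = x_{N-1} = 1\}$ of $\vec{1}$. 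Thus $g \in \Limm111$, and \cref{thm:post-T0inf-lim1} gives $g \in \Cons0\omega$.

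The only step with any content is the passage from Scott-continuity to topological continuity at $\vec{1}$, but this is a well-known property of the Scott topology on $2^\omega$ (cf.\ \cite{GHKLMS}) and introduces no new ideas; the rest is bookkeeping.
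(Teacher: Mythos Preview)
Your proof is correct and follows essentially the same approach as the paper: split into the constant-zero case and the nonzero case, show the latter lies in $\Limm111$, then invoke \cref{thm:post-T0inf-lim1}. The only difference is that the paper cites the dual of \cref{thm:mono-meetdown-0} for the step ``nonzero Scott-continuous $\Rightarrow$ $\Limm111$'', whereas you unpack that step directly via the prefix sequence; the content is the same.
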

\begin{proof}
By the dual of \cref{thm:mono-meetdown-0}, a Scott-continuous function $f \in \Incr$ is either the constant $0$ function, which is clearly in $\Cons0\omega$, or in $\Limm111$, hence in $\Cons0\omega$ by the above.
\end{proof}

\begin{remark}
It is not true that $\Decr\Cons0{<\omega}^{<\omega_1} \subseteq \Cons0\omega$; see \cref{ex:forall2}.
\end{remark}

We have already worked out the structure of the Borel clones in $[\ang{\Mono\Cons0{<\omega}\Cons11^{<\omega}}^\Borel, \Cons0\omega^\Borel]$, as part of the classification of the top cube in the preceding subsection:

\begin{corollary}
\label{thm:post-borel-T0omega}
There are 2, 3, 4, 6 Borel clones contained in $\Cons0\omega$ restricting to the finitary clones
$\Cons0{<\omega}^{<\omega},
\Cons0{<\omega}\Cons11^{<\omega},
\Mono\Cons0{<\omega}^{<\omega},
\Mono\Cons0{<\omega}\Cons11^{<\omega}$
respectively, namely $\ang{\ceil{-}}^\Borel$ applied to each of the Borel clones in
$\Clo^\Borel_{\Op2^{<\omega}}{2}$ (\cref{thm:post-borel-O2}),
$\Clo^\Borel_{\Cons11^{<\omega}}{2}$ (dual of \cref{thm:post-borel-T0}),
$\Clo^\Borel_{\Mono^{<\omega}}{2}$ (\cref{thm:post-borel-M}), and
$\Clo^\Borel_{\Mono\Cons11^{<\omega}}{2}$ (dual of \cref{thm:post-borel-MT0});
see \cref{fig:post-borel-T0inf}.
\end{corollary}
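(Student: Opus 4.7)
The plan is to obtain the classification by transporting the results of the top cube via the isomorphism of \cref{thm:post-T0inf-mod}. Instantiating that proposition with $\nu = \omega_1$ and observing that every operation involved---$\ceil{-}$, adjoining the constant $1$, and intersection with $\Cons0\omega$---preserves Borelness, its proof reprises verbatim in the Borel setting to produce an order-isomorphism
\[
[\ang{\wedge}^\Borel, \Cons0\omega^\Borel] \;\cong\; [\ang{\wedge,1}^\Borel, \Op2^\Borel]
\]
with $F \mapsto \ang{F \cup \{1\}}^\Borel$ and inverse $G \mapsto G \cap \Cons0\omega = \ang{\ceil G}^\Borel$, the latter equation being the Borel instance of \cref{thm:ceil-clone}.

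The next step is to verify that this isomorphism respects the bundle projection \cref{eq:pancake-borel}. The key observation is that for finitary $f : 2^n \to 2$, the set $f^{-1}(1) \subseteq 2^n$ is finite, so the countable and finite intersection properties coincide and $\Cons0\omega \cap \Op2^{<\omega} = \Cons0{<\omega}^{<\omega}$. Thus restricting the Borel isomorphism to $\Op2^{<\omega}$ recovers the finitary instance of \cref{thm:post-T0inf-mod}. Concretely, for $F \in [\ang{\wedge}^\Borel, \Cons0\omega^\Borel]$ with finitary restriction $F_0$, the finitary clone $H := \ang{F \cup \{1\}}^\Borel \cap \Op2^{<\omega}$ lies in $[\ang{\wedge,1}^{<\omega}, \Op2^{<\omega}]$ and satisfies
\[
H \cap \Cons0{<\omega}^{<\omega} \;=\; (\ang{F \cup \{1\}}^\Borel \cap \Cons0\omega) \cap \Op2^{<\omega} \;=\; F \cap \Op2^{<\omega} \;=\; F_0;
\]
by the uniqueness in the finitary isomorphism, $H = \ang{F_0 \cup \{1\}}^{<\omega}$. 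Dually, one checks that $(G \cap \Cons0\omega) \cap \Op2^{<\omega} = (G \cap \Op2^{<\omega}) \cap \Cons0{<\omega}^{<\omega}$. Hence the two-way isomorphism induces, for each $F_0 \in [\ang{\wedge}^{<\omega}, \Cons0{<\omega}^{<\omega}]$, a bijection between the Borel clones inside $\Cons0\omega^\Borel$ restricting to $F_0$ and the elements of $\Clo^\Borel_{G_0}{2}$, where $G_0 := \ang{F_0 \cup \{1\}}^{<\omega}$. The entire fiber $\Clo^\Borel_{G_0}{2}$ appears on the right because $\wedge, 1 \in G_0$ forces every Borel clone over $G_0$ to lie in $[\ang{\wedge,1}^\Borel, \Op2^\Borel]$.

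Finally, I would read off the counts. The four finitary clones in the statement correspond to $G_0 = \Op2^{<\omega}, \Cons11^{<\omega}, \Mono^{<\omega}, \Mono\Cons11^{<\omega}$, since intersecting each of these with $\Cons0{<\omega}^{<\omega}$ returns the four clones of the statement. By the preceding bijection, the number of Borel clones in each corresponding fiber of the statement equals $|\Clo^\Borel_{G_0}{2}|$, which is $2, 3, 4, 6$ by \cref{thm:post-borel-O2}, the de~Morgan dual of \cref{thm:post-borel-T0}, \cref{thm:post-borel-M}, and the de~Morgan dual of \cref{thm:post-borel-MT0}, respectively. The only nontrivial step is the diagram chase in the middle paragraph, which reduces cleanly to the finitary version of \cref{thm:post-T0inf-mod} together with the collapse $\Cons0\omega \cap \Op2^{<\omega} = \Cons0{<\omega}^{<\omega}$.
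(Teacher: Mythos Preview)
Your proof is correct and follows the same approach as the paper: both invoke the modularity isomorphism of \cref{thm:post-T0inf-mod} (in its Borel instance) to transport the top-cube classifications down to $[\ang{\wedge}^\Borel,\Cons0\omega^\Borel]$. The paper's proof is a one-line citation of \cref{thm:post-T0inf-mod}, whereas you have spelled out the diagram chase verifying that the isomorphism commutes with finitary restriction via the collapse $\Cons0\omega \cap \Op2^{<\omega} = \Cons0{<\omega}^{<\omega}$; this is implicit in the paper but worth making explicit.
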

\begin{proof}
By \cref{thm:post-T0inf-mod}.
(We computed $\ang{\ceil{-}}^\Borel$ of these clones in proving \cref{thm:post-borel-T0,thm:post-borel-T0T1,thm:post-borel-MT0,thm:post-borel-MT0T1}; here we need only omit the last step of adding back $\vee$.)
\end{proof}

\begin{figure}[tb]
\centering
\includegraphics{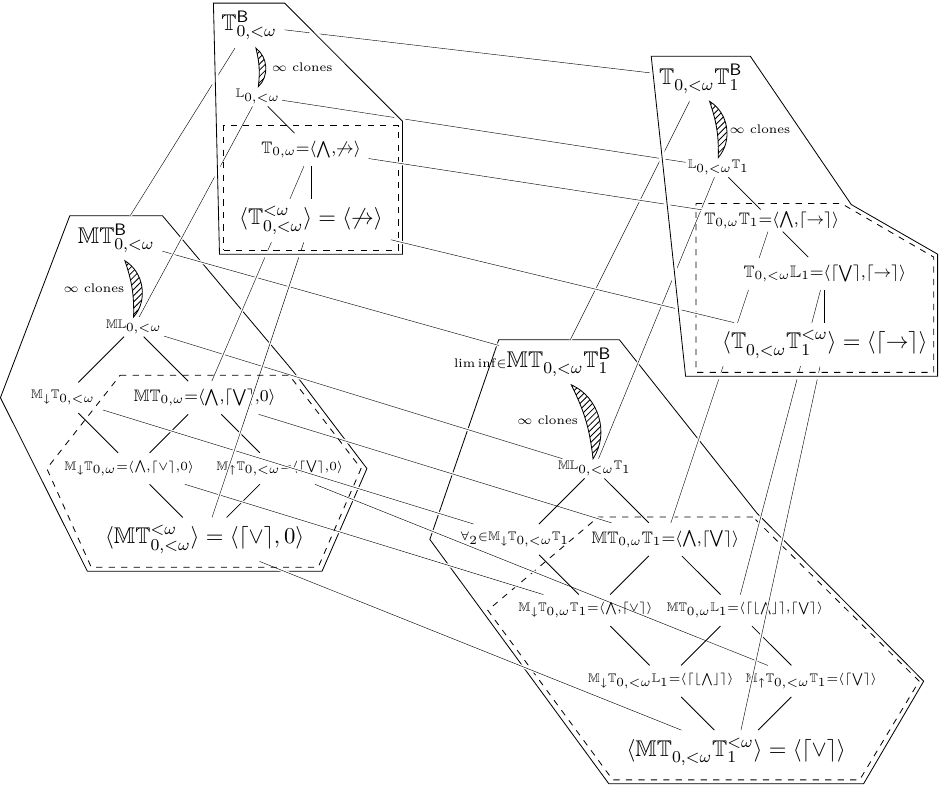}
\caption{All Borel clones restricting to $\Cons0{<\omega}^{<\omega}$, $\Cons0{<\omega}\Cons11^{<\omega}$, $\Mono\Cons0{<\omega}^{<\omega}$, or $\Mono\Cons0{<\omega}\Cons11^{<\omega}$ that are contained within $\Cons0\omega$ (dashed blocks), plus some examples of Borel clones restricting to these finitary clones that are not contained within $\Cons0\omega$ (see \cref{fig:post-borel-T0k,sec:borel-T0k} for structure of the infinite shaded regions).}
\label{fig:post-borel-T0inf}
\end{figure}

Recall that by \cref{thm:post-T0inf-mod}, every Borel clone above $\ang{\Mono\Cons0{<\omega}\Cons11^{<\omega}}^\Borel$ is sandwiched between exactly one of the above 15 clones in $[\ang{\Mono\Cons0{<\omega}\Cons11^{<\omega}}^\Borel, \Cons0\omega^\Borel]$ (the dashed blocks in \cref{fig:post-borel-T0inf}) and the corresponding clone above $\ang{\Mono\Cons11^{<\omega}}^\Borel = \ang{\wedge,\vee,1}^\Borel$ (\cref{fig:post-borel-O2-T0,fig:post-borel-T0T1,fig:post-borel-M-MT0}).
More generally, the same holds for ${<}\omega_1$-ary clones; the only difference is that there are more than just 15 clones below $\Cons0\omega^{<\omega_1}$, and we do not have a classification of all of them.
Nonetheless, we may deduce the following ``dichotomies'' for arbitrary ${<}\omega_1$-ary clones above $\ang{\Mono\Cons0{<\omega}\Cons11^{<\omega}}^{<\omega_1}$:

\begin{corollary}[of preceding subsection]
\label{thm:post-T0omega-dich}
Let $\Mono\Cons0{<\omega}\Cons11^{<\omega} \subseteq F \subseteq \Op2^{<\omega_1}$ be a ${<}\omega_1$-ary clone.
\begin{enumerate}[label=(\alph*)]
\item \label{thm:post-T0omega-dich:Meet}
Either $F \subseteq \ang{\Op2^{<\omega}}$, or $F \subseteq \Limm111$, or $F \subseteq \Incr$, or $\bigwedge \in F$.
\item \label{thm:post-T0omega-dich:cfMeet}
Either $F \subseteq \ang{\Op2^{<\omega}}$, or $F \subseteq \Incr$, or $\ceil{\floor{\bigwedge}} \in F$.
\item \label{thm:post-T0omega-dich:cJoin}
Either $F \subseteq \ang{\Op2^{<\omega}}$, or $F \subseteq \Decr$, or $\ceil{\bigvee} \in F$.
\end{enumerate}
\end{corollary}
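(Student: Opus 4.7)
The plan is to prove each of the three dichotomies by reducing, via the bundle decomposition of \cref{thm:post-T0inf-mod} applied at $\nu = \omega_1$, to a corresponding statement about the ``$\Cons0\omega$-projection'' $F \cap \Cons0\omega$, and then applying the de~Morgan duals of the Wadge-type lemmas of \cref{thm:post-wadge}.

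For the reduction, I would observe that each ``escape clone'' $C \in \{\ang{\Op2^{<\omega}}, \Limm111, \Incr, \Decr\}$ contains the constant $1$. Since \cref{thm:post-T0inf-mod} gives $F \subseteq \ang{F \cup \{1\}}^{<\omega_1} = \ang{(F \cap \Cons0\omega) \cup \{1\}}^{<\omega_1}$, and since $1 \in C$ means $\ang{(F \cap \Cons0\omega) \cup \{1\}}^{<\omega_1} \subseteq C$ whenever $F \cap \Cons0\omega \subseteq C$, the containment $F \subseteq C$ is equivalent to $F \cap \Cons0\omega \subseteq C$. Moreover, each of the conclusion functions $\bigwedge, \ceil{\floor{\bigwedge}}, \ceil{\bigvee}$ lies in $\Cons0\omega$ (as each is bounded above by a projection), so their membership in $F$ is equivalent to their membership in $F \cap \Cons0\omega$. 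Hence it suffices to prove each dichotomy in the reduced setting $F \subseteq \Cons0\omega$, where every $f \in F$ satisfies $f \le \pi_i$ for some $i$.

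In the reduced setting, for (a), I would suppose $F$ avoids the first three alternatives and pick witnesses $f_1 \in F \setminus \ang{\Op2^{<\omega}}$ (discontinuous), $f_2 \in F \setminus \Limm111$, and $f_3 \in F \setminus \Incr$ (not Scott-continuous). Combining these witnesses with $\wedge$ and the other generators of $\Mono\Cons0{<\omega}\Cons11^{<\omega} \subseteq F$, I would build a monotone, $1$-preserving function $f' \in F \cap \Mono\Cons11 \setminus \Limm111$ (i.e., monotone, fixing $\vec{1}$, but discontinuous there). Then the de~Morgan dual of \cref{thm:post-wadge}\cref{thm:post-wadge:MT0}, applied to $f'$ together with the generators from $\Mono\Cons11\Cons0{<\omega}^{<\omega} = \Mono\Cons0{<\omega}\Cons11^{<\omega} \subseteq F$, yields $\bigwedge \in F$. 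Parts (b) and (c) would follow analogously: for (b), the dual of \cref{thm:post-wadge}\cref{thm:post-wadge:O2} produces $\ceil{\floor{\bigwedge}}$ as the ``$\Cons0\omega$-fiber'' version of the generic discontinuity witness; for (c), using that $F \not\subseteq \Decr$ (the dual of the third alternative in (a)), the dual of \cref{thm:post-wadge}\cref{thm:post-wadge:M} produces $\ceil{\bigvee}$ as a monotone non-Scott-continuity witness inside $\Cons0\omega$.

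The main obstacle will be the monotonization step in the second paragraph: given a potentially non-monotone pathological $f \in F \subseteq \Cons0\omega$, producing a monotone, $1$-preserving $f' \in F$ with analogous pathology, using only meets and compositions with functions from $\Mono\Cons0{<\omega}\Cons11^{<\omega}$. Fortunately, the hypothesis $F \subseteq \Cons0\omega$ gives every $f \in F$ a projection upper bound, and after replacing $f$ by a meet with a suitable monotone element of $\Mono\Cons0{<\omega}\Cons11^{<\omega}$ we can assume the ``pathology'' is concentrated near the point ($\vec{1}$ or $\vec{0}$) where the relevant Wadge lemma extracts its convergent sequence. This combinatorial construction is the technical core of the proof, but is well-controlled by the $\Cons0\omega$ boundedness.
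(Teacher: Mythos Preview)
Your reduction to $F' = F \cap \Cons0\omega$ is valid (since $\ang{F \cup \{1\}} = \ang{F' \cup \{1\}}$ by \cref{thm:post-T0inf-mod} and each escape clone contains $1$), but it does not actually simplify the problem, and the ``monotonization step'' you identify as the main obstacle is a genuine gap that your sketch does not close.

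For (a), you aim to always manufacture a witness $f' \in F \cap \Mono\Cons11 \setminus \Limm111$ so as to invoke only the dual of \cref{thm:post-wadge}\cref{thm:post-wadge:MT0}. But if $F \not\subseteq \Cons11$, the only available witness for $F \not\subseteq \Limm111$ may simply satisfy $f(\vec 1) = 0$, and this cannot be repaired using $\wedge, \ceil\vee \in \Mono\Cons0{<\omega}\Cons11^{<\omega}$ alone (there is no $\vee$); likewise, meeting with a monotone function does not make a non-monotone $f$ monotone. The monotonization machinery that would help (\cref{thm:post-up-borel}, \cref{thm:post-up-ctcl}) appears \emph{after} this corollary and relies on it, so invoking it here would be circular. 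The paper instead does a four-way case split on whether $F \subseteq \Mono$ and whether $F \subseteq \Cons11$: whenever one of these fails, Post's lattice forces $F^{<\omega}$ to contain the corresponding larger finitary clone among $\Mono\Cons0{<\omega}^{<\omega}$, $\Cons0{<\omega}\Cons11^{<\omega}$, $\Cons0{<\omega}^{<\omega}$, and then the \emph{matching} case of the dual of \cref{thm:post-wadge} applies directly to a witness that is automatically in the required class. This is what the paper means by ``the case corresponding to the finitary restriction of $\ang{F \cup \{1\}}$''.

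For (c), invoking the \emph{dual} of \cref{thm:post-wadge}\cref{thm:post-wadge:M} is the wrong direction: that produces $\bigwedge$, not $\ceil\bigvee$. The paper works one level up: it applies the non-dual \cref{thm:post-wadge} (again with the case split) to $\ang{F \cup \{1\}} \supseteq \Mono\Cons11^{<\omega}$ to obtain $\bigvee \in \ang{F \cup \{1\}}$, and then pulls back $\ceil\bigvee \in \ang{\ceil{\ang{F \cup \{1\}}}} \subseteq F$ via \cref{thm:post-T0inf-mod}. Part (b) combines both ideas: if $F \not\subseteq \Limm111$, use (a) and observe $\ceil{\floor{\bigwedge}} \in \ang{\bigwedge, \ceil\vee}$; otherwise pass to $\ang{F \cup \{0,1\}}$, obtain $\bigwedge$ there via the dual of \cref{thm:post-wadge}, and pull back twice through $\floor{-}$ then $\ceil{-}$.
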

Note that in each case, the last alternative is mutually exclusive with the others.
\begin{proof}
\Cref{thm:post-T0omega-dich:Meet} is a restatement of the dual of \cref{thm:post-wadge} (using the case there corresponding to the finitary restriction of $\ang{F \cup \{1\}}$).

\Cref{thm:post-T0omega-dich:cJoin} follows from applying \cref{thm:post-wadge} to $\ang{F \cup \{1\}}^{<\omega_1} \in [\ang{\Mono\Cons11^{<\omega}}^{<\omega_1}, \Op2^{<\omega_1}]$, which shows that if $F \not\subseteq \ang{\Op2^{<\omega}}, F \not\subseteq \Decr$, then ${\bigvee} \in \ang{F \cup \{1\}}^{<\omega_1}$, so by \cref{thm:post-down-mod}, $\ceil{\bigvee} \in \ceil{\ang{F \cup \{1\}}^{<\omega_1}} \subseteq F \cap \Cons0\omega$.

\Cref{thm:post-T0omega-dich:cfMeet}
Suppose $F \not\subseteq \ang{\Op2^{<\omega}}, \Incr$.
If $F \not\subseteq \Limm111$, then ${\bigwedge} \in F$ by \cref{thm:post-T0omega-dich:Meet}, and so clearly $\ceil{\floor{\bigwedge}} \in F$ (since $F \supseteq \Mono\Cons0{<\omega}\Cons11^{<\omega} \ni \wedge, \ceil{\vee}$).
Otherwise, we have $\ang{F \cup \{1\}}^{<\omega_1} \in [\ang{\Mono\Cons11^{<\omega}}^{<\omega_1}, \Limm111^{<\omega_1}]$.
By the dual of \cref{thm:post-down-mod} with $H = \ang{\Mono\Cons01\Mono11^{<\omega}}^{<\omega_1}$ and $\up H = \Limm111^{<\omega_1}$ as in \cref{eq:post-down-MT01T11-mod}, $\ang{F \cup \{1\}}^{<\omega_1}$ corresponds to $\ang{F \cup \{0,1\}}^{<\omega_1} \in [\ang{\Mono^{<\omega}}^{<\omega_1}, \Op2^{<\omega_1}]$, which is not contained in $\ang{\Op2^{<\omega}}$ or $\Incr$, hence contains $\bigwedge$ by the dual of \cref{thm:post-wadge}, whence $\floor{\bigwedge} \in \floor{\ang{F \cup \{0,1\}}^{<\omega_1}} \subseteq \ang{F \cup \{1\}}^{<\omega_1} \cap \Cons1\omega$.
Applying \cref{thm:post-down-mod} again yields $\ceil{\floor{\bigwedge}} \in \ceil{\ang{F \cup \{1\}}^{<\omega_1}} \subseteq F \cap \Cons0\omega$.
(This is essentially a more explicit version of the argument used in \cref{thm:post-borel-T0,thm:post-borel-MT0} to determine $[\ang{\Mono\Cons01^{<\omega}}^\Borel, \Limm011^\Borel]$.)
\end{proof}

Using the preceding dichotomies, we now simplify some of the machinery from \cref{sec:post-down}, on the global structure of the ``side tubes'' of Post's lattice, for Borel and/or ${<}\omega_1$-ary clones.

Recall \cref{def:down} of the downward-closure $\down G$ of a clone, as well as \cref{def:up} of the upward-closure $\up f$ of a function, which is the canonical candidate for a monotone function $\ge f$.

\begin{remark}
\label{rmk:post-up-analytic}
For a Borel function $f : 2^\omega -> 2$, $\up f$ need not be Borel.
Conceptually, this is because the disjunction in \cref{def:up} is over $2^\omega$, hence yields an analytic rather than Borel set.

For a concrete example, let $A \subseteq 2^\omega \times 2^\omega$ be a Borel set whose projection onto the first coordinate is not Borel (see e.g., \cite[14.2]{Kcdst}), and let $e : 2^\omega -> 2^\omega$ be a continuous embedding whose image is an antichain, e.g., $e(\vec{x}) := (x_0,\neg x_0,x_1,\neg x_1,\dotsc)$.
Then the upward-closure of $(e \times e)(A) \subseteq 2^\omega \times 2^\omega \cong 2^\omega$ is not Borel, since the first projection of $A$ is $\{\vec{x} \in 2^\omega \mid (e(\vec{x}),\vec{1}) \in (e \times e)(A)\}$.
\end{remark}

Thus for a Borel clone $\wedge \in F \subseteq \Op2^\Borel$, the canonical candidate for a monotone subclone $G \subseteq \Mono$ with $G \subseteq F \subseteq \down G$, namely $G = \up[F]$, need not be Borel (hence need not be contained in $F$).
The following shows that nonetheless, $\up[F]$ is always ``approximately'' contained in $F$:

\begin{lemma}
\label{thm:post-up-borel}
Let $\wedge \in F \subseteq \Op2^{<\omega_1}$ be a ${<}\omega_1$-ary clone such that $F \not\subseteq \Mono, \ang{\Op2^{<\omega}}$, let $f \in F^n$, and let $h : 2^n -> 2^n$ be Borel (meaning that each $h_i := \pi_i \circ h : 2^n -> 2$ is Borel) with $h \le \id$.
Then $f \circ h \in F$.
\end{lemma}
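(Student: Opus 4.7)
My plan is to reduce the claim to showing that $F$ contains every Borel function bounded above by a projection, and then establish this containment using the hypotheses together with Wadge-type constructions.

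For the reduction, since $h_i \le \pi_i$, we have $h_i(\vec{x}) = \pi_i(\vec{x}) \wedge h_i(\vec{x})$, so
\begin{equation*}
f \circ h(\vec{x}) = f\bigl(\pi_0(\vec{x}) \wedge h_0(\vec{x}),\, \dotsc,\, \pi_{n-1}(\vec{x}) \wedge h_{n-1}(\vec{x})\bigr).
\end{equation*}
Because $\wedge \in F$ and $f \in F$, this composite lies in $F$ as soon as each $h_i \in F$. Since the $h_i$ range over arbitrary Borel functions bounded above by a projection, it suffices to establish $\Cons0\omega^\Borel \cap \Op2^{<\omega_1} \subseteq F$. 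By \cref{thm:post-borel-T0omega}, $\Cons0\omega^\Borel$ is generated as a Borel clone by $\wedge$, $\ceil{\bigvee}$, and $-/>$, so the task reduces to proving $\ceil{\bigvee} \in F$ and $-/> \in F$.

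To obtain $\ceil{\bigvee} \in F$, I would extract a discontinuous $g \in F$ from the hypothesis $F \not\subseteq \ang{\Op2^{<\omega}}$ and carry out a Wadge-type construction analogous to the proof of \cref{thm:post-wadge}\cref{thm:post-wadge:O2} but ``dualized'' to fit the bounding operator $\ceil{-}$. Concretely, I would pick a sequence $\vec{x}_k -> \vec{x}_\infty$ witnessing the discontinuity with $g$ constant on $(\vec{x}_k)_k$ and differing at $\vec{x}_\infty$, and then build a continuous linking function $g' : 2^\omega -> 2^m$ whose coordinates $g'_j$ are expressible in $F$ using only $\wedge$ and projections---for example, by choosing the $\vec{x}_k$ so that each $g'_j$ becomes $\wedge(\pi_0, \pi_{j+1})$ or a similarly simple combination. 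After case analysis on the values of $g$ at $\vec{0}$, $\vec{x}_k$, and $\vec{x}_\infty$, one finds $\ceil{\bigvee}$ realized as $g \circ g'$ or as $\pi_0 \wedge (g \circ g')$, hence lying in $F$. For $-/> \in F$, I would apply the analogous Wadge-type argument using a non-monotone $g \in F$ supplied by $F \not\subseteq \Mono$, again arranging the witnessing sequence so that the linking function's coordinates are built from $\wedge$ and projections alone.

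The main obstacle will be carrying out these two Wadge-type constructions with only $\wedge$ and projections as available ``continuous'' building blocks in $F$: the coordinates of the linking function $g'$ must be realized without invoking $\neg$ or $\bigvee$, which forces a careful choice of the witnessing sequences and a case analysis on the direction of discontinuity (whether $g$ jumps up or down at $\vec{x}_\infty$) and on the value of $g$ at the all-zeros input. Once $\ceil{\bigvee}$ and $-/>$ are placed in $F$, the reduction from the first paragraph delivers $h_i \in F$ for each $i$ and hence $f \circ h \in F$.
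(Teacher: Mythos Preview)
Your reduction to showing each $h_i \in F$ is correct, but the subsequent claim that $\Cons0\omega^\Borel \subseteq F$ is false in general, and this is where the proposal breaks down. Specifically, your plan to produce $-/> \in F$ from a non-monotone $g \in F$ cannot succeed when $F \subseteq \Cons11$: the function $-/>$ sends $(1,1) \mapsto 0$, so $-/> \notin \Cons11$, and hence $-/> \notin F$ for any such $F$. The hypotheses $\wedge \in F$, $F \not\subseteq \Mono$, $F \not\subseteq \ang{\Op2^{<\omega}}$ are perfectly compatible with $F \subseteq \Cons11$ (for instance $F = \Cons0{<\omega}\Cons11^{<\omega_1}$, or various Borel clones over $\Cons0k\Cons11^{<\omega}$). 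So no Wadge-type argument will extract $-/>$ here, and the blanket inclusion $\Cons0\omega^\Borel \subseteq F$ simply fails.

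The paper's proof does share your first step (reducing to $h_i \in F$), and in the case $F \not\subseteq \Cons11$ it proceeds essentially as you suggest, using $\Cons0\omega^\Borel = \ang{\bigwedge, -/>}^\Borel \subseteq F$. But for $F \subseteq \Cons11$ a genuinely new idea is needed: one first observes (from $F \subseteq \Cons11$ or $F \subseteq \Limm111$) that $f(\vec{x}) = 1$ on a set of the form $\{\vec{x} : \bigwedge_{j \in J} x_j = 1\}$ for some suitably small $J$, and then \emph{modifies} $h$ to be the identity on that set (which does not change $f \circ h$). The modified coordinates $h_i$ then land in the smaller clone $\ang{\bigwedge^{<\kappa}, \ceil{\floor{\bigwedge}}, \ceil{->}}$, which \emph{is} contained in $F$ (here $\ceil{->} \in \Cons01\Cons11^{<\omega} \subseteq F$ follows from $F \not\subseteq \Mono$ via Post's lattice, and $\ceil{\floor{\bigwedge}} \in F$ follows from \cref{thm:post-T0omega-dich}). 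The point is that one must tailor the target clone for the $h_i$ to what $F$ actually contains, rather than aiming for all of $\Cons0\omega^\Borel$.
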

Note that the assumption $h \le \id$ implies that $f \circ h \le \up f$.
Thus, $h$ may be regarded as a Borel ``choice function'' witnessing the disjunction in the definition of $\up f$.
\begin{proof}
Since $F \not\subseteq \Pol^{<\omega_1} \{\le\}$, we have $F \cap \Op2^{<\omega} \not\subseteq \Mono^{<\omega}$ (by the right adjunction in \cref{thm:pancake}), and so (from Post's lattice \ref{fig:post}) $\ceil{->} \in \Cons01\Cons11^{<\omega} \subseteq F$; and if $F \not\subseteq \Cons11$, then $-/> \in \Cons01^{<\omega} \subseteq F$.
Also since $F \not\subseteq \Mono, \ang{\Op2^{<\omega}}$, from \cref{thm:post-T0omega-dich}, we have $\ceil{\floor{\bigwedge}} \in F$; and if $F \not\subseteq \Limm111$, then $\bigwedge \in F$.

First suppose $F \not\subseteq \Cons11$.
Since $h \le \id$, each $h_i \le \pi_i$, so $h_i \in \Cons0\omega^\Borel = \ang{\bigwedge, -/>}^\Borel \subseteq F$, and so $f \circ h \in F$.

Now suppose $F \subseteq \Cons11$.
Then either $F \subseteq \Limm111$ or not; in either case, we have $\bigwedge^{<\kappa} \in F$, and there is $J \subseteq n$ of size $\abs{J} < \kappa$ such that $f(\vec{x}) = 1$ whenever $\bigwedge_{j \in J} x_j = 1$, for $\kappa = \omega$ or $\omega_1$ respectively.
We may assume that $h(\vec{x}) = \vec{x}$ for all such $\vec{x}$, by replacing $h$ with $h(\vec{x}) \vee (\bigwedge_{j \in J} x_j \wedge \vec{x})$ which does not affect $f \circ h$.
From above, each $h_i \in \ang{\bigwedge, -/>}$, whence for any $j < n$, we have $\pi_j \vee h_i \in \ang{\floor{\bigwedge}, \floor{-/>}}$ (dual of \cref{thm:ceil-clone}), whence
\begin{align*}
h_i &= \bigwedge_{j \in J} (\pi_i \wedge (\pi_j \vee h_i))
\in \ang{\bigwedge^{<\kappa}, \ceil{\floor{\bigwedge}}, \ceil{\floor{-/>}}}^{<\omega_1}
= \ang{\bigwedge^{<\kappa}, \ceil{\floor{\bigwedge}}, \ceil{->}}^{<\omega_1}
\subseteq F.
\qedhere
\end{align*}
\end{proof}

Recall from \cref{thm:pol-inv-kcl} that the \emph{countable closure} (or \emph{${<}\omega_1$-closure}) of a clone $F \subseteq \Op2^{<\omega_1}$ consists of all functions which agree on any countably many inputs with a function in $F$, or equivalently, all functions which preserve every ${<}\omega_1$-ary relation preserved by every function in $F$.

\begin{corollary}
\label{thm:post-up-ctcl}
Let $\wedge \in F \subseteq \Op2^{<\omega_1}$ be a ${<}\omega_1$-ary clone.
Then for every $f \in F^n$, $\up f$ is in the countable closure of $F$.

Thus if $F$ is countably closed, i.e., defined as $\Pol^{<\omega_1}$ of a set of ${<}\omega_1$-ary relations, then $\up[F] = F \cap \Mono$, so $F \subseteq \down (F \cap \Mono)$, and so $F$ belongs to the domain of the embedding in \cref{thm:post-T0inf-down}.
\end{corollary}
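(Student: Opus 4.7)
The plan is to unpack the countable-closure condition via \cref{thm:pol-inv-kcl}: given $f \in F^n$ and any countable family $\vec{x}_0, \vec{x}_1, \dotsc \in 2^n$, we must produce some $g \in F^n$ with $g(\vec{x}_i) = \up f(\vec{x}_i)$ for every $i < \omega$. The core idea is to build a Borel ``choice function'' $h : 2^n -> 2^n$ with $h \le \id$ that picks a witness to the upward-closure disjunction at each $\vec{x}_i$, and then invoke \cref{thm:post-up-borel} on $f \circ h$.

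Concretely, for each $i$ with $\up f(\vec{x}_i) = 1$ pick some $\vec{y}_i \le \vec{x}_i$ with $f(\vec{y}_i) = 1$, and otherwise set $\vec{y}_i := \vec{x}_i$ (so that $f(\vec{y}_i) = 0 = \up f(\vec{x}_i)$). Define $h$ to agree with the identity off $\{\vec{x}_0, \vec{x}_1, \dotsc\}$ and to send $\vec{x}_i \mapsto \vec{y}_i$; each coordinate $h_j$ differs from $\pi_j$ on only a countable set and is therefore Borel, and $h \le \id$ by construction. When $F \not\subseteq \Mono$ and $F \not\subseteq \ang{\Op2^{<\omega}}$, \cref{thm:post-up-borel} directly yields $g := f \circ h \in F$, and by design $g(\vec{x}_i) = f(\vec{y}_i) = \up f(\vec{x}_i)$.

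The two excluded cases are handled separately. If $F \subseteq \Mono$, then $\up f = f \in F$ outright. If instead $F \subseteq \ang{\Op2^{<\omega}}$ (and $F \not\subseteq \Mono$), then $f$ is essentially finitary, depending only on some finite $I \subseteq n$, so $f = f' \circ (\pi_j)_{j \in I}$ with $f' \in F \cap \Op2^{<\omega}$; a short computation gives $\up f = \up f' \circ (\pi_j)_{j \in I}$, reducing matters to $\up f' \in F \cap \Op2^{<\omega}$. By Post's theorem \cref{thm:post}, the finitary clone $F \cap \Op2^{<\omega}$ is non-monotone and contains $\wedge$, and therefore must be one of $\Op2^{<\omega}$, $\Cons01^{<\omega}$, $\Cons11^{<\omega}$, $\Cons01\Cons11^{<\omega}$, $\Cons0k^{<\omega}$, or $\Cons0k\Cons11^{<\omega}$ for some $k \in \{2, 3, \dotsc, {<}\omega\}$. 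Each of these is defined by invariants---preservation of $0$ or $1$, and the $k$-meeting property of the $1$-preimage---that transfer immediately to $\up f'$: since $\up f' \ge f'$ and any $k$ elements of $(\up f')^{-1}(1)$ lie above $k$ elements of $f'^{-1}(1)$ whose bitwise meet is coordinatewise below theirs, both the pointwise fixity at $\vec{0}, \vec{1}$ and the $k$-meeting property are preserved.

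For the ``Thus'' clause: if $F$ is countably closed, then the above gives $\up[F] \subseteq F$, hence $\up[F] \subseteq F \cap \Mono$ since $\up f$ is always monotone; the reverse inclusion $F \cap \Mono \subseteq \up[F]$ is automatic since $\up g = g$ for monotone $g$. Thus $\up[F] = F \cap \Mono$, and then $F \subseteq \down\up[F] = \down(F \cap \Mono)$ by \cref{eq:up}, whence $\down F = \down(F \cap \Mono)$ is a clone by the remark following \cref{thm:down-clone}, putting $F$ in the domain of the embedding of \cref{thm:post-T0inf-down}. The only real obstacle in the argument is the finite case-check on Post's lattice in the essentially-finitary subcase.
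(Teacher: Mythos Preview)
Your proof is correct and follows essentially the same approach as the paper's: both reduce via \cref{thm:pol-inv-kcl} to matching $\up f$ at countably many points using a Borel choice function $h \le \id$ fed into \cref{thm:post-up-borel}, with the monotone and essentially-finitary cases handled separately. The only cosmetic difference is that the paper case-splits on the individual function $f$ (monotone, essentially finitary, or neither) whereas you case-split on the clone $F$; you also spell out the Post's-lattice inspection that the paper leaves implicit.
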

\begin{proof}
If $f$ is monotone, then $\up f = f$.
If $f$ depends on finitely many variables, then $\up f$ depends on those same variables, and the result follows from inspecting finitary Post's lattice \ref{fig:post}.
Otherwise, in order to find $g \in F^n$ agreeing with $\up f$ on countably many $\vec{a}_i \in 2^n$, apply \cref{thm:post-up-borel} with the ``choice function'' $h : 2^n -> 2^n$ which is the identity at all $\vec{x}$ except the $\vec{a}_i \in \up f^{-1}(1)$, which it maps to any $\vec{a}_i \ge h(\vec{a}_i) \in f^{-1}(1)$.
The last statement follows from \cref{eq:up}.
\end{proof}

We also have the following, which concerns the ``left-versus-right'' direction ($\Cons11$ or not) of the ``side tubes'' of Post's lattice \ref{fig:post}, rather than the ``front-versus-back'' direction ($\Mono$ or not):

\begin{proposition}
\label{thm:post-T1-mod}
We have isomorphisms
\begin{align*}
[\ang{\ceil{\vee}}^{<\omega_1}, \Limm111^{<\omega_1}] &\cong \set{G \in [\ang{\ceil{\vee},0}^{<\omega_1}, \Op2^{<\omega_1}]}{G \subseteq \down (G \cap \Limm111)} \\
F &|-> \ang{F \cup \{0\}}^{<\omega_1} = \{f_0 \mid f \in F^{\ge2}\} \\
G \cap \Limm111 &<-| G,
\\[1ex]
[\ang{\bigwedge}^{<\omega_1}, \Cons11^{<\omega_1}] &\cong [\ang{\bigwedge,0}^{<\omega_1}, \Op2^{<\omega_1}] \\
F &|-> \ang{F \cup \{0\}}^{<\omega_1} \\
G \cap \Cons11 &<-| G.
\end{align*}
In each case, for every clone $F$ on the left, we have $\ang{F \cup \{0\}}^{<\omega_1} \subseteq \down F$.
\end{proposition}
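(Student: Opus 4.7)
The plan is to verify both claimed isomorphisms as instances of the modularity adjunction \cref{eq:adj-mod}, taking the endpoints $a \subseteq b$ and $c \subseteq d$ to be $\ang{\ceil{\vee}}^{<\omega_1} \subseteq \Limm111^{<\omega_1}$ and $\ang{\ceil{\vee}, 0}^{<\omega_1} \subseteq \Op2^{<\omega_1}$ for the first, and $\ang{\bigwedge}^{<\omega_1} \subseteq \Cons11^{<\omega_1}$ and $\ang{\bigwedge, 0}^{<\omega_1} \subseteq \Op2^{<\omega_1}$ for the second. By \cref{thm:post-const-xsec}, the left adjoint $F \mapsto \ang{F \cup \{0\}}^{<\omega_1}$ admits the cross-section description $\{f_0 \mid f \in F^{\ge 2}\}$, while the right adjoint is intersection with $\Limm111$ or $\Cons11$ respectively. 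Establishing the isomorphism reduces to checking both compositions of the adjoint maps are the identity on the corresponding intervals.

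The heart of the argument is a single witness construction. For the second isomorphism, given $g = f_0 \in \ang{F \cup \{0\}} \cap \Cons11$ with $f \in F^{\ge 2}$ of arity $n+1$, set $h(\vec{y}) := f(\bigwedge(\vec{y}), \vec{y})$, which lies in $F$ since $\bigwedge \in F \supseteq \ang{\bigwedge}$. A case analysis gives $h = g$: at $\vec{y} \ne \vec{1}$, $\bigwedge(\vec{y}) = 0$ yields $h(\vec{y}) = f(0, \vec{y}) = g(\vec{y})$; at $\vec{y} = \vec{1}$, $\bigwedge(\vec{1}) = 1$ yields $h(\vec{1}) = f(1, \vec{1}) = 1 = g(\vec{1})$ using $f, g \in \Cons11$. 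For the first isomorphism the template is identical but with $\bigwedge$ replaced by a finitary meet $\bigwedge_{j \in J}$ over a suitable finite $J \subseteq \{1, \dots, n\}$: by $\Limm111$-ness, both $g$ and the cross-section $f(1, \cdot)$ equal $1$ on basic open neighborhoods of $\vec{1}$, so $J$ can be taken to contain the finitely many support indices of both; the finitary meet lies in $\ang{\ceil{\vee}}$ because $\ceil{\vee}(y_1, y_2, y_2) = y_1 \wedge y_2$ iterates to produce $\wedge^k$ for any $k < \omega$.

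The reverse composition $\ang{(G \cap B) \cup \{0\}} = G$ requires, again via \cref{thm:post-const-xsec}, that every $g \in G^n$ arise as $h_0$ for some $h \in (G \cap B)^{n+1}$. When $g \in B$, take $h(y_0, \vec{y}) := g(\vec{y})$; otherwise, use the restriction $G \subseteq \down(G \cap \Limm111)$ (explicit in the first isomorphism, and to be verified as automatic for the second) to pick $f \in (G \cap B)^n$ with $g \le f$, and then assemble $h$ inside $G$ from $g$, $f$, $\bigwedge$ (resp.\ $\bigwedge_{j \in J}$), and $0$, so as to force $h(\vec{1}) = 1$ while keeping $h(0, \vec{y}) = g(\vec{y})$. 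The subsidiary claim $\ang{F \cup \{0\}}^{<\omega_1} \subseteq \down F$ is an immediate byproduct of the witness above: for $g = f_0$, that same $h \in F$ pointwise dominates $g$ (with equality off $\vec{1}$ and $h(\vec{1}) = 1 \ge g(\vec{1})$). The main obstacle is precisely this surjectivity step: given $g \in G$ with $g(\vec{1}) = 0$ and $g \ne 0$, explicitly lifting it to a $B$-preserving extension inside $G$, using only the available clone operations (which may not include $\vee$); the essential input is the interplay between the $\down$-closure hypothesis and the presence of $\bigwedge$ (or its finitary analog) already guaranteed inside $G$.
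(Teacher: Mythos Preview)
Your left-to-right direction is correct and essentially the paper's argument: the witness $h(\vec y) := f\bigl(\bigwedge_{j\in J} y_j,\, \vec y\bigr)$ (with $J$ the full index set in the $\Cons11$ case, and a finite set capturing the continuity of both $g$ and $f(1,\cdot)$ at $\vec 1$ in the $\Limm111$ case) gives $h = g$ when $g$ lies in the target clone, and $g \le h \in F$ always, proving both that composition and the subsidiary $\ang{F\cup\{0\}} \subseteq \down F$.

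The surjectivity direction, however, has a real gap in both isomorphisms. For the first, your ingredient list ``$g, f, \bigwedge_{j\in J}, 0$'' omits the crucial operation $\ceil{\vee} \in G$: the paper's construction is simply
\[
h(y_0,\vec y) := \ceil{\vee}\bigl(f(\vec y),\, y_0,\, g(\vec y)\bigr) = f(\vec y)\wedge\bigl(y_0\vee g(\vec y)\bigr),
\]
which has $h_0 = f\wedge g = g$ (since $g\le f$), $h_1 = f$, and $h\in\Limm111$ because $h(1,\cdot)=f$. Without a disjunction-like primitive, the ingredients you list cannot produce such an $h$.

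For the second isomorphism the gap is more serious. You flag as ``to be verified as automatic'' that every $G \in [\ang{\bigwedge,0},\Op2^{<\omega_1}]$ satisfies $G\subseteq\down(G\cap\Cons11)$, but this is where the work is: $\ceil{\vee}$ need not lie in $G$, so your template breaks down. The paper does \emph{not} proceed via a uniform $h$-lift. Instead it splits into cases. If $G\subseteq\Mono$ or $G\subseteq\ang{\Op2^{<\omega}}$, the claim is easy or follows from inspecting Post's lattice. Otherwise one invokes \cref{thm:post-up-borel} (applied with the Borel map $2^n\to 2^n$ that is the identity except at $\vec 1$, which it sends to some $\vec a\in g^{-1}(1)$) to conclude $g\vee\bigwedge \in G\cap\Cons11$; then, since $G\not\subseteq\Mono$ forces $\ceil{\to}\in G\cap\Cons11$, one writes
\[
g = (g\vee{\textstyle\bigwedge}) \nrightarrow {\textstyle\bigwedge} \in \ang{\,g\vee{\textstyle\bigwedge},\; \ceil{\to},\; {\textstyle\bigwedge},\; 0\,} \subseteq \ang{(G\cap\Cons11)\cup\{0\}}.
\]
So the ``essential input'' is not just $\down$-closure and $\bigwedge$, but also the availability of $\ceil{\to}$ (from non-monotonicity via Post's lattice) together with the approximation lemma \cref{thm:post-up-borel}. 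Your plan would go through once you supply these two missing pieces.
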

\begin{proof}
To show that these maps compose to the identity on the left, in each case, let $\bigwedge^{<\kappa} \in F \subseteq \Cons11^{<\omega_1}$ be a clone such that each $f \in F$ is ${<}\kappa$-continuous at $\vec{1}$, for $\kappa = \omega, \omega_1$ respectively.
So for $f \in F^{1+n}$, there is $J \subseteq n$ of size $<\kappa$ such that $f(1, \vec{x}) = 1$ whenever $\bigwedge_{j \in J} x_j = 1$.
Thus
\begin{align*}
f_0(\vec{x}) = f(0, \vec{x}) \le f\paren[\Big]{\bigwedge_{j \in J} x_j, \vec{x}}.
\end{align*}
This shows that $\ang{F \cup \{0\}}^{<\omega_1} \subseteq \down F$.
If $f_0$ is also ${<}\kappa$-continuous at $\vec{1}$, then we may pick $J$ so that also $f(0, \vec{x}) = 1$ whenever $\bigwedge_{j \in J} x_j = 1$, whence the above $\le$ becomes $=$, showing that $f_0 \in F$.
Thus $\ang{F \cup \{0\}}^{<\omega_1} \cap \Limm111$, respectively $\ang{F \cup \{0\}}^{<\omega_1} \cap \Cons11$, is contained in $F$; the other inclusion is trivial.

Now let $\ceil{\vee}, 0 \in G \subseteq \Op2^{<\omega_1}$ be a clone such that $G \subseteq \down F$ for $F := G \cap \Limm111$; we must show $G \subseteq \ang{F \cup \{0\}}$.
Let $g \in G^n$, and let $g \le f \in F^n = G^n \cap \Limm111$.
Then $h : 2^{1+n} -> 2$ with
\begin{align*}
h(y,\vec{x}) := f(\vec{x}) \wedge (y \vee g(\vec{x})) = \ceil{\vee}(f(\vec{x}), y, g(\vec{x}))
\end{align*}
has $h_0 = g$ and $h_1 = f$, thus $h \in \Limm111$, and so $h \in G \cap \Limm111 = F$ with $g = h_0 \in \ang{F \cup \{0\}}$.

Finally, let $\bigwedge, 0 \in G \subseteq \Op2^{<\omega_1}$ be a clone, and let $F := G \cap \Cons11$; we must show $G \subseteq \ang{F \cup \{0\}}$.
If $G \subseteq \Mono$, then clearly the only monotone function not in $\Cons11$ is $0$, so $F = G \setminus \{0\}$ and $G = F \cup \{0\}$.
If $G \subseteq \ang{\Op2^{<\omega}}$, the result follows by inspection of Post's lattice \cref{fig:post}.
Otherwise, for every $g \in G \setminus \Cons11 \setminus \{0\}$, we have $f := g \vee \bigwedge \in G \cap \Cons11 = F$ by \cref{thm:post-up-borel} with $h : 2^n -> 2^n$ which is the identity at all $\vec{x}$ except for $\vec{1}$ which gets mapped to any element of $g^{-1}(1)$.
Since $G \not\subseteq \Mono$, from Post's lattice \ref{fig:post} we also have $\ceil{->} \in G \cap \Cons11 = F$, so $g = f -/> \bigwedge \in \ang{f, -/>, \bigwedge} \subseteq \ang{f, \ceil{->}, 0, \bigwedge} \subseteq \ang{F \cup \{0\}}$.
\end{proof}

Similarly to \cref{rmk:post-up-analytic}, note that for a Borel clone $G \subseteq \Op2^\Borel$, $\down G$ may contain non-Borel functions (e.g., if $1 \in G$, then $\down G$ contains all functions).
Following \cref{def:clone-borel}, we adopt

\begin{notation}
\label{def:borel-down}
For a Borel clone $G \subseteq \Op2^\Borel$, its \defn{Borel downward-closure} is
\begin{align*}
\down^\Borel G := \down G \cap \Op2^\Borel.
\end{align*}
\end{notation}

\begin{proposition}
\label{thm:borel-down-clone}
For every Borel clone $\wedge \in G \subseteq \Op2^\Borel$, $\down^\Borel G$ is a clone.
\end{proposition}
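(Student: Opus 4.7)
The plan is to verify closure under composition for $\down^\Borel G$, the projections being trivially inherited from $G$. Suppose $f \le g \in G^n$ and $f_i \le g_i \in G^m$ for $i < n$ are all Borel; since $f \circ \vec{f} \le g \circ \vec{f}$ pointwise, it suffices to produce a Borel $h \in G^m$ dominating $g \circ \vec{f}$. The obstacle is that $g$ need not be monotone, so the naive choice $h = g \circ \vec{g}$ need not work. I will split into three cases, tracking the hypothesis of \cref{thm:post-up-borel}: $G \subseteq \Mono$, $G \subseteq \ang{\Op2^{<\omega}}$, or $G \not\subseteq \Mono, \ang{\Op2^{<\omega}}$.

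If $G \subseteq \Mono$, then $g$ is monotone and $h = g \circ \vec{g} \in G^m$ works, which is essentially the Borel specialization of \cref{thm:down-clone}. If $G \subseteq \ang{\Op2^{<\omega}}$, each $g \in G$ is essentially finite with some finitary shadow $g_J \in F := G \cap \Op2^{<\omega}$ on a finite $J \subseteq n$; I will use the auxiliary fact that for every finitary clone $F \ni \wedge$ the upward closure $\up g_J$ remains in $F$, verified by a short inspection of Post's lattice \ref{fig:post} (trivial for monotone $F$, and for the remaining possibilities $\Op2^{<\omega}$, the $\Cons{c}{1}$'s, and the $\Cons{0}{k}^{<\omega}$-type clones together with their $\Cons{1}{1}$-intersections, the defining invariants are routinely preserved under upward closure of the $1$-set). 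Granted this, $h = (\up g_J) \circ \vec{g}|_J \in G^m$ dominates $g \circ \vec{f}$ by monotonicity of $\up g_J$ and $\vec{f}|_J \le \vec{g}|_J$.

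The main case is $G \not\subseteq \Mono, \ang{\Op2^{<\omega}}$, where \cref{thm:post-up-borel} applies. I set $g'(\vec{x}, \vec{y}) := g(\vec{y}) \in G^{m+n}$ and define a Borel self-map of $2^{m+n}$ by $H(\vec{x}, \vec{y}) := (\vec{x}, \vec{y} \wedge \vec{f}(\vec{x}))$, which evidently satisfies $H \le \id$. Then \cref{thm:post-up-borel} places $g' \circ H$ into $G^{m+n}$; a direct computation gives $(g' \circ H)(\vec{x}, \vec{y}) = g(\vec{y} \wedge \vec{f}(\vec{x}))$, so precomposition with $(\pi^m_0, \dotsc, \pi^m_{m-1}, g_0, \dotsc, g_{n-1}) \in (G^m)^{m+n}$ extracts the function $\vec{x} \mapsto g(\vec{g}(\vec{x}) \wedge \vec{f}(\vec{x})) = g(\vec{f}(\vec{x}))$, the meet collapsing by $\vec{f} \le \vec{g}$; thus even $g \circ \vec{f}$ itself lies in $G^m$ in this case.

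The essential content is the construction of $g'$ and $H$ in the main case: although $\vec{f}$ may fall outside $G$, it can be packaged as the ``mask'' in a Borel map $\le \id$ on the enlarged domain, and the freedom in the auxiliary variable $\vec{y}$ is then spent on the $G$-members $g_i$ to recover $\vec{f}(\vec{x})$ on the nose. The two easier cases and the finitary upward-closure auxiliary lemma are routine once Post's lattice is on the table.
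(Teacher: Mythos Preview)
Your proof is correct and takes a genuinely different route from the paper's.

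The paper argues indirectly in the main case $G \not\subseteq \Mono, \ang{\Op2^{<\omega}}$: it first shows $\ang{\down^\Borel G}^\Borel = \ang{G \cup \{0\}}^\Borel$ (using \cref{thm:post-T0omega-dich} to get $\bigwedge \in \ang{G \cup \{0\}}$, then \cref{thm:ceil-down-clone}), and then shows $\ang{G \cup \{0\}}^\Borel \subseteq \down G$ via the cross-section description \cref{thm:post-const-xsec} and \cref{thm:post-T1-mod}. Your approach is more hands-on: you verify closure under composition directly, and your key trick---padding $g$ to $g'$ on $2^{m+n}$ and encoding the possibly-non-$G$ functions $\vec{f}$ as the Borel mask $H \le \id$, so that \cref{thm:post-up-borel} delivers $g' \circ H \in G$ and then a substitution by the $G$-members $g_i$ collapses the mask---is neat and self-contained. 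Both arguments ultimately rest on \cref{thm:post-up-borel} (the paper via \cref{thm:post-T1-mod}, which invokes it), so the depth is comparable. What the paper's route buys is the structural byproduct $\ang{\down^\Borel G}^\Borel = \ang{G \cup \{0\}}^\Borel$, which feeds into later results; your route buys a shorter, more transparent proof of the proposition itself, and in fact shows the slightly stronger fact that $g \circ \vec f \in G$ (not merely $\down^\Borel G$) in the main case. Your handling of the two easy cases matches the paper's, and your Post's-lattice check that $\up[F] \subseteq F$ for finitary $F \ni \wedge$ is correct.
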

\begin{proof}
If $G \subseteq \Mono$, this follows from \cref{thm:down-clone}; if $G \subseteq \ang{\Op2^{<\omega}}$, this follows from inspecting finitary Post's lattice \ref{fig:post}.
Suppose neither of these holds.
Then from Post's lattice \ref{fig:post}, $\Mono\Cons0{<\omega}\Cons11^{<\omega} \subseteq G$,
whence by \cref{thm:post-T0omega-dich},
$\ang{G \cup \{0\}}^\Borel
= \ang{G \cup \{0, \bigwedge\}}^\Borel
= \ang{G \cup \Cons0\omega^\Borel}^\Borel$ (since $\bigwedge \in \ang{0, \ceil{\floor{\bigwedge}}}$), which by \cref{thm:ceil-down-clone} with $F = \Op2^\Borel$ is equal to $\ang{\down^\Borel G}^\Borel$.
But by \cref{thm:post-const-xsec}, every function in $\ang{G \cup \{0\}}^\Borel$ is $g_0$ for some $g \in G$, which by \cref{thm:post-T1-mod} is in $\down G$.
So $\ang{\down^\Borel G}^\Borel \subseteq \down^\Borel G$.
\end{proof}

\begin{corollary}[cf.\ \cref{thm:post-T0inf-down}]
\label{thm:post-borel-T0inf-down}
We have an order-embedding with right adjoint retraction
\begin{equation*}
\begin{tikzcd}[column sep=1em]
&& \scriptstyle(\ang{F \cup \{1\}}^\Borel, \down^\Borel F)
    \dar[phantom, "\scriptstyle\in"{sloped}]
\\[-1em]
\scriptstyle(F \cap \Cons0\omega, \down^\Borel F)
    \rar[phantom, "\scriptstyle\in"]
    \ar[urr, mapsto, bend left=5] &
{} [\ang{\wedge}^\Borel, \Cons0\omega^\Borel] \times [\Cons0\omega^\Borel, \Op2^\Borel] \rar[phantom, "\cong"]
    \dar[shift left=2, two heads, right adjoint'] &
|[label={[rotate=-90,anchor=west,inner sep=0pt,label={[name=GH]right:\scriptstyle(G,H)}]below:\scriptstyle\ni}]|
{} [\ang{\wedge, 1}^\Borel, \Op2^\Borel] \times [\Cons0\omega^\Borel, \Op2^\Borel]
\\[-1em]
\scriptstyle F \rar[phantom, "\scriptstyle\in"] \uar[mapsto] &
{} [\ang{\wedge}^\Borel, \Op2^\Borel]
    \uar[shift left=2, hook] &
|[xshift=-5em]|
\scriptstyle G \cap H
    \lar[phantom, "\scriptstyle\ni"]
    \ar[to=GH, mapsfrom]
\end{tikzcd}
\end{equation*}
\end{corollary}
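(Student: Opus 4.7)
The plan is to follow the argument of \cref{thm:post-T0inf-down} essentially verbatim, with the key new ingredient being \cref{thm:borel-down-clone}, which guarantees that $\down^\Borel F$ is always a Borel clone and thereby removes the restriction appearing in the general ${<}\omega_1$-ary statement. First, I would apply \cref{thm:post-T0inf-mod} with $\nu = \omega_1$ and restrict throughout to Borel functions (harmless since $\ang{-}^\Borel$ and intersections with $\Cons0\omega$ preserve Borelness) to obtain the Borel version of that isomorphism, namely $[\ang{\wedge}^\Borel, \Cons0\omega^\Borel] \cong [\ang{\wedge, 1}^\Borel, \Op2^\Borel]$ via $F \leftrightarrow \ang{F \cup \{1\}}^\Borel$, together with the ``one-factor'' adjunction $F \mapsto F \cap \Cons0\omega$ with right adjoint $F \mapsto \ang{F \cup \{1\}}^\Borel$. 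Using the second description, I would define the left adjoint $L(F) := (\ang{F \cup \{1\}}^\Borel, \down^\Borel F)$ and the right adjoint $R(G, H) := G \cap H$; \cref{thm:borel-down-clone} ensures $\down^\Borel F$ is a clone, and it contains $\Cons0\omega^\Borel$ since every Borel $f \le \pi_i$ lies in $\down^\Borel \ang{\wedge}^\Borel \subseteq \down^\Borel F$, so $L$ indeed lands in the product of intervals.

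Next I would verify the adjunction $L(F) \le (G, H) \iff F \subseteq G \cap H$. The first-coordinate inequality $\ang{F \cup \{1\}}^\Borel \subseteq G$ is equivalent to $F \subseteq G$ since $1 \in G$. For the second coordinate, $F \subseteq H$ trivially gives $\down^\Borel F \subseteq \down^\Borel H$, and I claim $\down^\Borel H = H$ whenever $\Cons0\omega^\Borel \subseteq H$ is a Borel clone: given Borel $f \le h \in H$ of arity $n$, the Borel function $\ceil{f} : 2^{1+n} -> 2$ satisfies $\ceil{f} \le \pi_0$ and so $\ceil{f} \in \Cons0\omega^\Borel \subseteq H$, whence $f(\vec{x}) = h(\vec{x}) \wedge f(\vec{x}) = \ceil{f}(h(\vec{x}), \vec{x}) \in H$. (This is the Borel analog of \cref{thm:ceil-down-clone} with $G = \Cons0\omega^\Borel$.) The converse $\down^\Borel F \subseteq H \Rightarrow F \subseteq H$ is immediate from $F \subseteq \down^\Borel F$.

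Finally, to see that $L$ is an order-embedding it suffices to check $R \circ L = \id$. The inclusion $F \subseteq RL(F) = \ang{F \cup \{1\}}^\Borel \cap \down^\Borel F$ is trivial, while the reverse follows at once from \cref{thm:post-lower} applied with $\nu = \omega_1$: $\ang{F \cup \{1\}}^\Borel \cap \down^\Borel F \subseteq \ang{F \cup \{1\}}^{<\omega_1} \cap \down F = F$. The main substantive input is thus \cref{thm:borel-down-clone}; in the cases $F \not\subseteq \Mono, \ang{\Op2^{<\omega}}$ it rests on the dichotomies of \cref{thm:post-T0omega-dich}, which is the only place the argument genuinely uses that we are in the Borel (rather than general ${<}\omega_1$-ary) setting.
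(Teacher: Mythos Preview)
Your proof is correct and follows essentially the same approach as the paper. The paper's own proof is a single line: it invokes \cref{thm:post-lower} to get $\ang{F \cup \{1\}}^\Borel \cap \down^\Borel F = \ang{F \cup \{1\}}^\Borel \cap \down F = F$, leaving the adjunction and the well-definedness of $L$ (i.e., that $\down^\Borel F$ is a clone in $[\Cons0\omega^\Borel, \Op2^\Borel]$) implicit as consequences of \cref{thm:post-T0inf-down} and \cref{thm:borel-down-clone}. You spell out these implicit steps explicitly, which is fine; your direct verification that $\down^\Borel H = H$ for $H \supseteq \Cons0\omega^\Borel$ is correct and is indeed the Borel analogue of the argument behind \cref{thm:ceil-down-clone}.
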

\begin{proof}
By \cref{thm:post-lower}, $\ang{F \cup \{1\}}^\Borel \cap \down^\Borel F = \ang{F \cup \{1\}}^\Borel \cap \down F = F$.
\end{proof}

Thus, \cref{fig:post-mod-down} describes the entire left side (above $\ang{\wedge}^\Borel$) of the Borel version of Post's lattice.
We may also drop the monotonicity assumption from \cref{thm:post-down-emb,thm:post-down-mod}:

\begin{corollary}
\label{thm:post-borel-down-emb}
For any Borel clones $\wedge \in G \subseteq \Cons0\omega^\Borel$ and $G \subseteq H \subseteq \Cons0\omega^\Borel$, the modularity adjunction
\begin{align*}
[G, \ang{G \cup \{1\}}^\Borel] &\rightleftarrows [H, \ang{H \cup \{1\}}^\Borel]
\end{align*}
exhibits the left interval as a retract of the right.
\qed
\end{corollary}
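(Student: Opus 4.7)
The strategy is to reduce the claim to the reconstruction formula $F = \ang{F \cup \{1\}}^\Borel \cap \down^\Borel F$ furnished by the embedding in \cref{thm:post-borel-T0inf-down}. The monotonicity hypothesis on $G$ in the finitary analogue \cref{thm:post-down-emb} was used only to guarantee that $\down G$ is a clone, and \cref{thm:borel-down-clone} removes exactly this obstruction in the Borel setting.

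First note that $F \mapsto \ang{F \cup H}^\Borel$ does send $[G, \ang{G \cup \{1\}}^\Borel]$ into $[H, \ang{H \cup \{1\}}^\Borel]$, because $\ang{F \cup H}^\Borel \subseteq \ang{\ang{G \cup \{1\}}^\Borel \cup H}^\Borel = \ang{H \cup \{1\}}^\Borel$, and its right adjoint is $F' \mapsto F' \cap \ang{G \cup \{1\}}^\Borel$. The retract property thus amounts to showing
\[
\ang{F \cup H}^\Borel \cap \ang{G \cup \{1\}}^\Borel = F
\]
for every $F \in [G, \ang{G \cup \{1\}}^\Borel]$.

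I will apply the reconstruction from \cref{thm:post-borel-T0inf-down} to $\ang{F \cup H}^\Borel$, which contains $\wedge \in G$. Using $\ang{F \cup H \cup \{1\}}^\Borel = \ang{H \cup \{1\}}^\Borel$, this gives
\[
\ang{F \cup H}^\Borel = \ang{H \cup \{1\}}^\Borel \cap \down^\Borel \ang{F \cup H}^\Borel.
\]
Intersecting both sides with $\ang{G \cup \{1\}}^\Borel \subseteq \ang{H \cup \{1\}}^\Borel$, and applying the same reconstruction to $F$ itself (noting $\ang{F \cup \{1\}}^\Borel = \ang{G \cup \{1\}}^\Borel$), the target equation reduces to $\down^\Borel \ang{F \cup H}^\Borel = \down^\Borel F$.

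The nontrivial inclusion here is $\down^\Borel \ang{F \cup H}^\Borel \subseteq \down^\Borel F$. By \cref{thm:borel-down-clone}, $\down^\Borel F$ is itself a Borel clone; it contains $F$ trivially, and it contains $H$ because each $h \in H^n \subseteq \Cons0\omega^\Borel$ satisfies $h \le \pi^n_i$ for some $i < n$ (see \cref{def:down}), with $\pi^n_i \in F$. Therefore $\ang{F \cup H}^\Borel \subseteq \down^\Borel F$, and since $\down^\Borel$ is idempotent on sets we conclude $\down^\Borel \ang{F \cup H}^\Borel \subseteq \down^\Borel \down^\Borel F = \down^\Borel F$. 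The main step of the argument, and the only place the Borel hypothesis is essential, is this appeal to \cref{thm:borel-down-clone}, which frees us from the monotonicity constraint on $G$ present in the finitary version.
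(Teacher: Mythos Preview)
Your argument is correct and is precisely the unpacking of what the paper leaves implicit with its bare \qed: the corollary follows from the reconstruction formula in \cref{thm:post-borel-T0inf-down}, together with the observation that adjoining $H \subseteq \Cons0\omega^\Borel$ does not change the second coordinate $\down^\Borel F$ (since $\down^\Borel F$ is a clone containing $\Cons0\omega^\Borel$ by \cref{thm:borel-down-clone}). Your emphasis that \cref{thm:borel-down-clone} replaces the monotonicity hypothesis on $G$ from \cref{thm:post-down-emb} is exactly the point of the paper's remark ``We may also drop the monotonicity assumption.''
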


\begin{corollary}
\label{thm:post-borel-down-mod}
For any Borel clone $\wedge \in H \subseteq \Op2^\Borel$, we have modularity isomorphisms
\begin{equation*}
\begin{alignedat}[b]{2}
[H \cap \Cons0\omega, \Cons0\omega^\Borel]
&\cong [H, \down H]
&&\cong [\ang{H \cup \{1\}}^\Borel, \Op2^\Borel]. 
\end{alignedat}
\qed
\end{equation*}
\end{corollary}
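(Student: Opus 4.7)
The plan is to adapt the proof of \cref{thm:post-down-mod} to the Borel setting, using the Borel version of the bundle decomposition from \cref{thm:post-borel-T0inf-down} in place of its general-arity counterpart \cref{thm:post-T0inf-down}. The key enabling fact is \cref{thm:borel-down-clone}, which guarantees that $\down^\Borel H$ is itself a clone, so that the interval $[H, \down^\Borel H]$ is well-defined without any monotonicity hypothesis on $H$.

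First I would observe that for every Borel clone $F \in [H, \down^\Borel H]$ we have $\down^\Borel F = \down^\Borel H$: the inclusion $H \subseteq F$ gives $\down^\Borel H \subseteq \down^\Borel F$, while $F \subseteq \down^\Borel H$ gives $\down^\Borel F \subseteq \down^\Borel H$. Thus, under the embedding of \cref{thm:post-borel-T0inf-down}, such an $F$ is determined by its ``first coordinate'' $F \cap \Cons0\omega$; concretely, applying \cref{thm:ceil-down-clone} (with the roles of $F, G$ there played by $F, H$ here) gives
\begin{equation*}
F = F \cap \down^\Borel H = \ang{\ceil{F} \cup H}^\Borel = \ang{(F \cap \Cons0\omega) \cup H}^\Borel,
\end{equation*}
where the last equality uses $F \cap \Cons0\omega = \ang{\ceil{F}}^\Borel$ from \cref{thm:ceil-clone}. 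Since $\Cons0\omega^\Borel \subseteq \down^\Borel H$ (every $f \in \Cons0\omega^\Borel$ is bounded by some projection $\pi_i \in H$), the assignment $F_1 \mapsto \ang{F_1 \cup H}^\Borel$ sends $[H \cap \Cons0\omega, \Cons0\omega^\Borel]$ back into $[H, \down^\Borel H]$, and is easily seen to be mutually inverse with $F \mapsto F \cap \Cons0\omega$; this yields the first isomorphism (which is precisely the modularity adjunction \cref{eq:adj-mod} for the given intervals).

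For the second isomorphism I would invoke \cref{thm:post-T0inf-mod}: restriction $G \mapsto G \cap \Cons0\omega$ is the right adjoint of a retraction $[\ang{\wedge}^\Borel, \Op2^\Borel] \twoheadrightarrow [\ang{\wedge}^\Borel, \Cons0\omega^\Borel]$, which on the subinterval of clones containing $1$ becomes an isomorphism with $[\ang{\wedge,1}^\Borel, \Op2^\Borel]$. Restricting further to the interval $[\ang{H \cup \{1\}}^\Borel, \Op2^\Borel]$ therefore produces an isomorphism onto $[\ang{H \cup \{1\}}^\Borel \cap \Cons0\omega, \Cons0\omega^\Borel]$, and the identity $\ang{H \cup \{1\}}^\Borel \cap \Cons0\omega = H \cap \Cons0\omega$ from \cref{thm:post-T0inf-mod} identifies this with $[H \cap \Cons0\omega, \Cons0\omega^\Borel]$. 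Composing with the first isomorphism delivers the second.

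I do not anticipate a substantive obstacle: the only content that required work beyond the general-arity case --- namely, that $\down^\Borel H$ is always a clone for $\wedge \in H \subseteq \Op2^\Borel$, and that the two-coordinate embedding therefore holds unconditionally --- has already been secured in \cref{thm:borel-down-clone,thm:post-borel-T0inf-down}, so the present corollary amounts to transcribing the proof of \cref{thm:post-down-mod} with $\down$ replaced by $\down^\Borel$ throughout.
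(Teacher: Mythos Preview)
Your proposal is correct and follows essentially the same approach the paper intends: the corollary is marked \qed because it is the direct Borel analogue of \cref{thm:post-down-mod}, with \cref{thm:borel-down-clone} (and hence \cref{thm:post-borel-T0inf-down}) removing the monotonicity hypothesis, exactly as you outline. The only cosmetic point is that the statement writes $[H,\down H]$ while you write $[H,\down^\Borel H]$; since the ambient lattice is $\Clo^\Borel{2}$ these denote the same interval, so there is no discrepancy.
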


\Cref{thm:post-up-ctcl} continues to apply to ${<}\omega_1$-ary clones, rather than Borel clones; there does not seem to be a direct Borel analogue, since (as noted in \cref{rmk:post-up-analytic}) even for a Borel clone $F$, $\up[F]$ need not be Borel.
(Note that the last conclusion of \cref{thm:post-up-ctcl} is subsumed in the Borel setting by \cref{thm:borel-down-clone}.)
Nonetheless, we do have the following Borel variant of \cref{thm:post-up-ctcl}:

\begin{proposition}
\label{thm:post-borel-up}
Let $\wedge \in F \subseteq \Op2^\Borel$ be a Borel clone.
Suppose $F = \Pol^\Borel(\@M)$ for a countable set $\@M$ of Borel downward-closed relations $R \subseteq 2^k$, $k \le \omega$.
Then $\up[F] \subseteq \down(F \cap \Mono)$, and so $F \subseteq \down(F \cap \Mono)$.
\end{proposition}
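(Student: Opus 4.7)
For each $f \in F^n$, the goal is to exhibit a monotone Borel $g \in F$ with $g \ge f$ pointwise; by monotonicity of $g$ this forces $g \ge \up f$, placing $\up f \in \down(F \cap \Mono)$ and giving the final conclusion $F \subseteq \down(F \cap \Mono)$ since $f \le \up f$.

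I would begin with two easy reductions. If $F \subseteq \Mono$, then $\up f = f$ and $g = f$ works. Otherwise, if $f(\vec{0}) = 1$, applying $f$ to the all-$\vec{0}$ matrix and using that $\vec{0} \in R$ for every nonempty downward-closed $R \in \@M$ forces $\vec{1} \in R$ for every such $R$, so the constant function $\mathbf{1}$ preserves every $R$, lies in $F$, and can serve as $g$. So I may assume $F \not\subseteq \Mono$ and $f \in \Cons01$. Two structural consequences of the hypothesis on $\@M$ are then immediate. First, $F$ is closed under Borel pointwise downward-closure: if Borel $h \le g \in F$, then for any matrix $(x_{i,j})$ with $R$-columns we have $(h(\vec{x}_i))_i \le (g(\vec{x}_i))_i \in R$, which lies in $R$ by downward-closedness, so $h \in F$; in particular $\bigwedge, 0 \in F$ and $\ceil{\phi} \in F$ for every Borel $\phi$ (being $\le \pi_0$). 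Second, the set-theoretic function $\up f$ preserves every $R \in \@M$: given a matrix with $R$-columns, for each row $i$ pick a witness $\vec{y}_i \le \vec{x}_i$ with $f(\vec{y}_i) = \up f(\vec{x}_i)$ (taking $\vec{y}_i = \vec{0}$ if $\up f(\vec{x}_i) = 0$); downward-closedness of $R$ places the witness matrix in $R$, and $f$-preservation of $R$ yields $(\up f(\vec{x}_i))_i = (f(\vec{y}_i))_i \in R$.

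The heart of the argument is then to upgrade the set-theoretic $\up f$, which is in general only analytic (cf.\ \cref{rmk:post-up-analytic}), to a Borel monotone $g \in F$ above $f$. My plan is to apply the dichotomy of \cref{thm:post-T0omega-dich} (itself built on \cref{thm:post-wadge}) to split on the location of $F$ within the ``side tube'' of Post's lattice. In each branch, $F$ either is trapped in a sub-region where $F \cap \Mono$ is cofinal by a direct argument (for instance $F \subseteq \Incr$, where Scott-continuity of monotone Borel functions in $F$ permits writing down an explicit dominator from countable joins of finite meets of projections), or else $F$ contains a specific monotone Borel generator such as $\ceil{\bigvee}$ or $\ceil{\floor{\bigwedge}}$. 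In the generator case I would build $g$ from $f$ and the generator by combining the operator $\ceil{-}$ (which maps every Borel function into $F$) with the Borel-selector closure given by \cref{thm:post-up-borel}: enumerate $\@M = \{R_k\}_{k<\omega}$ and inductively adjust a candidate $g$ at each stage so as to preserve $R_k$ while maintaining preservation of $R_0, \dotsc, R_{k-1}$.

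The main obstacle, which I expect to require the most care, is maintaining both Borelness and $F$-membership of the candidate dominator throughout the inductive adjustment: naive enlargement of $g^{-1}(1)$ to dominate $f$ can break $R_k$-preservation, and since the natural target $\up f$ is only analytic, one cannot simply take it as the limit. The plan for overcoming this is to exploit the asymmetry provided by downward-closedness of each $R_k$: the ``bad'' direction for testing $R_k$-preservation is controlled because $R_k^c$ is upward-closed, so enlargements chosen to be Borel upward-closed refinements (using that $\ceil{\phi} \in F$ for any Borel $\phi$) preserve earlier invariants. Combined with the countability of $\@M$, this should make a diagonal construction stabilize at a Borel monotone $g \in F$ with $g \ge f$. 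Verifying that no limit-stage obstruction pushes the construction out of the Borel class is the delicate step.
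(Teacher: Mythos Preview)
Your reductions and structural observations are correct and match the paper: you correctly note that $\up f$ preserves every $R \in \@M$ set-theoretically (this is exactly \cref{thm:post-up-ctcl}), and that downward-closedness of each $R$ makes $\Pol(\@M)$ closed under pointwise decrease of Borel functions.  The gap is in the core step of producing a \emph{Borel} monotone $g \in F$ with $g \ge \up f$.  Your plan of a case split via \cref{thm:post-T0omega-dich} followed by an inductive ``diagonal construction'' over an enumeration of $\@M$ is not a workable strategy here: at each stage you would need to either shrink the candidate (to gain preservation of $R_k$, since $R_k$ is downward-closed) or enlarge it (to keep it above $\up f$), and these pull in opposite directions.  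There is no evident way to stabilize such a process at a Borel limit, and the generators $\ceil{\bigvee}$, $\ceil{\floor{\bigwedge}}$ you obtain from the dichotomy do not by themselves provide a mechanism for dominating an arbitrary analytic upward-closed set by a Borel one in $\Pol(\@M)$.  You yourself flag this as ``the delicate step,'' but it is in fact the entire content of the proposition.

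The paper's argument replaces this ad hoc construction with a single application of the first reflection theorem for analytic sets \cite[35.10]{Kcdst}.  One checks that the class of sets $B \subseteq 2^n$ whose indicator function lies in $\Pol(\@M)$ is $\*\Pi^1_1$ on $\*\Sigma^1_1$ (this uses that each $R$ is Borel and downward-closed, so that ``$(g_\alpha(\vec{x}_i))_i \in R$'' can be rewritten with a universal quantifier over $2^k$).  Since the analytic set $\up f^{-1}(1)$ belongs to this class, reflection yields a Borel $g^{-1}(1) \supseteq \up f^{-1}(1)$ with $g \in \Pol(\@M)$.  Then Dyck's monotone separation \cite[28.12]{Kcdst} gives a monotone Borel $h$ with $\up f \le h \le g$, and downward-closedness of each $R$ ensures $h \in \Pol(\@M)$, hence $h \in F \cap \Mono$.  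No case analysis on the position of $F$ in Post's lattice is needed.
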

\begin{proof}
For each $f \in F^n$, we have $\up f \in \Pol(\@M) \cap \Mono$ by \cref{thm:post-up-ctcl}.
Apply now the reflection theorem \cite[35.10]{Kcdst} to the analytic set $\up f^{-1}(1) \subseteq 2^n$, to get a Borel superset $g^{-1}(1) \supseteq \up f^{-1}(1)$ whose indicator function $g$ is in $\Pol^n(\@M)$.
To do so, we need to verify the hypothesis that the class of sets $g^{-1}(1) \subseteq 2^n$ whose indicator function $g$ is in $\Pol(\@M)$ is $\*\Pi^1_1$ on $\*\Sigma^1_1$,
which means (see again \cite[35.10]{Kcdst}), in terms of the indicator functions, that
for any family of functions $(g_\alpha : 2^n -> 2)_{\alpha \in 2^\omega}$ which is the indicator function in two variables of an analytic set $\{(\alpha,\vec{x}) \mid g_\alpha(\vec{x}) = 1\} \subseteq 2^\omega \times 2^n$, the set of indices $\alpha$ such that $g_\alpha \in \Pol(\@M)$ is $\*\Pi^1_1$.
Indeed,
\begin{align*}
g_\alpha \in \Pol(\@M)
\iff{}&
\forall R \in \@M^k\, \forall (x_{i,j})_{i<k,j<n} \in 2^{k \times n}\,
\paren[\Big]{\forall j < n\, \paren[\big]{(x_{i,j})_i \in R} \implies \paren[\big]{g_\alpha((x_{i,j})_j)}_i \in R};
\end{align*}
and by downward-closure of $R$,
\begin{align*}
\paren[\big]{g_\alpha((x_{i,j})_j)}_i \in R
\iff{}&
\forall \vec{y} \in 2^k\, \paren[\Big]{\vec{y} \le \paren[\big]{g_\alpha((x_{i,j})_j)}_i \implies \vec{y} \in R}
\\
\iff{}&
\forall \vec{y} \in 2^k\, \paren[\Big]{\forall i < k\, \paren[\big]{y_i = 1 \implies g_\alpha((x_{i,j})_j) = 1} \implies \vec{y} \in R}
\end{align*}
which are clearly $\*\Pi^1_1$.
So we have found Borel $\up f \le g \in \Pol(\@M)$.
By Dyck's monotone separation theorem \cite[28.12]{Kcdst}, there is a monotone Borel $\up f \le h \le g$.
Since each $R \in \@M$ is downward-closed, we still have $h \in \Pol(\@M)$, so $h \in F \cap \Mono$ with $\up f \le h$, as desired.
\end{proof}

\begin{example}
Each $\Cons0k^\Borel = \Pol^\Borel \{{\bigwedge^k}{=}0\}$ is defined by a countable-arity downward-closed Borel relation $\paren{{\bigwedge^k}{=}0} \subseteq 2^k$.
Thus, the above shows that every $\Cons0k$ Borel function is $\le$ a monotone such Borel function.
(See \cref{sec:borel-T0k} for similar but more involved examples.)
\end{example}

\begin{remark}
The above proof applies more generally if $\@M$ is a possibly uncountable family $\{R_i\}_{i \in I}$ of relations, indexed over some standard Borel space $I$, such that $\{(i,\vec{x}) \mid \vec{x} \in R_i\}$ is $\*\Pi^1_1$.
\end{remark}

We close this subsection with some basic remarks on the Borel clones below $\Cons0{<\omega}$ which are \emph{not} contained in $\Cons0\omega$; we will study these clones in more detail in \cref{sec:borel-T0k} (producing in particular the countably infinitely many examples in the crescent-shaped shaded regions in \cref{fig:post-borel-T0inf}).

\begin{example}
$\liminf : 2^\omega -> 2$ is in $\Mono\Cons0{<\omega}\Cons11^\Borel$, indeed in $\Meet\Cons01\Cons11^\Borel$ (it is the indicator function of the Fréchet filter of cofinite sets).
But it is not in $\Cons0\omega$, or even \cref{eq:post-T0inf-lim0} in $\Limm011$, since the strings of the form $0\dotsm 0 111\dotsm \in \liminf^{-1}(1)$ converge to $\vec{0}$.
\end{example}

\begin{example}
\label{ex:forall2}
The quantifier $\forall_2 : 2^\omega -> 2$, ``for all but at most one input bits'', namely
\begin{align*}
\forall_2(\vec{x}) = \bigwedge_{i < j < \omega} (x_i \vee x_j) = \bigvee_{i < \omega} \bigwedge_{j \ne i} x_j
\end{align*}
(recall \cref{def:post-funs}), is also in $\Mono\Cons0{<\omega}\Cons11^\Borel \setminus \Cons0\omega$.
Unlike $\liminf$, it is in $\ang{\bigwedge, \vee}^\Borel = \Decr\Cons01\Cons11^\Borel \subseteq \Limm011^\Borel$.
\end{example}

In fact, $\forall_2$ is a ``minimal'' example of a non-$\Cons0\omega$ function:

\begin{lemma}
\label{thm:post-forall2}
If $f : 2^\omega -> 2 \notin \Cons0\omega$,
then $\forall_2 \in \ang{\{f\} \cup \Decr\Cons0\omega\Cons11^\Borel} = \ang{f, \bigwedge, \ceil{\vee}}$.
\end{lemma}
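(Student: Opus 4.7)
The plan is to exhibit a function $a \in F := \ang{f, \bigwedge, \ceil{\vee}}$ with $a \ge \forall_2$ pointwise; given such an $a$, the identity
\[
\bigwedge_{i<j<\omega} \ceil{\vee}(a(\vec y), y_i, y_j)
= a(\vec y) \wedge \bigwedge_{i<j<\omega}(y_i \vee y_j)
= a(\vec y) \wedge \forall_2(\vec y)
= \forall_2(\vec y)
\]
exhibits $\forall_2$ as an element of $F$.

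To build $a$, I first extract witnesses: since $f \notin \Cons0\omega$, I fix $\vec x_0, \vec x_1, \ldots \in f^{-1}(1)$ with $\bigwedge_i \vec x_i = \vec 0$, and by replacing each $\vec x_j$ with some $\vec x_{i(j)}$ satisfying $(\vec x_{i(j)})_j = 0$, I may assume without loss of generality that $(\vec x_i)_i = 0$ for every $i$ (this preserves both $\vec x_i \in f^{-1}(1)$ and $\bigwedge_i \vec x_i = \vec 0$). Setting $S_j := \{i < \omega : (\vec x_i)_j = 0\} \ni j$, I define $h : 2^\omega \to 2^\omega$ coordinate-wise by $h_j(\vec y) := \bigwedge_{i \in S_j} y_i$, so each $h_j \in \ang{\bigwedge} \subseteq F$ and $h_j \le \pi_j$. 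Writing $\vec w_i \in 2^\omega$ for the string with $0$ at coordinate $i$ and $1$ elsewhere, a direct evaluation shows $h(\vec 1) = \vec 1$ and $h(\vec w_i) = \vec x_i$ for every $i$; note that $\forall_2^{-1}(1) = \{\vec 1\} \cup \{\vec w_i : i < \omega\}$.

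If $f(\vec 1) = 1$, I simply take $a := f \circ h \in F$, and the above evaluations yield $a(\vec 1) = f(\vec 1) = 1$ and $a(\vec w_i) = f(\vec x_i) = 1$, so $a \ge \forall_2$ and the proof concludes.

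The main obstacle is the remaining case $f(\vec 1) = 0$: since every function in $\ang{\bigwedge, \ceil{\vee}}$ alone evaluates to $1$ at $\vec 1$, the direct $f \circ h$ gives $a(\vec 1) = 0$ and fails. The key observation is that $f(\vec x_0) = 1 > 0 = f(\vec 1)$ together with $\vec x_0 \le \vec 1$ forces $f$ to be non-monotone, whence $F \not\subseteq \Mono$; and $F \not\subseteq \ang{\Op2^{<\omega}}$ since $\bigwedge \in F$ is not essentially finitary. Under these hypotheses, \cref{thm:post-up-borel} guarantees $f \circ h' \in F$ for every Borel $h' : 2^\omega \to 2^\omega$ with $h' \le \id$. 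I then redefine $h'$ to agree with $h$ off the singleton $\{\vec 1\}$ and to send $\vec 1$ to $\vec x_0$; this $h'$ is Borel (differing from $h$ only on a Borel singleton) and coordinate-wise $\le \id$, and setting $a := f \circ h' \in F$ gives $a(\vec 1) = f(\vec x_0) = 1$ and $a(\vec w_i) = f(\vec x_i) = 1$, so $a \ge \forall_2$ as required.
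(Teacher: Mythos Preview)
Your proof is correct and follows essentially the same approach as the paper: both construct the same map $h$ (the paper writes it as $\bigwedge_i g_i$ with $g_i(\vec y) = (y_i \vee \vec x_i) \wedge \vec y$, which unwinds coordinatewise to your $\bigwedge_{i \in S_j} y_i$), obtain a function $a \in \ang{f,\bigwedge}$ with $a \ge \forall_2$, and then cut down. The only cosmetic differences are that the paper invokes \cref{thm:post-up-ctcl} rather than splitting on $f(\vec 1)$ and calling \cref{thm:post-up-borel} directly, and uses the abstract \cref{thm:post-lower} for the cut-down step where you write out the explicit formula $\forall_2 = \bigwedge_{i<j}\ceil{\vee}(a,\pi_i,\pi_j)$.
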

\begin{proof}
Since $f \notin \Cons0\omega$, there are $\vec{x}_0, \vec{x}_1, \dotsc \in f^{-1}(1)$ with $x_{i,i} = \pi_i(\vec{x}_i) = 0$.
Define
\begin{align*}
g_i : 2^\omega -->{}& 2^\omega \\
\vec{y} |-->{}& (y_i \vee \vec{x}_i) \wedge \vec{y} \\
={}& \begin{cases}
\vec{x}_i \wedge \vec{y} &\text{if $y_i = 0$}, \\
\vec{y} &\text{if $y_i = 1$}.
\end{cases}
\end{align*}
Then each coordinate $g_{i,j} := \pi_j \circ g_i$ of each $g_i$ is either $\pi_j$ (if $x_{i,j} = 1$) or $\pi_i \wedge \pi_j$ (if $x_{i,j} = 0$), so $g_{i,j} \in \ang{\wedge}$.
Thus
\begin{equation*}
\begin{aligned}
\inline{\bigwedge_i g_i} : 2^\omega &--> 2^\omega \\
0111\dotsb &|--> \vec{x}_0 \\
1011\dotsb &|--> \vec{x}_1 \\
1101\dotsb &|--> \vec{x}_2 \\
&\vdotswithin{|-->}
\end{aligned}
\end{equation*}
has each coordinate in $\ang{\bigwedge}$, and so we have $f \circ {\bigwedge_i g_i} \in \ang{f, \bigwedge}$, mapping each string with exactly one $0$ in the $i$th coordinate to $f(\vec{x}_i) = 1$.
By \cref{thm:post-up-ctcl}, we then in fact have some $\forall_2 \le h \in \ang{f, \bigwedge}$.
Since $\forall_2 \in \Decr\Cons0{<\omega}\Cons11^\Borel \subseteq \Decr\Cons11^\Borel = \ang{\Decr\Cons0\omega\Cons11^\Borel \cup \{1\}}^{<\omega_1} \subseteq \ang{\{f\} \cup \Decr\Cons0\omega\Cons11^\Borel \cup \{1\}}^{<\omega_1}$, by \cref{thm:post-lower} it follows that $\forall_2 \in \ang{\{f\} \cup \Decr\Cons0\omega\Cons11^\Borel}^{<\omega_1}$.
\end{proof}

\begin{corollary}
\label{thm:post-T0inf-forall2}
Let $\Mono\Cons0{<\omega}\Cons11^{<\omega} \subseteq F \subseteq \Cons0{<\omega}^{<\omega_1}$ be a ${<}\omega_1$-ary clone.
Then $F \subseteq \Cons0\omega$ or $\forall_2 \in F$.
\end{corollary}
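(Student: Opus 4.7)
The plan is to combine the combinatorial lemma \cref{thm:post-forall2} with the dichotomy \cref{thm:post-T0omega-dich}\cref{thm:post-T0omega-dich:Meet}. I would begin by assuming $F \not\subseteq \Cons0\omega$ and fixing some witness $f \in F \setminus \Cons0\omega$; the task then reduces to proving $\forall_2 \in F$. Since $\Mono\Cons0{<\omega}\Cons11^{<\omega} \subseteq F$ by hypothesis, we already have $\ceil{\vee} \in F$, so \cref{thm:post-forall2} will furnish $\forall_2 \in \ang{f, \bigwedge, \ceil{\vee}} \subseteq F$ as soon as I can also locate $\bigwedge$ inside $F$.

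To force $\bigwedge \in F$, I would appeal to \cref{thm:post-T0omega-dich}\cref{thm:post-T0omega-dich:Meet}, which offers the four alternatives $F \subseteq \ang{\Op2^{<\omega}}$, $F \subseteq \Limm111$, $F \subseteq \Incr$, or $\bigwedge \in F$, and then rule out the first three by playing off $F \subseteq \Cons0{<\omega}$ against $f \notin \Cons0\omega$. The case $F \subseteq \Incr$ is excluded by \cref{thm:post-T0inf-incr}, which gives $\Incr\Cons0{<\omega}^{<\omega_1} \subseteq \Cons0\omega$; the case $F \subseteq \Limm111$ is excluded by \cref{thm:post-T0inf-lim1}, which gives $\Cons0{<\omega}\Limm111^{<\omega_1} \subseteq \Cons0\omega$; and the case $F \subseteq \ang{\Op2^{<\omega}}$ forces $f$ to be essentially finitary with finitary restriction lying in $\Cons0{<\omega}^{<\omega}$, but per Post's lattice \ref{fig:post} every such function is bounded above by a projection, hence lies in $\Cons0\omega$. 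Each case contradicts the choice of $f$, so only the fourth alternative can survive.

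With $f, \bigwedge, \ceil{\vee}$ all in $F$, a direct application of \cref{thm:post-forall2} finishes the argument. There is essentially no obstacle here: the combinatorial heart of the statement is already packaged in \cref{thm:post-forall2}, and the work above is purely lattice-theoretic bookkeeping. The only mildly subtle point is the third case of the dichotomy, where one needs the observation $\ang{\Op2^{<\omega}} \cap \Cons0{<\omega}^{<\omega_1} \subseteq \Cons0\omega$ (which ultimately reflects the fact that the finitary $\Cons0{<\omega}$ functions are exactly those bounded by a single variable).
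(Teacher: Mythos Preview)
Your proposal is correct and follows essentially the same route as the paper's proof: assume $F \not\subseteq \Cons0\omega$, rule out the three ``bad'' alternatives of \cref{thm:post-T0omega-dich}\cref{thm:post-T0omega-dich:Meet} using \cref{thm:post-T0inf-lim1}, \cref{thm:post-T0inf-incr}, and the observation $\ang{\Op2^{<\omega}} \cap \Cons0{<\omega}^{<\omega_1} \subseteq \Cons0\omega$, conclude $\bigwedge \in F$, and then invoke \cref{thm:post-forall2}. The paper compresses the essentially-finitary case to a parenthetical ``clearly'', but your unpacking of it via Post's lattice is exactly the intended content.
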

Note that as in \cref{thm:post-T0omega-dich}, Borelness of $F$ is not required.
\begin{proof}
If $F \not\subseteq \Cons0\omega$, then by \cref{thm:post-T0inf-lim1,thm:post-T0inf-incr}, $F \not\subseteq \Limm111, \Incr$, and clearly also $F \not\subseteq \ang{\Op2^{<\omega}}$ (since $F \not\subseteq \Cons0\omega$), whence by \cref{thm:post-T0omega-dich}, $\bigwedge \in F$; now apply the preceding lemma.
\end{proof}

\begin{remark}
\label{rmk:forall2}
We do not know whether $\Mono\Cons0{<\omega}\Cons11^\Borel = \ang{\forall_2}^\Borel$, but suspect this to be the case.
For instance, we have $\forall_3 \in \ang{\forall_2}$ (recall again \cref{def:post-funs}), since it is easily seen that
\begin{align*}
\forall_3(\vec{x}) \le \forall_2(
    \forall_2(x_{i_{0,0}},x_{i_{0,1}},\dotsc),
    \forall_2(x_{i_{1,0}},x_{i_{1,1}},\dotsc),
    \dotsc),
\end{align*}
where $(i_{j,k})_{j,k \in \omega}$ is any bijection $\omega \times \omega \cong \omega$; now apply \cref{thm:post-lower}.
More generally, $\ang{\forall_2} = \ang{\forall_k}$ for any $2 \le k < \omega$.

Note that if $\Mono\Cons0{<\omega}\Cons11^\Borel = \ang{\forall_2}^\Borel$, then by \cref{thm:post-borel-down-mod}, we would have a complete classification of all Borel clones ``just above'' $\Cons0\omega$ in \cref{fig:post-borel-T0inf}, i.e., between $\Decr\Cons0{<\omega}\Cons11$ and $\Limm0{<\omega}{}$ which is the downward-closure $\down \Decr\Cons0{<\omega}\Cons11$ (see \cref{rmk:post-borel-L0inf}).
\end{remark}

\subsection{Filters and ideals}

Next, we consider the Borel clones below $\Meet = \Pol\{\wedge\}$, which consists of indicator functions $f : 2^n -> 2$ of filters $f^{-1}(1) \subseteq 2^n$ or the empty set; the latter is excluded in $\Meet\Cons11 = \Pol\{\wedge, 1\}$, while the improper filter $2^n$ is excluded in $\Meet\Cons01$.
Similarly, $\Meet_\omega = \Pol\{\bigwedge\}$ consists of indicator functions of $\sigma$-filters.
But since $2^\omega$ is countably generated under meets, a $\sigma$-filter in it is just the principal filter above some element $\vec{x}$, namely $\{\vec{y} \in 2^\omega \mid \vec{x} \le \vec{y}\}$, whose indicator function is $\smash{\bigwedge_{\substack{i < \omega \\ x_i = 1}} \pi_i}$.
This yields
\begin{align*}
\yesnumber
\label{eq:meet-gen}
\Meet_\omega^{<\omega_1} &= \ang{\bigwedge, 0, 1}^{<\omega_1}, &
\Meet_\omega \Cons11^{<\omega_1} &= \ang{\bigwedge, 1}^{<\omega_1}, \\
\Meet_\omega \Cons01^{<\omega_1} &= \ang{\bigwedge, 0}^{<\omega_1}, &
\Meet_\omega \Cons01\Cons11^{<\omega_1} &= \ang{\bigwedge}^{<\omega_1}.
\end{align*}
We also clearly have
\begin{align}
\label{eq:meet-decr}
\Meet \Decr = \Pol\brace{\wedge, \bigwedgedown} &= \Pol\brace{\bigwedge} = \Meet_\omega, \\
\Meet_\omega \Cons01 = \Pol\brace{{\bigwedge},0} &\subseteq \Pol\brace{{\bigwedge}{=}0} = \Cons0\omega.
\end{align}

\begin{lemma}
\label{thm:post-meet-lim0}
$\Meet\Limm011^{<\omega_1} = \Meet\Cons0\omega^{<\omega_1}$.
\end{lemma}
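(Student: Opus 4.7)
The plan is to prove both containments directly. The $\supseteq$ direction is immediate from \eqref{eq:post-T0inf-lim0}, which already gives $\Cons0\omega^{<\omega_1} \subseteq \Limm011$; intersecting both sides with $\Meet$ yields $\Meet\Cons0\omega^{<\omega_1} \subseteq \Meet\Limm011^{<\omega_1}$. So the real content is the reverse inclusion.

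For $\subseteq$, let $f \in \Meet\Limm011$ be of arity $n \le \omega$. I need to show that $f \le \pi_j$ for some $j < n$, equivalently (recall \cref{def:post-clones}) that $x_j = 1$ for every $\vec{x} \in f^{-1}(1)$. The case $f^{-1}(1) = \emptyset$ is vacuous, so assume $f^{-1}(1)$ is a proper filter in $2^n$ (proper because $f(\vec{0}) = 0$). The key idea is to first use continuity at $\vec{0}$ to trap $f^{-1}(1)$ inside a \emph{finite}-coordinate constraint, and then use the filter (binary meet) property to shrink that constraint down to a single coordinate.

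Concretely, since $f \in \Limm011$, there is a basic clopen neighborhood $U_S := \{\vec{x} \in 2^n : x_i = 0 \text{ for all } i \in S\}$ of $\vec{0}$, for some finite $S \subseteq n$, on which $f$ vanishes. This means every $\vec{x} \in f^{-1}(1)$ must have $T(\vec{x}) := \{i \in S : x_i = 1\} \ne \emptyset$. Because $f^{-1}(1)$ is closed under $\wedge$ and $T(\vec{x} \wedge \vec{y}) = T(\vec{x}) \cap T(\vec{y})$, the family $\{T(\vec{x})\}_{\vec{x} \in f^{-1}(1)}$ is a downward-directed family of non-empty subsets of the finite set $S$. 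Such a family attains a minimum, which is necessarily non-empty; so $\bigcap_{\vec{x} \in f^{-1}(1)} T(\vec{x}) \ne \emptyset$. Any $j$ in this intersection satisfies $x_j = 1$ for every $\vec{x} \in f^{-1}(1)$, hence $f \le \pi_j$, i.e., $f \in \Cons0\omega$.

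There is no real obstacle here: the proof is a short combination of the clopen-basis description of continuity at $\vec{0} \in 2^n$ with the observation that binary-meet closure is enough to collapse a directed system in the finite powerset $\mathcal P(S)$ down to a single point. The statement is really a reflection of the fact that, in the presence of the filter property, the countable intersection property (\ref{def:post-clones}) and mere continuity at $\vec{0}$ coincide.
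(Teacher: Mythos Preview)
Your proof is correct. The argument for $\subseteq$ differs in presentation from the paper's: the paper argues by contrapositive, showing that if $f$ is the indicator of a filter with $f \not\le \pi_i$ for every $i$, then each string $\vec{x}_i$ with a single $0$ in the $i$th coordinate lies in $f^{-1}(1)$ (by upward closure), and the successive meets $\vec{x}_0, \vec{x}_0 \wedge \vec{x}_1, \dotsc$ form a sequence in $f^{-1}(1)$ converging to $\vec{0}$, contradicting $f \in \Limm011$. You instead argue directly, first extracting a finite $S$ from continuity at $\vec{0}$ and then using meet-closure to collapse the downward-directed family $\{T(\vec{x})\}$ inside the finite set $\mathcal P(S)$. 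The two arguments are essentially dual (upward closure versus meet closure of a filter), and both are one-paragraph proofs; yours is perhaps slightly more conceptual in isolating the ``directed family in a finite poset attains its minimum'' step, while the paper's is more explicit about the witnessing sequence.
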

\begin{proof}
$\supseteq$ by \cref{eq:post-T0inf-lim0}; conversely, if $f : 2^\omega -> 2$ is the indicator function of a filter $f^{-1}(1)$ but $f \not\le \pi_i$ for each $i < \omega$, then each of the strings $\vec{x}_i := 1\dotsm101\dotsm$ with a single $0$ in the $i$th bit would be in $f^{-1}(1)$, whence $\vec{x}_0, \vec{x}_0 \wedge \vec{x}_1, \dotsb \in f^{-1}(1)$ would converge to $\vec{0}$, whence $f \notin \Limm011$.
\end{proof}

\begin{remark}
\label{thm:post-meet-lim1}
$\Meet\Limm111^{<\omega_1} = \ang{\Meet\Cons11^{<\omega}}^{<\omega_1}$.
\end{remark}
\begin{proof}
Dually, an ideal $\subseteq 2^\omega$ with $\vec{0}$ in the interior is a clopen subgroup mod 2.
\end{proof}

From these observations, we may compute all Borel clones that are the intersection of $\Meet$ with a clone from the top cube (\cref{rmk:post-topcube-irred}); see \cref{fig:post-borel-Meet}.

\begin{figure}[tb]
\centering
\includegraphics{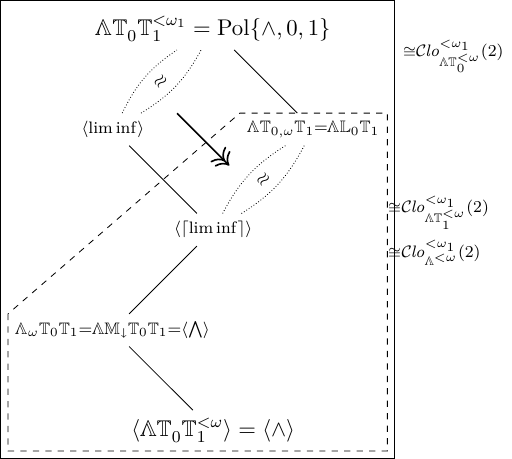}
\caption{All countable arity clones restricting to $\Meet\Cons01\Cons11^{<\omega}$.
Aside from the 6 labelled, all other clones must lie in one of the two `$\approx$' regions, inside of which no clones can be distinguished by countable arity relations.
The clones restricting to $\Meet\Cons01^{<\omega}, \Meet\Cons11^{<\omega}, \Meet^{<\omega}$ are isomorphic to the blocks shown.}
\label{fig:post-borel-Meet}
\end{figure}

We now show that these are ``approximately'' all of the Borel clones, indeed all of the ${<}\omega_1$-ary clones, with finitary restriction $\Meet^{<\omega}$ or one of its relatives.
To do so, we need ``dichotomies'' along the lines of \cref{thm:post-T0omega-dich,thm:post-forall2}.
Note however that the proof technique used there does not apply, since it depended on a prior understanding of $[\ang{\Mono\Cons0{<\omega}\Cons11}, \Cons0\omega]$, whereas we lack a similar understanding of $\Meet\Cons0\omega$.
Instead, we use the following, which is implicit in \cite{Kahane}:

\begin{lemma}[Kahane]
\label{thm:kahane}
If $f : 2^\omega -> 2 \in \Mono\Cons02 \setminus \Limm011$,
then $\liminf \in \ang{f, \wedge}$.
\end{lemma}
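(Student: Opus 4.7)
My approach is to extract a decreasing sequence in $f^{-1}(1)$ converging to $\vec 0$, and then exhibit $\liminf$ as $f$ applied to a carefully engineered substitution whose zero-pattern tracks $\vec y$'s zero-set in a way $\Cons02$ can detect.

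\emph{Data extraction.} First, I observe that $f \in \Cons02$ forces $f(\vec 0) = 0$ (otherwise the pair $(\vec 0, \vec 0)$, whose coordinatewise meet is $\vec 0$, would violate $\Cons02$). Since $f \notin \Limm011$, $f$ is not continuous at $\vec 0$, so there is a sequence $(\vec x_k)$ in $f^{-1}(1)$ with $\vec x_k \to \vec 0$. Replacing $\vec x_k$ by $\bigvee_{j \ge k} \vec x_j$ (still in $f^{-1}(1)$ by monotonicity, still converging to $\vec 0$), I may assume $\vec x_0 \ge \vec x_1 \ge \cdots$. Let $T_k := \vec x_k^{-1}(1)$, so $T_0 \supseteq T_1 \supseteq \cdots$ with $\bigcap_k T_k = \emptyset$. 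Each $T_k$ is nonempty (since $f(\vec 0) = 0$) and in fact infinite: otherwise, since $T_k \supseteq T_{k+1} \supseteq \cdots$ is a decreasing chain of finite sets with empty intersection, some $T_j = \emptyset$, contradicting $\vec x_j \in f^{-1}(1)$. Passing to a subsequence, I assume $T_k \supsetneq T_{k+1}$, and for $n \in T_0$ I set $k^*(n) := \max\{k : n \in T_k\}$, which is finite by $\bigcap T_k = \emptyset$.

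\emph{Main construction.} Next I construct $h : 2^\omega \to 2^\omega$ with each $h_n$ a finite meet of coordinate projections of $\vec y$ (hence $h_n \in \ang{\wedge}$), chosen so that $f \circ h = \liminf$. Writing $h_n(\vec y) = \bigwedge_{i \in S_n} y_i$ for a finite $S_n \subseteq \omega$, and letting $F$ denote the zero-set of $\vec y$, the support of $h(\vec y)$ is $G(F) = \{n : S_n \cap F = \emptyset\}$. The canonical choice is $S_n := \{k^*(n)+1\}$ for $n \in T_0$ (extended by $S_n := \emptyset$ for $n \not\in T_0$, which encodes the constant $1$ entry and requires the auxiliary step described below). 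With this choice one has $G(F) \supseteq T_{k}$ exactly when $F \subseteq [0,k]$; hence for finite $F$, $h(\vec y) \ge \vec x_{\max F}$, so by monotonicity $f(h(\vec y)) = 1 = \liminf(\vec y)$.

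\emph{Handling the infinite-zeros case.} The main obstacle is verifying $f(h(\vec y)) = 0$ for $\vec y$ with $F$ infinite. The plan is to use $\Cons02$: produce some $\vec z \in f^{-1}(1)$ with $h(\vec y) \wedge \vec z = \vec 0$, whence $f(h(\vec y)) = 0$. For this, one should not work with a single substitution, since no choice of $S_n$'s yields $G(F) \cap T_k = \emptyset$ for some $k$ uniformly over all infinite $F$ (sparse $F$ can elude any fixed intersection pattern, as a diagonal argument against any prescribed growth of $|S_n|$ shows). I will therefore wrap the construction in a second layer of $f$-applications: design an $\omega$-indexed family $\vec d^{(m)}$ with each $\vec d^{(m)}_n \in \ang{\wedge}$ so that the two-layer composite $\vec y \mapsto f\bigl(f(\vec d^{(0)}(\vec y)),\,f(\vec d^{(1)}(\vec y)),\,\dots\bigr)$ synthesizes the required $\Cons02$-witness inside $\ang{f,\wedge}$. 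The key point is that for each infinite $F$, the $m$-th inner $f(\vec d^{(m)}(\vec y))$ should return an element of $f^{-1}(1)$ whose support is disjoint from $h(\vec y)$, exploiting the infinite support of each $T_k$; then the outer $f$ flips to $0$ by a final $\Cons02$ application, whereas the same composite returns $1$ when $F$ is finite because $h(\vec y) \ge \vec x_l$ dominates. The combinatorial engineering of $\vec d^{(m)}$---matching $F$'s ``density class'' with an $L_k$-level whose $T_k$ is then disjoint from $G(F)$---is the delicate point where Kahane's original argument is used.
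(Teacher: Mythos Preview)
Your data extraction is correct, and you rightly identify that a single-layer substitution $f \circ h$ with $h_n \in \ang{\wedge}$ cannot compute $\liminf$. But from that point on the proposal is incomplete: the two-layer construction is never actually specified. You write that ``the combinatorial engineering of $\vec d^{(m)}$ \ldots\ is the delicate point where Kahane's original argument is used,'' which amounts to deferring the entire content of the lemma. Your description of what the inner layer should do is also garbled: ``the $m$-th inner $f(\vec d^{(m)}(\vec y))$ should return an element of $f^{-1}(1)$ whose support is disjoint from $h(\vec y)$'' conflates the output bit $f(\vec d^{(m)}(\vec y)) \in 2$ with a string in $2^\omega$. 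As written, there is no proof here.

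The key observation you are missing makes the whole thing much simpler. From monotonicity and your decreasing sequence you already have $\liminf \le f$. Now use $\Cons02$: if $f(\vec y) = 1$ then $f(\neg \vec y) = 0$ (since $\vec y \wedge \neg \vec y = \vec 0$), i.e., $f \le \neg f \neg = \delta(f)$; and de~Morgan duality applied to $\liminf \le f$ gives $\delta(f) \le \limsup$. Thus $\liminf \le f \le \limsup$, so $f(\vec y) = \lim \vec y$ on every convergent $\vec y$. This immediately yields the explicit formula
\[
\liminf \vec y \;=\; \lim_{n \to \infty} \lim_{m \to \infty} (y_n \wedge y_{n+1} \wedge \dotsb \wedge y_{n+m}) \;=\; f\bigl(\,\bigl(f(y_n,\, y_n \wedge y_{n+1},\, y_n \wedge y_{n+1} \wedge y_{n+2},\, \dotsc)\bigr)_{n}\,\bigr),
\]
since each inner sequence is decreasing (hence convergent) and the outer sequence is increasing (hence convergent). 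No further combinatorics is needed; the $\Cons02$-witness you were trying to manufacture by hand is produced automatically by the inner $f$'s.
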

(Monotonicity of $f$ is automatic if $f \in \Meet$, but is in fact not required otherwise; see \cref{thm:fkt-gen}.)
\begin{proof}
As in \cref{thm:post-wadge}\cref{thm:post-wadge:MT0}, there are $\vec{x}_0 \ge \vec{x}_1 \ge \dotsb$ in $f^{-1}(1)$ converging to $\vec{0} \in f^{-1}(0)$.
By monotonicity, it follows that $f(\vec{y}) = 1$ whenever $\vec{y}$ has cofinally many $1$'s, i.e.,
\begin{align*}
\liminf \le f
&&\text{which implies by de~Morgan duality}&&
\delta(f) = \neg f \neg \le \limsup.
\end{align*}
But since $f \in \Cons02$, we have $f \wedge f\neg = 0$, i.e., $f \le \neg f \neg$; thus
\begin{align*}
f(\vec{y}) &= \lim \vec{y}
    \quad \text{whenever $\vec{y} \in 2^\omega$ converges}. \\
\shortintertext{Hence}
\liminf \vec{y}
&= \lim_{n -> \infty} \inf_{i \ge n} y_i
= \lim_{n -> \infty} \lim_{m -> \infty} (y_i \wedge y_{i+1} \wedge \dotsb \wedge y_m) \\
&= f\paren*{\begin{aligned}
    &f(y_0, y_0 \wedge y_1, y_0 \wedge y_1 \wedge y_2, \dotsc), \\
    &f(y_1, y_1 \wedge y_2, y_1 \wedge y_2 \wedge y_3, \dotsc), \\
    &f(y_2, y_2 \wedge y_3, y_2 \wedge y_3 \wedge y_4, \dotsc), \\
    &\dotsc
\end{aligned}}.
\qedhere
\end{align*}
\end{proof}

\begin{corollary}
\label{thm:kahane-ceil}
If $f : 2^\omega -> 2 \in \Meet \setminus \Decr$,
then $\ceil{\liminf} \in \ang{f, \wedge}$.
\end{corollary}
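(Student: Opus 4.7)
My plan is to reduce to Kahane's \cref{thm:kahane} by constructing an auxiliary function that satisfies its hypotheses, and then threading an extra variable $z$ through Kahane's explicit formula for $\liminf$ to produce $\ceil{\liminf}$ directly.

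First I would extract a decreasing witness: since $f \in \Meet \setminus \Decr$ (and so in particular $f$ is not constant), there exist $\vec{x}_0 \ge \vec{x}_1 \ge \dotsb$ in $f^{-1}(1)$ with meet $\vec{x}_\infty \in f^{-1}(0)$. Let $A := \{j : x_{\infty,j}=1\}$ and $B := \omega \setminus A$; the restrictions $\vec{x}_i|_B$ then decrease to $\vec{0}$ in $2^B$, and $B$ must be infinite (otherwise the restrictions would stabilize and $\vec{x}_i = \vec{x}_\infty$ eventually, contradicting $f(\vec{x}_i) \ne f(\vec{x}_\infty)$). Fixing a bijection $B \cong \omega$, define $\tilde f : 2^B -> 2$ by $\tilde f(\vec{y}) := f(\chi_A \vee \vec{y})$. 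A short calculation using $f \in \Meet$ shows $\tilde f \in \Meet$ as well; together with $\tilde f(\vec{0}) = f(\vec{x}_\infty) = 0$, this gives $\tilde f \in \Mono\Cons02$ (since $\Meet \setminus \{1\} \subseteq \Cons02$). Finally, $\tilde f(\vec{x}_i|_B) = f(\vec{x}_i) = 1$ with $\vec{x}_i|_B \searrow \vec{0}$ witnesses $\tilde f \notin \Limm011$, so \cref{thm:kahane} applies to $\tilde f$.

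The function $\tilde f$ itself need not lie in $\ang{f,\wedge}$ (its definition invokes the constant $\chi_A$), so next I would introduce the $z$-threaded term
\[
g(z,\vec{y}) := f(\vec{w}(z,\vec{y})), \qquad w_j(z,\vec{y}) := \begin{cases} z & \text{if } j \in A, \\ z \wedge y_j & \text{if } j \in B, \end{cases}
\]
which clearly lies in $\ang{f,\wedge}$ and satisfies $g(z,\vec{y}) = z \wedge \tilde f(\vec{y})$: if $z=0$ then $\vec{w}=\vec{0}$ so $f(\vec{w}) = 0$; if $z=1$ then $\vec{w} = \chi_A \vee \vec{y}|_B$ so $f(\vec{w}) = \tilde f(\vec{y})$.

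The final step is to substitute $g(z,-)$ for each occurrence of $\tilde f$ in Kahane's double-$f$ formula. Kahane's proof shows that $\tilde f$ agrees with $\lim$ on convergent sequences and yields
\[
\liminf \vec{y} = \tilde f\bigl(\tilde f(\vec{v}_0),\tilde f(\vec{v}_1),\dotsc\bigr), \qquad \vec{v}_i := (y_i,\, y_i \wedge y_{i+1},\, y_i \wedge y_{i+1} \wedge y_{i+2},\, \dotsc).
\]
Setting $h(z,\vec{y}) := g\bigl(z,\, g(z,\vec{v}_0),\, g(z,\vec{v}_1),\, \dotsc\bigr) \in \ang{f,\wedge}$, the inner calls evaluate to $g(z,\vec{v}_i) = z \wedge \inf_{j \ge i} y_j$, which as a sequence in $i$ is monotonically increasing with limit $z \wedge \liminf\vec{y}$; applying $\tilde f = \lim$ once more via the outer $g$ gives $h(z,\vec{y}) = z \wedge (z \wedge \liminf\vec{y}) = \ceil{\liminf}(z,\vec{y})$. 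The main obstacle I anticipate is the case $\vec{x}_\infty \ne \vec{0}$, in which $f$ may itself lie in $\Limm011$ so that \cref{thm:kahane} does not apply to $f$ directly; it is exactly the passage to $\tilde f$ and the compensating use of $z$ in $g$ that bridges this shift while keeping the resulting term inside the clone $\ang{f,\wedge}$.
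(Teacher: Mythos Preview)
Your proof is correct and takes essentially the same underlying idea as the paper—reduce to Kahane's lemma applied to the auxiliary function $\vec{y} \mapsto f(\vec{x}_\infty \vee \vec{y})$—but the execution differs. The paper first splits into cases on whether $f \in \Limm011$; if so, it observes that the auxiliary function lies in $\ang{f,1}$, applies Kahane to get $\liminf \in \ang{f,\wedge,1}$, and then invokes the structural identity $\ang{f,\wedge} = \ang{\ceil{\ang{f,\wedge,1}}}$ from \cref{thm:post-T0inf-mod} to pull $\ceil{\liminf}$ back into $\ang{f,\wedge}$. You instead avoid both the case split and the modularity machinery by threading the extra variable $z$ explicitly through Kahane's double-$\tilde f$ formula, so that $g(z,\vec{y}) = z \wedge \tilde f(\vec{y}) \in \ang{f,\wedge}$ directly yields $\ceil{\liminf}$. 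Your route is more self-contained and hands-on; the paper's is shorter once the structural lemmas are in place and illustrates how those lemmas are meant to be used.
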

\begin{proof}
Since $f \notin \Decr$, it is clearly not a constant function; thus $f \in \Meet\Cons01\Cons11$.
If $f \notin \Limm011$, then \cref{thm:kahane} applies; so suppose $f \in \Meet\Limm011^{<\omega_1} = \Meet\Cons0\omega^{<\omega_1}$ (\cref{thm:post-meet-lim0}).
Let $\vec{x} := \bigwedge f^{-1}(1)$ and
\begin{equation*}
g(\vec{y}) := f(\vec{x} \vee \vec{y}) = f\paren[\big]{(x_i ? 1 : y_i)_{i < \omega}};
\end{equation*}
then $g \in \ang{f, 1} \subseteq \Meet$.
Since $f$ does not preserve $\bigwedgedown$, and $f^{-1}(1) \subseteq 2^\omega$ is a filter, there is a decreasing sequence in $f^{-1}(1)$ converging to $\vec{x} \in f^{-1}(0)$; the meet of each term in this sequence with $\neg \vec{x}$ is then a sequence in $g^{-1}(1)$ converging to $\vec{0} \in g^{-1}(0)$, and so $g \in \Meet\Cons01 \setminus \Limm011$.
By \cref{thm:kahane}, $\liminf \in \ang{g, \wedge}$, whence by \cref{thm:post-T0inf-mod}, $\ang{f, \wedge} = \ang{\ceil{\ang{f, \wedge, 1}}} \supseteq \ang{\ceil{\ang{g, \wedge}}} \ni \ceil{\liminf}$.
\end{proof}

\begin{corollary}
\label{thm:post-meet-cts}
If $f : 2^\omega -> 2 \in \Meet$ is discontinuous,
then $\bigwedge \in \ang{f, \wedge}$.
\end{corollary}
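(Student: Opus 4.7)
The plan is to split on whether $f$ lies in $\Meet_\omega$. The dichotomy is clean: by \cref{eq:meet-decr}, $\Meet \cap \Decr = \Meet_\omega$, so $f \notin \Meet_\omega$ is equivalent to $f \notin \Decr$, which is precisely the hypothesis needed to invoke the preceding \cref{thm:kahane-ceil}.

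If $f \in \Meet_\omega^{<\omega_1}$, then by \cref{eq:meet-gen} $f^{-1}(1)$ is a principal (possibly improper or empty) $\sigma$-filter in $2^\omega$, so $f(\vec{z}) = \bigwedge_{i \in I} z_i$ for some $I \subseteq \omega$. Since $f$ is discontinuous, $I$ must be infinite (otherwise $f^{-1}(1)$ would be clopen, being a finite intersection of subbasic clopens). Enumerating $I = \{i_0 < i_1 < \dotsb\}$, I would then set $h_{i_j} := \pi_j$ for $j < \omega$ and $h_k := \pi_0$ for $k \in \omega \setminus I$, so that $f \circ \vec{h} = \bigwedge$; this case uses only projections.

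In the remaining case $f \notin \Decr$, \cref{thm:kahane-ceil} directly yields $\ceil{\liminf} \in \ang{f, \wedge}$. To promote $\ceil{\liminf}$ to $\bigwedge$, I would feed it the decreasing sequence of partial meets $z_i(\vec{x}) := x_0 \wedge x_1 \wedge \dotsb \wedge x_i$, each of which lies in $\ang{\wedge}$. Since $\liminf$ of a decreasing sequence equals its infimum,
\begin{equation*}
\ceil{\liminf}(z_0, z_1, z_2, \dotsc) = z_0 \wedge \liminf(z_1, z_2, \dotsc) = x_0 \wedge \bigwedge_{i} x_i = \bigwedge_i x_i,
\end{equation*}
so $\bigwedge \in \ang{\ceil{\liminf}, \wedge} \subseteq \ang{f, \wedge}$, as required.

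The only non-routine step is the algebraic trick in the second case: recognizing that $\bigwedge$ is recoverable from $\ceil{\liminf}$ by substituting partial meets. Everything else is bookkeeping against the already-established \cref{eq:meet-gen,eq:meet-decr,thm:kahane-ceil}.
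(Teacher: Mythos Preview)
Your proof is correct and follows essentially the same strategy as the paper: split on whether $f \in \Decr$ (equivalently $f \in \Meet_\omega$), handle the principal-filter case by an infinite-index variable substitution, and in the non-$\Decr$ case invoke \cref{thm:kahane-ceil} and then recover $\bigwedge$ from $\ceil{\liminf}$. The only cosmetic difference is the substitution used in the last step: you feed $\ceil{\liminf}$ the partial meets $x_0 \wedge \dotsb \wedge x_i \in \ang{\wedge}$, while the paper feeds it the raw sequence $x_0; x_1; x_1, x_2; x_1, x_2, x_3; \dotsc$ of projections; both yield $\bigwedge$.
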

\begin{proof}
If $f \notin \Decr$, then by the preceding corollary, we get $\ceil{\liminf} \in \ang{f, \wedge}$, from which we get $\bigwedge \vec{x} = \ceil{\liminf}(x_0; x_1; x_1, x_2; x_1, x_2, x_3; x_1, x_2, x_3, x_4; \dotsc)$.
Otherwise, $f \in \Meet\Decr^{<\omega_1} = \Meet_\omega^{<\omega_1} = \ang{\bigwedge, 0, 1}^{\omega_1}$ (\ref{eq:meet-decr} and \ref{eq:meet-gen}); since $f$ is discontinuous, it must be a meet of infinitely many variables, from which we easily get $\bigwedge$ via a variable substitution.
\end{proof}

These results show that the ${<}\omega_1$-ary clones $\Clo^{<\omega_1}_{\Meet\Cons01\Cons11^{<\omega}}{2}$ restricting to $\Meet\Cons01\Cons11^{<\omega}$, say, are as depicted in \cref{fig:post-borel-Meet}, with all other potential clones in one of the intervals $[\liminf, \Meet\Cons01\Cons11]$ or $[\ceil\liminf, \Meet\Cons0\omega\Cons11]$.
Within these intervals, we have a mismatch between the minimal functions provided by \cref{thm:kahane,thm:kahane-ceil} and the invariant relations: for example, not every Borel filter $f \in \Meet\Cons01\Cons11^\Borel$ can be constructed from the Fréchet filter $\liminf$.
(See \cref{ex:summable}.)
We point out however that this is true ``approximately''.
Recall again from \cref{thm:pol-inv-kcl} that the \emph{countable closure} of a clone consists of all functions agreeing on any countably many input strings.

\begin{proposition}
\label{thm:post-ctcl-liminf}
For any finitary clone $\wedge \in F \subseteq \Op2^{<\omega}$ in Post's lattice, the infinitary clone $\-{\ang{F}}$ (of arbitrary arity) is the countable closure of $\ang{F \cup \{\liminf\}}$.
\end{proposition}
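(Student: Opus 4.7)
The plan is to prove both inclusions, recalling via \cref{thm:pol-inv-kcl} that the countable closure of a clone $G$ is $\Pol(\Inv^{<\omega_1}(G))$, while $\-{\ang{F}} = \Pol(\Inv^{<\omega}(F))$.

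For ``$\subseteq$'', it suffices to show $\liminf \in \-{\ang{F}}$: then $\ang{F \cup \{\liminf\}} \subseteq \-{\ang{F}}$, and $\-{\ang{F}}$, being ${<}\omega$-closed by \cref{thm:pol-inv-kcl}, is a fortiori ${<}\omega_1$-closed (agreement on countably many tuples implies agreement on every finite subset of them). Since $\wedge \in F$, it is enough to check that $\liminf$ preserves every finitary $\wedge$-invariant relation $R \subseteq 2^k$. Given a $k \times \omega$ matrix $(x_{i,j})$ with each column $\vec{c}_j := (x_{i,j})_{i<k}$ in $R$, the pigeonhole principle (using $\abs{2^k} < \omega$) yields a nonempty finite set $P^\infty \subseteq R$ of column patterns occurring infinitely often. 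A per-row check then gives
\begin{equation*}
(\liminf_j x_{i,j})_{i < k} = \bigwedge_{\vec{p} \in P^\infty} \vec{p},
\end{equation*}
which lies in $R$ by finitary $\wedge$-invariance, since $P^\infty$ is finite.

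For ``$\supseteq$'', fix $g \in \-{\ang{F}}^n$ with $n$ an arbitrary cardinal, and any countable family $\vec{a}_0, \vec{a}_1, \dotsc \in 2^n$. For each $k < \omega$, applying \cref{thm:pol-inv-kcl} to the finite subfamily $\vec{a}_0, \dotsc, \vec{a}_k$ yields some $f_k \in \ang{F}^n$ with $f_k(\vec{a}_i) = g(\vec{a}_i)$ for all $i \le k$. Then $h := \liminf \circ (f_k)_{k < \omega} \in \ang{F \cup \{\liminf\}}^n$, and for each $i$ the sequence $(f_k(\vec{a}_i))_{k \ge i}$ is constantly $g(\vec{a}_i)$, so $h(\vec{a}_i) = g(\vec{a}_i)$.

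The only delicate step is showing that $\liminf$ preserves every finitary $\wedge$-invariant relation: finiteness of $k$ is essential, since pigeonhole is what collapses the row-wise $\liminf$ to a \emph{finite} meet of $R$-columns, whereas closure of $R$ under countably infinite meets is not available from finitary $\wedge$-invariance alone. Everything else reduces to bookkeeping with the $\Pol$--$\Inv$ adjunction and the description of finite-input agreement from \cref{thm:pol-inv-kcl}.
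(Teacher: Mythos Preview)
Your proof is correct and takes essentially the same approach as the paper: both establish the nontrivial inclusion by choosing $f_k \in \ang{F}$ agreeing with $g$ on $\vec{a}_0,\dotsc,\vec{a}_k$ and then setting $h := \liminf_k f_k$. The only minor difference is in showing $\liminf \in \-{\ang{F}}$: the paper observes more briefly that $\bigwedge \in \-{\ang{F}}$ (as a pointwise limit of finite meets) whence $\liminf \in \-{\ang{F}}$, while your direct pigeonhole argument that $\liminf$ preserves every finitary $\wedge$-invariant relation is an equally valid route.
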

(Here, as usual, $\liminf$ means $\liminf^\omega : 2^\omega -> 2$.)
\begin{proof}
Since $\wedge \in F$, we have $\bigwedge \in \-{\ang{F}}$, and so $\liminf \in \-{\ang{F}}$; this shows $\supseteq$.
To show $\subseteq$: let $g \in \-{\ang{F}}^n$ and $\vec{x}_0, \vec{x}_1, \dotsc \in 2^n$; we must find $f \in \ang{F \cup \{\liminf\}}^n$ agreeing with $g$ on each $\vec{x}_i$.
Since $g \in \-{\ang{F}}$, we may find $f_0, f_1, \dotsc \in \ang{F}^n$ such that $f_i$ agrees with $g$ on $\vec{x}_0, \dotsc, \vec{x}_{i-1}$.
Let $f := \liminf_i f_i$.
\end{proof}

\begin{corollary}
\label{thm:post-T0T1Meet-ctcl}
$\Meet\Cons01\Cons11^\omega \subseteq 2^{2^\omega}$ is the countable closure of $\ang{\liminf}^\omega$.
\qed
\end{corollary}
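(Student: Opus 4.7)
The plan is to derive this as a direct application of the preceding Proposition \ref{thm:post-ctcl-liminf}, taken with $F := \Meet\Cons01\Cons11^{<\omega}$, which is a clone in Post's lattice and satisfies $\wedge \in F$ (indeed, by inspection of \cref{fig:post}, $F = \ang{\wedge}^{<\omega}$). The proposition then yields that $\-{\ang{F}}$ equals the countable closure of $\ang{F \cup \{\liminf\}}$, and I want to massage both sides so that the left becomes $\Meet\Cons01\Cons11^\omega$ and the right becomes the countable closure of $\ang{\liminf}^\omega$.

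For the right-hand side, I would first verify that $\ang{F \cup \{\liminf\}} = \ang{\liminf}$; the nontrivial inclusion is $\wedge \in \ang{\liminf}$, which follows from the computation $\liminf(x,y,x,y,x,y,\dotsc) = \bigvee_i \bigwedge_{j \ge i} z_j = x \wedge y$ where $(z_j)_j$ is the alternating sequence. Then restricting to arity $\omega$, \cref{thm:pol-inv-kcl} tells us that the countable closure is computed separately at each arity as $\Pol^\omega \circ \Inv^{<\omega_1}$, so the $\omega$-ary part of the countable closure of $\ang{\liminf}$ is exactly the countable closure of $\ang{\liminf}^\omega$, as desired.

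For the left-hand side, I would invoke the pancake theorem \cref{thm:pancake} with $\nu = \omega_1$, which identifies $\-{\ang{F}}^{<\omega_1} = \Pol^{<\omega_1}(\Inv^{<\omega}(F))$. Since $F$ is defined precisely as $\Pol^{<\omega}\{\wedge, 0, 1\}$ in Post's lattice, we have $\Inv^{<\omega}(F) \supseteq \{\wedge, 0, 1\}$ and hence $\Pol^{<\omega_1}(\Inv^{<\omega}(F)) = \Pol^{<\omega_1}\{\wedge, 0, 1\} = \Meet\Cons01\Cons11^{<\omega_1}$, whose $\omega$-ary slice is $\Meet\Cons01\Cons11^\omega$.

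Putting these two identifications together delivers the claim. There is no real obstacle: the content is entirely packaged into Proposition \ref{thm:post-ctcl-liminf} and the Galois/pancake machinery set up earlier, and the only piece of explicit combinatorics needed is the one-line substitution showing $\wedge \in \ang{\liminf}$. The main thing to be careful about is that both sides of the final equality are genuinely determined arity-by-arity, which is where \cref{thm:pol-inv-kcl} is crucial.
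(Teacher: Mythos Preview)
Your proposal is correct and is essentially the paper's approach: the corollary is marked with \qed in the paper, meaning it follows immediately from Proposition~\ref{thm:post-ctcl-liminf} applied to $F = \Meet\Cons01\Cons11^{<\omega} = \ang{\wedge}^{<\omega}$. You have simply unpacked the details (in particular the identifications $\ang{F \cup \{\liminf\}} = \ang{\liminf}$ and $\-{\ang{F}}^\omega = \Meet\Cons01\Cons11^\omega$) that the paper leaves implicit.
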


\begin{corollary}
$\Meet\Cons01^\omega, \Meet\Cons11^\omega, \Meet^\omega$ are the countable closures of $\ang{\liminf, 0}^\omega, \ang{\liminf, 1}^\omega, \ang{\liminf, 0, 1}^\omega$.
\end{corollary}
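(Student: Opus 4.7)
The plan is to apply Proposition \cref{thm:post-ctcl-liminf} to each of the three finitary clones $F = \Meet\Cons01^{<\omega}, \Meet\Cons11^{<\omega}, \Meet^{<\omega}$, all of which contain $\wedge$ and lie in Post's lattice. In each case, the proposition directly yields that $\-{\ang{F}}$ equals the countable closure of $\ang{F \cup \{\liminf\}}$, an equality which is preserved by restricting to any fixed arity, in particular arity $\omega$.

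First, I would identify $\-{\ang{F}}^\omega$ with the intended $\Meet$-type clone. By the pancake theorem (\cref{thm:pancake}), $\-{\ang{F}}^\omega = \Pol^\omega(\Inv^{<\omega}(F))$. For the three finitary clones at hand, defined at the finitary level as $\Pol^{<\omega}\{\wedge,0\}$, $\Pol^{<\omega}\{\wedge,1\}$, $\Pol^{<\omega}\{\wedge,0,1\}$ respectively, applying $\Pol^\omega$ to the same generating relations gives precisely $\Meet\Cons01^\omega$, $\Meet\Cons11^\omega$, $\Meet^\omega$.

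Next, I would identify the $\omega$-ary clone $\ang{F \cup \{\liminf\}}^\omega$. By inspection of Post's lattice \cref{fig:post}, $\Meet\Cons01^{<\omega}$, $\Meet\Cons11^{<\omega}$, $\Meet^{<\omega}$ are generated by $\{\wedge,0\}$, $\{\wedge,1\}$, $\{\wedge,0,1\}$ respectively, so the $\omega$-ary clone in question is generated by $\liminf$ together with these finitary generators (lifted to depend on only finitely many of the $\omega$ variables). The key observation, already used implicitly in the proof of \cref{thm:post-T0T1Meet-ctcl}, is that the lifted binary $\wedge$ is redundant:
\[
x_0 \wedge x_1 \;=\; \liminf(x_0, x_1, x_0, x_1, \dotsc),
\]
since every tail of the alternating sequence $(x_0, x_1, x_0, x_1, \dotsc)$ contains both $x_0$ and $x_1$, so its infimum is $x_0 \wedge x_1$. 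Thus $\ang{F \cup \{\liminf\}}^\omega$ equals $\ang{\liminf, 0}^\omega$, $\ang{\liminf, 1}^\omega$, or $\ang{\liminf, 0, 1}^\omega$ respectively, yielding the three claimed equalities.

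There is no genuine obstacle here: the statement is just the natural extension of the preceding \cref{thm:post-T0T1Meet-ctcl} (which is the constant-free case) obtained by feeding each of the three finitary $\Meet$-clones with constants into \cref{thm:post-ctcl-liminf}, and then using \cref{thm:pancake} on the left and the trivial generator reduction on the right.
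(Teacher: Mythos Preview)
Your proof is correct. The paper's one-line argument takes a slightly different tack: rather than re-applying \cref{thm:post-ctcl-liminf} to each of the three clones and then identifying $\-{\ang{F}}^\omega$ and simplifying the generators, it reduces directly to the already-proved \cref{thm:post-T0T1Meet-ctcl} via the observation that any monotone function (in particular any $f \in \Meet$) is either constant or preserves both $0$ and $1$, so that e.g.\ $\Meet\Cons01^\omega = \{0\} \cup \Meet\Cons01\Cons11^\omega$, and hence approximation by $\ang{\liminf,0}^\omega$ follows immediately from approximation by $\ang{\liminf}^\omega$ for the non-constant case. Both routes are equally short and valid; yours is more systematic (it would apply verbatim to any finitary clone containing $\wedge$), while the paper's leverages the small structural fact about monotone functions to avoid repeating the $\Pol$--$\Inv$ identification and the $\wedge \in \ang{\liminf}$ reduction.
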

\begin{proof}
A function $f \in \Mono$ is either constant or preserves both $0, 1$.
\end{proof}

\begin{corollary}
$\Meet\Cons0\omega\Cons11^\omega, \Meet\Cons0\omega^\omega$ are the countable closures of $\ang{\ceil{\liminf}}^\omega, \ang{\ceil{\liminf}, 0}^\omega$.
\end{corollary}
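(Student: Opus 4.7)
The plan is to lift the preceding corollary through the modularity isomorphism from \cref{thm:post-T0inf-mod} (applied with $\nu = \omega_1$), which identifies $[\ang{\wedge}^{<\omega_1}, \Cons0\omega^{<\omega_1}]$ with $[\ang{\wedge, 1}^{<\omega_1}, \Op2^{<\omega_1}]$ via $G \mapsto G \cap \Cons0\omega = \ang{\ceil{G}}^{<\omega_1}$. Under this isomorphism, $\Meet\Cons11^{<\omega_1}$ and $\Meet^{<\omega_1}$ correspond to $\Meet\Cons0\omega\Cons11^{<\omega_1}$ and $\Meet\Cons0\omega^{<\omega_1}$ respectively, so the two equalities to be proved should be the ``$\Cons0\omega$-shadows'' of the preceding corollary.

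First I would verify the generator correspondence
\[
\ang{\ceil{\liminf}}^\omega = \ang{\liminf, 1}^\omega \cap \Cons0\omega, \qquad
\ang{\ceil{\liminf}, 0}^\omega = \ang{\liminf, 0, 1}^\omega \cap \Cons0\omega,
\]
which follows from \cref{thm:ceil-clone,thm:ceil-floor-gen} once one notes that $\ceil{1}$ is a projection, $\ceil{0}$ is the constant $0$, and $\wedge = \ceil{\liminf}(x_0, x_1, x_1, x_1, \dotsc) \in \ang{\ceil{\liminf}}$. Granting this, it suffices to show that the countable closure of $\ang{\liminf, 1}^\omega \cap \Cons0\omega$ equals $\Meet\Cons11^\omega \cap \Cons0\omega = \Meet\Cons0\omega\Cons11^\omega$, and analogously for the second pair. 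The $\subseteq$ direction is immediate: $\Cons0\omega$ is countably closed (being $\Pol$ of a single countable-arity relation), and the countable closure of $\ang{\liminf, 1}^\omega$ is already known to be $\Meet\Cons11^\omega$ by the preceding corollary.

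For $\supseteq$, fix $g \in \Meet\Cons0\omega\Cons11^\omega$ and $\vec{a}_0, \vec{a}_1, \dotsc \in 2^\omega$. The argument in the proof of \cref{thm:post-meet-lim0} shows that an $\omega$-ary $g \in \Meet\Cons0\omega$ is either constant $0$ or satisfies $g \le \pi_i$ for some $i < \omega$; the constant $0$ alternative is ruled out here by $g \in \Cons11$. By the preceding corollary applied to $\{\vec{a}_j\}_j$, I would pick $f \in \ang{\liminf, 1}^\omega$ with $f(\vec{a}_j) = g(\vec{a}_j)$ for all $j$, and then set $h(\vec{y}) := y_i \wedge f(\vec{y})$. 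Since $\wedge \in \ang{\liminf}$ we get $h \in \ang{\liminf, 1}^\omega$; since $h \le \pi_i$ also $h \in \Cons0\omega$; and $h(\vec{a}_j) = a_{j,i} \wedge g(\vec{a}_j) = g(\vec{a}_j)$, where the last step uses that $g(\vec{a}_j) = 1$ forces $a_{j,i} = 1$. Thus $h$ witnesses $g$ in the countable closure of $\ang{\liminf, 1}^\omega \cap \Cons0\omega = \ang{\ceil{\liminf}}^\omega$. The argument for $\Meet\Cons0\omega^\omega$ is identical, dispatching the constant-$0$ case by hand (take $h = 0 \in \ang{\ceil{\liminf}, 0}^\omega$) and otherwise substituting $\ang{\liminf, 0, 1}^\omega$ and $\Meet^\omega$ for $\ang{\liminf, 1}^\omega$ and $\Meet\Cons11^\omega$.

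The main obstacle here is nothing conceptually new; the real content is the simple ``clip by $\pi_i$'' trick, which transports an approximant of $g$ into $\Cons0\omega$ without destroying agreement on the prescribed countable set. The only step that requires genuine care is the algebraic identification of $\ang{\ceil{\liminf}}^\omega$ with $\ang{\liminf, 1}^\omega \cap \Cons0\omega$, as this occurs naturally at the level of $<\omega_1$-ary clones and must then be restricted back to $\omega$-ary, but this bookkeeping is fully taken care of by the machinery of \cref{thm:post-T0inf-mod,thm:ceil-clone,thm:ceil-floor-gen}.
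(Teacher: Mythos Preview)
Your proof is correct and essentially matches the paper's approach. The paper sketches two routes: one via the modularity isomorphisms (as in \cref{thm:kahane-ceil}), and one by modifying the proof of \cref{thm:post-ctcl-liminf} so that the individual approximants $f_i$ already satisfy $f_i \le \pi_0$ (by forcing them to also agree with $g$ at $0111\dotsb$), then taking $f = \ceil{\liminf}(\pi_0, f_0, f_1, \dotsc)$. Your argument is a clean hybrid: you use the modularity identification $\ang{\ceil{\liminf}} = \ang{\liminf,1} \cap \Cons0\omega$ for bookkeeping, but then apply the preceding corollary as a black box and clip the \emph{final} approximant by $\pi_i$, rather than re-opening the proof of \cref{thm:post-ctcl-liminf} to clip each $f_i$ individually. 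The net effect is the same function $\pi_i \wedge f$, so the difference is purely organizational.
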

\begin{proof}
This may be seen using the modularity isomorphisms (similarly to \cref{thm:kahane-ceil}), or by slightly modifying the proof of \cref{thm:post-ctcl-liminf}: if $g \in \Mono\Cons0\omega$, say (up to a variable permutation) $g \le \pi_0$, then we may pick the $f_i$ in the proof of \cref{thm:post-ctcl-liminf} to agree with $g$ on $0111\dotsb$, hence also $f_i \le \pi_0$; then $f = \ceil\liminf(\pi_0, f_0, f_1, \dotsc)$.
\end{proof}

The following summarizes our positive classification results about $\Meet^{<\omega}$ and its variants:

\begin{theorem}
\label{thm:post-borel-Meet}
There are precisely 4 countably closed ${<}\omega_1$-ary clones on $2$ restricting to $\Meet\Cons01\Cons11^{<\omega}$ (see \cref{fig:post-borel-Meet}):
\begin{itemize}
\item  $\Meet\Cons01\Cons11^{<\omega_1} = \Pol^{<\omega_1}\brace{\wedge,0,1} =$ countable closure of $\ang{\liminf}^\Borel$.
\item  $\Meet\Cons0\omega\Cons11^{<\omega_1} = \Meet\Limm011\Cons11^{<\omega_1} = \Pol^{<\omega_1}\brace{\wedge,{\lim}{=}0,1} =$ countable closure of $\ang{\ceil{\liminf}}^\Borel$.
\item  $\Meet_\omega\Cons01\Cons11^{<\omega_1} = \Meet\Decr\Cons01\Cons11^{<\omega_1} = \Pol^{<\omega_1}\brace{\bigwedge,0,1} = \ang{\bigwedge}^\Borel$.
\item  $\ang{\Meet\Cons01\Cons11^{<\omega}}^{<\omega_1} = \Meet\Incr\Cons01\Cons11^{<\omega_1} = \Meet\Cons01\Limm111^{<\omega_1} = \Pol^{<\omega_1}\brace{\wedge,{\lim},0,1} = \ang{\wedge}^\Borel$.
\end{itemize}
Every other (non-countably-closed) ${<}\omega_1$-ary clone restricting to $\Meet\Cons01\Cons11^{<\omega}$ lies in one of the intervals in the first two bullet points, between the generated clone and its countable closure.
Moreover:
\begin{enumerate}[label=(\alph*)]
\item \label{thm:post-borel-Meet:T0}
The ${<}\omega_1$-ary clones restricting to $\Meet\Cons01^{<\omega}$ are isomorphic to these, via $F |-> \ang{F \cup \{0\}}^{<\omega_1}$.
\item \label{thm:post-borel-Meet:T1}
The ${<}\omega_1$-ary clones restricting to $\Meet\Cons11^{<\omega}$ are isomorphic to those in $[\ang{\Meet\Cons01\Cons11^{<\omega}}^\Borel, \Meet\Cons0\omega\Cons11^{<\omega_1}]$, via $F |-> \ang{F \cup \{1\}}^{<\omega_1}$, as are the ${<}\omega_1$-ary clones restricting to $\Meet^{<\omega}$, via $F |-> \ang{F \cup \{0,1\}}^{<\omega_1}$.
\item \label{thm:post-borel-Meet:ceil}
Every
$F \in [\ang{\ceil{\liminf}}^\Borel, \Meet\Cons0\omega\Cons11^{<\omega_1}]$
is $\Cons0\omega$ intersected with some
$G \in [\ang{\liminf}^\Borel, \Meet\Cons01\Cons11^{<\omega_1}]$.
(Similarly for the clones in
$[\ang{\ceil{\liminf},0}^\Borel, \Meet\Cons0\omega^{<\omega_1}]$
versus
$[\ang{\liminf,0}^\Borel, \Meet\Cons01^{<\omega_1}]$.)
\end{enumerate}
\end{theorem}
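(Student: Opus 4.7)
The plan is to sort every clone $F \in [\ang{\wedge}^{<\omega_1}, \Meet\Cons01\Cons11^{<\omega_1}]$ into one of three disjoint cases via Kahane's dichotomies, identify the countably closed representative(s) in each, and then transfer to the three other finitary restrictions via modularity. \emph{Case (i):} if $F \subseteq \Decr$, then by \cref{eq:meet-decr,eq:meet-gen} we have $F \subseteq \Meet_\omega\Cons01\Cons11^{<\omega_1} = \ang{\bigwedge}^{<\omega_1}$, placing $F$ in $[\ang{\wedge}^{<\omega_1}, \ang{\bigwedge}^{<\omega_1}]$. \emph{Case (ii):} if $F \not\subseteq \Decr$, then \cref{thm:kahane-ceil} applied to any witness $f \in F \setminus \Decr$ yields $\ceil{\liminf} \in F$, and if additionally $F \subseteq \Cons0\omega$ we land in $[\ang{\ceil{\liminf}}^\Borel, \Meet\Cons0\omega\Cons11^{<\omega_1}]$. \emph{Case (iii):} otherwise some $f \in F$ lies in $\Meet\Cons01\Cons11 \setminus \Cons0\omega = \Meet\Cons01\Cons11 \setminus \Limm011 \subseteq \Mono\Cons02 \setminus \Limm011$ by \cref{thm:post-meet-lim0}, so \cref{thm:kahane} delivers $\liminf \in F$, placing $F$ in $[\ang{\liminf}^\Borel, \Meet\Cons01\Cons11^{<\omega_1}]$. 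These cases are mutually exclusive because $\ceil{\liminf} \notin \Decr$ (the sequence $(1, \underbrace{0, \dotsc, 0}_{n}, 1, 1, \dotsc)$ decreases to $(1, 0, 0, \dotsc)$ while $\ceil{\liminf}$ jumps from $1$ to $0$) and $\liminf \notin \Cons0\omega$.

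Next I would identify the countably closed clones in each case. For case (i), the interval $[\ang{\wedge}^{<\omega_1}, \ang{\bigwedge}^{<\omega_1}]$ contains only its two endpoints, since any clone here possessing a function that depends essentially on infinitely many variables is forced, after identifying variables in the witnessing meet, to contain $\bigwedge$ itself; both endpoints are countably closed. The alternative characterizations of $\ang{\wedge}^{<\omega_1}$ as $\Meet\Incr\Cons01\Cons11^{<\omega_1}$ and $\Meet\Cons01\Limm111^{<\omega_1}$ reduce to a direct filter-theoretic check: a meet-closed, $\Cons01$ function which is either Scott-continuous or continuous-at-$\vec{1}$ has $f^{-1}(1)$ equal to a principal filter on a finite subset of $\omega$, since intersecting any $y \in f^{-1}(1)$ against the characteristic vector of the guaranteed basic neighborhood, combined with $\Cons01$, forces every element above a finitely-supported generator. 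In cases (ii) and (iii), the two corollaries immediately following \cref{thm:post-T0T1Meet-ctcl} identify the countable closures of $\ang{\ceil{\liminf}}^\omega$ and $\ang{\liminf}^\omega$ as the respective interval tops, so any countably closed clone in these cases must equal the top; all other clones in these intervals are non-countably-closed by definition.

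For part (a), the identity $\Meet\Cons01 = \Meet\Cons01\Cons11 \cup \{0^n : n \ge 1\}$ (any filter is either empty, giving $f = 0$, or contains $\vec{1}$, giving $\Cons11$) yields $G = \ang{(G \cap \Cons11) \cup \{0\}}^{<\omega_1}$ for every $G$ over $\Meet\Cons01^{<\omega}$, giving the claimed bijection. For (b), the dual identity gives a surjection $F \mapsto \ang{F \cup \{1\}}^{<\omega_1}$, but the structural collapse $\ang{\ceil{\liminf}, 1}^{<\omega_1} = \ang{\liminf, 1}^{<\omega_1}$ (via $\ceil{\liminf}(1, x_1, \dotsc) = \liminf(x_1, \dotsc)$ in one direction and $\ceil{\liminf} = \wedge(\pi_0, \liminf(\dotsc)) \in \ang{\liminf}$ in the other) merges cases (ii) and (iii) after adjoining $1$; a parallel structural induction on compositions of $\liminf$ and $1$ shows the right adjoint is $G \mapsto G \cap \Cons0\omega$, restricting to a bijection with the lower half. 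For (c), given $F \in [\ang{\ceil{\liminf}}^\Borel, \Meet\Cons0\omega\Cons11^{<\omega_1}]$, I would set $G := \ang{F \cup \{\liminf\}}^{<\omega_1}$, observe $G$ lies in the source interval, and then compute via \cref{thm:ceil-clone,thm:ceil-floor-gen} that $G \cap \Cons0\omega = \ang{\ceil{G}}^{<\omega_1} = \ang{\ceil{F} \cup \{\ceil{\liminf}, \wedge\}}^{<\omega_1} = \ang{\ceil{F}}^{<\omega_1} = F$, the last equality using that $\ceil{\liminf}, \wedge \in F \cap \Cons0\omega$ and applying \cref{thm:ceil-clone} again.

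The hard part is not this assembly---which is essentially a combinatorial repackaging of Kahane's dichotomies together with the countable-closure identifications following \cref{thm:post-T0T1Meet-ctcl}---but rather the promised \emph{existence} of non-countably-closed clones strictly inside the intervals in cases (ii) and (iii). That is deferred to \cref{ex:summable} and rests on the Kanovei--Reeken result on Rudin--Keisler non-reducibility of Borel ideals, which lies well outside the lattice-theoretic toolkit developed above; the present theorem by itself only pins down the countably closed clones and the shape of the intervals within which the remaining clones must sit.
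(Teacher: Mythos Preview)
Your proof is correct and follows essentially the same architecture as the paper's: the three-case split via \cref{thm:kahane,thm:kahane-ceil} (together with \cref{eq:meet-decr,eq:meet-gen}) matches the paper's invocation of \cref{thm:kahane,thm:kahane-ceil,thm:post-meet-cts}, and the countable-closure identifications are the same corollaries you cite.

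The one place you diverge is in parts (a)--(c). The paper dispatches these uniformly by invoking the general modularity isomorphisms \cref{thm:post-T1-mod} (for (a)) and \cref{thm:post-T0inf-mod} (for (b) and (c)), whereas you argue more directly from the filter structure. Your approach to (a) via the set identity $\Meet\Cons01 = \Meet\Cons01\Cons11 \cup \{0\}$ is valid, though the bijectivity claim needs one more line: to see $\ang{F \cup \{0\}}^{<\omega_1} \cap \Cons11 \subseteq F$, note that any $f_0 \in \Cons11$ with $f \in F \subseteq \Meet$ satisfies $(0,\vec{1}) \in f^{-1}(1)$, and since $f^{-1}(1)$ is a filter this forces $f(y_0, y_1, \dotsc) = f(0, y_1, \dotsc)$ for all $\vec{y}$, whence $f_0(\vec{x}) = f(x_0, x_0, x_1, \dotsc) \in \ang{f} \subseteq F$. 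Your argument for (c) via \cref{thm:ceil-clone,thm:ceil-floor-gen} is a clean explicit unpacking of what \cref{thm:post-T0inf-mod} encodes. Part (b), however, is too sketchy: the phrase ``a parallel structural induction on compositions of $\liminf$ and $1$'' does not constitute a proof, and you would do better simply to invoke \cref{thm:post-T0inf-mod} directly, which gives precisely the isomorphism $[\ang{\wedge}^{<\omega_1}, \Cons0\omega^{<\omega_1}] \cong [\ang{\wedge,1}^{<\omega_1}, \Op2^{<\omega_1}]$ with inverse $G \mapsto G \cap \Cons0\omega$, and then restrict to $\Meet$.
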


\begin{proof}
The equivalent defining invariant relations are by \cref{thm:post-meet-lim0,thm:post-meet-lim1}, the countable closure definitions are by the preceding corollaries, and that these options exhaust $\smash{\Clo^{<\omega_1}_{\Meet\Cons01\Cons11^{<\omega}}{2}}$ is by \cref{thm:kahane,thm:kahane-ceil,thm:post-meet-cts}.

\Cref{thm:post-borel-Meet:T0}
follows from \cref{thm:post-T1-mod}, which shows that the clones containing $\bigwedge$ in both fibers are isomorphic, and \cref{thm:post-meet-cts}, which shows that the only clones not containing $\bigwedge$ in both fibers are the essentially finitary ones.

\Cref{thm:post-borel-Meet:T1}
follows from \cref{thm:post-T0inf-mod} and \cref{thm:post-borel-Meet:T0},
as does
\Cref{thm:post-borel-Meet:ceil}
since $[\ang{\liminf}^\Borel, \Meet\Cons01\Cons11^{<\omega_1}]$ is sandwiched between the two intervals in \cref{thm:post-borel-Meet:T1}.
\end{proof}

We close this subsection by showing that indeed, $\ang{\liminf}^\Borel \subsetneqq \Meet\Cons01\Cons11^\Borel$:

\begin{example}
\label{ex:summable}
The indicator function $f : 2^\omega -> 2$ of the \defn{summable filter},
\begin{equation*}
f(\vec{x}) = 1  \coloniff  \sum_{\substack{i < \omega \\ x_i = 0}} \frac{1}{i+1} < \infty,
\end{equation*}
is not in $\ang{\liminf}$.
This follows from a much more general result of Kanovei--Reeken \cite{KanoveiReeken}.

Indeed, note, first, that every function $g \in \ang{\liminf}^\omega$, which is \emph{a priori} a composite of copies of $\liminf$, may be written as such a composite in ``normal form'' as follows: consider first the smallest subclass $L \subseteq \ang{\liminf}^\omega$ of functions containing all projections and closed under left composition $(h_0,h_1,\dotsc) |-> \liminf_i h_i$ with $\liminf$, \emph{subject to the constraint that the $h_i$ depend on disjoint sets of variables}; then $g$ can be obtained from some $h \in L$ via a variable substitution
\begin{align*}
g(x_0,x_1,\dotsc) = h(x_{s(0)},x_{s(1)},\dotsc) = h(s^*(\vec{x}))
\qquad \text{where} \qquad
s^*(\vec{x}) := (x_{s(0)},x_{s(1)},\dotsc)
\end{align*}
for some $s : \omega -> \omega$.
(This is easily seen via a minor variation of \cref{thm:clone-gen-left}.)
This means that $g^{-1}(1) \subseteq 2^\omega$ is the preimage of $h^{-1}(1) \subseteq 2^\omega$ under $s^* : 2^\omega -> 2^\omega$; such an $s$ is called a \defn{Rudin--Keisler reduction} of the filter $g^{-1}(1)$ to the filter $h^{-1}(1)$.
Now the construction of $h$ via left composition of $\liminf$ with functions $h_i$ with disjoint variable sets means in turn that $h^{-1}(1)$ is a \defn{Fubini product}, mod the Fréchet filter $\liminf^{-1}(1)$, of the filters $h_i^{-1}(1)$.
Thus, to say that $g \in \ang{\liminf}$ means that the filter $g^{-1}(1) \subseteq 2^\omega$ Rudin--Keisler reduces to a filter $h^{-1}(1)$ which is a transfinitely iterated Fubini product, mod the Fréchet filter, of principal filters $\pi_i^{-1}(1)$.
Equivalently, the dual ideal $\neg[g^{-1}(1)] \subseteq 2^\omega$ (i.e., subgroup mod $2$) Rudin--Keisler reduces to an ideal $\neg[h^{-1}(1)]$ which is a transfinitely iterated Fubini product, mod the Fréchet ideal of finite sets, of principal ideals (or trivial subgroups).
This implies in particular that the coset equivalence relation of the subgroup $\neg[g^{-1}(1)]$ is a continuous preimage of an iterated Fubini product mod Fréchet of equality relations.
But Kanovei--Reeken \cite{KanoveiReeken} showed that the coset equivalence relations of many Borel ideals $\subseteq 2^\omega$, including the summable ideal $\neg[f^{-1}(1)]$, do not admit a continuous (or even Borel) reduction to an iterated Fubini product mod Fréchet of equalities.
\end{example}

Thus the topmost $\approx$ interval in \cref{fig:post-borel-Meet} contains at least two distinct clones.
On the other hand, by \cref{thm:post-ctcl-liminf} there is no countable-arity relation $R \subseteq 2^\omega$ that can distinguish between such clones (i.e., that is preserved by $\liminf$ but not by the indicator function of the summable filter).
So in some sense, while \cref{thm:post-borel-Meet} does not completely classify all Borel clones with finitary restriction $\Meet\Cons01\Cons11^{<\omega}$, the remaining classification problem is ``strictly harder'' than all of the positive classifications we have obtained in this paper.

We expect there to be many other Borel clones between $\ang{\liminf}^\Borel$ and $\Meet\Cons01\Cons11^\Borel$, i.e., Borel filters (or ideals) which are indistinguishable by countable-arity relations but are not equivalent up to Rudin--Keisler reducibility and Fubini product.
Note that the proofs in \cite{KanoveiReeken} use deep tools from set-theoretic forcing and the theory of Borel equivalence relations (including ideas from Hjorth's turbulence theory \cite{Hjorth}), but show a much stronger Borel non-reducibility result; there may be an easier proof of Rudin--Keisler non-reducibility, which is all that is needed to yield distinct Borel clones.
See \cite{Hrusak}, \cite{Kanovei}, \cite{Solecki} for more information on Borel filters and ideals.

\subsection{The $k$-ary intersection property}
\label{sec:borel-T0k}

In this final section, we give some results on the structure of the left ``side tube'' $[\ang{\Mono\Cons0{<\omega}\Cons11^{<\omega}}^\Borel, \Cons02^\Borel]$.
We will produce a large yet richly structured family of distinct Borel clones lying over each $\Cons0k^{<\omega}$ and its variants, depicted in \cref{fig:post-borel-T0k} and in more detail in \cref{fig:post-borel-L0kt-T03}.

Recall from \cref{def:post-clones} the clones $\Limm011 = \Pol \{{\lim}{=}0\} \subseteq \Cons01$, of functions $f : 2^n -> 2$ ($n$ countable) such that $\vec{0} \notin \-{f^{-1}(1)}$.
Recall also the clones $\Cons0k = \Pol \{{\wedge^k}{=}0\}$ ($0 < k < \omega$), of functions $f$ such that $f^{-1}(1)$ has the \defn{$k$-ary intersection property}:
\begin{align*}
\vec{0} \notin \underbrace{f^{-1}(1) \wedge \dotsb \wedge f^{-1}(1)}_k =: f^{-1}(1)^{\wedge k}.
\end{align*}
Here by $A \wedge B$ for two sets $A, B \subseteq 2^n$, we mean $\{\vec{a} \wedge \vec{b} \mid \vec{a} \in A,\, \vec{b} \in B\}$.

\begin{definition}
\label{def:L0k}
More generally, for each $0 < k < \omega$ and $n \le \omega$, let
\begin{align*}
\Limm0kk^n :={}& \set[\big]{f : 2^n -> 2}{\text{the indicator function of } \-{f^{-1}(1)} \subseteq 2^n \text{ is in $\Cons0k$}} \\
={}& \set[\big]{f : 2^n -> 2}{\vec{0} \notin \-{f^{-1}(1)}^{\wedge k}}. \\
\intertext{It is easily seen (using compactness of $2^n$) that this is}
={}& \set[\big]{f : 2^n -> 2}{\nexists (\vec{x}_{r,q} \in f^{-1}(1))_{r<k,q<\omega} \text{ s.t.\ } \origdisplaystyle\lim_{q -> \infty} (\vec{x}_{0,q} \wedge \dotsb \wedge \vec{x}_{k-1,q}) = \vec{0}}.
\end{align*}
Thus, we define the partial operation
\begin{align*}
\lim \wedge^k : 2^\omega \cong 2^{k \times \omega} &--> 2 \\
\vec{x} = (x_i)_{i<\omega} = (x_{qk+r})_{r<k,q<\omega} &|--> \lim_{q -> \infty} (x_{qk} \wedge x_{qk+1} \wedge \dotsb \wedge x_{qk+k-1}),
\end{align*}
and put
\begin{equation*}
\Limm0kk := \Pol \{{\lim\wedge^k}{=}0\}.
\end{equation*}
This recovers the above definition of $\Limm0kk^n$ for $n$ countable, and shows that $\Limm0k{}^{<\omega_1}$ forms a clone.
\end{definition}

\begin{example}
\label{ex:L0k}
To illustrate, consider $\Limm033$.
Recalling \cref{def:pol-inv}, for a function $f$ to preserve the relation $({\lim\wedge^3}{=}0)$ means that for every $\omega \times \omega$ matrix, such as
\begin{equation*}
\begin{array}{ccccccccccccccc}
0 & 1 & 1 & 1 & 1 & 1 & 1 & 1 & 1 & \dotsm &->&f(011111111\dotsm) \\
1 & 0 & 1 & 1 & 1 & 1 & 1 & 1 & 1 & \dotsm &->&f(101111111\dotsm) \\
1 & 1 & 0 & 1 & 1 & 1 & 1 & 1 & 1 & \dotsm &->&f(110111111\dotsm) \\
\hline
0 & 1 & 1 & 0 & 1 & 1 & 1 & 1 & 1 & \dotsm &->&f(011011111\dotsm) \\
1 & 0 & 1 & 1 & 0 & 1 & 1 & 1 & 1 & \dotsm &->&f(101101111\dotsm) \\
1 & 1 & 0 & 1 & 1 & 0 & 1 & 1 & 1 & \dotsm &->&f(110110111\dotsm) \\
\hline
0 & 1 & 1 & 0 & 1 & 1 & 0 & 1 & 1 & \dotsm &->&f(011011011\dotsm) \\
1 & 0 & 1 & 1 & 0 & 1 & 1 & 0 & 1 & \dotsm &->&f(101101101\dotsm) \\
1 & 1 & 0 & 1 & 1 & 0 & 1 & 1 & 0 & \dotsm &->&f(110110110\dotsm) \\
\hline
\vdots & \vdots & \vdots & \vdots & \vdots & \vdots & \vdots & \vdots & \vdots & \ddots && \vdots
\end{array}
\end{equation*}
if in each original column, cofinally many blocks of $3$ entries have at least one $0$, then the same must hold for the resulting column obtained by applying $f$ to each row.
(See \cref{thm:fkt} for an application of the particular matrix shown above.)

Note that if this does \emph{not} hold, then after passing to a subsequence of blocks of $3$ rows, we may assume that the resulting column of $f(\dotsb)$'s is $\vec{1}$; then the rows will yield 3 sequences in $f^{-1}(1)$ whose termwise meet converges to $\vec{0}$.
By passing to further subsequences, we may assume each of these 3 sequences individually converges, and that the meet of the 3 limit strings is $\vec{0}$.
\end{example}

\begin{definition}
\label{def:L0kt}
More generally still, for $0 < k < \omega$, $0 \le t \le k$, and $n \le \omega$, let
\begin{align*}
\Limm0kt^n
:={}& \set[\big]{f : 2^n -> 2}
    {\vec{0} \notin f^{-1}(1)^{\wedge (k-t)} \wedge \-{f^{-1}(1)}^{\wedge t}
                = \smash{\underbrace{f^{-1}(1) \wedge \dotsb}_{k-t} \wedge \underbrace{\-{f^{-1}(1)} \wedge \dotsb}_{t}}}. \\
\intertext{It is easily seen that this is}
={}& \set*{f : 2^n -> 2}{\begin{aligned}
&\nexists (\vec{x}_r \in f^{-1}(1))_{r<k-t}, (\vec{x}_{r,q} \in f^{-1}(1))_{k-t\le r<k,q<\omega} \text{ s.t.\ } \\
&\qquad\origdisplaystyle\lim_{q -> \infty} (\vec{x}_0 \wedge \dotsb \wedge \vec{x}_{k-t-1} \wedge \vec{x}_{k-t,q} \wedge \dotsb \wedge \vec{x}_{k-1,q}) = \vec{0}
\end{aligned}}.
\end{align*}
Thus, we define the partial operation
\begin{align*}
\wedge^{k-t} \wedge \lim \wedge^t : 2^\omega \cong 2^{(k-t) + t \times \omega} &--> 2 \\
\vec{x} = (x_0,\dotsc,x_{k-t-1},x_{qt+r})_{k-t\le r<k,q<\omega} &|--> x_0 \wedge \dotsb \wedge x_{k-t-1} \wedge \lim_{i -> \infty} (x_{qt+k-t} \wedge \dotsb \wedge x_{qt+k-1}),
\end{align*}
and put
\begin{equation*}
\Limm0kt := \Pol \{{\wedge^{k-t}\wedge\lim\wedge^t}{=}0\}.
\end{equation*}
The matrix involved is as in \cref{ex:L0k}, except that there are $k-t$ distinguished rows at the top, then repeating blocks of $t$ rows; a column obeys ${\wedge^{k-t}\wedge\lim\wedge^t}{=}0$ iff either one of the first $k-t$ entries is $0$, or cofinally many of the repeating blocks contain at least one $0$.

When $t = 0$, $\wedge^{k-t} \wedge \lim \wedge^t$ becomes $\wedge^k$ (with domain $2^k$, rather than $2^\omega$); thus
\begin{equation*}
\Limm0k{{0}} = \Cons0k
    \quad (= \Cons01 \text{ if $k = 1$}).
\end{equation*}
On the other hand,
\begin{equation*}
\Limm0k{{k}} = \Limm0kk
    \quad (= \Limm011 \text{ if $k = 1$}).
\end{equation*}
We have the obvious relations
\begin{gather}
\label{eq:L0kt-ordering}
\Cons0\omega \subseteq \dotsb \subseteq \Limm0{k+2}k \subseteq \Limm0{k+1}k \subseteq \Limm0kk = \Limm0k{{k}} \subseteq \Limm0k{k-1} \subseteq \dotsb \subseteq \Limm0k1 \subseteq \Limm0k{{0}} = \Cons0k
\end{gather}
(see \cref{fig:post-borel-L0kt-grid}).
Based on these inclusions, it is natural to also define
\begin{align*}
\Limm0{{0}}{{{0}}} &:= \Limm0{{0}}{} := \Cons00 := \Op2, \\
\Limm0{<\omega}t &:= \bigcap_{k < \omega} \Limm0kt, \\
\Limm0{<\omega}{{<\omega}} &:= \Limm0{<\omega}{} := \bigcap_{k < \omega} \Limm0kk = \bigcap_{t \le k < \omega} \Limm0kt.
\end{align*}

Let also $\Limm1kt$ be the de~Morgan dual of $\Limm0kt$, consisting of all functions $f$ preserving the relation $({\vee^{k-t} \vee \lim \vee^t}{=}1)$ defined analogously, or (in countable arities) such that $f^{-1}(0)$ does not contain $k-t$ elements together with $t$ elements in its closure with join $\vec{1}$.
\end{definition}

The poset of all $\Limm0kt$ clones is depicted in \cref{fig:post-borel-L0kt-grid}.
\emph{However, the $\Limm0kt$'s do not form a sublattice of $\Clo^\Borel{2}$.}
For example, $\Limm031 \cap \Limm022 \supsetneqq \Limm032$; see \cref{fig:post-borel-L0kt-T03}.
In fact, there are \emph{no nontrivial meet relations} (that are not implied by the ordering) among the $\Limm0kt$'s, as we now show.

\begin{definition}
\label{def:fkt}
For $f_i : 2^{n_i} -> 2 \in \Cons11$, $i < k$, their \defn{orthogonal disjunction} is%
\footnote{Identifying $f : 2^n -> 2$ with the set $f^{-1}(1)$, a standard name from topology for $\sqcup$ would be the \emph{wedge sum} $\vee$; but that notation conflicts with ordinary disjunction.}
\begin{align*}
\smash{\bigsqcup_{i<k} f_i} : 2^{\bigsqcup_{i<k} n_i = \{(i,j) \mid i < k, j < n_i\}} -->{}& 2 \\
(x_{i,j})_{i,j} |-->{}& \bigvee_{i<k} \paren[\Big]{f_i((x_{i,j})_j) \wedge \bigwedge_{i' \ne i} \bigwedge_{j < n_{i'}} x_{i',j}} \\
={}& \forall^k_2 \paren[\Big]{\paren[\Big]{\bigwedge_{j<i} x_{i,j}}_{i<k}} \wedge \bigvee_{i<k} f_i((x_{i,j})_j).
\end{align*}
(The binary operation $\sqcup$ is associative up to a canonical permutation of the input bits.)

For example, when $f_i = 1 : 2 -> 2$ for all $i$, we get $\bigsqcup_{i<k} f_i = \forall^k_2 : 2^k -> 2$ (recall \cref{def:post-funs}).

More generally, when $k < \omega$, $f_0 = \dotsb = f_{k-t-1} = 1 : 2 -> 2$, and $f_{k-t} = \dotsb = f_{k-1} = \liminf : 2^\omega -> 2$, we may identify $2^{(k-t)+t\times\omega} \cong 2^\omega$ (assuming $t > 0$), so that $\bigsqcup_{i<k} f_i$ becomes
\begin{align*}
\forall^{k-t}_2 \sqcup \inline\liminf^{\sqcup t} : 2^\omega \cong 2^{(k-t)+t\times\omega} &--> 2 \\
\vec{x} = (x_0,\dotsc,x_{k-t-1},x_{qt+r})_{k-t\le r<k,q<\omega} &|--> \inline{\forall^k_2\paren{x_0, \dotsc, x_{k-t-1}, \bigwedge_q x_{qt+k-t}, \dotsc, \bigwedge_q x_{qt+k-1}} \wedge \liminf \vec{x}}. \\
\intertext{When $t = k$, this becomes}
\inline\liminf^{\sqcup t} : 2^\omega \cong 2^{k\times\omega} &--> 2 \\
\vec{x} = (x_{qk+r})_{r<k,q<\omega} &|--> \inline{\forall^k_2\paren{\bigwedge_q x_{qk}, \dotsc, \bigwedge_q x_{qk+k-1}} \wedge \liminf \vec{x}}. \\
\intertext{For example, when $t = k = 2$, we get}
\liminf \sqcup \liminf : 2^\omega \cong 2^{2\times\omega} &--> 2 \\
\vec{x} &|--> \paren[\Big]{\liminf_{q->\infty} x_{2q} \wedge \bigwedge_{q<\omega} x_{2q+1}} \vee \paren[\Big]{\bigwedge_{q<\omega} x_{2q} \wedge \liminf_{q->\infty} x_{2q+1}}.
\end{align*}
\end{definition}

\begin{lemma}
\label{thm:fkt}
For any $t \le k < \omega$, we have $\forall^{k-t}_2 \sqcup \liminf^{\sqcup t} \in \Mono\Cons11 \cap \bigcap_{\substack{t' \le k' < \omega \\ k' < k \text{ or } t' < t}} \Limm0{k'}{t'} \setminus \Limm0kt$.
\end{lemma}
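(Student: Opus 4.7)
The plan is to establish each of the four claims in turn: monotonicity, preservation of $1$, the non-preservation $f \notin \Limm0kt$, and the preservations $f \in \Limm0{k'}{t'}$ for $k' < k$ or $t' < t$. The first two are immediate from the explicit expression of $f := \forall^{k-t}_2 \sqcup \liminf^{\sqcup t}$ as a combination of the monotone $1$-fixing operations $\forall^k_2, \wedge, \bigwedge, \liminf$. All the content lies in the remaining two claims, which I will derive from a careful characterization of $f^{-1}(1)$ and its closure.

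Index the coordinates of $2^\omega \cong 2^{(k-t) + t \times \omega}$ as $y_i$ ($i < k-t$) and $z_{j,q}$ ($j \in [k-t,k)$, $q < \omega$); call $\{z_{j,q} : q < \omega\}$ the $j$th $z$-column. Unwinding the displayed formula for $f$, one sees that $\vec{x} \in f^{-1}(1)$ iff $\vec{x}$ has finitely many $0$'s and the support of $\vec{x}$ (its set of $0$-positions) is either empty, a singleton $\{y_i\}$, or a finite subset of a single $z$-column---the finiteness coming from $\liminf \vec{x} = 1$, and the support shape from $\forall^k_2$ tolerating at most one $0$ among its $k$ inputs. Taking pointwise limits then shows that $\-{f^{-1}(1)}$ is obtained by additionally allowing supports that are any (possibly infinite) subset of a single $z$-column: no approximating sequence in $f^{-1}(1)$ can place $0$'s simultaneously at two distinct $y$-positions, at a $y$-position and any $z$-position, or across two distinct $z$-columns. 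To witness $f \notin \Limm0kt$, take for each $i < k-t$ the string $\vec{x}_i \in f^{-1}(1)$ with support $\{y_i\}$; and for each $j \in [k-t, k)$ take the pointwise limit $\vec{x}_j^* \in \-{f^{-1}(1)}$ of strings $\vec{x}_{j,q} \in f^{-1}(1)$ with support $\{z_{j,0}, \ldots, z_{j,q-1}\}$, so that $\vec{x}_j^*$ has the entire $j$th $z$-column as its support. The bitwise meet of these $k-t$ elements of $f^{-1}(1)$ and $t$ elements of $\-{f^{-1}(1)}$ equals $\vec{0}$.

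For $f \in \Limm0{k'}{t'}$ whenever $k' < k$ or $t' < t$, I will argue that no $k'-t'$ strings $\vec{a}_0, \ldots, \vec{a}_{k'-t'-1} \in f^{-1}(1)$ together with $t'$ strings $\vec{b}_0, \ldots, \vec{b}_{t'-1} \in \-{f^{-1}(1)}$ can have bitwise meet $\vec{0}$. Reading off the support types: placing a $0$ at coordinate $y_i$ requires some chosen string to have support exactly $\{y_i\}$, and every such string lies in $f^{-1}(1)$; these are distinct for distinct $i$, so at least $k-t$ of the chosen strings lie among the $\vec{a}$'s, forcing $k'-t' \ge k-t$. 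Similarly, placing a $0$ at every position of the $j$th $z$-column cannot be accomplished with only finitely many finite-support strings in that column, so at least one chosen string has infinite support in that column; such a string lies in $\-{f^{-1}(1)} \setminus f^{-1}(1)$, so is among the $\vec{b}$'s, and these are distinct for distinct $j$, forcing $t' \ge t$. Together, $k' \ge k$ and $t' \ge t$, contradicting the hypothesis. The main obstacle is pinning down $\-{f^{-1}(1)}$ cleanly; once the support dichotomy between $f^{-1}(1)$ and $\-{f^{-1}(1)} \setminus f^{-1}(1)$ (finite vs.\ infinite support within a single column) is made explicit, the remaining counting is routine finite combinatorics, and the boundary cases $t=0$ (where $f = \forall^k_2$ has finite arity and the closure is trivial) and $t=k$ (no $y$-coordinates) are handled uniformly by the same counting.
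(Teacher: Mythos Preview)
Your approach is sound in spirit and close to the paper's, but there is a small gap in the counting. You claim that each string with support $\{y_i\}$ must lie among the $\vec{a}$'s because it lies in $f^{-1}(1)$, forcing $k'-t' \ge k-t$. This does not follow: the $\vec{b}$'s are drawn from $\-{f^{-1}(1)} \supseteq f^{-1}(1)$, so nothing prevents one of the $\vec{b}$'s from having support $\{y_i\}$. (Concretely, with $k=2$, $t=1$, $k'=t'=2$, take $\vec{b}_0$ with support $\{y_0\}$ and $\vec{b}_1$ with support the entire $z$-column; their meet is $\vec{0}$ yet $k'-t' = 0 < 1 = k-t$.) The fix is easy: your argument already produces $k-t$ distinct chosen strings covering the $y$-positions and $t$ distinct chosen strings with infinite column-support, and these two families are disjoint (different support types), giving $k' \ge k$ directly. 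Combined with your correct derivation of $t' \ge t$ (infinite-support strings lie in $\-{f^{-1}(1)} \setminus f^{-1}(1)$, hence must be $\vec{b}$'s), this yields the desired contradiction.

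For comparison, the paper treats the two cases separately: for $k' < k$ it observes that the indicator of $\-{f^{-1}(1)}$ is $\forall^k_2$ applied to $k$ column-and-coordinate meets, hence lies in $\Cons0{k'}$, so $f \in \Limm0{k'}{k'} \subseteq \Limm0{k'}{t'}$; for $t' < t$ it pigeonholes the $t'$ closure strings among the $t$ columns to find one column on which every closure string is identically $1$, whence the meet has infinitely many $1$'s there. Your unified support-counting is arguably cleaner once the gap above is patched, and your explicit description of $f^{-1}(1)$ and $\-{f^{-1}(1)}$ in terms of support shapes makes the structure transparent.
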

\begin{proof}
To show that $\forall^{k-t}_2 \sqcup \liminf^{\sqcup t} \notin \Limm0kt$, consider a matrix like the one in \cref{ex:L0k}, which is the case $t = k = 3$; each column obeys $\lim \wedge^3 = 0$, while each row obeys $\liminf^{\sqcup 3} = 1$.
When $t < k$, we consider instead such a matrix split into blocks of $t$, and then prepend $(k-t)$ rows and columns which are all $1$'s except for $(k-t)$ $0$'s down the diagonal; then we again get that each column obeys $\wedge^{k-t} \wedge \lim \wedge^t = 0$ while each row obeys $\forall^{k-t}_2 \sqcup \liminf^{\sqcup t} = 1$.

It is clear that $\forall^{k-t}_2 \sqcup \liminf^{\sqcup t}$ is in $\Mono\Cons11$.
To show that it is in $\Limm0{k'}{t'}$, for $k' < k$: it is easily seen that $\-{(\forall^{k-t}_2 \sqcup \liminf^{\sqcup t})^{-1}(1)}$ has indicator function
\begin{align*}
\tag{$*$}
(\forall^{k-t}_2 \sqcup (1^\omega)^{\sqcup t})(\vec{x}) = \inline{\forall^k_2\paren{x_0, \dotsc, x_{k-t-1}, \bigwedge_q x_{qt+k-t}, \dotsc, \bigwedge_q x_{qt+k-1}}},
\end{align*}
which is in $\Cons0{k'}$, whence the former function is in $\Limm0{k'}{k'} \subseteq \Limm0{k'}{t'}$.

To show that it is in $\Limm0{k'}{t'}$, for $t' < t$: let $\vec{y}_0, \dotsc, \vec{y}_{k'-t'-1} \in (\forall^{k-t}_2 \sqcup \liminf^{\sqcup t})^{-1}(1)$ and $\vec{y}_{k'-t'}, \dotsc, \vec{y}_{k'-1} \in \-{(\forall^{k-t}_2 \sqcup \liminf^{\sqcup t})^{-1}(1)}$, i.e., when $\vec{x} = \vec{y}_{k'-t'}, \dotsc, \vec{y}_{k'-1}$, all but $1$ of the arguments of the $\forall^k_2$ on the right-hand side of ($*$) above evaluate to $1$.
Since $t' < t$, there must be some $k-t \le r < k$ such that $\bigwedge_q x_{qt+r} = 1$ whenever $\vec{x} = \vec{y}_{k'-t'}, \dotsc, \vec{y}_{k'-1}$.
Since also $\liminf_q x_{qt+r} = 1$ when $\vec{x} = \vec{y}_0, \dotsc, \vec{y}_{k'-t'-1}$, it follows that $\vec{y}_0 \wedge \dotsb \wedge \vec{y}_{k'-1}$ has infinitely many $1$'s.
This shows $\vec{0} \notin (\forall^{k-t}_2 \sqcup \liminf^{\sqcup t})^{-1}(1)^{\wedge (k'-t')} \wedge \-{(\forall^{k-t}_2 \sqcup \liminf^{\sqcup t})^{-1}(1)}^{\wedge t'}$, hence $\forall^{k-t}_2 \sqcup \liminf^{\sqcup t} \in \Limm0{k'}{t'}$.
\end{proof}

Recall that for a poset $P$, the \defn{free complete meet-semilattice} $\^P$ generated by $P$ is a complete lattice equipped with an order-embedding $P -> \^P$ whose image generates $P$ under meets, and such that every equation between meets of elements of $P$ which holds in $\^P$ is implied by the ordering, in the precise sense that any monotone map $P -> Q$ to another complete lattice $Q$ admits a unique meet-preserving extension $\^P -> Q$.
Concretely, $\^P$ may be constructed as the poset of upward-closed subsets $U \subseteq P$ under \emph{reverse} inclusion; we think of such $U$ as the formal meet of its elements.
If we exclude $U = \emptyset$, then we obtain instead the free poset with \emph{nonempty} meets $\bigwedge_{i \in I}$, $I \ne \emptyset$.

We claim that there are no nontrivial meet relations between the Borel clones $\Limm0kt^\Borel$, other than those implied by the order relations \cref{eq:L0kt-ordering} (depicted in \cref{fig:post-borel-L0kt-grid}).
Formally, this means

\begin{corollary}
\label{thm:L0kt-free}
The closure of the Borel clones $\Limm0kt^\Borel \in \Clo^\Borel{2}$, $t \le k < \omega$, under nonempty intersections yields the free complete nonempty-meet-semilattice generated by the poset depicted in \cref{fig:post-borel-L0kt-grid}, which is the poset of indices $\{(k,t) \mid t \le k < \omega\}$ under the coordinatewise reverse ordering $\ge$.
In other words, we have an order-embedding
\begin{align*}
(\{\emptyset \ne D \subseteq \{(k,t) \mid t \le k < \omega\} \mid D \text{ is $\le$-downward-closed}\}, \supseteq) &`--> (\Clo^\Borel{2}, \subseteq) \\
D &|--> \bigcap_{(k,t) \in D} \Limm0kt^\Borel.
\end{align*}
This remains an embedding upon intersecting with $\Mono$ and/or $\Cons11$.
\end{corollary}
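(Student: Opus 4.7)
The plan is to first observe that $\Phi(D) := \bigcap_{(k,t) \in D} \Limm0kt^\Borel$ is manifestly order-preserving from $(\{\text{nonempty downward-closed } D\}, \supseteq)$ into $(\Clo^\Borel{2}, \subseteq)$, since $D \supseteq D'$ as sets means the intersection defining $\Phi(D)$ ranges over a superset and hence produces a subset. The real content is to show $\Phi$ also reflects the order, which I will prove by contrapositive: given $D \not\supseteq D'$, I will produce a Borel function witnessing $\Phi(D) \not\subseteq \Phi(D')$.

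To do this, pick any $(k,t) \in D' \setminus D$. Downward-closedness of $D$ together with $(k,t) \notin D$ forces every $(k',t') \in D$ to fail $(k,t) \le (k',t')$, i.e., to satisfy $k' < k$ or $t' < t$. \Cref{thm:fkt} then delivers exactly what is needed: the Borel function $f := \forall^{k-t}_2 \sqcup \liminf^{\sqcup t}$ lies in $\Mono\Cons11$, lies in $\Limm0{k'}{t'}^\Borel$ for every such $(k',t')$, yet avoids $\Limm0kt^\Borel$. Thus $f \in \Phi(D)$ (from the first two), while $f \notin \Phi(D')$ (since $(k,t) \in D'$ contributes $\Limm0kt^\Borel$ to the intersection defining $\Phi(D')$). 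This is the required witness, and moreover, since $f$ is already in $\Mono\Cons11$, the very same argument survives after intersecting each clone with $\Mono$, with $\Cons11$, or with both, yielding the last clause of the theorem.

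Finally, I will briefly spell out why this order-embedding gives the ``free complete nonempty-meet-semilattice'' statement: the poset of nonempty downward-closed subsets of any poset $P$ under reverse inclusion is one of the standard constructions of the free complete nonempty-meet-semilattice on $P$; the map $\Phi$ preserves arbitrary nonempty meets, since $\bigcap_\alpha \Phi(D_\alpha) = \Phi\bigl(\bigcup_\alpha D_\alpha\bigr)$ and a union of downward-closed sets is downward-closed; and $\Phi$ sends the principal downward-set of $(k,t)$ to $\Limm0kt^\Borel$ itself, because that indexed intersection is attained at its maximum. The image of $\Phi$ is therefore exactly the intersection-closure of the $\Limm0kt^\Borel$ in $\Clo^\Borel{2}$, and $\Phi$ furnishes the required isomorphism with the free semilattice.

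The main obstacle in this whole argument is already absorbed into \Cref{thm:fkt}, where one has to construct a single Borel function that witnesses non-containment in $\Limm0kt$ while simultaneously preserving every strictly weaker invariant relation $({\wedge^{k'-t'}\wedge\lim\wedge^{t'}}{=}0)$; the delicate combinatorics live there, and the present corollary is then pure downward-closure bookkeeping on top.
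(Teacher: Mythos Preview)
Your proposal is correct and follows essentially the same approach as the paper's proof: both reduce order-reflection to the contrapositive, pick an index $(k,t)$ in one set but not the other, use downward-closure to force $k'<k$ or $t'<t$ for all remaining indices, and then invoke \cref{thm:fkt} to produce the separating function $\forall^{k-t}_2 \sqcup \liminf^{\sqcup t} \in \Mono\Cons11$. Your additional paragraph spelling out the free-semilattice identification and meet-preservation is accurate elaboration of material the paper states more tersely in the preamble to the corollary.
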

\begin{proof}
If $D' \not\supseteq D$ are two such downward-closed sets of indices, then there is $(k,t) \in D \setminus D'$, whence for all $(k',t') \in D'$, we have $(k,t) \not\le (k',t')$; by \cref{thm:fkt}, we have $\forall^{k-t}_2 \sqcup \liminf^{\sqcup t} \in \Mono\Cons11 \cap \bigcap_{(k',t') \in D'} \Limm0{k'}{t'} \setminus \Limm0kt$.
\end{proof}

\begin{figure}[tb]
\centering
\includegraphics{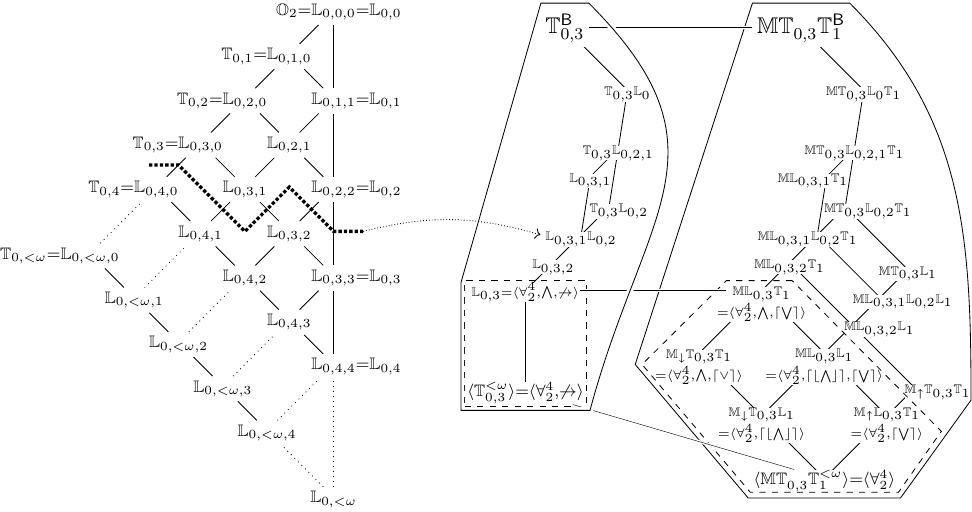}
\begin{minipage}{\textwidth}
\subcaptionbox{Inclusion ordering among the clones $\Limm0kt$, together with an upward-closed set (above thick dotted line), whose intersection $\Limm031\Limm022$ is not contained in any clones below.%
\label{fig:post-borel-L0kt-grid}}[.425\textwidth]{}
\hfill
\subcaptionbox{Some Borel clones with finitary restrictions $\Cons03^{<\omega}, \Mono\Cons03\Cons11^{<\omega}$ respectively, obtained by intersecting $\Limm0kt$'s with clones from top cube (\cref{rmk:post-topcube-irred}).
The dashed blocks are exhaustive below $\Limm033$ and are isomorphic to corresponding intervals below $\Cons0\omega$ (\cref{fig:post-borel-T0inf}).
(Similar blocks over $\Cons03\Cons11^{<\omega}, \Mono\Cons03^{<\omega}$ not shown.)%
\label{fig:post-borel-L0kt-T03}}[.525\textwidth]{}
\end{minipage}
\caption{Examples of Borel clones over left ``side tube'' of Post's lattice.}
\label{fig:post-borel-L0kt}
\end{figure}

The ``cobweb'' region of \cref{fig:post-borel-T0k}, above $\Limm0{<\omega}{}$, depicts all clones $F$ obtained by intersecting the $\Limm0kt$'s (except for $\Limm0{{0}}{{{0}}} = \Op2$), which are isomorphic to the poset of all downward-closed $\emptyset \ne D \subseteq \{(k,t) \mid t \le k < \omega\}$ under reverse inclusion.
\Cref{fig:post-borel-L0kt-grid} depicts a typical such $D$ (the region above the thick dotted line), which determines the intersection clone $F = \Limm031\Limm022$ (since these are the minimal clones above the dotted line).
We may categorize such $F$ as follows:
\begin{itemize}
\item
If $D$ contains a greatest $(k,0)$, then the intersection $F = \bigcap_{(k',t') \in D} \Limm0{k'}{t'}$ is contained in $\Limm0k{{0}} = \Cons0k$ but not $\Limm0{k+1}{{0}} = \Cons0{k+1}$, and contains $\Limm0k{{k}} = \Limm0kk \supseteq \ang{\Cons0k^{<\omega}}$ (since every $(k',t') \in D$ must be $\le (k,k)$).
Thus $F$ has finitary restriction $\Cons0k^{<\omega}$.

Note that there are precisely $2^k$ such $(k,0) \in D \notni (k+1,0)$.
Indeed, $D$ either contains or does not contain $(k,1)$; the latter $D$ are isomorphic to $\{D' \mid (k-1,0) \in D' \notni (k,0)\}$, via $D |-> D \setminus \{(k,0)\}$, as are the former $D$, via $D |-> \{(k'-1,t'-1) \mid (k',t') \in D\}$.
(In \cref{fig:post-borel-L0kt-grid}, there are $2^3$ possible thick dotted lines starting between $\Limm03{{0}}$ and $\Limm04{{0}}$ and moving right for $3$ steps, since at each step it may move down or up.)
Thus the poset of all $D$ such that $(k,0) \in D \notni (k+1,0)$ may be partitioned into two isomorphic copies of the poset of all $D'$ such that $(k-1,0) \in D' \notni (k,0)$.
By induction, there are $2^k$ such $D$.

For $k = 3$, the $2^3 = 8$ such clones are depicted in \cref{fig:post-borel-L0kt-T03} (above $\Limm033$).
Note that the ``lower and upper halves'', below $\Limm031$ or not, are each order-isomorphic to the $4$ intersection clones obtained for $k = 2$.
\end{itemize}

\begin{corollary}
\label{thm:post-borel-L0kt-2^k}
For each $k < \omega$, there are at least $2^k$ Borel clones between $\Limm0kk^\Borel$ and $\Cons0k^\Borel$ with finitary restriction $\Cons0k^{<\omega}$.
Similarly for $\Mono\Cons0k^{<\omega}, \Cons0k\Cons11^{<\omega}, \Mono\Cons0k\Cons11^{<\omega}$.
\qed
\end{corollary}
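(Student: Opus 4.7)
The plan is to deduce the corollary directly from Corollary~\ref{thm:L0kt-free} together with the counting argument already sketched in the bulleted discussion immediately preceding the statement, which does most of the work. The key point is that the embedding $D \mapsto \bigcap_{(k,t) \in D} \Limm0kt^\Borel$ already gives a faithful injection from downward-closed subsets of $\{(k,t) \mid t \le k < \omega\}$ (ordered by $\supseteq$) into $\Clo^\Borel{2}$, so I just need to count the relevant downward-closed sets and verify that all resulting clones land in the stated interval.

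First I would fix $k < \omega$ and restrict attention to downward-closed subsets $D \subseteq \{(k',t') \mid t' \le k' < \omega\}$ containing $(k,0)$ but not $(k+1,0)$. For each such $D$, let $F_D := \bigcap_{(k',t') \in D} \Limm0{k'}{t'}^\Borel$. From $(k,0) \in D$ we get $F_D \subseteq \Limm0k{{0}} = \Cons0k$, while downward-closure together with $(k+1,0) \notin D$ forces every $(k',t') \in D$ to satisfy $(k',t') \le (k,k)$, so $F_D \supseteq \Limm0k{{k}} = \Limm0kk^\Borel$. In particular $F_D \cap \Op2^{<\omega} = \Cons0k^{<\omega}$ since $\Limm0kk^\Borel \supseteq \ang{\Cons0k^{<\omega}}^\Borel$ and $F_D \subseteq \Cons0k$.

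Next I would count: there are exactly $2^k$ such $D$. The recursion is the one already sketched: splitting on whether $(k,1) \in D$ sets up a bijection between this collection of $D$'s and two copies of the analogous collection at height $k-1$ (shifting the whole diagonal by $(-1,-1)$ in the ``contains $(k,1)$'' case, or truncating $(k,0)$ off in the other). The base case $k=0$ gives the single downward-closed set $\{(0,0)\}$. By Corollary~\ref{thm:L0kt-free}, distinct $D$'s yield distinct clones $F_D$, so we obtain $2^k$ distinct Borel clones in $[\Limm0kk^\Borel, \Cons0k^\Borel]$ all restricting to $\Cons0k^{<\omega}$.

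For the variants $\Mono\Cons0k^{<\omega}$, $\Cons0k\Cons11^{<\omega}$, $\Mono\Cons0k\Cons11^{<\omega}$, I would apply the last sentence of Corollary~\ref{thm:L0kt-free}, which says the embedding remains an embedding upon intersecting with $\Mono$ and/or $\Cons11$; the $2^k$ clones $F_D \cap \Mono$, $F_D \cap \Cons11$, and $F_D \cap \Mono \cap \Cons11$ then remain distinct, and the same argument as above shows they restrict finitarily to the claimed variants of $\Cons0k^{<\omega}$ (using that $\Mono\Cons0k^{<\omega}$, etc., are the finitary restrictions of $\Mono\Limm0kk^\Borel$, etc.). No step here is a real obstacle, since the hard combinatorial/analytic content is already packaged in Lemma~\ref{thm:fkt} and Corollary~\ref{thm:L0kt-free}; the only thing to check carefully is the recursive $2^k$ count of downward-closed sets, which is elementary.
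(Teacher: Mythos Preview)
Your proposal is correct and follows essentially the same approach as the paper: the paper's proof is exactly the bulleted discussion preceding the statement, which uses Corollary~\ref{thm:L0kt-free} plus the recursive $2^k$ count of downward-closed $D$ with $(k,0)\in D\notni(k+1,0)$, and invokes the last sentence of Corollary~\ref{thm:L0kt-free} for the $\Mono$/$\Cons11$ variants. Your write-up is a faithful expansion of that discussion.
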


\begin{itemize}[resume*]
\item
On the other hand, if $D$ contains $(k,0)$ for all $k < \omega$, then the intersection $F$ is contained in $\bigcap_{k<\omega} \Cons0k = \Cons0{<\omega}$, and contains $\Limm0{<\omega}{} \supseteq \ang{\Cons0{<\omega}^{<\omega}}$.
Thus $F$ has finitary restriction $\Cons0{<\omega}^{<\omega}$.

We may further categorize such $F$, by considering whether or not there is a largest $t < \omega$ such that $(k,t) \in D$ for all $k < \omega$.
If not, then $F = \Limm0{<\omega}{}$.
If so, then $F$ is contained in $\Limm0{<\omega}t$ but not $\Limm0{<\omega}{t+1}$.
Then there is a least $k \ge t$ such that $(k+1,t+1) \notin D$, which means $F \subseteq \Limm0k{t+1}$ but $F \not\subseteq \Limm0{k+1}{t+1}$.
For fixed $t, k$, the poset of all such $D$ is order-isomorphic to the poset of $D'$ with $(k-t-1,0) \in D \notni (k-t,0)$.
(This means the thick dotted line in \cref{fig:post-borel-L0kt-grid} starts between $\Limm0{<\omega}t$ and $\Limm0{<\omega}{t+1}$, and moves up until just before it crosses $\Limm0k{t+1}$, then bends down.)
So the poset of all $D$ with $\{(k,0) \mid k \in \omega\} \subseteq D$, except for the maximum $D = \{(k,t) \mid t \le k < \omega\}$, may be partitioned into countably many mutually isomorphic subsets, each of which may be partitioned into copies of $\{D' \mid (k,0) \in D' \notni (k+1,0)\}$ for all $k$.
In particular, there are countably infinitely many such $D$.
\end{itemize}

\begin{corollary}
\label{thm:post-borel-L0inf-inf}
There are at least countably infinitely many distinct Borel clones above $\Limm0{<\omega}{}^\Borel$ with finitary restriction $\Cons0{<\omega}^{<\omega}$.
Similarly for $\Mono\Cons0{<\omega}^{<\omega}, \Cons0{<\omega}\Cons11^{<\omega}, \Mono\Cons0{<\omega}\Cons11^{<\omega}$.
\qed
\end{corollary}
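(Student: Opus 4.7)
The plan is to apply Corollary \ref{thm:L0kt-free} to an explicit countable chain of $\le$-downward-closed index sets. For each $t < \omega$, I would take
\[
D_t := \{(k,s) \mid 0 \le s \le t,\ s \le k < \omega\} \subseteq \{(k,s) \mid s \le k < \omega\}.
\]
Each $D_t$ is $\le$-downward-closed and contains $(k,0)$ for every $k < \omega$, and the $D_t$ are strictly nested: $D_t \subsetneq D_{t'}$ whenever $t < t'$. Using the containments in \cref{eq:L0kt-ordering} and collapsing the intersection along the $k$-coordinate (since $\Limm0kt \subseteq \Limm0ks$ for $s \le t$), the image of $D_t$ under the embedding in \cref{thm:L0kt-free} is
\[
F_t := \bigcap_{(k,s) \in D_t} \Limm0ks^\Borel = \Limm0{<\omega}t^\Borel.
\]
This gives a strictly decreasing chain $(F_t)_{t<\omega}$ of Borel clones.

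Next I would verify each $F_t$ sits in the required region. The containment $\Limm0{<\omega}{}^\Borel \subseteq F_t$ is immediate from $\Limm0{<\omega}{} \subseteq \Limm0{<\omega}t$. Since $(k,0) \in D_t$ for every $k$, we have $F_t \subseteq \bigcap_{k<\omega}\Cons0k^\Borel = \Cons0{<\omega}^\Borel$, so $F_t \cap \Op2^{<\omega} \subseteq \Cons0{<\omega}^{<\omega}$; the reverse containment on finitary restrictions is automatic, since any $f \in \Cons0{<\omega}^{<\omega}$ has finite support and so lies trivially in every $\Limm0ks$ (all the relevant limits being finite), hence in every $F_t$. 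Thus each $F_t$ has finitary restriction exactly $\Cons0{<\omega}^{<\omega}$.

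Distinctness is formally part of Corollary \ref{thm:L0kt-free}, but the concrete separating witnesses come from Lemma \ref{thm:fkt}: for $t < t'$ and any $k > t'$, the function $\forall^{k-t'}_2 \sqcup \liminf^{\sqcup t'} \in \Mono\Cons11$ lies in $\Limm0{k'}{s'}$ whenever $s' < t'$, in particular for every $(k',s') \in D_t$, so it is in $F_t$; but it fails to lie in $\Limm0k{t'} \supseteq F_{t'}$. Because these witnesses are already in $\Mono\Cons11$, the $\Mono$, $\Cons11$, and $\Mono\Cons11$ variants follow verbatim by intersecting the $F_t$ with $\Mono^\Borel$ and/or $\Cons11^\Borel$, as permitted by \cref{thm:L0kt-free}; the resulting finitary restrictions become $\Mono\Cons0{<\omega}^{<\omega}$, $\Cons0{<\omega}\Cons11^{<\omega}$, $\Mono\Cons0{<\omega}\Cons11^{<\omega}$ respectively. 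I do not anticipate a real obstacle here: the chain and its witnesses are essentially suggested by the informal discussion preceding the corollary, and the only task is to fix the bookkeeping and cite \cref{thm:fkt} for the separations.
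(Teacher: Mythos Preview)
Your proposal is correct and follows essentially the same approach as the paper: both rely on Corollary~\ref{thm:L0kt-free} applied to an infinite family of downward-closed index sets $D$ containing all $(k,0)$, with separating witnesses supplied by Lemma~\ref{thm:fkt}. The paper's discussion preceding the corollary establishes more structure (a recursive partition of all such $D$), but for the bare countability claim your explicit chain $D_t$ (yielding $F_t = \Limm0{<\omega}t^\Borel$) is exactly the natural choice and matches the clones the paper singles out in its categorization.
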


\begin{remark}
It would be desirable if the recursive order-isomorphisms of the various kinds of index sets $D \subseteq \{(k,t) \mid t \le k < \omega\}$ mentioned in the above discussion could be enhanced to isomorphisms between the respective intervals of clones, perhaps by using certain ``operators'' on functions along the lines of \crefrange{sec:post-ceil-floor-xsec}{sec:post-beta}.
For instance, perhaps the (Borel) clones between $\Limm031, \Cons03^{<\omega}$ (see \cref{fig:post-borel-L0kt-T03}) could be shown to be isomorphic to those between $\Cons02, \Cons02^{<\omega}$.

This would be beneficial to the remaining classification problem for the Borel clones in the ``side tubes''.
We currently only know of the finite lower bound of $2^k$ (plus a little bit; see below).
For instance, in stark contrast to what \cref{fig:post-borel-T0k} suggests, we currently cannot even rule out the possibility of there being strictly more Borel clones over $\Cons02^{<\omega}$ than over $\Cons03^{<\omega}$.
\end{remark}

We now show a converse to \cref{thm:fkt}: the functions $\forall^{k-t}_2 \sqcup \liminf^{\sqcup t}$ are ``minimal'' $\notin \Limm0kt$.
This generalizes \cref{thm:kahane} (when $k = t = 1$), and is also analogous to \cref{thm:post-forall2}.

\begin{lemma}
\label{thm:fkt-gen}
If $f : 2^n -> 2 \in \Op2^{<\omega_1} \setminus \Limm0kt$, then $\forall^{k-t}_2 \sqcup \liminf^{\sqcup t} \in \ang{\{f\} \cup \Mono\Cons0\omega\Cons11^\Borel} = \ang{f, \bigwedge, \ceil{\bigvee}}$.
\end{lemma}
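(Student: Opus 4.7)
I will adapt the arguments of \cref{thm:kahane,thm:post-forall2} to arbitrary $(k,t)$, using the modularity descent of \cref{thm:post-lower} to pass from a provisional ``over-generated'' clone back to $G := \ang{f,\bigwedge,\ceil{\bigvee}}$.

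First, from $f \notin \Limm0kt$ extract $\vec a_0,\dotsc,\vec a_{k-t-1} \in f^{-1}(1)$ and, after passing to convergent subsequences, sequences $(\vec x_{r,q})_{q<\omega} \subseteq f^{-1}(1)$ with $\vec x_{r,q} \to \vec b_r$ and $\bigwedge_{r<k-t}\vec a_r \wedge \bigwedge_{r \ge k-t}\vec b_r = \vec 0$.  Define a coordinate-wise ``choice'' map $h : 2^{(k-t)+t\times\omega} \to 2^n$ by letting $h_j(\vec y)$ be the countable meet of those $y_r$ with $r<k-t,\, a_{r,j}=0$ together with those $y_{r,q}$ with $r \ge k-t,\, x_{r,q,j}=0$.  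Each $h_j \in \ang{\bigwedge}$, so $g := f \circ h \in \ang{f,\bigwedge} \subseteq G$; and by construction $g$ takes value $1$ on every one-$0$ string, since $h$ then maps $\vec y$ to the corresponding witness $\vec a_{r_0}$ or $\vec x_{r_0,q_0}$.  Mimicking the proof of \cref{thm:post-forall2}, \cref{thm:post-up-ctcl} places $\up g$ into the countable closure of $\ang{f,\bigwedge}^{<\omega_1}$; choosing $h_1 \in \ang{f,\bigwedge}$ agreeing with $\up g$ on the countable set $\forall_2^{-1}(1)$ yields $\forall_2 \le h_1$. Since $\forall_2 \in \ang{\bigwedge,\ceil{\bigvee},1} \subseteq \ang{G \cup \{1\}}^{<\omega_1}$ while $\forall_2 \le h_1 \in \down G$, \cref{thm:post-lower} places $\forall_2$, hence $\forall^k_2$ (via \cref{rmk:forall2}), into $G$.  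This already settles the case $t = 0$, where the target is $\forall^k_2$.

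For $t \ge 1$ the remaining task is to also put $\liminf$ into $G$, after which the identity $\forall^{k-t}_2 \sqcup \liminf^{\sqcup t}(\vec y) = \forall^k_2(y_0,\dotsc,y_{k-t-1},\bigwedge_q y_{k-t,q},\dotsc,\bigwedge_q y_{k-1,q}) \wedge \liminf \vec y$ assembles the target from $\forall^k_2, \bigwedge, \liminf \in G$.  To produce $\liminf$, fix a single sequence slot $r_0 \in \{k-t,\dotsc,k-1\}$ and substitute the constants $\vec a_r$ and the approximating sequences $\vec x_{r',q}$ ($r' \ne r_0$) into the other slots of $f$ via a Borel map $\hat h \le \id$.  \Cref{thm:post-up-borel} (applicable because $\bigwedge \in G$ forces $G \not\subseteq \ang{\Op2^{<\omega}}$, and in the subcase $f \notin \Mono$ also $G \not\subseteq \Mono$; the case $f \in \Mono$ is handled directly without \cref{thm:post-up-borel}) places $\tilde f := f \circ \hat h$ in $G$; by construction $\tilde f$'s $1$-set accumulates at $\vec 0$ in the remaining coordinates, so $\tilde f \notin \Limm011$.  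The $\up$-closure $+$ \cref{thm:post-lower} bootstrap of the previous paragraph, applied to a Kahane-style iteration for $\tilde f$, then deposits $\liminf$ into $G$.

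The main obstacle is this last step --- a non-monotone, non-$\Cons02$ extension of \cref{thm:kahane}.  Kahane's classical argument identifies $f$ with $\lim$ on convergent inputs using both monotonicity (for $\liminf \le f$) and $\Cons02$ (for $f \le \limsup$); neither is assumed of our $\tilde f$.  The route around this is to pass to the monotone envelope $\up \tilde f$, which lies in the countable closure of $G$ by \cref{thm:post-up-ctcl} and inherits the relevant meet-to-$\vec 0$ property from $\tilde f$ via the routine check $\up A \wedge \up B \ni \vec 0 \iff A \wedge B \ni \vec 0$; Kahane's identity then produces $\liminf$ inside the countable closure, and \cref{thm:post-lower} descends it back into $G$ itself.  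The delicate combinatorial core is the coordination of the constants $\vec a_r$ with the approximating sequences $\vec x_{r,q}$ across the multiple constant and sequence slots, so that the single Borel map $\hat h$ simultaneously forces $\tilde f$'s $1$-set to accumulate at $\vec 0$ and keeps $\tilde f$ close enough to the Kahane template that the double iteration recovers $\liminf$ rather than some strictly larger monotone function.
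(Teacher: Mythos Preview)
Your strategy of first extracting $\forall_2$ and then separately producing $\liminf$, reassembling the target from these pieces, is more circuitous than the paper's and has real gaps in the $t \ge 1$ case.

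First, a minor but concrete error: your assembly formula uses the finitary $\forall^k_2$, which you claim comes from $\forall_2 \in G$ via \cref{rmk:forall2}.  That remark concerns $\forall^\omega_k$, not $\forall^k_2$; and in fact $\forall^k_2 \notin \Cons0{<\omega}$ for $k \ge 2$, whereas $\ang{\forall_2,\liminf,\bigwedge,\ceil{\bigvee}} \subseteq \Cons0{<\omega}$, so $\forall^k_2$ cannot lie there.  (This particular gap is patchable: pad the tail of $\forall_2$ with copies of $\liminf\vec y$ to simulate $\forall^k_2$.)

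The substantive gap is your $\liminf$ step.  You speak of substituting the constants $\vec a_r$ and the sequences $\vec x_{r',q}$ ``into the other slots of $f$ via a Borel map $\hat h \le \id$'', but $f : 2^n \to 2$ has no slot structure; the slots live on the domain of your $h$.  If you meant to substitute into $g = f \circ h$, you would need the constant $1$ (unavailable in $G$), and no map that inserts fixed strings $\vec a_r$ can satisfy $\hat h \le \id$ as \cref{thm:post-up-borel} requires.  Even granting some $\tilde f \in G \setminus \Limm011$, your Kahane step needs $\up\tilde f \in \Cons02$; your observation $\vec 0 \in \up A \wedge \up B \iff \vec 0 \in A \wedge B$ is correct but only reduces this to $\tilde f \in \Cons02$, which you never establish and which need not hold.

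The paper avoids all of this by a single additional normalization you are missing.  After making each sequence $(\vec x_{r,q})_q$ decreasing (as you allude to), it further replaces each limit $\vec x_r$ by $\vec x_r \vee \bigvee_{s<r}\neg\vec x_s$, so that the $\vec x_r$ have pairwise joins $\vec 1$; both steps only raise the witnesses, so \cref{thm:post-up-ctcl} lets one modify $f$ within $G$ to stay $1$ at the new points.  Now each coordinate $j$ has a \emph{unique} $r$ with $x_{r,j}=0$, so the substitution $g$ can take each $g_j$ to be a single projection, and one checks directly that $\forall^{k-t}_2 \sqcup \liminf^{\sqcup t} \le f \circ g \in \ang{f}$.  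A single application of \cref{thm:post-lower} then finishes, with no need to produce $\liminf$ or $\forall^k_2$ separately.
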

\begin{proof}
Since $f \notin \Limm0kt$, there are $\vec{x}_0, \dotsc, \vec{x}_{k-t-1}, (\vec{x}_{k-t,q})_{q<\omega}, \dotsc, (\vec{x}_{k-1,q})_{q<\omega}$ in $f^{-1}(1)$ such that
\begin{align*}
\lim_{q -> \infty} (\vec{x}_0 \wedge \dotsb \wedge \vec{x}_{k-t-1} \wedge \vec{x}_{k-t,q} \wedge \dotsb \wedge \vec{x}_{k-1,q}) = \vec{0}.
\end{align*}
We now modify these $\vec{x}_r, \vec{x}_{r,q}$'s to make them resemble points in $(\forall^{k-t}_2 \sqcup \liminf^{\sqcup t})^{-1}(1)$.

By passing to subsequences, we may assume that for each $k-t \le r < k$, the sequence $(\vec{x}_{r,q})_q$ converges to some $\vec{x}_r \in 2^n$.
Thus $\bigwedge_{0\le r<k} \vec{x}_r = \vec{0}$.

In fact, we may assume that $\vec{x}_{r,0} \ge \vec{x}_{r,1} \ge \vec{x}_{r,2} \ge \dotsb$ for each $k-t \le r < k$.
Indeed, this becomes true if we replace each $\vec{x}_{r,q}$ with $\bigvee_{p \ge q} \vec{x}_{r,p}$ (as in \cref{thm:post-wadge}\cref{thm:post-wadge:MT0}), and the above limit remains $\vec{0}$, provided that each $(\vec{x}_{r,q})_q$ converges.
Now these new points $\vec{x}_{r,q}$ may no longer be in $f^{-1}(1)$; but since they are $\ge$ the original points, and there are only countably many, by \cref{thm:post-up-ctcl} we may replace $f$ with another function in $\ang{\{f\} \cup \Mono\Cons0\omega\Cons11^\Borel}$ which is $1$ at these new points.

Finally, we may assume that the $\vec{x}_r$'s (= $\lim_q \vec{x}_{r,q}$ for $r \ge k-t$) have pairwise joins $\vec{1}$.
This is by replacing each $\vec{x}_r$ (or $\vec{x}_{r,q}$) with its join with $\bigvee_{s<r} \neg \vec{x}_s$.
Again, these new points are $\ge$ the original ones, so by \cref{thm:post-up-ctcl} we may replace $f$ to make it $1$ at these new points.
We still have $\bigwedge_{r<k} \vec{x}_r = \vec{0}$; thus for each $j < n$, there is a unique $r < k$ such that $x_{r,j} = 0$.

Now define the variable substitution
\begin{align*}
g : 2^{(k-t)+t\times\omega} &--> 2^n \\
(y_0,\dotsc,y_{k-t-1},y_{qt+r})_{k-t \le r < k, q < \omega} &|--> \paren*{\begin{cases}
y_r &\text{if $x_{r,j} = 0$ for $r < k-t$,} \\
y_{\min \{q < \omega \mid x_{r,q,j} = 0\} \cdot t + r} &\text{if $x_{r,j} = 0$ for $k-t \le r < k$}
\end{cases}}_{j<n}.
\end{align*}
Then $g(\vec{y}) \in f^{-1}(1)$ whenever $\vec{y} \in (\forall^{k-t}_2 \sqcup \liminf^{\sqcup t})^{-1}(1)$, so $\forall^{k-t}_2 \sqcup \liminf^{\sqcup t} \le f \circ g \in \ang{f}$.
Since clearly $\forall^{k-t}_2 \sqcup \liminf^{\sqcup t} \in \ang{\bigwedge, \bigvee} \subseteq \ang{\Mono\Cons0\omega\Cons11^\Borel \cup \{1\}}$, by \cref{thm:post-lower} it follows that $\forall^{k-t}_2 \sqcup \liminf^{\sqcup t} \in \ang{\{f\} \cup \Mono\Cons0\omega\Cons11^\Borel}$.
\end{proof}

\begin{corollary}
\label{thm:post-L0kt-dich}
For a ${<}\omega_1$-ary clone $F \supseteq \Mono\Cons0\omega\Cons11^\Borel$, either $F \subseteq \Limm0kt$ or $\forall^{k-t}_2 \sqcup \liminf^{\sqcup t} \in F$.
\qed
\end{corollary}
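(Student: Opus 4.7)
The plan is to derive this corollary directly from the preceding \cref{thm:fkt-gen}, which already does all the substantive work. The corollary is purely a repackaging of that lemma in the form of a dichotomy for clones, once one observes that the hypothesis $F \supseteq \Mono\Cons0\omega\Cons11^\Borel$ is precisely what is needed to absorb the extra generators appearing in the conclusion of \ref{thm:fkt-gen}.

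Concretely, I would argue as follows. Assume $F$ is a ${<}\omega_1$-ary clone with $F \supseteq \Mono\Cons0\omega\Cons11^\Borel$. The two alternatives are mutually exclusive once one verifies that $\forall^{k-t}_2 \sqcup \liminf^{\sqcup t} \notin \Limm0kt$; but this is part of what \cref{thm:fkt} establishes, so no work is needed. Suppose now that $F \not\subseteq \Limm0kt$. Then by definition there exists some $f \in F$ with $f \notin \Limm0kt$. Since $f \in \Op2^{<\omega_1} \setminus \Limm0kt$, \cref{thm:fkt-gen} yields
\begin{equation*}
\forall^{k-t}_2 \sqcup \inline\liminf^{\sqcup t} \in \ang{\{f\} \cup \Mono\Cons0\omega\Cons11^\Borel}.
\end{equation*}
Because $f \in F$ and $\Mono\Cons0\omega\Cons11^\Borel \subseteq F$, the generating set on the right is contained in $F$, and since $F$ is a clone the clone it generates is still contained in $F$. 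Hence $\forall^{k-t}_2 \sqcup \liminf^{\sqcup t} \in F$, giving the second alternative.

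Since the preceding lemma is doing all the heavy lifting, I do not anticipate any obstacles here: the corollary is a one-line deduction, included mainly to record the dichotomy in a convenient form parallel to \cref{thm:post-T0omega-dich,thm:post-T0inf-forall2}. The only thing worth explicitly noting in the write-up is why the two alternatives are indeed alternatives (i.e., that $\forall^{k-t}_2 \sqcup \liminf^{\sqcup t}$ itself lies outside $\Limm0kt$), and that follows from \cref{thm:fkt}.
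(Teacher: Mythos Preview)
Your proposal is correct and matches the paper's approach exactly: the corollary is marked with \qed\ precisely because it follows immediately from \cref{thm:fkt-gen} in the way you describe. Your observation about mutual exclusivity via \cref{thm:fkt} is also correct, though the corollary as stated only asserts the (non-exclusive) disjunction.
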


\begin{corollary}
\label{thm:post-L0kt-partition}
Every ${<}\omega_1$-ary clone $F \supseteq \Mono\Cons0\omega\Cons11^\Borel$ belongs to exactly one of the intervals
\begin{align*}
\textstyle
[\ang{\{\forall^{k-t}_2 \sqcup \liminf^{\sqcup t} \mid (k,t) \notin D\}}^\Borel, \bigcap_{(k,t) \in D} \Limm0kt^{<\omega_1}]
\end{align*}
for a downward-closed $\emptyset \ne D \subseteq \{(k,t) \mid t \le k < \omega\}$ (namely for $D = \{(k,t) \mid F \subseteq \Limm0kt\}$).
\end{corollary}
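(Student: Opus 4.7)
The plan is to associate to each clone $F \supseteq \Mono\Cons0\omega\Cons11^\Borel$ the canonical index set $D(F) := \{(k,t) \mid t \le k < \omega,\ F \subseteq \Limm0kt^{<\omega_1}\}$, verify that $F$ lies in the interval indexed by $D(F)$, and then show that $D(F)$ is uniquely determined by $F$ among downward-closed index sets.

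First I would check the structural requirements on $D(F)$. Nonemptiness is immediate, since $\Limm0{{0}}{{{0}}} = \Op2 \supseteq F$ puts $(0,0)$ into $D(F)$. Downward-closure in the poset of \cref{thm:L0kt-free} reduces to the chain of inclusions \cref{eq:L0kt-ordering}: if $(k',t') \in D(F)$ and $(k,t)$ lies poset-below $(k',t')$, then repeated applications of \cref{eq:L0kt-ordering} give $\Limm0{k'}{t'} \subseteq \Limm0kt$, whence $F \subseteq \Limm0kt$.

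Next I would verify that $F$ sits inside the interval $[\ang{\{\forall^{k-t}_2 \sqcup \liminf^{\sqcup t} \mid (k,t) \notin D(F)\}}^\Borel, \bigcap_{(k,t) \in D(F)} \Limm0kt^{<\omega_1}]$. The upper inclusion is immediate from the definition of $D(F)$. For the lower inclusion I would apply the dichotomy \cref{thm:post-L0kt-dich} once for each $(k,t) \notin D(F)$: the failure $F \not\subseteq \Limm0kt$ forces $\forall^{k-t}_2 \sqcup \liminf^{\sqcup t} \in F$, and assembling these generators yields the desired containment.

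Finally I would establish uniqueness. If $F$ also lies in the interval indexed by some downward-closed $\emptyset \ne D'$, then the upper bound forces $D' \subseteq D(F)$. In the other direction, for any $(k,t) \notin D'$ the lower bound supplies $\forall^{k-t}_2 \sqcup \liminf^{\sqcup t} \in F$, and by \cref{thm:fkt} this function does not belong to $\Limm0kt$, so $(k,t) \notin D(F)$; this gives $D(F) \subseteq D'$, and hence $D(F) = D'$. No step poses a real obstacle, since the corollary is a straightforward repackaging of the dichotomy \cref{thm:post-L0kt-dich} together with the order-embedding already established in \cref{thm:L0kt-free} — the substantive work has been done in those two results.
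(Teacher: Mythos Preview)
Your proof is correct and follows essentially the same approach as the paper: membership in the interval via \cref{thm:post-L0kt-dich}, and uniqueness via \cref{thm:fkt}. You are slightly more thorough than the paper in verifying explicitly that $D(F)$ is nonempty and $\le$-downward-closed, which the paper leaves implicit; and you phrase uniqueness as ``$D' = D(F)$ for any valid $D'$'' rather than ``distinct intervals are disjoint,'' but these amount to the same thing.
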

\begin{proof}
For said $D$, for every $(k,t) \notin D$, we have $\forall^{k-t}_2 \sqcup \liminf^{\sqcup t} \in F$ by the preceding corollary.
And these intervals are disjoint for $D \ne D'$: if without loss of generality $D \not\subseteq D'$, then for $(k,t) \in D$, $\forall^{k-t}_2 \sqcup \liminf^{\sqcup t}$ is in every clone in the latter interval but none in the former by \cref{thm:fkt}.
\end{proof}

\begin{remark}
\label{rmk:post-borel-L0kt-gen}
Each of the above intervals is nonempty, i.e., the lower bound is contained in the upper bound, by \cref{thm:fkt}; in fact there are at least 4 Borel clones in the interval, namely $\bigcap_{(k,t) \in D} \Limm0kt^\Borel$ and its intersections with $\Mono$ and/or $\Cons11$, or alternatively the clones generated by $\{\forall^{k-t}_2 \sqcup \liminf^{\sqcup t} \mid (k,t) \notin D\}$ together with $\Cons0\omega^\Borel, \Cons0\omega\Cons11^\Borel, \Mono\Cons0\omega^\Borel, \Mono\Cons0\omega\Cons11^\Borel$.

We do not know if, within each combination of $\Mono$ and/or $\Cons11$, the generators match the relations.
For example, we do not know if each inclusion
\begin{equation*}
\ang{\Cons0\omega^\Borel \cup \{\forall^{k-t}_2 \sqcup \liminf^{\sqcup t} \mid (k,t) \notin D\}}^\Borel \subseteq \bigcap_{(k,t) \in D} \Limm0kt^\Borel
\end{equation*}
is in fact an equality.
The only known cases are when $D \subseteq \{(0,0), (1,0), (1,1)\}$, which yield the clones over $\Cons01^{<\omega}, \Op2^{<\omega}$ from \cref{sec:borel-topcube}; $D = \{(k',t) \mid t \le k' \le k\}$ for fixed $k < \omega$, which yield the clones $\Limm0kk^\Borel$ whose generators are determined by \cref{ex:post-borel-L0k-gen} below; and $(2,0) \in D \notni (2,1)$, which yield a few sporadic clones over $(\Mono)\Cons02(\Cons11)^{<\omega}$ related to self-dual functions (see \cref{thm:post-borel-T02}).

In contrast to the analogous situation with the Borel clones over $\Meet^{<\omega}$ (\cref{thm:post-borel-Meet}), here we do not even know if such generation occurs up to countable closure, i.e., up to approximation at any given countably many input strings in $2^\omega$.
\end{remark}

The above dichotomies only apply to Borel clones $F$ containing $\Mono\Cons0\omega\Cons11^\Borel$, i.e., whose ``projection'' $F \cap \Cons0\omega$ to the ``base of the side tube'' as in \cref{thm:post-T0inf-mod,fig:post-mod-down} is one of the top 4 nodes given by \cref{thm:post-borel-T0omega} (\cref{fig:post-borel-T0inf}).
In order to extend these results to $F$ for which $F \cap \Cons0\omega$ is one of the other 11 clones in \cref{thm:post-borel-T0omega}, we need to consider the interactions between the $\Limm0kt$ and the clones from the top cube (\cref{rmk:post-topcube-irred}).

\begin{proposition}
\label{thm:post-borel-L0k-down}
For each $k < \omega$, we have $\Limm0kk^{<\omega_1} = \down \ang{\Mono\Cons0k\Cons11^{<\omega}}^\Borel$.
\end{proposition}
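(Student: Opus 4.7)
The plan is to prove both inclusions of $\Limm0kk^{<\omega_1} = \down \ang{\Mono\Cons0k\Cons11^{<\omega}}^\Borel$ by combining the definition of $\Limm0kk$ with a compactness argument in $2^\omega$, since $\ang{\Mono\Cons0k\Cons11^{<\omega}}^\Borel$ is just the essentially finitary monotone functions in $\Cons0k\Cons11$.

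For $\supseteq$: I would take $g \in \ang{\Mono\Cons0k\Cons11^{<\omega}}^\Borel$ and $f \le g$. Since $g$ is essentially finitary and monotone, it is continuous, so $g^{-1}(1)$ is closed, and hence $\overline{f^{-1}(1)} \subseteq \overline{g^{-1}(1)} = g^{-1}(1)$. Since $g \in \Cons0k$, we have $\vec 0 \notin g^{-1}(1)^{\wedge k}$, and so $\vec 0 \notin \overline{f^{-1}(1)}^{\wedge k}$, giving $f \in \Limm0kk$.

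For $\subseteq$: Given $f : 2^n -> 2 \in \Limm0kk^{<\omega_1}$ with $f \ne 0$, the set $\overline{f^{-1}(1)} \subseteq 2^n$ is compact (in the Cantor topology; trivially so if $n < \omega$), hence so is its image $\overline{f^{-1}(1)}^{\wedge k}$ under the continuous map $\wedge^k : (2^n)^k \to 2^n$. Since this image does not contain $\vec 0$, there is a basic clopen neighborhood of $\vec 0$ of the form $\{\vec y : y_j = 0 \text{ for all } j \in I\}$, with $I \subseteq n$ finite, missing it. Equivalently: for every $\vec x_1, \dotsc, \vec x_k \in \overline{f^{-1}(1)}$, there exists $j \in I$ with $x_{1,j} = \dotsb = x_{k,j} = 1$. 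Now I would define $h : 2^I -> 2$ to be the indicator of the upward closure (inside $2^I$) of $\set{\vec x|_I}{\vec x \in \overline{f^{-1}(1)}}$, and then let $g : 2^n -> 2$ be the lift, $g(\vec x) := h(\vec x|_I)$.

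It remains to verify $g \in \Mono\Cons0k\Cons11^{<\omega}$ and $f \le g$: monotonicity is by construction; $g \ge f$ because $\vec x \in f^{-1}(1) \implies \vec x \in \overline{f^{-1}(1)} \implies \vec x|_I$ witnesses $h(\vec x|_I) = 1$; $g \in \Cons11$ since $\overline{f^{-1}(1)}$ is nonempty, so $h(\vec 1) = 1$; and $g \in \Cons0k$ follows from the property of $I$: given $\vec y_1, \dotsc, \vec y_k \in h^{-1}(1)$, pick witnesses $\vec x_i \in \overline{f^{-1}(1)}$ with $\vec x_i|_I \le \vec y_i$, find $j \in I$ with $x_{i,j} = 1$ for all $i$, and conclude $y_{i,j} = 1$ for all $i$, so the bitwise meet of the $\vec y_i$ is not $\vec 0$. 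The degenerate case $f = 0$ is handled by taking $g$ to be any projection. The main delicacy is keeping track of all three conditions ($\Mono$, $\Cons0k$, $\Cons11$) simultaneously on the single essentially finitary upper bound $g$; once the right $I$ is extracted from compactness, the monotone upward-closure construction handles all three uniformly.
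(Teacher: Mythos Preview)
Your proof is correct and follows essentially the same approach as the paper's: both directions use the same ideas (downward-closure of $\Limm0kk$ for $\supseteq$, and a compactness argument producing a clopen upward-closed $\Cons0k$ upper bound for $\subseteq$). The only cosmetic difference is in how compactness is invoked: the paper writes $\overline{f^{-1}(1)}$ as a directed intersection of clopen sets $A$ and finds one with $A^k \cap (\wedge^k)^{-1}(\vec 0) = \emptyset$, whereas you use compactness of the image $\overline{f^{-1}(1)}^{\wedge k}$ to extract a finite coordinate set $I$ directly; these yield the same clopen monotone bound after taking upward closure.
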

(Recall \cref{def:down} of the downward-closure $\down$, as well as \cref{def:borel-down} for the Borel downward-closure $\down^\Borel$.
Note that it follows that $\Limm0kk^\Borel = \down^\Borel \ang{\Mono\Cons0k\Cons11^{<\omega}}^\Borel$.)
\begin{proof}
Clearly from its definition, $\Limm0kk$ is downward-closed, which shows $\supseteq$.
To show $\subseteq$:
let $f : 2^n -> 2 \in \Limm0kk^{<\omega_1}$.
If $f = 0$, then $f \le \pi_0 \in \ang{\Mono\Cons0k\Cons11^{<\omega}}$; so suppose not.
Then $\-{f^{-1}(1)} \subseteq 2^n$ has the $k$-ary intersection property, i.e., $\-{f^{-1}(1)}^k \subseteq (2^n)^k$ is disjoint from $(\wedge^k : (2^n)^k -> 2^n)^{-1}(\vec{0})$.
Since $\-{f^{-1}(1)} \subseteq 2^n$ is a downward-directed intersection of clopen sets, by compactness of $(2^n)^k$, for some such clopen $f^{-1}(1)(1) \subseteq A \subseteq 2^n$, $A^k$ must also be disjoint from $(\wedge^k)^{-1}(\vec{0})$, i.e., $A$ also has the $k$-ary intersection property.
Then the upward-closure $\up A \subseteq 2^n$ is clearly a clopen upward-closed set still with the $k$-ary intersection property, hence its indicator function $g$ is in $\ang{\Mono\Cons0k\Cons11^{<\omega}}$ with $f \le g$.
\end{proof}

\begin{corollary}
\label{thm:post-borel-L0k}
For each $k < \omega$, there are 2, 3, 4, 6 Borel clones contained in $\Limm0kk$ restricting to the finitary clones
$\Cons0k^{<\omega},
\Cons0k\Cons11^{<\omega},
\Mono\Cons0k^{<\omega},
\Mono\Cons0k\Cons11^{<\omega}$
respectively, namely generated by the respective clones below $\Cons0\omega$ from \cref{thm:post-borel-T0omega} (\cref{fig:post-borel-T0inf}) together with the generator $\forall^{k+1}_2 \in \Mono\Cons0k\Cons11^{<\omega}$.
\end{corollary}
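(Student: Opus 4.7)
The plan is to deduce the corollary by transporting \cref{thm:post-borel-T0omega} across the modularity isomorphism of \cref{thm:post-borel-down-mod}. Set $H := \ang{\Mono\Cons0k\Cons11^{<\omega}}^\Borel$. By \cref{thm:post-borel-L0k-down}, $\down^\Borel H = \Limm0kk^\Borel$, so \cref{thm:post-borel-down-mod} delivers an order-isomorphism
\begin{equation*}
[H \cap \Cons0\omega^\Borel,\, \Cons0\omega^\Borel] \;\cong\; [H,\, \Limm0kk^\Borel],
\end{equation*}
with $F' \mapsto \ang{F' \cup H}^\Borel$ going up and $F \mapsto F \cap \Cons0\omega^\Borel$ going down.

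Next I would compute the lower interval explicitly. Since $H$ consists of essentially finitary Borel functions whose underlying finitary restriction lies in $\Mono\Cons0k\Cons11^{<\omega}$, and since any monotone finitary function preserving $1$ and bounded above by some projection trivially has the finite intersection property, one reads off $H \cap \Cons0\omega^\Borel = \ang{\Mono\Cons0{<\omega}\Cons11^{<\omega}}^\Borel$. Borel clones in $[\ang{\Mono\Cons0{<\omega}\Cons11^{<\omega}}^\Borel, \Cons0\omega^\Borel]$ then have finitary restrictions in the four-element interval $[\Mono\Cons0{<\omega}\Cons11^{<\omega}, \Cons0{<\omega}^{<\omega}]$ of Post's lattice, over which \cref{thm:post-borel-T0omega} already exhibits precisely $2, 3, 4, 6$ Borel clones within $\Cons0\omega^\Borel$, each obtained as $\ang{\ceil{-}}^\Borel$ of a clone in the top cube.

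To finish, these $15$ clones transport to $15$ clones in $[H, \Limm0kk^\Borel]$. Reading off \cref{fig:post}, the only finitary clones between $\Mono\Cons0{<\omega}\Cons11^{<\omega}$ and $\Mono\Cons0k\Cons11^{<\omega}$ are the intermediate $\Mono\Cons0j\Cons11^{<\omega}$ for $k \le j < \omega$, and $\forall^{k+1}_2 \in \Mono\Cons0k\Cons11^{<\omega} \setminus \Mono\Cons0{k+1}\Cons11^{<\omega}$; hence $\Mono\Cons0k\Cons11^{<\omega} = \ang{\Mono\Cons0{<\omega}\Cons11^{<\omega} \cup \{\forall^{k+1}_2\}}^{<\omega}$, and so $\ang{F' \cup H}^\Borel = \ang{F' \cup \{\forall^{k+1}_2\}}^\Borel$ whenever $F' \supseteq \ang{\Mono\Cons0{<\omega}\Cons11^{<\omega}}^\Borel$, which matches the generators asserted in the statement. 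The finitary restriction of $\ang{F' \cup \{\forall^{k+1}_2\}}^\Borel$ is the Post-lattice join of the finitary restriction of $F'$ with $\Mono\Cons0k\Cons11^{<\omega}$, which bijectively sends the four lower finitary restrictions onto $\Cons0k^{<\omega}$, $\Cons0k\Cons11^{<\omega}$, $\Mono\Cons0k^{<\omega}$, $\Mono\Cons0k\Cons11^{<\omega}$, preserving the counts $2, 3, 4, 6$. The only step requiring even modest attention is the inspection of Post's lattice used to identify $\forall^{k+1}_2$ as the relevant generator over $\Mono\Cons0{<\omega}\Cons11^{<\omega}$; everything else is a direct application of the machinery already in hand.
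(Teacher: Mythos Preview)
Your proposal is correct and follows exactly the paper's approach: the paper's proof is the single line ``By \cref{thm:post-borel-down-mod} applied to $H = \ang{\Mono\Cons0k\Cons11^{<\omega}}^\Borel$,'' and you have simply unpacked the details of that application (identifying $\down^\Borel H = \Limm0kk^\Borel$ via \cref{thm:post-borel-L0k-down}, computing $H \cap \Cons0\omega$, and extracting $\forall^{k+1}_2$ as the missing generator). Nothing more is needed.
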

The simplest (2) and most complicated (6) of these are depicted in \cref{fig:post-borel-L0kt-T03} (dashed blocks), for $k = 3$; for the others, see \cref{fig:post-borel-T0k} (solid-shaded regions).
\begin{proof}
By \cref{thm:post-borel-down-mod} applied to $H = \ang{\Mono\Cons0k\Cons11^{<\omega}}^\Borel$.
\end{proof}

\begin{example}
\label{ex:post-borel-L0k-gen}
We get
\begin{align*}
\Mono\Limm0kk\Cons11^\Borel
&= \ang{\Mono\Cons0\omega\Cons11^\Borel \cup \{\forall^{k+1}_2\}}^\Borel
= \ang{\bigwedge, \ceil{\bigvee}, \forall^{k+1}_2}^\Borel, \\
\Limm0kk^\Borel
&= \ang{\Cons0\omega^\Borel \cup \{\forall^{k+1}_2\}}^\Borel
= \ang{\bigwedge, -/>, \forall^{k+1}_2}^\Borel
\end{align*}
(as shown in \cref{fig:post-borel-L0kt-T03} for $k = 3$).
Note that $\forall^{k+1}_2 = \forall^{k+1-0}_2 \sqcup \liminf^{\sqcup 0}$ (\cref{def:fkt}), which matches the candidate generating set for $\Limm0kk^\Borel$ from \cref{rmk:post-borel-L0kt-gen}.
\end{example}

\begin{remark}
\label{thm:post-T0k-decr}
For $k < \omega$, we have $\Decr\Cons0k^{<\omega_1} \subseteq \Limm0kk$, since for $f \in \Decr^{<\omega_1}$, $f^{-1}(1)$ is already closed (by the dual of \cref{thm:scott-prod}; cf.\ \cref{thm:mono-meetdown-0}).
\end{remark}

\begin{corollary}
\label{thm:post-borel-L0inf-down}
$\Limm0{<\omega}{}^{<\omega_1} = \down \Decr\Cons0{<\omega}\Cons11^\Borel$.
\end{corollary}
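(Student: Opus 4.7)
The plan is to reduce to the previous \cref{thm:post-borel-L0k-down}, but upgraded from a single $k$ to all $k$ simultaneously, and to observe that in this limiting case one can take the canonical witness $\up\-{f^{-1}(1)}$ rather than a clopen approximation.

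For the easy inclusion $\supseteq$: since $\Limm0{<\omega}{} = \bigcap_{k < \omega} \Limm0kk$ is downward-closed (each $\Limm0kk$ is, directly from its definition), and by \cref{thm:post-T0k-decr} together with $\Cons0{<\omega}^{<\omega_1} \subseteq \bigcap_k \Cons0k$ we have $\Decr\Cons0{<\omega}\Cons11^{<\omega_1} \subseteq \bigcap_k \Limm0kk = \Limm0{<\omega}{}$, we deduce $\down\Decr\Cons0{<\omega}\Cons11^\Borel \subseteq \Limm0{<\omega}{}^{<\omega_1}$.

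For the reverse inclusion $\subseteq$: let $f : 2^n \to 2$, $n \le \omega$, be in $\Limm0{<\omega}{}^{<\omega_1}$. The case $f = 0$ is handled by $f \le \pi_0 \in \ang{\Mono\Cons0{<\omega}\Cons11^{<\omega}}^\Borel$, so assume $f \ne 0$. Since $f \in \Limm0kk$ for every $k < \omega$, the closed set $B := \-{f^{-1}(1)} \subseteq 2^n$ has the $k$-ary intersection property for every $k$, i.e., $B$ has the finite intersection property. Let $A := \up B$. Then the indicator function $g := \mathbf{1}_A$ will be our witness, provided we verify: (i) $A$ is Borel; (ii) $A$ is upward-closed; (iii) $\vec 1 \in A$; (iv) $A$ has FIP; (v) $A$ is closed under decreasing limits. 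Once these hold, $g$ is Borel, monotone, in $\Cons11 \cap \Cons0{<\omega} \cap \Decr$, and $f \le g$ since $f^{-1}(1) \subseteq B \subseteq A$.

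Properties (ii), (iii), (iv) are direct: (ii) by construction; (iii) because $B$ is nonempty and $\vec 1$ is the top; (iv) because if $\vec a_i \ge \vec b_i \in B$ then $\bigwedge_i \vec a_i \ge \bigwedge_i \vec b_i \ne \vec 0$. The main point is to establish (i) and (v) simultaneously by showing that $A$ is actually \emph{closed} in $2^n$. This is the only nontrivial step, and it is a short compactness argument: given a sequence $\vec x_\ell \to \vec x_\infty$ with each $\vec x_\ell \ge \vec b_\ell \in B$, use sequential compactness of $2^n$ (which is a compact metrizable space since $n \le \omega$) to extract a subsequence with $\vec b_{\ell_m} \to \vec b_\infty \in B$; passing to the limit in each coordinate preserves the inequality $\vec b_{\ell_m} \le \vec x_{\ell_m}$, yielding $\vec b_\infty \le \vec x_\infty$, hence $\vec x_\infty \in \up B = A$. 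Closedness then gives both Borelness and closure under all limits (in particular decreasing ones), completing the verification.
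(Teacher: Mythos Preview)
Your proof is correct, but takes a different route from the paper's. The paper's argument for $\subseteq$ is a one-line reduction to \cref{thm:post-borel-L0k-down}: for each $k$, that proposition gives a continuous monotone $g_k \in \ang{\Mono\Cons0k\Cons11^{<\omega}}^\Borel$ with $f \le g_k$, and then $g := \bigwedge_k g_k$ lies in $\Decr\Cons0{<\omega}\Cons11^\Borel$ (being a countable meet of continuous monotone $\Cons0k\Cons11$ functions) with $f \le g$. Your approach instead constructs a single canonical witness $g = \mathbf{1}_{\up\overline{f^{-1}(1)}}$ directly and verifies its properties via a short compactness argument; this is more explicit and self-contained, and in fact your $g$ is exactly $\bigwedge_k g_k$ when the $g_k$ are chosen canonically in the proof of \cref{thm:post-borel-L0k-down}. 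The paper's version is shorter given what has already been proved, while yours makes the bound concrete and avoids invoking the previous proposition.
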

\begin{proof}
$\supseteq$ by intersecting the preceding remark over $k$; $\subseteq$ because for $f \in \Limm0{<\omega}{}^{<\omega_1}$, by \cref{thm:post-borel-L0k-down} we may write $f \le g_k \in \ang{\Mono\Cons0k\Cons11^{<\omega}}^\Borel$ for each $k$, whence $f \le \bigwedge_k g_k \in \Decr\Cons0{<\omega}\Cons11^\Borel$.
\end{proof}

\begin{remark}
\label{rmk:post-borel-L0inf}
It follows from \cref{thm:post-borel-down-mod} that
$[\Decr\Cons0{<\omega}\Cons11^\Borel, \Limm0{<\omega}{}^\Borel] \cong [\Decr\Cons0\omega\Cons11^\Borel, \Cons0\omega^\Borel]$,
which consists of 6 clones (see \cref{fig:post-borel-T0inf}).

If, as suggested in \cref{rmk:forall2}, we had $\Decr\Cons0{<\omega}\Cons11^\Borel = \ang{\forall_2}^\Borel$, then together with \cref{thm:post-borel-T0omega} this would give a complete classification of all Borel clones in $[\ang{\Mono\Cons0{<\omega}\Cons11^{<\omega}}^\Borel, \Limm0{<\omega}{}^\Borel]$.
\end{remark}

The following is analogous to \cref{thm:post-T0inf-lim1}:

\begin{lemma}
\label{thm:post-T0k-lim1}
For each $0 < k < \omega$, we have $\Cons0k\Limm111^{<\omega_1} \subseteq \Limm0{k-1}{}$.
\end{lemma}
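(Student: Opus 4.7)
The plan is to mimic the proof of \cref{thm:post-T0inf-lim1}: give an infinitary positive-primitive definition of the defining relation $({\lim\wedge^{k-1}}{=}0)$ of $\Limm0{k-1}{k-1} = \Limm0{k-1}{}$ using the two relations $({\wedge^k}{=}0)$ and $({\lim}{=}1)$ defining $\Cons0k$ and $\Limm111$. By \cref{thm:inv-pol-npp}, this immediately gives the claimed inclusion, since $\Pol^{<\omega_1}\{({\wedge^k}{=}0), ({\lim}{=}1)\} = \Cons0k\Limm111^{<\omega_1}$ preserves every relation positive-primitively definable from these two.

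The concrete formula I have in mind is
\begin{align*}
\lim_{q \to \infty}(x_{q(k-1)} \wedge \dotsb \wedge x_{q(k-1)+k-2}) = 0 \iff \exists (y_q)_{q<\omega} \in 2^\omega \paren[\Big]{(\lim_q y_q = 1) \wedge \bigwedge_{q < \omega} (y_q \wedge x_{q(k-1)} \wedge \dotsb \wedge x_{q(k-1)+k-2} = 0)},
\end{align*}
where the inner conjunction is the relation $({\wedge^k}{=}0)$ applied to the $k$-tuple $(y_q, x_{q(k-1)}, \dotsc, x_{q(k-1)+k-2})$. For the direction $(\Leftarrow)$, note that for bit sequences $\lim_q y_q = 1$ means $y_q = 1$ for all but finitely many $q$, whence $y_q \wedge (x_{q(k-1)} \wedge \dotsb \wedge x_{q(k-1)+k-2}) = 0$ forces the meet to vanish for cofinitely many $q$, giving the limit. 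For $(\Rightarrow)$, define $y_q := 1$ exactly when $x_{q(k-1)} \wedge \dotsb \wedge x_{q(k-1)+k-2} = 0$; this sequence is $1$ cofinitely and satisfies each clause of the inner conjunction.

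The above is a legitimate positive-primitive formula in the sense of \cref{thm:inv-pol-npp} (existential quantification over $\omega$-many variables, plus an $\omega$-ary conjunction of atomic relations drawn from $\{({\wedge^k}{=}0),({\lim}{=}1)\}$), so ${\lim\wedge^{k-1}}{=}0 \in \Inv^\omega(\Cons0k\Limm111^{<\omega_1})$, which is exactly the desired inclusion. There is no real obstacle here beyond keeping the block-of-$(k-1)$ indexing straight; the case $k = 1$ degenerates (with $\wedge^0 = 1$) to the trivial statement $\Cons01\Limm111 \subseteq \Limm0{{0}}{} = \Op2$, and the case $k = 2$ recovers a special case of \cref{thm:post-T0inf-lim1}.
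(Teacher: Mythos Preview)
Your proposal is correct and essentially identical to the paper's own proof: both give the same positive-primitive definition of $({\lim\wedge^{k-1}}{=}0)$ from $({\wedge^k}{=}0)$ and $({\lim}{=}1)$, differing only in notation (the paper writes the relation as a subset of $(2^\omega)^{k-1}$ with coordinates $x_{r,q}$, you use the flattened block indexing $x_{q(k-1)+r}$). One small quibble: your closing remark that the case $k=2$ ``recovers a special case of \cref{thm:post-T0inf-lim1}'' is not quite accurate as stated, since that lemma concerns $\Cons0{<\omega}$ and $\Cons0\omega$ rather than $\Cons02$ and $\Limm011$; but the proof technique is indeed the same, which is presumably what you meant.
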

\begin{proof}
We have the following positive-primitive definition of the relation $({\lim\wedge^{k-1}}{=}0)$ (\cref{def:L0k}), which we treat here as a subset of $(2^\omega)^{k-1}$ rather than $2^{(k-1) \times \omega} \cong 2^\omega$ for clarity:
\begin{align*}
\lim (\vec{x}_0 \wedge \dotsb \wedge \vec{x}_{k-2}) = 0
&\iff  \exists \vec{y} \in 2^\omega\, \paren[\Big]{(\lim \vec{y} = 1) \wedge \bigwedge_{q < \omega} (x_{0,q} \wedge \dotsb \wedge x_{k-2,q} \wedge y_q = 0)}.
\qedhere
\end{align*}
\end{proof}

Analogously to \cref{thm:post-T0inf-incr}, we may deduce from this that $\Incr\Cons0k^{<\omega_1} \subseteq \Limm0{k-1}{}$.
But in fact:

\begin{lemma}
\label{thm:post-T0k-incr}
For each $k < \omega$, we have $\Incr\Cons0k^{<\omega_1} \subseteq \Limm0k{k-1}$.
\end{lemma}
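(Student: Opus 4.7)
The plan is to reformulate the claim in terms of the set $U := f^{-1}(1) \subseteq 2^n$ and then argue by contradiction, producing $k$ explicit members of $U$ whose bitwise meet is $\vec{0}$. Let $f \in \Incr\Cons0k^{<\omega_1}$. By \cref{def:post-rels}, $f \in \Incr$ means that $U$ is both upward-closed and Cantor-open in $2^n$ (equivalently, $f^{-1}(0)$ is closed under increasing joins), while $f \in \Cons0k$ means that $U$ has the $k$-ary intersection property. The case $n < \omega$ is trivial since $\-U = U$, so assume $n = \omega$. The goal is to rule out the existence of $\vec{x}_0 \in U$ and $\vec{x}_1, \dotsc, \vec{x}_{k-1} \in \-U$ with $\vec{x}_0 \wedge \vec{x}_1 \wedge \dotsb \wedge \vec{x}_{k-1} = \vec{0}$.

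The key construction, which exploits openness and upward-closure of $U$ \emph{simultaneously}, goes as follows. Since $U$ is open and $\vec{x}_0 \in U$, there is a basic clopen neighborhood of $\vec{x}_0$ contained in $U$, namely $\{\vec{y} \mid y_j = x_{0,j}\ \forall j \in F\}$ for some finite $F \subseteq n$; setting $F_0 := \{j \in F \mid x_{0,j} = 1\}$ and writing $\vec{1}_{F_0} \in 2^n$ for the string with $1$'s exactly on $F_0$, upward-closure of $U$ yields $\vec{1}_{F_0} \in U$ (since $\vec{1}_{F_0}$ already agrees with $\vec{x}_0$ on $F$, placing it in the chosen neighborhood). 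In parallel, for each $i \in \{1, \dotsc, k-1\}$, since $\vec{x}_i \in \-U$, every basic clopen neighborhood of $\vec{x}_i$ meets $U$, so I can pick $\vec{z}_i \in U$ with $z_{i,j} = x_{i,j}$ for all $j \in F_0$; then, letting $\vec{1}_{n \setminus F_0}$ denote the string with $1$'s off $F_0$ and $0$'s on $F_0$, the bit string $\vec{y}_i := \vec{z}_i \vee \vec{1}_{n \setminus F_0}$ still lies in $U$ by upward-closure, agrees with $\vec{x}_i$ on $F_0$, and equals $1$ off $F_0$.

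The punchline is a coordinate-wise computation of $\vec{w} := \vec{1}_{F_0} \wedge \vec{y}_1 \wedge \dotsb \wedge \vec{y}_{k-1}$. For $j \notin F_0$ the first factor already gives $w_j = 0$; for $j \in F_0$ one has $w_j = x_{1,j} \wedge \dotsb \wedge x_{k-1,j}$, which must vanish because $x_{0,j} = 1$ on $F_0$ while $\vec{x}_0 \wedge \vec{x}_1 \wedge \dotsb \wedge \vec{x}_{k-1} = \vec{0}$ by hypothesis. So $\vec{w} = \vec{0}$, whereas $\vec{1}_{F_0}, \vec{y}_1, \dotsc, \vec{y}_{k-1}$ are $k$ members of $U$, contradicting the $k$-ary intersection property. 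I anticipate no serious obstacle here; the only delicate moves are the two applications of upward-closure that produce the honest element $\vec{1}_{F_0}$ from the basic clopen and reshape each $\vec{z}_i$ into the convenient $\vec{y}_i$. Note that this is sharper than what a naive bootstrap from \cref{thm:post-T0k-lim1} (via $\Incr^{<\omega_1} \setminus \{0\} \subseteq \Limm111$) would give, which yields only $\Limm0{k-1}{}$; the extra strength comes precisely from having replaced one of the $k$ ``closure'' factors by the honest member $\vec{1}_{F_0}$ of $U$.
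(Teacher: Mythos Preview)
Your proof is correct, and the argument is complete. However, it takes a genuinely different route from the paper's proof.

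The paper proves the inclusion by exhibiting a positive-primitive definition of the relation $({\wedge^1\wedge\lim\wedge^{k-1}}{=}0)$ (viewed as a subset of $2 \times (2^\omega)^{k-1}$) from the relations $\bigveeup$ and $({\wedge^k}{=}0)$:
\begin{equation*}
x_0 \wedge \lim_{q\to\infty} (\vec{x}_{1,q} \wedge \dotsb \wedge \vec{x}_{k-1,q}) = 0
\iff \exists \vec{y} \in 2^\omega\, \Bigl(\bigl(\inline\bigveeup \vec{y} = x_0\bigr) \wedge \bigwedge_{q < \omega} (x_{1,q} \wedge \dotsb \wedge x_{k-1,q} \wedge y_q = 0)\Bigr),
\end{equation*}
and then invokes the general machinery of \cref{thm:inv-pol-npp}: anything preserving the defining relations preserves the defined one. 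This is a one-line proof in the spirit of the other pp-definition lemmas in the paper (\cref{thm:mono-lim,thm:mono-meetdown-0,thm:post-T0inf-lim1,thm:post-T0k-lim1}).

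Your argument instead works directly with the topological description of $U = f^{-1}(1)$ as upward-closed and Cantor-open, extracting from $\vec{x}_0 \in U$ the finite ``support'' $F_0$ and then reshaping the closure witnesses $\vec{x}_1,\dots,\vec{x}_{k-1}$ into honest elements $\vec{y}_i \in U$ that still have the required meet with $\vec{1}_{F_0}$. This is more concrete and self-contained (no appeal to the $\Pol$--$\Inv$ Galois connection), and it makes the mechanism visible: the single ``honest'' factor in $\Limm0k{k-1}$ is exactly what openness of $U$ buys, while upward-closure is what lets you discard the irrelevant coordinates outside $F_0$. The paper's approach, by contrast, is more uniform with the surrounding lemmas and immediately clarifies why the result holds at the level of invariant relations. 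One small remark: your first ``application of upward-closure'' to obtain $\vec{1}_{F_0} \in U$ is actually unnecessary, since (as you note parenthetically) $\vec{1}_{F_0}$ already lies in the chosen basic clopen neighborhood.
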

\begin{proof}
We have the following positive-primitive definition of the relation $({\wedge^1\wedge\lim\wedge^{k-1}}{=}0)$ (\cref{def:L0kt}), which we treat here as a subset of $2 \times (2^\omega)^{k-1}$:
\begin{align*}
x_0 \wedge \lim (\vec{x}_1 \wedge \dotsb \wedge \vec{x}_{k-1}) = 0
&\iff  \exists \vec{y} \in 2^\omega\, \paren[\Big]{\paren[\big]{\bigveeup \vec{y} = x_0} \wedge \bigwedge_{q < \omega} (x_{1,q} \wedge \dotsb \wedge x_{k-1,q} \wedge y_q = 0)}.
\qedhere
\end{align*}
\end{proof}

These lemmas explain why in the right $\Mono\Cons03\Cons11$ part of \cref{fig:post-borel-L0kt-T03}, below the primary ``cobweb'' of 8 clones $[\Mono\Limm033\Cons11, \Mono\Cons03\Cons11]$, there is a ``secondary row'' of only the clones below $\Limm022$ intersected with $\Limm111$, and why below only $\Limm032$, there is a ``tertiary'' intersection with $\Incr$.
(Over $\Cons0k\Cons11$, there would only be the ``secondary row'', whereas over $\Mono\Cons0k$, there would only be the ``tertiary'' intersection with $\Incr$; see \cref{fig:post-borel-T0k}.)

We now show that such ``secondary'' and ``tertiary'' Borel clones indeed correspond to the obvious ``primary'' clones in the ``cobweb'' suggested by \cref{fig:post-borel-L0kt-T03,fig:post-borel-T0k}.

\begin{proposition}
\label{thm:post-T0k-lim1-mod}
For each $0 < k < \omega$, we have modularity isomorphisms
\begin{gather*}
\begin{aligned}
[\ang{\Mono\Cons0k\Cons11^{<\omega}}^\Borel, \Cons0k\Limm111^{<\omega_1}]
&\cong [\ang{\Mono\Cons0k^{<\omega}}^\Borel, \Cons0k\Limm0{k-1}{}^{<\omega_1}] \\
F &|-> \ang{F \cup \{0\}}^{<\omega_1} \\
G \cap \Limm111 &<-| G,
\end{aligned}
\\[1ex]
\begin{alignedat}{2}
[\Decr\Cons0k\Limm111^\Borel, \Cons0k\Limm111^{<\omega_1}] &\cong
[\Decr\Cons0k\Cons11^\Borel, \Cons0k\Limm0{k-1}{}\Cons11^{<\omega_1}] &&\cong
[\Decr\Cons0k^\Borel, \Cons0k\Limm0{k-1}{}^{<\omega_1}] \\
F &|-> \ang{F \cup \{\bigwedge\}}^{<\omega_1} &&|-> \ang{F \cup \{0\}}^{<\omega_1}.
\end{alignedat}
\end{gather*}
\end{proposition}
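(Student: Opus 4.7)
My plan is to deduce both isomorphisms by specializing the two modularity isomorphisms of \cref{thm:post-T1-mod} to the relevant sub-intervals. For the first isomorphism, the inclusion $\ceil{\vee} \in \Mono\Cons0k\Cons11^{<\omega}$ places the left interval inside the domain of \cref{thm:post-T1-mod}'s first iso, so the bulk of the work reduces to two endpoint identities. The lower-endpoint identity $\ang{\ang{\Mono\Cons0k\Cons11^{<\omega}}^\Borel \cup \{0\}}^{<\omega_1} = \ang{\Mono\Cons0k^{<\omega}}^\Borel$ is a direct consequence of Post's lattice. The upper-endpoint identity $\ang{\Cons0k\Limm111^{<\omega_1} \cup \{0\}}^{<\omega_1} = \Cons0k\Limm0{k-1}{}^{<\omega_1}$ has its $\subseteq$ direction from $\Cons0k\Limm111 \subseteq \Limm0{k-1}{}$ (\cref{thm:post-T0k-lim1}) together with the closure of $\Cons0k\Limm0{k-1}{}$ under adjoining $0$. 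For the $\supseteq$ direction, given $f \in \Cons0k\Limm0{k-1}{}^{<\omega_1}$, the hypothesis $\vec 0 \notin \overline{f^{-1}(1)}^{\wedge(k-1)}$ combined with clopen-separation on $2^n$ yields a finite $S \subseteq n$ such that $g := f \vee \bigwedge_{j \in S} \pi_j$ lies in $\Cons0k\Limm111^{<\omega_1}$ with $g \geq f$ (the $k$-ary intersection property for mixed tuples follows from the $(k-1)$-ary intersection of the closure applied to the $f^{-1}(1)$ coordinates, while all-$U$ and all-$f^{-1}(1)$ tuples are handled directly); the formula $h(y,\vec x) := \ceil{\vee}(g(\vec x), y, f(\vec x))$ from \cref{thm:post-T1-mod} then yields $h \in \Cons0k\Limm111$ with $h_0 = f$, so $f \in \ang{\Cons0k\Limm111^{<\omega_1} \cup \{0\}}^{<\omega_1}$.

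For the second isomorphism, the middle-to-right step $F \mapsto \ang{F \cup \{0\}}^{<\omega_1}$ is a restriction of the second iso of \cref{thm:post-T1-mod}, with endpoint calculations parallel to those above but with the continuity-at-$\vec 1$ constraint replaced by the weaker $f(\vec 1)=1$ constraint. The left-to-middle step $F \mapsto \ang{F \cup \{\bigwedge\}}^{<\omega_1}$ passes from $\Limm111$ to $\Cons11$ inside $\Decr\Cons0k$; endpoint matching here uses $\bigwedge \in \Decr\Cons0k\Cons11$ together with the observation that, within $\Decr$-functions, adjoining $\bigwedge$ is the minimal bridge between continuity at $\vec 1$ and mere preservation of $\vec 1$ (an easy check using that Scott-continuity plus the value $1$ at $\vec 1$ is the condition cutting out $\Limm111$ from $\Cons11$ inside $\Decr$).

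The hard part will be upgrading these modularity adjunctions from retractions to genuine isomorphisms, i.e., verifying for every $G$ in each target interval the down-closure condition from \cref{thm:post-T1-mod}. Concretely, for the first iso one must show that every $f \in G$ admits an upper bound $h \in G \cap \Limm111$. The obvious bound $f \vee \bigwedge_{j \in S} \pi_j$ does not lie in $G$ because $\vee \notin \Cons0k$ for $k \geq 2$. I would instead use the generator $\forall^{k+1}_2 \in \Mono\Cons0k\Cons11^{<\omega} \subseteq G$ to form
\begin{equation*}
h := \forall^{k+1}_2\paren[\big]{f, \inline{\bigwedge_{j \in S_1}} \pi_j, \dotsc, \inline{\bigwedge_{j \in S_k}} \pi_j}
\end{equation*}
for a suitable partition $S = S_1 \sqcup \dotsb \sqcup S_k$ of the same finite $S$ obtained by compactness. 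Such an $h$ automatically lies in $G \cap \Limm111$; the requirement $h \geq f$ reduces to the combinatorial condition that for every $\vec x \in f^{-1}(1)$, the bad set $\{j \in S : x_j = 0\}$ is contained in a single block $S_l$. Exhibiting such a partition---by exploiting the $(k-1)$-ary intersection structure on $\overline{f^{-1}(1)}$ to control how widely the bad sets can spread on $S$---is the combinatorial heart of the proof, and an analogous construction with $\bigwedge$ adjoined handles the down-closure condition needed for the second iso.
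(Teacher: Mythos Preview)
Your reduction to \cref{thm:post-T1-mod} and your endpoint calculations are correct, and you have correctly identified that the substance of the proof is the down-closure condition $G \subseteq \down(G \cap \Limm111)$ for every $G$ in the target interval. However, your proposed ``combinatorial heart''—the existence of a partition $S = S_1 \sqcup \dotsb \sqcup S_k$ such that every bad set $\{j \in S : x_j = 0\}$ for $\vec x \in f^{-1}(1)$ lies in a single block—is false in general, and so the single-layer formula $\forall^{k+1}_2(f, \bigwedge_{S_1}\pi, \dotsc, \bigwedge_{S_k}\pi)$ cannot work.

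For a concrete counterexample, take $k=2$ and let $f : 2^\omega \to 2$ be the indicator of the set of strings with \emph{exactly} two $0$'s. Then $f \in \Cons02\Limm011^{<\omega_1}$, and any finite $S$ with $|S|\ge 3$ satisfies your compactness condition. But for \emph{every} finite $S$ with $|S|\ge 2$ and every pair $i \ne j$ in $S$, the string with $0$'s exactly at $\{i,j\}$ lies in $f^{-1}(1)$ and has bad set $\{i,j\}$; so all elements of $S$ are forced into a single block, leaving the other block empty (hence requiring the constant $1 \notin G$ in your formula). No choice of $S$ avoids this.

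The paper's proof replaces your one-step formula by a \emph{recursive} one. It first uses $\Limm0{k-1}{}^{<\omega_1} = \down\ang{\forall^k_2}^\Borel$ (\cref{thm:post-borel-L0k-down}) to get a bound $g \le f' \in \ang{\forall^k_2}^\Borel$, then inducts on the construction of $f'$ via \cref{thm:clone-gen-left}: at a step $f' = \forall^k_2(f'_0,\dotsc,f'_{k-1})$, the inductive hypothesis gives $h_i \ge f'_i \wedge g$ in $\ang{\{g\}\cup\Mono\Cons0k\Cons11^{<\omega}} \cap \Limm111$, and one sets $h := \forall^{k+1}_2(g, h_0, \dotsc, h_{k-1})$. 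The crucial difference from your formula is that the $h_i$ are built recursively from $g$ itself, not from bare projections; in the counterexample above this produces a nested expression like $\forall^3_2\bigl(g,\, \pi_0,\, \forall^3_2(\ceil\vee(g,\pi_1,\pi_2),\, \pi_1,\, \pi_2)\bigr)$, which does lie in $G \cap \Limm111$ and bounds $g$.
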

\begin{proof}
The latter two isomorphisms above follow from the first isomorphism above together with the second isomorphism in \cref{thm:post-T1-mod}.

For the first isomorphism above, note first that the map is well-defined, since the upper bound of the left-hand interval is equal to $\Cons0k\Limm0{k-1}{}\Limm111^{<\omega_1}$ by \cref{thm:post-T0k-lim1}.
And it is an embedding by the first isomorphism in \cref{thm:post-T1-mod}, with image consisting of all clones $G$ in the right-hand interval such that $G \subseteq \down (G \cap \Limm111)$; thus it suffices to prove this for all $G$ in the right-hand interval.
For that, it suffices to prove that
\begin{equation*}
\forall g \in \Limm0{k-1}{}^{<\omega_1},\; g \in \down \paren[\big]{\ang{\{g\} \cup \Mono\Cons0k\Cons11^{<\omega}} \cap \Limm111}.
\end{equation*}
By \cref{thm:post-borel-L0k-down}, $\Limm0{k-1}{}^{<\omega_1} = \down \ang{\Mono\Cons0{k-1}\Cons11^{<\omega}}^\Borel = \down \ang{\forall^k_2}^\Borel$; thus it suffices to prove
\begin{align*}
\ang{\forall^k_2}^\Borel \subseteq \set[\big]{f \in \ang{\Mono\Cons0{k-1}\Cons11^{<\omega}}^\Borel}{\forall g \le f,\; g \in \down \paren[\big]{\ang{\{g\} \cup \Mono\Cons0k\Cons11^{<\omega}} \cap \Limm111}}.
\end{align*}
Clearly the right-hand side contains each $\pi_i$ ($g \le \pi_i \in \ang{\emptyset} \cap \Limm111$); thus it suffices (\cref{thm:clone-gen-left}) to prove that it is closed under left-composition with $\forall^k_2$.
Let $f_0, \dotsc, f_{k-1} : 2^n -> 2$ belong to the right-hand side, and let $g \le \forall^k_2(f_0, \dotsc, f_{k-1})$.
Then by the induction hypothesis, there are $h_i$ with
\begin{align*}
f_i \wedge g \le h_i
&\in \ang{\{f_i \wedge g\} \cup \Mono\Cons0k\Cons11^{<\omega}} \cap \Limm111 \\
&\subseteq \ang{\{g\} \cup \Mono\Cons0k\Cons11^{<\omega}} \cap \Limm111
\end{align*}
by \cref{thm:post-lower} since $f_i \wedge g \in \down \ang{\{g\} \cup \Mono\Cons0k\Cons11^{<\omega}} \cap \ang{\{g\} \cup \Mono\Cons0k\Cons11^{<\omega} \cup \{1\}}$.
Then
\begin{align*}
g
&\le \forall^{k+1}_2(g, h_0, \dotsc, h_{k-1})
\in \ang{\{g\} \cup \Mono\Cons0k\Cons11^{<\omega}} \cap \Limm111,
\end{align*}
since $g = g \wedge \forall^k_2(f_0, \dotsc, f_{k-1}) \le \forall^{k+1}_2(g, h_0, \dotsc, h_{k-1})$, and $h_0 \wedge \dotsb \wedge h_{k-1} \le \forall^{k+1}_2(g, h_0, \dotsc, h_{k-1})$ whence $\forall^{k+1}_2(g, h_0, \dotsc, h_{k-1})$ is continuous at $\vec{1}$.
\end{proof}

\begin{corollary}
\label{thm:post-T0k-lim1-down}
For any $t < k < \omega$, we have $\Limm0kt\Limm0{k-1}{}^{<\omega_1} = \down \Mono\Limm0kt\Limm0{k-1}{}\Limm111^{<\omega_1}$ and $\Limm0kt\Limm0{k-1}{}^\Borel = \down^\Borel \Mono\Limm0kt\Limm0{k-1}{}\Limm111^\Borel$.
\end{corollary}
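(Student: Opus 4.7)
The inclusion $\supseteq$ is immediate: both defining relations $({\wedge^{k-t}\wedge\lim\wedge^t}){=}0$ and $(\lim\wedge^{k-1}){=}0$ are downward-closed subsets of $2^\omega$, so $\Limm0kt \cap \Limm0{k-1}{}$ is a downward-closed class of functions.

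For $\subseteq$, my plan is to produce, for any $f$ in the left-hand side, a witness $h \ge f$ lying in $\Mono \cap \Limm0kt \cap \Limm0{k-1}{} \cap \Limm111$ in two stages.  First I would monotonify.  In the ${<}\omega_1$-ary setting I set $g := \up f$ and verify that $\up f \in \Limm0kt \cap \Limm0{k-1}{}$ via the pointwise identities $\up A \wedge \up B = \up(A \wedge B)$ and $\overline{\up A} = \up\overline{A}$ (the latter using compactness of $2^n$ and closedness of the coordinatewise order), together with $\vec{0} \in \up X \iff \vec{0} \in X$; these combine to show that $\up f$ satisfies each downward-closed defining relation iff $f$ does.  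In the Borel setting $\up f$ may fail to be Borel (\cref{rmk:post-up-analytic}), so instead I would apply \cref{thm:post-borel-up} to the countable downward-closed Borel family $\brace{({\wedge^{k-t}\wedge\lim\wedge^t}){=}0,\, (\lim\wedge^{k-1}){=}0}$ defining $\Limm0kt\Limm0{k-1}{}^\Borel$, producing a Borel $g \ge f$ in $\Mono\Limm0kt\Limm0{k-1}{}^\Borel$.

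Second, I would feed this monotone $g \in \Limm0{k-1}{}$ into the inductive construction from the proof of \cref{thm:post-T0k-lim1-mod}, which yields $h \ge g$ with $h \in \ang{\{g\} \cup \Mono\Cons0k\Cons11^{<\omega}} \cap \Limm111$ (and Borel if $g$ is, since each step of the construction is clone-generated from Borel inputs).  The crucial observation is that the clone being generated is automatically contained in $\Mono \cap \Limm0kt \cap \Limm0{k-1}{}$: $g$ satisfies all three by Step~1, and the finitary generators $\Mono\Cons0k\Cons11^{<\omega}$ do too, because every finitary function $f : 2^n \to 2$ has $f^{-1}(1)$ closed (as $2^n$ is finite), giving $\Cons0k^{<\omega} = \Limm0k{t'}^{<\omega}$ for every $t' \le k$, and furthermore $\Cons0k^{<\omega} \subseteq \Cons0{k-1}^{<\omega} = \Limm0{k-1}{k-1}^{<\omega}$.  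Hence $h \in \Mono\Limm0kt\Limm0{k-1}{}\Limm111$ with $f \le g \le h$, as required.

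The chief obstacle is the simultaneous preservation of both invariants $\Limm0kt$ and $\Limm0{k-1}{}$ in the monotonification step: in the Borel case one must check that \cref{thm:post-borel-up} applies jointly to both downward-closed defining relations at once (which requires verifying that each is downward-closed and Borel of countable arity), while in the ${<}\omega_1$-ary case it depends on $\up$ commuting with both topological closure and coordinatewise meet on $2^n$, which allows both invariants of $\up f$ to be read off in parallel from those of $f$.
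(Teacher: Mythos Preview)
Your proposal is correct and follows essentially the same two-step strategy as the paper: first monotonify (via \cref{thm:post-borel-up} in the Borel case; the paper implicitly uses \cref{thm:post-up-ctcl} for the ${<}\omega_1$-ary case, while your direct verification that $\up$ commutes with closure and coordinatewise meet on $2^n$ is an equivalent hands-on substitute), then apply the inductive construction from \cref{thm:post-T0k-lim1-mod} to land in $\Limm111$, noting that the generated clone stays inside $\Mono\Limm0kt\Limm0{k-1}{}$ because the finitary generators $\Mono\Cons0k\Cons11^{<\omega}$ already lie there. The paper phrases the second step more abstractly via the modularity isomorphism (deducing $\Mono\Limm0kt\Limm0{k-1}{} = \ang{\Mono\Limm0kt\Limm0{k-1}{}\Limm111 \cup \{0\}} \subseteq \down \Mono\Limm0kt\Limm0{k-1}{}\Limm111$), but the content is the same.
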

\begin{proof}
$\supseteq$ is obvious; $\subseteq$ follows from the above and that $\ang{F \cup \{0\}}^{<\omega_1} \subseteq \down F$ for a ${<}\omega_1$-ary clone $F \subseteq \Mono$ (\cref{thm:down-T0inf}), as well as \cref{thm:post-borel-up} in the Borel case to get $\smash{\Limm0kt\Limm0{k-1}{}^\Borel}
\subseteq \smash{\down^\Borel \Mono\Limm0kt\Limm0{k-1}{}^\Borel}
\subseteq \smash{\down^\Borel \ang{\Mono\Limm0kt\Limm0{k-1}{}\Limm111^\Borel \cup \{0\}}^\Borel}
\subseteq \smash{\down^\Borel \Mono\Limm0kt\Limm0{k-1}{}\Limm111^\Borel}$.
\end{proof}

Thus for instance, in \cref{fig:post-borel-L0kt-T03}, the interval $[\Limm033, \Cons03\Limm022]$ is isomorphic to the two parallel intervals on the right.
Note that the proof of \cref{thm:post-T0k-lim1-mod} applies to \emph{all ${<}\omega_1$-ary clones in these intervals, not just the 4 shown}.
This includes non-Borel ${<}\omega_1$-ary clones, as well as any potential unknown Borel clones between the upper 3 shown and their proposed generators from \cref{rmk:post-borel-L0kt-gen}.

\begin{remark}
\label{rmk:post-T0k-lim1-mod}
From the proof of \cref{thm:post-T0k-lim1-mod}, we may read off a procedure for obtaining an upper bound $h \in \ang{\{g\} \cup \Mono\Cons0k\Cons11^{<\omega}} \cap \Limm111$ for $g \in \Cons0k\Limm0{k-1}{}$, given an upper bound $g \le f \in \ang{\forall^k_2}$.
Namely, we induct on the construction of $f$; whenever $f = \forall^k_2(f_0,\dotsc,f_{k-1})$, then we recursively find upper bounds $h_i$ for $f_i \wedge g$, and then take
\begin{align*}
h := \forall^{k+1}_2(g, h_0, \dotsc, h_{k-1})
= g \vee (h_0 \wedge \dotsb \wedge h_{k-1}).
\end{align*}
By \cref{thm:post-lower}, it follows that the preimage of $\ang{\{g\} \cup \Mono\Cons0k\Cons11^{<\omega}}$ under the second isomorphism in \cref{thm:post-T0k-lim1-mod} is $\ang{\{h\} \cup \Mono\Cons0k\Cons11^{<\omega}}$ (since $g$ is generated by the latter together with $0$).

We may apply this to $g = \forall^{k-t}_2 \sqcup \liminf^{\sqcup t} \in \Mono\Limm0k{t-1}\Limm0{k-1}{}\Cons11 \setminus \Limm0kt$ from \cref{thm:fkt}, for each $1 \le t \le k$.
An easy upper bound is $f = \forall^k_2(\pi_0,\dotsc,\pi_{k-1})$, which gives
\begin{align*}
h
&= (\forall^{k-t}_2 \sqcup \inline\liminf^{\sqcup t}) \vee (\pi_0 \wedge \dotsb \wedge \pi_{k-1}).
\end{align*}
For example, when $k = t = 2$ (cf.\ \cref{def:fkt}), we get
\begin{align*}
h(\vec{x}) &= \paren[\Big]{\liminf_{q->\infty} x_{2q} \wedge \bigwedge_{q<\omega} x_{2q+1}} \vee \paren[\Big]{\bigwedge_{q<\omega} x_{2q} \wedge \liminf_{q->\infty} x_{2q+1}} \vee (x_0 \wedge x_1).
\end{align*}
Using the above, we may convert \cref{thm:fkt-gen} into a ``dichotomy'' for such $h$:
\end{remark}

\begin{corollary}
\label{thm:fkt-lim1-gen}
For any $0 < t \le k < \omega$ and function
$f : 2^n -> 2 \in \Cons0k\Limm111^{<\omega_1} \setminus \Limm0kt$,
we have
$(\forall^{k-t}_2 \sqcup \inline\liminf^{\sqcup t}) \vee (\pi_0 \wedge \dotsb \wedge \pi_{k-1}) \in \ang{\{f\} \cup \Mono\Limm0kk\Limm111^\Borel} = \ang{f, \forall^{k+1}_2, \ceil{\floor{\bigwedge}}, \ceil{\bigvee}}$.
\end{corollary}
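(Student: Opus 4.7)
The plan is to combine \cref{thm:fkt-gen}, the explicit construction in \cref{rmk:post-T0k-lim1-mod}, and the first modularity isomorphism of \cref{thm:post-T0k-lim1-mod}. First, since $f \notin \Limm0kt$, \cref{thm:fkt-gen} yields $g := \forall^{k-t}_2 \sqcup \liminf^{\sqcup t} \in \ang{\{f\} \cup \Mono\Cons0\omega\Cons11^\Borel} = \ang{f, \bigwedge, \ceil\bigvee}$. Per \cref{rmk:post-T0k-lim1-mod} applied with the easy upper bound $g \le \forall^k_2(\pi_0, \dotsc, \pi_{k-1})$, the function $h := g \vee (\pi_0 \wedge \dotsb \wedge \pi_{k-1})$ coincides pointwise with $\forall^{k+1}_2(g, \pi_0, \dotsc, \pi_{k-1})$: this uses the observation that $g(\vec x) = 1$ forces at most one of $x_0, \dotsc, x_{k-1}$ to be $0$, which follows by inspecting the arguments of the $\forall^k_2$ in the formula for $g$. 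In particular $h \in \Limm111$, being $1$ on the open neighborhood $\{x_0 = \dotsb = x_{k-1} = 1\}$ of $\vec 1$.

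Set $F := \ang{\{f\} \cup \Mono\Limm0kk\Limm111^\Borel}$. I claim $F$ lies in the interval $[\ang{\Mono\Cons0k\Cons11^{<\omega}}^\Borel, \Cons0k\Limm111^{<\omega_1}]$ of the first modularity isomorphism of \cref{thm:post-T0k-lim1-mod}: the upper bound uses $f \in \Cons0k\Limm111$ together with $\Mono\Limm0kk\Limm111 \subseteq \Mono\Cons0k\Limm111$, while the lower bound uses that every finitary $\Mono\Cons0k\Cons11^{<\omega}$ function is continuous, hence automatically in both $\Limm0kk$ and $\Limm111$. Under the isomorphism, $F = G \cap \Limm111$ for $G := \ang{F \cup \{0\}}^{<\omega_1}$, so since $h \in \Limm111$ it suffices to exhibit $h \in G$.

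Each of $\forall^{k+1}_2$, $\ceil\bigvee$, and $\ceil{\floor\bigwedge}$ lies in $\Mono\Limm0kk\Limm111^\Borel$ (the first by finitary continuity; the second is Scott-continuous, hence in $\Cons0\omega \subseteq \Limm0kk$, and is $1$ on $\{x_0 = x_1 = 1\}$; the third has support $\subseteq \{x_0 = 1\}$, whose closure enjoys the $k$-intersection property for every $k$, and is $1$ on $\{x_0 = x_1 = 1\}$). Consequently the substitution identity $\bigwedge(x_0, x_1, \dotsc) = \ceil{\floor\bigwedge}(x_0, 0, x_0, x_1, \dotsc)$ places $\bigwedge \in \ang{\{0\} \cup \Mono\Limm0kk\Limm111^\Borel} \subseteq G$, so $g \in \ang{f, \bigwedge, \ceil\bigvee} \subseteq G$, and finally $h = \forall^{k+1}_2(g, \pi_0, \dotsc, \pi_{k-1}) \in G$, proving $h \in F$.

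For the asserted equality $\ang{\{f\} \cup \Mono\Limm0kk\Limm111^\Borel} = \ang{f, \forall^{k+1}_2, \ceil{\floor\bigwedge}, \ceil\bigvee}$, it suffices to establish $\Mono\Limm0kk\Limm111^\Borel = \ang{\forall^{k+1}_2, \ceil{\floor\bigwedge}, \ceil\bigvee}^\Borel$. Here $\supseteq$ is routine; for $\subseteq$, the right-hand side lies in the left interval of the same isomorphism, and its image under the isomorphism is $\ang{\forall^{k+1}_2, \ceil{\floor\bigwedge}, \ceil\bigvee, 0}^\Borel$, which via the same substitution recovers $\bigwedge$ and so equals $\Mono\Limm0kk^\Borel = \ang{\Mono\Limm0kk\Cons11^\Borel \cup \{0\}}^\Borel$ (using \cref{thm:post-borel-L0k}); intersecting back with $\Limm111$ retrieves $\Mono\Limm0kk\Limm111^\Borel$. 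The main obstacle throughout is that the natural candidate $\bigwedge$ is absent from $\Limm111$, while $\floor\bigwedge$ already escapes $\Limm0kk$ once $k \ge 2$; the device $\ceil{\floor\bigwedge}$ finesses both constraints, and combines with the constant $0$ — available only inside $G$, not $F$ — to restore $\bigwedge$.
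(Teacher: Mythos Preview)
Your proof is correct and follows essentially the same approach as the paper: transport \cref{thm:fkt-gen} through the first modularity isomorphism of \cref{thm:post-T0k-lim1-mod}, using the explicit upper bound $h$ from \cref{rmk:post-T0k-lim1-mod}. You spell out more of the mechanics (the pointwise identity $h = \forall^{k+1}_2(g,\pi_0,\dotsc,\pi_{k-1})$, why $h \in \Limm111$, how $\bigwedge$ is recovered from $\ceil{\floor\bigwedge}$ and $0$), whereas the paper compresses these into references.

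One minor slip: ``Scott-continuous, hence in $\Cons0\omega$'' is not a valid inference on its own (the constant $1$ is Scott-continuous but not in $\Cons01$). The correct and simpler reason $\ceil\bigvee \in \Cons0\omega$ is just that $\ceil\bigvee \le \pi_0$. This does not affect the argument.
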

\begin{proof}
The clone $F := \ang{\{f\} \cup \Mono\Limm0kk\Limm111^\Borel}^{<\omega_1} \not\subseteq \Limm0kt\Limm111$ corresponds via the first isomorphism in \cref{thm:post-T0k-lim1-mod} to $\ang{F \cup \{0\}}^{<\omega_1} \not\subseteq \Limm0kt$, which hence contains $\ang{\{\forall^{k-t}_2 \sqcup \inline\liminf^{\sqcup t}\} \cup \Mono\Limm0kk^\Borel}^\Borel$ by \cref{thm:fkt-gen}, whence the original clone $F$ contains $h$ from the preceding remark.

(The generators $\forall^{k+1}_2, \ceil{\floor{\bigwedge}}, \ceil{\bigvee}$ for $\Mono\Limm0kk\Limm111^\Borel$ are by \cref{thm:post-borel-L0k}.)
\end{proof}

This completes the description of the ``secondary row'' $[\Mono\Limm033\Limm111, \Mono\Cons03\Limm111]$ in \cref{fig:post-borel-L0kt-T03}, again modulo the caveat (as in \cref{rmk:post-borel-L0kt-gen}) that we do not know if the candidate generators for $(\Mono)\Limm0kt\Limm111^\Borel$ match the relations once $t < k$.

It remains to explain the single ``tertiary'' node $\Incr\Cons03\Cons11 = \Incr\Limm032\Limm111$ in \cref{fig:post-borel-L0kt-T03}:

\begin{proposition}
\label{thm:post-T0k-incr-down}
For each $1 \le k < \omega$, we have $\Limm0k{k-1}^{<\omega_1} = \down \Incr\Cons0k\Cons11^\Borel$.
\end{proposition}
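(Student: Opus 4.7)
The containment $\supseteq$ is the easy direction: by Lemma~\ref{thm:post-T0k-incr} we have $\Incr\Cons0k \subseteq \Limm0k{k-1}$, so $\Incr\Cons0k\Cons11^\Borel \subseteq \Limm0k{k-1}^{<\omega_1}$, and since the defining relation $({\wedge^1\wedge\lim\wedge^{k-1}}{=}0) \subseteq 2^\omega$ is downward-closed, so is the clone $\Limm0k{k-1}^{<\omega_1}$, giving $\down \Incr\Cons0k\Cons11^\Borel \subseteq \Limm0k{k-1}^{<\omega_1}$.

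For the nontrivial containment $\subseteq$, let $f \in \Limm0k{k-1}^{<\omega_1}$. First, I would reduce to the case that $f$ is monotone. Since $\Limm0k{k-1}^{<\omega_1} = \Pol^{<\omega_1}\{({\wedge^1\wedge\lim\wedge^{k-1}}{=}0)\}$ is countably closed, Corollary~\ref{thm:post-up-ctcl} yields $f \le \up f \in \Mono\Limm0k{k-1}^{<\omega_1}$, and since $\down$ is transitive on pointwise bounds, it suffices to bound $\up f$ from above by some $g \in \Incr\Cons0k\Cons11^\Borel$. So replace $f$ by $\up f$ and assume $A := f^{-1}(1) \subseteq 2^n$ is upward-closed. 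The case $n < \omega$ is immediate: $\bar A = A$, monotone on a finite cube is automatically Scott-continuous, and the hypothesis specializes to the $k$-ary intersection property, so $f$ itself lies in $\Incr\Cons0k\Cons11^\Borel$ (adding $\pi_0$ if $A = \emptyset$).

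The heart of the proof is the case $n = \omega$: construct a Scott-open, upward-closed, Borel $B \subseteq 2^\omega$ with $A \subseteq B$, $\vec 1 \in B$, and $\vec 0 \notin B^{\wedge k}$. Equivalently, find a family $\mathcal{B}$ of finite subsets of $\omega$ with the $k$-ary intersection property such that every $\vec a \in A$ dominates $\chi_b$ for some $b \in \mathcal{B}$. The compactness ingredient is standard: for each $\vec a \in A$, the continuous image $\{\vec a\} \wedge \bar A^{\wedge (k-1)} \subseteq 2^\omega$ is a compact set avoiding $\vec 0$, so it is covered by finitely many subbasic opens $\{x_j = 1\}$, $j \in F_{\vec a}$; setting $b_{\vec a} := F_{\vec a} \cap \text{supp}(\vec a)$ yields a finite $b_{\vec a} \subseteq \text{supp}(\vec a)$ with $\chi_{b_{\vec a}} \wedge \bar A^{\wedge (k-1)} \not\ni \vec 0$.

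The main obstacle is that the family $\{b_{\vec a}\}_{\vec a \in A}$ produced this way need not itself have the $k$-ary intersection property: the compactness argument only witnesses that each $\chi_{b_{\vec a}}$ intersects well with $\bar A$-tuples, but nothing ensures that different $b_{\vec a}$'s share a common index. I would resolve this by a coherent choice, e.g., by enlarging each $b_{\vec a}$ appropriately: well-order a countable dense $A_0 \subseteq A$, and recursively pick each $b_{\vec a}$ to contain the compactness-given witness \emph{together with} finitely many indices that force intersection with all previously chosen $b_{\vec a'}$'s (possible because $\chi_{b_{\vec a}} \wedge \chi_{b_{\vec a'}} \wedge \cdots$ can be made nonzero by taking $F_{\vec a}$ large enough, using that $A$ is upward-closed to reduce the problem to a similar compactness statement involving the already-chosen generators). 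By upward-closedness of $A$, covering $A_0$ suffices to cover $A$. Finally, $B := \bigcup_{b \in \mathcal{B}} [\chi_b, \vec 1]$ is Scott-open (by Lemma~\ref{thm:scott-prod}) and Borel; the $k$-ary intersection property of $\mathcal{B}$ gives $\vec 0 \notin B^{\wedge k}$, $\vec 1$ lies in $B$ since $\mathcal{B}$ is nonempty (when $A \ne \emptyset$), and $A \subseteq B$ by construction. Thus the indicator of $B$ is the desired $g \in \Incr\Cons0k\Cons11^\Borel$ with $f \le g$.
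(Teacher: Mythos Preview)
Your overall strategy is natural, but the coherence step---ensuring the family $\{b_{\vec a}\}$ has the $k$-ary intersection property---is the heart of the matter, and your proposed fix does not work. There are two problems. First, covering a countable Cantor-dense $A_0 \subseteq A$ by cylinders $[\chi_b,\vec 1]$ does \emph{not} imply covering $A$: such cylinders are merely clopen, and an open upward-closed set containing a dense subset of an upward-closed $A$ need not contain $A$ (e.g.\ $A=\{\vec x:x_0=1\text{ or }x_1=1\}$, $A_0=A\setminus\{(1,0,0,\dotsc)\}$, $B=\{\vec x:x_1=1\}\cup\{\vec x:x_0=1\text{ and }\exists i\ge1,\,x_i=1\}$). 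Second, the recursive enlargement is not clearly feasible: at stage $\vec a$ you need $b_{\vec a}\cap b_{\vec a'_1}\cap\dotsb\cap b_{\vec a'_{k-1}}\ne\emptyset$ for all earlier tuples, but since $b_{\vec a}\subseteq\operatorname{supp}(\vec a)$ and $\chi_{b_{\vec a'_j}}\le\vec a'_j$, the hypothesis $\chi_{b_{\vec a'_1}}\wedge\overline{A}^{\wedge(k-1)}\not\ni\vec 0$ only yields $\chi_{b_{\vec a'_1}}\wedge\vec a\wedge\vec a'_2\wedge\dotsb\ne\vec 0$, not the stronger $\vec a\wedge\chi_{b_{\vec a'_1}}\wedge\dotsb\wedge\chi_{b_{\vec a'_{k-1}}}\ne\vec 0$ that you would need to locate a witness inside $\operatorname{supp}(\vec a)$.

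The paper sidesteps the coherence problem with a stratified construction (which it remarks is reminiscent of the $T_5$ separation axiom). Using metrizability of $2^n$, write $\overline{f^{-1}(1)}=\bigcap_i U_i$ with open $U_0\supseteq\overline{U_1}\supseteq U_1\supseteq\overline{U_2}\supseteq\dotsb$. For each $\vec x\in f^{-1}(1)$, compactness gives some level $i$ with $\vec 0\notin\vec x\wedge\overline{U_i}^{\wedge(k-1)}$. Setting $U:=\bigcup_i\bigl(U_i\setminus\down\neg[\overline{U_i}^{\wedge(k-1)}]\bigr)$, one gets $f^{-1}(1)\subseteq U$ and $\vec 0\notin U^{\wedge k}$ automatically: among any $k$ points of $U$, the one at the smallest level $i_0$ sees the other $k-1$ as lying in $U_{i_j}\subseteq\overline{U_{i_0}}$, so their meet is nonzero by choice of $i_0$. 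The indicator of $\up U$ is then the desired bound in $\Incr\Cons0k\Cons11^\Borel$. No reduction to the monotone case or to a countable subset is needed; the nesting of the $U_i$ is precisely what supplies the coherence your pointwise construction lacks.
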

\begin{proof}
$\supseteq$ by \cref{thm:post-T0k-incr}.
Now let $f : 2^n -> 2 \in \Limm0k{k-1}^{<\omega_1}$; thus
\begin{align*}
\vec{0} &\notin f^{-1}(1) \wedge \-{f^{-1}(1)}^{\wedge (k-1)}. \\
\intertext{Using metrizability of $2^n$ for $n \le \omega$, write $\-{f^{-1}(1)} \subseteq 2^n$ as a countable decreasing intersection of open sets $U_0 \supseteq \-{U_1} \supseteq U_1 \supseteq \-{U_2} \supseteq U_2 \supseteq \dotsb$.
Then for each $\vec{x} \in f^{-1}(1)$, there is some $i$ such that}
\vec{0} &\notin \vec{x} \wedge \-{U_i}^{\wedge (k-1)},
\text{ i.e., }
\vec{x} \notin \down \neg [\-{U_i}^{\wedge (k-1)}].
\end{align*}
Let $U := \bigcup_i (U_i \setminus \down \neg [\-{U_i}^{\wedge (k-1)}])$.
Then $f^{-1}(1) \subseteq U$, and $\vec{0} \not\in U^{\wedge k}$, since for $\vec{x}_0, \dotsc, \vec{x}_{k-1} \in U$ where each $\vec{x}_j \in U_{i_j} \setminus \down \neg [\-{U_{i_j}}^{\wedge (k-1)}]$, with $i_0 \le i_1, \dotsc, i_{k-1}$, we have $\vec{x}_0 \wedge \dotsb \wedge \vec{x}_{k-1} \in \vec{x}_0 \wedge U_{i_1} \wedge \dotsb \wedge U_{i_{k-1}} \subseteq \vec{x}_0 \wedge U_{i_0}^{\wedge (k-1)} \notni \vec{0}$.
So the indicator function $g$ of the upward-closure of $U$ obeys $f \le g \in \Incr\Cons0k\Cons11$.
\end{proof}

\begin{remark}
The above argument is reminiscent of the $T_5$ separation axiom in metrizable spaces.
As such, it fails for uncountable arities $n > \omega$.
Indeed, if $2^n$ is not $T_5$ (see e.g., \cite[4.F]{Kelley}), then taking $A, B \subseteq 2^n$ disjoint from each other's closures but inseparable by open sets, and then embedding $2^n `-> 2^{2+2n}$ as an antichain as in \cref{rmk:post-up-analytic}, with meet $\ne \vec{0}$ and join $\ne \vec{1}$ (e.g., take only strings beginning with $01$), the indicator function $f$ of $A \cup \neg[B] \subseteq 2^{2+2n}$ will be in the $n$-ary analogue of $\Limm021$ (meaning $\vec{0} \not\in f^{-1}(1) \wedge \-{f^{-1}(1)}$) but not $\le$ any Scott-continuous $\Cons02$ function.
\end{remark}

It follows from the above (and the fact that $\Limm0k{k-1} \not\subseteq \Limm0kk$ by \cref{thm:fkt}) that the clones $\Incr\Cons0k(\Cons11)^\Borel = \Incr\Limm0k{k-1}(\Cons11)^\Borel$ are not contained in $\Limm0kk$, as shown in \cref{fig:post-borel-L0kt-T03}.

However, in contrast to the situation with $\Limm111$ (\cref{thm:post-T0k-lim1-mod}), here we do not know if \emph{every} subclone of $\Mono\Limm0k{k-1}^\Borel$ outside $\Limm0kk$ corresponds to a subclone of $\Incr\Limm0k{k-1}^\Borel$.
Nor do we have a candidate generating set for $\Incr\Limm0k{k-1}^\Borel$, or know if there is a single function generating a minimal subclone of $\Incr\Limm0k{k-1}$ outside $\Limm0kk$ (analogous to $\liminf^{\sqcup k} \vee (\pi_0 \wedge \dotsb \wedge \pi_{k-1})$ which is minimal in $\Mono\Limm0k{k-1} \setminus \Limm0kk$ from \cref{thm:fkt-lim1-gen}).
An example of such a function is given by

\begin{example}
The indicator function $g : 2^\omega -> 2$ of the set of all strings $\ge$ one of
\begin{align*}
&11\;00\;00\;00\,\dotsb, \\[1ex]
&10\;11\;00\;00\,\dotsb, \\
&01\;11\;00\;00\,\dotsb, \\[1ex]
&10\;10\;11\;00\,\dotsb, \\
&01\;01\;11\;00\,\dotsb, \\
&\vdotswithin{00\;00\;00\;00\,\dotsb}
\end{align*}
is in $\Incr\Limm021\Cons11^\Borel \setminus \Limm022$ (the strings $101010\dotsb, 010101\dotsb \in \-{g^{-1}(1)}$ are disjoint).
Note that this is an example of an upper bound for $\liminf \sqcup \liminf \in \Limm021$, as provided by \cref{thm:post-T0k-incr-down}.
\end{example}

The following summarizes all of the positive classification results we have obtained for Borel clones lying over the left ``side tube'' of Post's lattice \ref{fig:post}, from this subsection and \cref{sec:borel-T0inf}:

\begin{theorem}
\label{thm:post-borel-T0k}
For a Borel clone $\Mono\Cons0{<\omega}\Cons11^{<\omega} = \ang{\ceil{\vee}}^{<\omega} \subseteq F \subseteq \Op2^\Borel$, one of the following holds.
These cases can all occur, and are mutually exclusive except for the boundary cases indicated in \cref{thm:post-borel-T0k:T0omega}.
\let\displaystyle=\textstyle
\begin{enumerate}[label=(\alph*)]

\item \label{thm:post-borel-T0k:L0kt}
For some coordinatewise downward-closed $\emptyset \ne D \subseteq \{(k,t) \mid t \le k < \omega\}$, we have one of
\begin{alignat*}{2}
\Cons0\omega^\Borel \subseteq
\ang{\{\bigwedge, -/>\} \cup
\{\forall^{k-t}_2 \sqcup \liminf^{\sqcup t} \mid (k,t) \notin D\}}^\Borel
&\subseteq F \subseteq \bigcap_{(k,t) \in D} \Limm0kt,
\\
\Cons0\omega\Cons11^\Borel \subseteq
\ang{\{\bigwedge, \ceil{->}\} \cup
\{\forall^{k-t}_2 \sqcup \liminf^{\sqcup t} \mid (k,t) \notin D\}}^\Borel
&\subseteq F \subseteq \Cons11 \cap \bigcap_{(k,t) \in D} \Limm0kt,
\\
\Mono\Cons0\omega^\Borel \subseteq
\ang{\{\bigwedge, \ceil\bigvee, 0\} \cup
\{\forall^{k-t}_2 \sqcup \liminf^{\sqcup t} \mid (k,t) \notin D\}}^\Borel
&\subseteq F \subseteq \Mono \cap \bigcap_{(k,t) \in D} \Limm0kt, \text{ or}
\\
\Mono\Cons0\omega\Cons11^\Borel \subseteq
\ang{\{\bigwedge, \ceil\bigvee\} \cup
\{\forall^{k-t}_2 \sqcup \liminf^{\sqcup t} \mid (k,t) \notin D\}}^\Borel
&\subseteq F \subseteq \Mono\Cons11 \cap \bigcap_{(k,t) \in D} \Limm0kt
\end{alignat*}
(\cref{thm:post-L0kt-partition,thm:post-borel-T0omega}).
The first two of these intervals of $F$ are isomorphic, as are the latter two (\cref{thm:post-T1-mod}), which order-embed into the first two (\cref{thm:post-borel-down-emb}); the image includes all Borel clones defined as $\Pol$ of countably many Borel downward-closed relations $R \subseteq 2^\omega$ (\cref{thm:post-borel-up}).
Moreover, we have exactly one of the following subcases.
\begin{enumerate}[label=(\roman*)]
\item \label{thm:post-borel-T0k:L0kt:L0kt}
There is a greatest $k < \omega$ for which $(k,0) \in D$, in which case $\forall^{k+1}_2 \in F \subseteq \Cons0k$, so $F$ has finitary restriction $\Cons0k^{<\omega}, \Cons0k\Cons11^{<\omega}, \Mono\Cons0k^{<\omega}$ or $\Mono\Cons0k\Cons11^{<\omega}$ respectively.
There are $2^k$ of each of the four types of intervals above in this case (\cref{thm:post-borel-L0kt-2^k}).
\item \label{thm:post-borel-T0k:L0kt:L0inft}
We have $(k,0) \in D$ for all $k < \omega$ (so $F \subseteq \Limm0{<\omega}{{0}} = \Cons0{<\omega}$), and $\forall_2 = \forall^\omega_2 \in F$.
There are countably infinitely many of each of the four intervals above in this case (\cref{thm:post-borel-L0inf-inf}), the lowest one being $[\ang{\forall_2, \bigwedge, \ceil{\bigvee}}^\Borel, \Mono\Limm0{<\omega}{}\Limm111^\Borel]$ when $D = \{(k,t) \mid t \le k < \omega\}$.
\item \label{thm:post-borel-T0k:L0kt:T0omega}
We have $\forall_2 \not\in F$, in which case $F = \Cons0\omega^\Borel, \Cons0\omega\Cons11^\Borel, \Mono\Cons0\omega^\Borel$ or $\Mono\Cons0\omega\Cons11^\Borel$ (\cref{thm:post-T0inf-forall2}).
\end{enumerate}
(These are the ``primary cobwebs'' in \cref{fig:post-borel-L0kt-T03,fig:post-borel-T0k}.)

\item \label{thm:post-borel-T0k:L0ktL1}
For some $t < k < \omega$, $F$ has finitary restriction $\Cons0k\Cons11^{<\omega}$ or $\Mono\Cons0k\Cons11^{<\omega}$, and obeys respectively
\begin{alignat*}{2}
\qquad\quad\;
\mathllap{\Limm0kk\Limm111^\Borel \subseteq{}}
\ang{\forall^{k+1}_2, \ceil{\bigvee}, \ceil{->},
(\forall^{k-t-1}_2 \sqcup \liminf^{\sqcup (t+1)}) \vee (\pi_0 \wedge \dotsb \wedge \pi_{k-1})}^\Borel
&\subseteq F \subseteq \Limm0kt\Limm0{k-1}{}\Limm111 \text{ or}
\\
\mathllap{\Mono\Limm0kk\Limm111^\Borel \subseteq{}}
\ang{\forall^{k+1}_2, \ceil{\floor{\bigwedge}}, \ceil{\bigvee},
(\forall^{k-t-1}_2 \sqcup \liminf^{\sqcup (t+1)}) \vee (\pi_0 \wedge \dotsb \wedge \pi_{k-1})}^\Borel
&\subseteq F \subseteq \Mono\Limm0kt\Limm0{k-1}{}\Limm111
\end{alignat*}
(\cref{thm:fkt-lim1-gen}).
These intervals are isomorphic to the second and fourth intervals in \cref{thm:post-borel-T0k:L0kt}\cref{thm:post-borel-T0k:L0kt:L0kt}, for $(k,0), (k-1,k-1) \in D \notni (k+1,0), (k,k)$ (\cref{thm:post-T0k-lim1-mod}).
(These are the ``secondary rows'' below the ``cobwebs'' in \cref{fig:post-borel-L0kt-T03,fig:post-borel-T0k}.)

\item \label{thm:post-borel-T0k:L0ktIncr}
For some $0 < k < \omega$, $F$ has finitary restriction $\Mono\Cons0k^{<\omega}$ or $\Mono\Cons0k\Cons11^{<\omega}$, and obeys respectively
\begin{align*}
\Incr\Limm0kk^\Borel =
\ang{\forall^{k+1}_2, \ceil{\bigvee}, 0}^\Borel
&\subsetneqq F \subseteq \Incr\Cons0k^\Borel = \Incr\Limm0k{k-1}^\Borel \text{ or}
\\
\Incr\Limm0kk\Cons11^\Borel =
\ang{\forall^{k+1}_2, \ceil{\bigvee}}^\Borel
&\subsetneqq F \subseteq \Incr\Cons0k\Cons11^\Borel = \Incr\Limm0k{k-1}\Limm111^\Borel.
\end{align*}
These two sets of $F$ are isomorphic to each other (\cref{thm:post-T0k-lim1-mod}), and embed into the third and fourth intervals in \cref{thm:post-borel-T0k:L0kt}\cref{thm:post-borel-T0k:L0kt:L0kt} for $D = \{(k',t') \mid (k',t') \le (k,k-1)\}$ (\cref{thm:post-borel-down-emb}) with cofinal image (\cref{thm:post-T0k-incr-down}).
(These are the ``tertiary'' nodes in \cref{fig:post-borel-L0kt-T03,fig:post-borel-T0k}.)

\item \label{thm:post-borel-T0k:forall2}
$F$ has finitary restriction $\Mono\Cons0{<\omega}^{<\omega}$ or $\Mono\Cons0{<\omega}\Cons11^{<\omega}$, and obeys respectively
\begin{align*}
\ang{\forall_2, 0}^\Borel &\subseteq F \subseteq \Decr\Cons0{<\omega}^\Borel \text{ or} \\
\ang{\forall_2}^\Borel &\subseteq F \subseteq \Decr\Cons0{<\omega}\Cons11^\Borel.
\end{align*}
These two intervals are isomorphic (\cref{thm:post-T1-mod}), and they embed into the respective intervals below $\Limm0{<\omega}{}$ in \cref{thm:post-borel-T0k:L0kt}\cref{thm:post-borel-T0k:L0kt:L0inft} (\cref{thm:post-borel-down-emb}) with cofinal image (\cref{thm:post-borel-L0inf-down}).

\item \label{thm:post-borel-T0k:T0omega}
$F$ is equal to one of the precisely 2, 3, 4, 6 Borel clones contained in $\Limm0kk$ restricting to the finitary clones
$\Cons0k^{<\omega},
\Cons0k\Cons11^{<\omega},
\Mono\Cons0k^{<\omega},
\Mono\Cons0k\Cons11^{<\omega}$ respectively, for some $0 \le k < \omega$ (\cref{thm:post-borel-L0k}) or $k = \omega$ (\cref{thm:post-borel-T0omega}; by convention $\Limm0\omega\omega := \Cons0\omega$).
These are all isomorphic as $k$ varies, and are obtained by adjoining the generator $\forall^{k+1}_2$ to the respective clone below $\Cons0\omega$ from \cref{fig:post-borel-T0inf} for $k < \omega$, or alternately by intersecting $\Limm0kk$ with the respective clone above $\ang{\Mono\Cons11^{<\omega}}^\Borel$ in the top cube from \cref{sec:borel-topcube} (\cref{thm:post-T0inf-mod}).

For each $k$, and in each of the four cases ($\Mono$ and/or $\Cons11$), the top clone $(\Mono)\Limm0kk(\Cons11)^\Borel$ here coincides with one of the extreme cases from \cref{thm:post-borel-T0k:L0kt}\cref{thm:post-borel-T0k:L0kt:L0kt} or \cref{thm:post-borel-T0k:L0kt:T0omega}, where $(k,k) \in D \notni (k+1,0)$ (or $D = \{(k,t) \mid t \le k < \omega\}$, when $k = \omega$).
\qed
\end{enumerate}
\end{theorem}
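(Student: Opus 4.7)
The plan is to assemble this summary theorem by organizing the previously established results of \crefrange{sec:borel-T0inf}{sec:borel-T0k} according to two invariants of a Borel clone $\ang{\ceil\vee}^\Borel \subseteq F \subseteq \Op2^\Borel$: its vertical projection $F \cap \Cons0\omega$, classified by \cref{thm:post-borel-T0omega} into the 15-node fiber $[\ang{\wedge}^\Borel, \Cons0\omega^\Borel]$ via \cref{thm:post-borel-T0inf-down}; and its horizontal index set $D := \{(k,t) \mid F \subseteq \Limm0kt\}$, a $\le$-downward-closed subset of $\{(k,t) \mid t \le k < \omega\}$. First I would dispatch case \cref{thm:post-borel-T0k:T0omega}: \cref{thm:post-borel-L0k-down} yields $\Limm0kk = \down \ang{\Mono\Cons0k\Cons11^{<\omega}}^\Borel$, so \cref{thm:post-borel-down-mod} lifts the classification of the top-cube fibers from \cref{sec:borel-topcube} verbatim onto each $[\ang{\Mono\Cons0k\Cons11^{<\omega}}^\Borel, \Limm0kk^\Borel]$. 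This also absorbs the boundary case \cref{thm:post-borel-T0k:L0kt:T0omega}, since by \cref{thm:post-T0inf-forall2} the Borel clones with $\forall_2 \notin F$ are precisely those contained in $\Cons0\omega = \Limm0\omega\omega$.

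Next I would handle the ``primary cobweb'' case \cref{thm:post-borel-T0k:L0kt}. Under the hypotheses $F \not\subseteq \Limm0kk$ for every $k \le \omega$ and $F \supseteq \Mono\Cons0\omega\Cons11^\Borel$ (which is forced by the finitary restriction), \cref{thm:post-L0kt-partition} places $F$ into exactly one of the four intervals with generators $\forall^{k-t}_2 \sqcup \liminf^{\sqcup t}$ for $(k,t) \notin D$; existence of these generators in $F$ comes from \cref{thm:fkt-gen}, and the distinctness of the upper bounds $\bigcap_{(k,t) \in D} \Limm0kt$ from \cref{thm:fkt,thm:L0kt-free}. The isomorphisms between the four intervals and the embedding of the $\Mono$ intervals into the non-$\Mono$ ones follow from \cref{thm:post-T1-mod,thm:post-borel-down-emb}; the characterization of the image as $\Pol^\Borel$ of countably many Borel downward-closed relations follows from \cref{thm:post-borel-up}. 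The three sub-cases are read off from the shape of $D$: \cref{thm:post-borel-T0k:L0kt:L0kt} has a finite greatest $k$ with $(k,0) \in D$ (with the $2^k$ count as in the discussion preceding \cref{thm:post-borel-L0kt-2^k}), \cref{thm:post-borel-T0k:L0kt:L0inft} has all $(k,0) \in D$ but $D \ne \{(k,t) \mid t \le k < \omega\}$ (countably infinite count as in the discussion preceding \cref{thm:post-borel-L0inf-inf}), and \cref{thm:post-borel-T0k:L0kt:T0omega} is the full $D$.

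The remaining cases correspond to the other values of $F \cap \Cons0\omega$, and I would derive each by transporting case \cref{thm:post-borel-T0k:L0kt} along a known modularity or downward-closure identification. Case \cref{thm:post-borel-T0k:L0ktL1}: \cref{thm:post-T0k-lim1-mod} yields the modularity isomorphism with the appropriate subinterval of \cref{thm:post-borel-T0k:L0kt:L0kt}, and \cref{rmk:post-T0k-lim1-mod,thm:fkt-lim1-gen} supply the explicit lower generator $(\forall^{k-t-1}_2 \sqcup \liminf^{\sqcup (t+1)}) \vee (\pi_0 \wedge \dotsb \wedge \pi_{k-1})$. Case \cref{thm:post-borel-T0k:L0ktIncr}: \cref{thm:post-T0k-incr-down} identifies $\Limm0k{k-1}$ as $\down \Incr\Cons0k\Cons11^\Borel$, so \cref{thm:post-borel-down-emb} embeds the resulting fiber cofinally into the appropriate interval of \cref{thm:post-borel-T0k:L0kt:L0kt} for $D = \{(k',t') \mid (k',t') \le (k,k-1)\}$. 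Case \cref{thm:post-borel-T0k:forall2}: \cref{thm:post-borel-L0inf-down} gives $\Limm0{<\omega}{} = \down \Decr\Cons0{<\omega}\Cons11^\Borel$, reducing the classification to \cref{thm:post-borel-T0k:L0kt:L0inft} via \cref{thm:post-borel-down-emb}, with \cref{thm:post-T1-mod} providing the $\Mono\Cons01$ vs.\ $\Mono$ isomorphism.

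The hard part will be the bookkeeping of exhaustiveness and mutual exclusivity across the five cases rather than any individual case. I would verify exhaustiveness by observing that the pair $(F \cap \Cons0\omega, D)$ determines the case unambiguously: the 15 possible values of $F \cap \Cons0\omega$ together with whether $F \subseteq \Limm0kk$ for some $k \le \omega$ slot $F$ into exactly one of \cref{thm:post-borel-T0k:L0kt}--\cref{thm:post-borel-T0k:T0omega}, the boundary overlap at \cref{thm:post-borel-T0k:T0omega} being the stated exception. Nonemptiness of each interval — in particular the strict containments claimed in \cref{thm:post-borel-T0k:L0ktIncr} — reduces to the generator-distinctness of \cref{thm:fkt} together with the cofinality clauses of \cref{thm:post-T0k-incr-down,thm:post-borel-L0inf-down}.
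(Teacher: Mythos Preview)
Your proposal is correct and follows essentially the same approach as the paper: this theorem is presented in the paper with no separate proof (it ends with \qed), with all justifications embedded as cross-references inside the statement itself, and your plan assembles exactly those cited results in the same way. One small slip: in your subcase split, you describe \cref{thm:post-borel-T0k:L0kt:L0inft} as ``all $(k,0)\in D$ but $D\ne\{(k,t)\mid t\le k<\omega\}$'' and \cref{thm:post-borel-T0k:L0kt:T0omega} as ``the full $D$'', but in fact both can have the full $D$; the distinction is whether $\forall_2\in F$ (case~\cref{thm:post-borel-T0k:L0kt:L0inft}, with the full-$D$ instance giving the lowest interval there) or $\forall_2\notin F$ (case~\cref{thm:post-borel-T0k:L0kt:T0omega}, forcing $F$ to be one of the four $\Cons0\omega$ clones by \cref{thm:post-T0inf-forall2}).
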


Note that in each of the ``imprecise'' cases \cref{thm:post-borel-T0k:L0kt}--\cref{thm:post-borel-T0k:forall2} above, \emph{except for \cref{thm:post-borel-T0k:L0ktIncr}}, we have examples of ``minimal'' functions contained in all clones in the interval.
If these can be shown to generate the upper bound of the interval, then the entire interval would collapse.
We know this to happen only in the ``bottommost'' cases covered by \cref{thm:post-borel-T0k:T0omega}, in the top cube cases $k \le 1$ from \cref{sec:borel-topcube}, and in a few sporadic $k = 2$ cases; see below.
Also, in each of the ``topmost'' cases $(1,1) \not\in D$ in \cref{thm:post-borel-T0k:L0kt}, yielding an interval $\forall^{1-1}_2 \sqcup \liminf^{\sqcup 1} = \liminf \in F \subseteq \Cons0k$ (or $\subseteq \Cons0{<\omega}$ when $k = \omega$), we know by \cref{thm:post-ctcl-liminf} that the interval collapses \emph{up to taking countable closure}, i.e., every ${<}\omega_1$-ary function in $\Cons0k$ can be approximated at any countably many inputs by a Borel function built from the candidate generators.

We conclude by relating the partial classification of Borel clones over $\Mono\Cons02^{<\omega}$ given by the above, to the Borel self-dual clones restricting to $\Dual\Mono^{<\omega}$, the last of the 3 finitary self-dual, non-affine clones \cref{eq:post-D-beta-clones} which corresponds to $\Mono\Cons02^{<\omega}$ via the self-dualizing operator $\beta$ (\cref{thm:post-D-beta-mod}).

Recall that for the other 2 such finitary self-dual clones, $\Dual^{<\omega}$ and $\Dual\Cons01^{<\omega}$, the Borel clones were fully classified using $\beta$ in \cref{thm:post-borel-D}; recall from there the definition of $\beta(\bigwedge) \in \Dual\Limm011$.
Note that, in terms of \cref{def:fkt}, $\beta(\bigwedge)$ may be written as
\begin{align*}
\textstyle
\beta(\bigwedge)
= 1 \sqcup \bigvee
\ge 1 \sqcup \liminf.
\end{align*}
It follows by \cref{thm:post-lower} that $1 \sqcup \liminf \in \ang{\beta(\bigwedge), 0}$.
It is also easily seen that
\begin{align*}
\textstyle
\yesnumber
\label{eq:betaMeet-liminf}
\beta(\bigwedge)(\vec{x})
&= (1 \sqcup \liminf)(x_0, x_1, \exists^3_2(x_0, x_1, x_2), \exists^3_2(x_0, x_1, \exists^3_2(x_0, x_2, x_3)), \dotsc), \\
\exists^3_2(x, y, z)
&= \textstyle \beta(\bigwedge)(x, y, z, y, z, y, z, \dotsc).
\end{align*}
Thus
\begin{equation*}
\ang{\beta(\bigwedge), 0}^\Borel = \ang{1 \sqcup \liminf, \exists^3_2, 0}^\Borel
\end{equation*}
are two equivalent versions of the candidate generators for $\Mono\Cons02\Limm011^\Borel$ given by \cref{thm:post-L0kt-partition} (for the downward-closed set of indices $D = \{(2,0),(1,0),(1,1),(0,0)\}$).

\begin{proposition}
\label{thm:post-borel-MT02L0}
$\Mono\Cons02\Limm011^\Borel = \ang{\beta(\bigwedge), 0}^\Borel$.
\end{proposition}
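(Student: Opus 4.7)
The plan is to reduce the proposition, via the self-dualization correspondence in \cref{thm:post-D-beta-mod}, to the equivalent self-dual equality $\Dual\Mono\Limm011^\Borel=\ang{\beta(\bigwedge)}^\Borel$, and then to tackle the resulting equality using \cref{thm:post-lower} applied to $\ang{\beta(\bigwedge),0}^\Borel$.

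First I will dispatch $\supseteq$ by verifying $\beta(\bigwedge)\in\Mono\Cons02\Limm011$. Monotonicity is immediate from the formula $\beta(\bigwedge)(x_0,\vec x)=(x_0\wedge\bigvee_{i\ge1}x_i)\vee\bigwedge_{i\ge1}x_i$; self-duality is by construction of $\beta$; the inclusion $\Dual\cap\Mono\subseteq\Cons02$ (if $\vec a\wedge\vec b=\vec 0$ with $f(\vec a)=f(\vec b)=1$, then $\vec b\le\neg\vec a$, forcing $f(\vec b)\le f(\neg\vec a)=\neg f(\vec a)=0$) then gives $\beta(\bigwedge)\in\Cons02$; and $\beta(\bigwedge)\in\Limm011$ is part of \cref{thm:post-borel-D}. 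Together with $0\in\Mono\Cons02\Limm011$ this yields the easy direction.

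For the reverse inclusion, let $G_1:=\ang{\beta(\bigwedge),0}^\Borel$ and $G_2:=\Mono\Cons02\Limm011^\Borel$. I will check that both contain $0$ and are closed under $\beta$. For $G_1$, closure under $\beta$ is by \cref{thm:post-D-beta-gen} together with the identities $\beta(\beta(\bigwedge))=\beta(\bigwedge)$ (modulo a dummy input, since $\beta(\bigwedge)$ is self-dual) and $\beta(0)=\pi_0$. For $G_2$, given $f\in G_2$, the function $\beta(f)$ is self-dual by construction, monotone because $f\le\delta(f)$ holds for $f\in\Mono\Cons02$, in $\Cons02$ by $\Dual\cap\Mono\subseteq\Cons02$, and in $\Limm011$ because $\beta(f)(0,\vec x)=f(\vec x)$ vanishes on an open neighborhood of $\vec 0$ which persists to a neighborhood of $\vec 0\in2^{1+n}$. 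Applying the bijection of \cref{thm:post-D-beta-mod}, the correspondents on the $\Dual$ side are $G_1\cap\Dual=\ang{\beta(\bigwedge)}^\Borel$ (via \cref{thm:post-D-beta-gen} again) and $G_2\cap\Dual=\Dual\Mono\Limm011^\Borel$ (using $\Dual\cap\Mono\subseteq\Cons02$), so $G_1=G_2$ is equivalent to the self-dual equality $\Dual\Mono\Limm011^\Borel=\ang{\beta(\bigwedge)}^\Borel$.

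The $\subseteq$ direction of this self-dual equality is the main obstacle. My approach is to apply \cref{thm:post-lower} to $G:=\ang{\beta(\bigwedge),0}^\Borel$: since $\beta(\bigwedge)(0,\vec x)=\bigwedge\vec x$ and $\beta(\bigwedge)(1,\vec x)=\bigvee\vec x$, one has $\ang{G\cup\{1\}}^\Borel=\ang{\bigwedge,\bigvee,0,1}^\Borel=\Mono^\Borel$ by \cref{thm:post-borel-M}, so $G=\Mono^\Borel\cap\down G$. It therefore suffices to show that every $f\in\Mono\Cons02\Limm011^\Borel$ admits an upper bound $g\in G$. The hard step is constructing such $g$: the naive self-dualization $g=f\vee\delta(f)$ fails to preserve $\Limm011$ when $f$ lacks continuity at $\vec 1$, so I would instead build $g$ as a Borel substitution into $\beta(\bigwedge)$ of the form $g(\vec x)=\beta(\bigwedge)(h_0(\vec x),h_1(\vec x),h_2(\vec x),\ldots)$, where the auxiliary Borel terms $h_i$ are themselves built recursively from $\beta(\bigwedge)$ and encode both the upward-closed antichain structure of $f^{-1}(1)$ and the clopen neighborhood of $\vec 0$ witnessing $f\in\Limm011$. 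The essential algebraic input enabling this construction is the containment $\Dual\cap\Mono\subseteq\Cons02$, which is peculiar to $k=2$ and underlies the paper's remark that no analogous characterization is currently known for $k\ge3$.
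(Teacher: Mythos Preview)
Your reduction via \cref{thm:post-lower} to the statement ``every $f\in\Mono\Cons02\Limm011^\Borel$ lies below some $g\in G=\ang{\beta(\bigwedge),0}^\Borel$'' is exactly right, and matches the paper's strategy. (The detour through \cref{thm:post-D-beta-mod} and the self-dual equality is correct but unnecessary; the paper works directly with $G$ throughout.)

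The genuine gap is that you do not actually construct the upper bound $g$. Your description of ``a Borel substitution into $\beta(\bigwedge)$ with auxiliary terms $h_i$ built recursively to encode the antichain structure of $f^{-1}(1)$ and the clopen neighborhood of $\vec 0$'' is a plan, not a proof, and it is more elaborate than what is needed. The paper's construction is far simpler: since $f\in\Limm011$, we have $f\le\pi_0\vee\dotsb\vee\pi_{m-1}$ for some finite $m$, and one inducts on $m$. For the inductive step, the cross-section $f_0(\vec x)=f(0,\vec x)$ satisfies $f_0\le\pi_0\vee\dotsb\vee\pi_{m-2}$, so $f_0\in G$ by induction; then $\beta(f_0)\in G$ by closure under $\beta$ (which you verified); and finally $f\le\beta(f_0)$ because $f\in\Mono\Cons02$ forces $f\le\delta(f)$, hence $f_1\le\delta(f_0)$, while $f_0=\beta(f_0)_0$ trivially. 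Now \cref{thm:post-lower} finishes. The single missing idea is this one-line upper bound $g=\beta(f_0)$ and the induction on $m$; no encoding of antichains is required.
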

\begin{proof}
Let $f \in \Mono\Cons02\Limm011^\Borel$; then $f \le \pi_0 \vee \dotsb \vee \pi_{m-1}$ for some $m < \omega$, and we must show $f \in \ang{\beta(\bigwedge), 0}$.
We induct on $m$.
If $m = 0$, then $f = 0$.
Now suppose the claim holds for $m$, and $f \le \pi_0 \vee \dotsb \vee \pi_m$.
Then (the cross-section) $f_0 \le \pi_0 \vee \dotsb \vee \pi_{m-1}$, so $f_0 \in \ang{\beta(\bigwedge), 0}^\Borel$, and so $\beta(f_0) \in \ang{\beta(\bigwedge), 0}$, since $\ang{\beta(\bigwedge), 0}$ is the clone containing $0$ corresponding via \cref{thm:post-D-beta-mod} to $\ang{\beta(\wedge)} \subseteq \Dual$, hence is closed under $\beta$.
But since $f \in \Mono\Cons02$, we have $f \le \beta(f_0)$, thus $f \in \ang{\beta(\bigwedge), 0}$ by \cref{thm:post-lower}.
\end{proof}

\begin{corollary}
\label{thm:post-borel-T02}
We have:
\begin{enumerate}[label=(\alph*)]
\item \label{thm:post-borel-T02:L0}
$\Cons02\Limm011^\Borel = \ang{\beta(\bigwedge), -/>}^\Borel$,
$\Cons02\Limm011\Cons11^\Borel = \ang{\beta(\bigwedge), \bigwedge, \ceil{->}}^\Borel$, and
$\Mono\Cons02\Limm011\Cons11^\Borel = \ang{\beta(\bigwedge), \bigwedge}^\Borel$.
\item \label{thm:post-borel-T02:L0L1}
$\Cons02\Limm011\Limm111^\Borel = \ang{\beta(\bigwedge), \ceil{->}}^\Borel$ and
$\Mono\Cons02\Limm111^\Borel = \ang{\beta(\bigwedge), \wedge}^\Borel$.
\item \label{thm:post-borel-T02:liminf}
$\Cons02^\Borel = \ang{\liminf, \exists^3_2, -/>}^\Borel$ and
$\Mono\Cons02^\Borel = \ang{\liminf, \exists^3_2, 0}^\Borel$.
\end{enumerate}
\end{corollary}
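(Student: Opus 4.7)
My plan is to start from Proposition~\ref{thm:post-borel-MT02L0}, which gives $\Mono\Cons02\Limm011^\Borel = \ang{\beta(\bigwedge), 0}^\Borel$, and propagate its content through the modularity isomorphisms already established. The recurring algebraic observation is that $\beta(\bigwedge)(0, \vec{y}) = \bigwedge \vec{y}$, so $\bigwedge$ (hence $\wedge$) lies in $\ang{\beta(\bigwedge), 0}^\Borel$.

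For (a3), I apply the second isomorphism of \cref{thm:post-T1-mod}, $[\ang{\bigwedge}^{<\omega_1}, \Cons11^{<\omega_1}] \cong [\ang{\bigwedge, 0}^{<\omega_1}, \Op2^{<\omega_1}]$, to the base case: the preimage of $\ang{\beta(\bigwedge), 0}^\Borel$ under $F \mapsto \ang{F \cup \{0\}}^{<\omega_1}$ is $\ang{\beta(\bigwedge), \bigwedge}^\Borel$, which must therefore equal $\Mono\Cons02\Limm011\Cons11^\Borel$. For (b2), I first invoke \cref{thm:post-T0k-lim1} with $k = 2$ to get $\Mono\Cons02\Limm111 = \Mono\Cons02\Limm011\Limm111$; then I apply the first isomorphism of \cref{thm:post-T0k-lim1-mod} with $k = 2$, restricted to monotone sub-intervals, which identifies $\Mono\Cons02\Limm111^\Borel$ with $\Mono\Cons02\Limm011^\Borel$ via $F \mapsto \ang{F \cup \{0\}}$; the preimage of $\ang{\beta(\bigwedge), 0}^\Borel$ is $\ang{\beta(\bigwedge), \wedge}^\Borel$. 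The non-monotone identities (a1), (a2), (b1) follow from their monotone counterparts by the modularity bijections of \cref{thm:post-borel-down-mod} between monotone and non-monotone clones in their respective fibers, adjoining $-/>$ or $\ceil{->}$ as appropriate.

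For (c1) and (c2), the target clones lie outside $\Limm011$ and so are not covered by the preceding framework. I first verify, using \cref{eq:betaMeet-liminf} together with the identity $\exists^3_2(\bigwedge \vec{y}, x_0, \liminf \vec{y}) = \bigwedge \vec{y} \vee (x_0 \wedge \liminf \vec{y}) = (1 \sqcup \liminf)(x_0, \vec{y})$ (using $\bigwedge \le \liminf$) and $\bigwedge \in \ang{\liminf}^\Borel$ (via interleaving), that $\beta(\bigwedge) \in \ang{\liminf, \exists^3_2, 0}^\Borel$; this gives the easy inclusion $\Mono\Cons02\Limm011^\Borel \subseteq \ang{\liminf, \exists^3_2, 0}^\Borel \subseteq \Mono\Cons02^\Borel$. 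The reverse direction $\Mono\Cons02^\Borel \subseteq \ang{\liminf, \exists^3_2, 0}^\Borel$ is the main obstacle. My plan is to apply the self-dualizing correspondence of \cref{thm:post-D-beta-mod}: for $f \in \Mono\Cons02^\Borel$, the function $\beta(f)$ lies in $\Dual\Mono^\Borel$ (since $\Cons02$ forces $f \wedge \delta(f) = 0$, making $\beta(f)$ monotone) and satisfies $f = \beta(f)_0$, so it suffices to show $\Dual\Mono^\Borel = \ang{\beta(\liminf), \exists^3_2}^\Borel$. This self-dual generation problem cannot be deduced from modularity and must be settled by a Kahane-style combinatorial dichotomy (analogous to \cref{thm:kahane}) on the structure of Borel self-dual monotone functions, showing that any such $f$ either already lies in $\ang{\exists^3_2}^\Borel$ or produces $\beta(\liminf)$ via a suitable limiting composition. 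Once (c2) is established, (c1) follows by adjoining $-/>$ via the modularity bijection of \cref{thm:post-borel-down-mod}.
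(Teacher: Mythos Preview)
Your treatment of (a) and (b) is essentially the paper's: start from \cref{thm:post-borel-MT02L0} and propagate via the modularity isomorphisms of \cref{thm:post-T1-mod} and \cref{thm:post-T0k-lim1-mod}. (The paper derives (a1) first, from \cref{thm:post-borel-up} and \cref{thm:post-lower}, but this amounts to the same modularity reasoning you invoke.)

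For (c), however, you have a genuine gap. You reduce $\Mono\Cons02^\Borel = \ang{\liminf,\exists^3_2,0}^\Borel$ to the self-dual generation problem $\Dual\Mono^\Borel = \ang{\beta(\liminf),\exists^3_2}^\Borel$, and then propose to settle the latter by an unspecified ``Kahane-style combinatorial dichotomy''. No such argument is given, and in fact the paper proves \cref{thm:post-borel-DM} \emph{after} and \emph{using} \cref{thm:post-borel-T02}(c), so your order of deduction is inverted relative to what is available.

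The paper's actual argument for (c) is much more direct and avoids the self-dual detour entirely: for any $f \in (\Mono)\Cons02^\Borel$, observe that
\[
f = \liminf_{m\to\infty}\bigl(f \wedge (\pi_0 \vee \dotsb \vee \pi_{m-1})\bigr),
\]
and each truncation $f \wedge (\pi_0 \vee \dotsb \vee \pi_{m-1})$ vanishes on a neighborhood of $\vec{0}$, hence lies in $(\Mono)\Cons02\Limm011^\Borel$, which by (a) is already contained in $\ang{\beta(\bigwedge),-/>}^\Borel$ (resp.\ $\ang{\beta(\bigwedge),0}^\Borel$). Since you have already verified $\beta(\bigwedge) \in \ang{\liminf,\exists^3_2,0}^\Borel$, this immediately gives $f \in \ang{\liminf,\exists^3_2,-/>}^\Borel$ (resp.\ $\ang{\liminf,\exists^3_2,0}^\Borel$). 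This single observation replaces the entire ``Kahane-style'' program you outline.
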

\begin{proof}
The first equality in \cref{thm:post-borel-T02:L0} follows from \cref{thm:post-borel-up,thm:post-lower,thm:post-borel-MT02L0};
the rest then follows from \cref{thm:post-T1-mod}.

\Cref{thm:post-borel-T02:L0L1} follows from \cref{thm:post-T0k-lim1-mod}.

\Cref{thm:post-borel-T02:liminf}:
Note first that from $\liminf, \exists^3_2$, we easily get $1 \sqcup \liminf$, via a formula similar to \cref{eq:betaMeet-liminf}, which then yields $\beta(\bigwedge)$; so the two generated clones in question contain $\beta(\bigwedge)$ and $-/>, 0$ respectively, hence contain all of $(\Mono)\Cons02\Limm011^\Borel$ by \cref{thm:post-borel-MT02L0} and \cref{thm:post-borel-T02:L0}.
Now for $f \in (\Mono)\Cons02^\Borel$, we have $f = \liminf_{m -> \infty} (f \wedge (\pi_0 \vee \dotsb \vee \pi_{m-1}))$ with each $f \wedge (\pi_0 \vee \dotsb \vee \pi_{m-1}) \in (\Mono)\Cons02\Limm011^\Borel$.
\end{proof}

Thus among the Borel clones with finitary restriction $\Cons02^{<\omega}$ and its variants, we have the curious situation (see \cref{fig:post-borel-T0k}) that sufficiently small (below $\Limm022$) or large (above $\Mono\Cons02\Limm011\Limm111$) such clones are fully classified (solid-shaded regions in \ref{fig:post-borel-T0k}), whereas the intermediate clones, $\Limm021$ and its variants, remain open (hatch-shaded regions in \ref{fig:post-borel-T0k}).
Nonetheless, this is enough to yield

\begin{corollary}
\label{thm:post-borel-DM}
There are precisely 3 Borel clones on $2$ restricting to $\Dual\Mono^{<\omega}$ (see \cref{fig:post-borel-D}):
\begin{itemize}
\item  $\Dual\Mono^\Borel = \Pol^\Borel \{\neg, \le\} = \ang{\beta(\liminf)}^\Borel$,
\item  $\Dual\Mono\Limm011^\Borel = \Dual\Mono\Limm111^\Borel = \Pol^\Borel \{\neg, \le, {\lim}{=}0\} = \ang{\beta(\bigwedge)}^\Borel$,
\item  $\ang{\Dual\Mono^{<\omega}}^\Borel = \Pol^\Borel \{\neg, \le, \lim\} = \ang{\exists^3_2}^\Borel$,
\end{itemize}
where
\begin{equation*}
\beta(\liminf)(x_0, x_1, \dotsc) = \paren[\big]{x_0 \wedge \limsup_{i -> \infty} x_i} \vee \liminf_{i -> \infty} x_i.
\end{equation*}
\end{corollary}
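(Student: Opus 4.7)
The strategy is to apply \cref{thm:post-D-beta-mod} to reduce the classification of Borel clones restricting to $\Dual\Mono^{<\omega}$ to the classification of $\beta$-closed Borel clones $G \subseteq \Op2^\Borel$ containing $0$ and restricting finitarily to $\ang{\Dual\Mono^{<\omega} \cup \{0\}}^{<\omega} = \Mono\Cons02^{<\omega}$, via the bijection $F \leftrightarrow G = \ang{F \cup \{0\}}^\Borel$. The three candidate clones will be $G_1 := \ang{\Mono\Cons02^{<\omega}}^\Borel$, $G_2 := \Mono\Cons02\Limm011^\Borel = \ang{\beta(\bigwedge), 0}^\Borel$ (by \cref{thm:post-borel-MT02L0}), and $G_3 := \Mono\Cons02^\Borel = \ang{\liminf, \exists^3_2, 0}^\Borel$ (by \cref{thm:post-borel-T02:liminf}), corresponding respectively to the three claimed self-dual clones $F_1, F_2, F_3$.

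First I will verify each $G_i$ is $\beta$-closed using \cref{thm:post-D-beta-gen}, which reduces the check to generators via the identities $\beta(\wedge) = \exists^3_2$, $\beta(\exists^3_2) = \exists^3_2 \circ (\pi_1,\pi_2,\pi_3)$ by self-duality of $\exists^3_2$, $\beta(0) = \pi_0$, and $\beta(h) = h \circ (\pi_1,\pi_2,\dotsc)$ for self-dual $h$. The generators of $F_i := G_i \cap \Dual = \ang{\beta(G_i)}^\Borel$ then follow by applying \cref{thm:post-D-beta-gen} to generating sets of $G_i$, yielding $F_1 = \ang{\exists^3_2}^\Borel$, $F_2 = \ang{\beta(\bigwedge)}^\Borel$, and $F_3 = \ang{\beta(\liminf)}^\Borel$, using in particular $\exists^3_2 = \beta(\liminf)(x_0, x_1, x_2, x_1, x_2, \dotsc)$ and $\liminf = \beta(\liminf)(0, -)$. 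Distinctness follows since $\beta(\bigwedge) \le \pi_0 \vee \pi_1$ lies in $\Limm011$ (it vanishes on the neighborhood $\{x_0 = x_1 = 0\}$ of $\vec{0}$), while $\beta(\liminf) \notin \Limm011$ since $\vec{x}_n := (0, 0^n, 1^\omega) \to \vec{0}$ satisfies $\beta(\liminf)(\vec{x}_n) = \liminf(0^n, 1^\omega) = 1$.

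To show these are the only three, given $F \in \Clo^\Borel_{\Dual\Mono^{<\omega}}{2}$ with $F \neq F_1$, I set $G := \ang{F \cup \{0\}}^\Borel \supsetneq G_1$, so $G$ contains an infinitary function, and split into two dichotomies. If $F \not\subseteq \Limm011$, taking $f \in F \setminus \Limm011$ and using $\Dual\Mono \subseteq \Mono\Cons02$, Kahane's \cref{thm:kahane} applied to $f$ with $\wedge \in G$ yields $\liminf \in \ang{f, \wedge} \subseteq G$, whence $G \supseteq \ang{\liminf, \exists^3_2, 0}^\Borel = G_3$ and $F = F_3$. If instead $F \subseteq \Limm011$ (equivalently $F \subseteq \Limm111$ by self-duality of $F$), then $F \subseteq F_2$, and the goal becomes $\beta(\bigwedge) \in F$; by $\beta$-closure of $G$, this is equivalent to $\bigwedge \in G$.

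The main obstacle is this last step: extracting $\bigwedge$ from an arbitrary infinitary $f \in F \cap \Dual\Mono\Limm011$. The plan is to write $f = \beta(f_0)$ with $f_0 := f(0, -) \in G$ infinitary in $\Mono\Cons02\Limm011^\Borel = \ang{\beta(\bigwedge), 0}^\Borel$ (by \cref{thm:post-borel-MT02L0}), then exhibit a variable substitution $(y_0(\vec{z}), \vec{y}(\vec{z}))$ so that $f(x_0, y_0(\vec{z}), \vec{y}(\vec{z})) = \beta(\bigwedge)(x_0, \vec{z})$ in $G$. The prototypical construction, illustrated for $f = \beta(\ceil{\liminf})$, uses $y_0 = z_0$ together with $\vec{y} = (\vec{z}, \vec{z}, \dotsc)$ repeating, exploiting that $\liminf$ and $\limsup$ of the repeated block equal $\bigwedge \vec{z}$ and $\bigvee \vec{z}$ respectively, yielding $(x_0 ? z_0 \vee \bigvee \vec{z} : z_0 \wedge \bigwedge \vec{z}) = (x_0 ? \bigvee \vec{z} : \bigwedge \vec{z}) = \beta(\bigwedge)(x_0, \vec{z})$. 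The general argument will combine such substitutions with induction on the smallest finite $m$ with $f_0 \le \bigvee_{i < m} \pi_i$ (finite since $f_0 \in \Limm011$), and leverage both the structural description $\Mono\Cons02\Limm011^\Borel = \ang{\beta(\bigwedge), 0}^\Borel$ and the $\beta$-closure of $G$ to handle cases where lower-order cross-sections become essentially finitary.
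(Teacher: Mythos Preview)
Your reduction via \cref{thm:post-D-beta-mod} and the treatment of the case $F \not\subseteq \Limm011$ via \cref{thm:kahane} are correct and parallel the paper. The gap is in your Case~2, where you must show that any $\beta$-closed Borel clone $G \supsetneq \ang{\Mono\Cons02^{<\omega}}^\Borel$ with $0 \in G$ contains $\bigwedge$. Your explicit-substitution idea is only worked out for the single function $\beta(\ceil{\liminf})$; the promised ``general argument via induction on $m$'' is not given, and it is unclear how it would go: for an arbitrary discontinuous $f \in \Dual\Mono\Limm011^\Borel$, the cross-section $f_0 \in \ang{\beta(\bigwedge),0}^\Borel$ can have complicated structure, and there is no evident uniform projection-substitution recovering $\beta(\bigwedge)$ from every such $f$. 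Knowing that $f_0$ \emph{lies in} $\ang{\beta(\bigwedge),0}^\Borel$ does not by itself tell you how to \emph{extract} $\beta(\bigwedge)$ from it.

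The paper bypasses this entirely with a one-line application of \cref{thm:post-T0omega-dich}. Take any discontinuous $g \in G \subseteq \Mono$; by \cref{thm:mono-lim} either $g \notin \Incr$, or else $g \notin \Decr$, in which case $\beta(g) \notin \Incr$ (since $\beta(g)(1,-) = \delta(g)$ fails to preserve an increasing join whenever $g$ fails to preserve a decreasing meet) and $\beta(g) \in G$ by $\beta$-closure. Either way $G \not\subseteq \Incr$; since also $0 \in G$ rules out $G \subseteq \Limm111$, and $g$ rules out $G \subseteq \ang{\Op2^{<\omega}}$, \cref{thm:post-T0omega-dich}\cref{thm:post-T0omega-dich:Meet} yields $\bigwedge \in G$ immediately. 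From there the paper gets $\beta(\bigwedge) \in G$, hence $G \supseteq \Mono\Cons02\Limm011^\Borel$ by \cref{thm:post-borel-MT02L0}, and finally uses \cref{thm:kahane} with \cref{thm:post-borel-T02}\cref{thm:post-borel-T02:liminf} to decide between $G_2$ and $G_3$. The missing idea in your proposal is precisely this use of $\beta$-closure to force $G \not\subseteq \Incr$, which unlocks the existing dichotomy and replaces your incomplete inductive construction.
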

\begin{proof}
By \cref{thm:post-D-beta-mod} (and the fact that $\Dual\Mono^{<\omega} = \ang{\beta(\Mono\Cons02^{<\omega})}^{<\omega}$), such Borel clones are $\ang{\beta(G)}^\Borel$ for all Borel clones $G$ restricting to $\Mono\Cons02^{<\omega}$ which are closed under $\beta$.
If $G$ contains a discontinuous function $g$, then by \cref{thm:mono-lim}, either $g \notin \Incr$, or $g \notin \Decr$ in which case clearly from \cref{def:beta}, $\beta(g) \notin \Incr$; thus by \cref{thm:post-T0omega-dich}, $\bigwedge \in G$, so $\beta(\bigwedge) \in G$, so by \cref{thm:post-borel-MT02L0}, $\Mono\Cons02\Limm011^\Borel \subseteq G$, and so by \cref{thm:kahane,thm:post-borel-T02}\cref{thm:post-borel-T02:liminf}, either $G = \Mono\Cons02\Limm011^\Borel$ or $G = \Mono\Cons02^\Borel$.
So
\begin{align*}
\Mono\Cons02^\Borel &= \ang{\liminf, \exists^3_2, 0}^\Borel, &
\Mono\Cons02\Limm011^\Borel &= \ang{\beta(\bigwedge), 0}^\Borel, &
\ang{\Mono\Cons02^{<\omega}}^\Borel &= \ang{\exists^3_2, 0}^\Borel
\end{align*}
are the 3 Borel clones lying over $\Mono\Cons02^{<\omega}$ which are closed under $\beta$; these are easily seen to correspond via \cref{thm:post-D-beta-mod} to the above clones lying over $\Dual\Mono^{<\omega}$.
\end{proof}

\def\MR#1{}
\bibliographystyle{amsalpha}
\bibliography{refs}

\newcommand{\etalchar}[1]{$^{#1}$}
\providecommand{\bysame}{\leavevmode\hbox to3em{\hrulefill}\thinspace}
\providecommand{\MR}{\relax\ifhmode\unskip\space\fi MR }
\providecommand{\MRhref}[2]{%
  \href{http://www.ams.org/mathscinet-getitem?mr=#1}{#2}
}
\providecommand{\href}[2]{#2}
\begin{thebibliography}{FMMT22}

\bibitem[BKKR69]{BKKR}
V.~G. Bodnarčuk, L.~A. Kalužnin, V.~N. Kotov, and B.~A. Romov, \emph{Galois theory for {P}ost algebras. {I}, {II}}, Kibernetika (Kiev) (1969), no.~3, 1--10; ibid. {\bf 1969}, no. 5, 1--9. \MR{300895}

\bibitem[Bod21]{Bodirsky}
Manuel Bodirsky, \emph{Complexity of infinite-domain constraint satisfaction}, Lecture Notes in Logic, vol.~52, Cambridge University Press, Cambridge; Association for Symbolic Logic, Ithaca, NY, 2021. \MR{4273453}

\bibitem[Bul17]{Bulatov}
Andrei~A. Bulatov, \emph{A dichotomy theorem for nonuniform {CSP}s}, 58th {A}nnual {IEEE} {S}ymposium on {F}oundations of {C}omputer {S}cience---{FOCS} 2017, IEEE Computer Soc., Los Alamitos, CA, 2017, pp.~319--330. \MR{3734240}

\bibitem[Dou88]{Dougherty}
Randall Dougherty, \emph{Monotone reducibility over the {C}antor space}, Trans. Amer. Math. Soc. \textbf{310} (1988), no.~2, 433--484. \MR{943302}

\bibitem[FMMT22]{FMMT}
Ralph~S. Freese, Ralph~N. McKenzie, George~F. McNulty, and Walter~F. Taylor, \emph{Algebras, lattices, varieties. {V}ol. {II}}, Mathematical Surveys and Monographs, vol. 268, American Mathematical Society, Providence, RI, 2022. \MR{4496007}

\bibitem[Gei68]{Geiger}
David Geiger, \emph{Closed systems of functions and predicates}, Pacific J. Math. \textbf{27} (1968), 95--100. \MR{234893}

\bibitem[GHK{\etalchar{+}}03]{GHKLMS}
G.~Gierz, K.~H. Hofmann, K.~Keimel, J.~D. Lawson, M.~Mislove, and D.~S. Scott, \emph{Continuous lattices and domains}, Encyclopedia of Mathematics and its Applications, vol.~93, Cambridge University Press, Cambridge, 2003. \MR{1975381}

\bibitem[Gr{\"a}03]{Gratzer}
George Gr{\"a}tzer, \emph{General lattice theory}, second ed., Birkh\"{a}user Verlag, Basel, 2003, With appendices by B. A. Davey, R. Freese, B. Ganter, M. Greferath, P. Jipsen, H. A. Priestley, H. Rose, E. T. Schmidt, S. E. Schmidt, F. Wehrung and R. Wille. \MR{2451139}

\bibitem[Hjo00]{Hjorth}
Greg Hjorth, \emph{Classification and orbit equivalence relations}, Math. Surveys Monogr., vol.~75, American Mathematical Society, Providence, RI, 2000.

\bibitem[Hru11]{Hrusak}
Michael Hru\v{s}\'{a}k, \emph{Combinatorics of filters and ideals}, Set theory and its applications, Contemp. Math., vol. 533, Amer. Math. Soc., Providence, RI, 2011, pp.~29--69. \MR{2777744}

\bibitem[Joh82]{Jstone}
Peter~T. Johnstone, \emph{Stone spaces}, Cambridge Studies in Advanced Mathematics, vol.~3, Cambridge University Press, Cambridge, 1982. \MR{698074}

\bibitem[Kah92]{Kahane}
Sylvain Kahane, \emph{Op\'{e}rations de {H}ausdorff it\'{e}r\'{e}es et r\'{e}unions croissantes de compacts}, Fund. Math. \textbf{141} (1992), no.~2, 169--194. \MR{1183330}

\bibitem[Kan08]{Kanovei}
Vladimir Kanovei, \emph{Borel equivalence relations}, University Lecture Series, vol.~44, American Mathematical Society, Providence, RI, 2008, Structure and classification. \MR{2441635}

\bibitem[Kec95]{Kcdst}
Alexander~S. Kechris, \emph{Classical descriptive set theory}, Graduate Texts in Mathematics, vol. 156, Springer-Verlag, New York, 1995. \MR{1321597}

\bibitem[Kel75]{Kelley}
John~L. Kelley, \emph{General topology}, Graduate Texts in Mathematics, vol. No. 27, Springer-Verlag, New York-Berlin, 1975, Reprint of the 1955 edition [Van Nostrand, Toronto, Ont.]. \MR{370454}

\bibitem[KR03]{KanoveiReeken}
V.~G. Kanove\u{\i} and M.~Reeken, \emph{Some new results on the {B}orel irreducibility of equivalence relations}, Izv. Ross. Akad. Nauk Ser. Mat. \textbf{67} (2003), no.~1, 59--82. \MR{1957916}

\bibitem[KTV23]{KatayTothVidnyanszky}
Tamás Kátay, László~Márton Tóth, and Zoltán Vidnyánszky, \emph{The {CSP} dichotomy, the axiom of choice, and cyclic polymorphisms}, preprint (2023), \url{https://arxiv.org/abs/2310.00514}.

\bibitem[Lau06]{Lau}
Dietlinde Lau, \emph{Function algebras on finite sets: A basic course on many-valued logic and clone theory}, Springer Monographs in Mathematics, Springer-Verlag, Berlin, 2006. \MR{2254622}

\bibitem[Lou83]{Louveau}
A.~Louveau, \emph{Some results in the {W}adge hierarchy of {B}orel sets}, Cabal seminar 79--81, Lecture Notes in Math., vol. 1019, Springer, Berlin, 1983, pp.~28--55. \MR{730585}

\bibitem[Mos09]{Mdst}
Yiannis~N. Moschovakis, \emph{Descriptive set theory}, second ed., Mathematical Surveys and Monographs, vol. 155, American Mathematical Society, Providence, RI, 2009. \MR{2526093}

\bibitem[Pos41]{Post}
Emil~L. Post, \emph{The {T}wo-{V}alued {I}terative {S}ystems of {M}athematical {L}ogic}, Annals of Mathematics Studies, vol. No. 5, Princeton University Press, Princeton, NJ, 1941. \MR{4195}

\bibitem[Ros09]{Racts}
Christian Rosendal, \emph{Automatic continuity of group homomorphisms}, Bull. Symbolic Logic \textbf{15} (2009), no.~2, 184--214. \MR{2535429}

\bibitem[Sol99]{Solecki}
Sławomir Solecki, \emph{Analytic ideals and their applications}, Ann. Pure Appl. Logic \textbf{99} (1999), no.~1-3, 51--72. \MR{1708146}

\bibitem[Sze86]{Szendrei}
\'{A}gnes Szendrei, \emph{Clones in universal algebra}, S\'{e}minaire de Math\'{e}matiques Sup\'{e}rieures [Seminar on Higher Mathematics], vol.~99, Presses de l'Universit\'{e} de Montr\'{e}al, Montreal, QC, 1986. \MR{859550}

\bibitem[Tho22]{Thornton}
Riley Thornton, \emph{An algebraic approach to {B}orel {CSP}s}, preprint (2022), \url{https://arxiv.org/abs/2203.16712}.

\bibitem[Wad83]{Wadge}
William~Wilfred Wadge, \emph{Reducibility and determinateness on the {B}aire space}, ProQuest LLC, Ann Arbor, MI, 1983, Thesis (Ph.D.)--University of California, Berkeley. \MR{2633374}

\bibitem[Zhu17]{Zhuk}
Dmitriy Zhuk, \emph{A proof of {CSP} dichotomy conjecture}, 58th {A}nnual {IEEE} {S}ymposium on {F}oundations of {C}omputer {S}cience---{FOCS} 2017, IEEE Computer Soc., Los Alamitos, CA, 2017, pp.~331--342. \MR{3734241}

\end{thebibliography}

\medskip
\noindent
Department of Mathematics\\
University of Michigan\\
Ann Arbor, MI, USA\\
\nolinkurl{ruiyuan@umich.edu}\\
\nolinkurl{zibai@umich.edu}

\end{document}